\documentclass{amsart}

\usepackage{mathtools}
\usepackage{amsmath}
\usepackage{amsthm}
\usepackage{amssymb}
\usepackage{enumitem}
\usepackage{booktabs}
\usepackage{iftex}
\ifptex
  
\fi
\usepackage{tikz}
\usetikzlibrary{arrows.meta,decorations.pathreplacing}
\ifptex
  \usepackage[dvipdfmx,hidelinks]{hyperref}
\else
  \usepackage[hidelinks]{hyperref}
\fi
\usepackage{cleveref}

\numberwithin{equation}{section}

\newcommand{\R}{\mathbb{R}}
\newcommand{\B}{\mathbb{B}}
\newcommand{\BetaBall}{\mathrm P\mathbb B^n_{\beta_n}}
\newcommand{\Pyr}{\mathcal{P}}
\newcommand{\Ple}{\mathcal{P}_{\le 1}}
\newcommand{\PGauss}{\mathcal{P}_{\Gamma^\infty}}
\newcommand{\dH}{d_{\mathbb H}}
\newcommand{\dHaus}[1]{(#1)_H}
\newcommand{\dconc}{d_{\mathrm{conc}}}

\theoremstyle{plain}
\newtheorem{thm}{Theorem}[section]
\newtheorem{prop}[thm]{Proposition}
\newtheorem{lem}[thm]{Lemma}
\newtheorem{cor}[thm]{Corollary}
\crefname{thm}{Theorem}{Theorems}
\crefname{prop}{Proposition}{Propositions}
\crefname{lem}{Lemma}{Lemmas}
\crefname{cor}{Corollary}{Corollaries}

\theoremstyle{definition}
\newtheorem{defn}[thm]{Definition}
\crefname{defn}{Definition}{Definitions}

\theoremstyle{remark}
\newtheorem{remark}[thm]{Remark}
\crefname{remark}{Remark}{Remarks}

\crefname{section}{Section}{Sections}
\crefname{figure}{Figure}{Figures}
\crefname{table}{Table}{Tables}

\setlist[enumerate]{leftmargin=2em,itemsep=0.25em,topsep=0.25em}
\setlist[itemize]{leftmargin=2em,itemsep=0.25em,topsep=0.25em}
\title[Poincar\'e Beta Balls]{Poincar\'e Beta Balls: Radial Laws, Shell Transforms, and Phase Diagram}
\author{Shigeaki Yokota}
\date{}
\keywords{metric measure space, pyramid, Poincar\'e ball, concentration of measure, separation distance}
\subjclass[2020]{Primary 53C23; Secondary 60D05, 28A33}

\begin{document}

\begin{abstract}
On a high-dimensional Poincar\'e ball, a Euclidean beta-type radial measure concentrates near a sphere, but hyperbolic distance amplifies the surviving radial spread, so the usual shell reduction loses part of the limiting metric. In each regime of the balance between this radial width and amplified angular separation, we determine the weak pyramid limit of the spaces rescaled at the order at which the transition between concentration and dissipation occurs. The four possibilities are a pyramid generated by finite star trees, the pyramid of spaces of diameter at most one, metric transforms of the Gaussian pyramid, and the Gaussian pyramid. Each star tree has branches from a common center, a shifted exponential distribution along them, and paths between different branches through the center. We also give a sharp criterion for convergence to the diameter-at-most-one pyramid.
\end{abstract}

\maketitle

\section{Introduction}
\label{sec:introduction-draft}

The unit spheres $S^n$, equipped with their geodesic distances and normalized Riemannian volume measures, provide the classical model of concentration of measure: as $n\to\infty$, every $1$-Lipschitz function becomes nearly constant on almost all of $S^n$, uniformly over the choice of the function. Gromov developed the geometry of mm-spaces to treat this phenomenon. The relevant objects are complete separable metric spaces equipped with full-support Borel probability measures, called mm-spaces. We write $X$ for one such object and $\mathcal X$ for the set of mm-isomorphism classes. Gromov introduced the observable distance, whose convergence is concentration of mm-spaces, and the box distance, which gives a strictly finer topology on mm-spaces \cite{gromov2007met}\cite[Introduction]{kazukawa-shioya2020ellipsoids}. The modern formulation used here is described in \cite[Introduction, Proposition~5.5, Sections~4.6 and~5.5]{shioya2016mmg}. We denote these distances by $\dconc$ and $\Box$, respectively. Gromov also proposed a natural compactification \cite{gromov2007met}, formalized in the pyramid language used here in \cite[Introduction, Definition~2.24, and Theorem~2.27]{esaki-kazukawa-mitsuishi2024cones}. The pyramid formulation includes box closedness, a condition added later to obtain the Hausdorff property \cite[Definition~2.21 and the paragraph following it]{ozawa2015limit}. In this compactification, each mm-space $X$ is represented by the pyramid $\Pyr(X)$, a directed subfamily of $\mathcal X$ called the pyramid associated with $X$. Shioya later metrized the topology by a distance $d_\Pi$ and formulated the space of all pyramids as a compact metric space \cite[Definition~4.5 and Theorem~4.6]{shioya2022sugaku}.

Spheres are the classical concrete examples of concentration. Shioya proved that, as $n\to\infty$, the pyramids associated with the $n$-dimensional spheres $S^n(\sqrt n)$ of radius $\sqrt n$, equipped with the restricted Euclidean distance, converge weakly to the Gaussian pyramid $\PGauss$ \cite[Theorem~7.40]{shioya2016mmg}. Kazukawa and Shioya extended this result to high-dimensional solid ellipsoids: suitably rescaled ellipsoids, equipped with normalized volume measures, converge weakly to an infinite-dimensional Gaussian space, and the conditions under which this convergence is in the box topology are characterized \cite[Theorems~1.1 and~1.2]{kazukawa-shioya2020ellipsoids}. A more unexpected example concerns $\kappa$-cones. Esaki, Kazukawa, and Mitsuishi proved that, for fixed $\kappa\in\R$, weak convergence of the underlying pyramids and radial measures implies weak convergence of the corresponding $\kappa$-cones. Applied to the $n$-dimensional Cauchy space with exponent $\beta>0$ and the generalized Cauchy distribution, their result gives concentration to a half-line after rescaling by $n^{-1/2}$ as $n\to\infty$ \cite[Theorems~1.2 and~1.4]{esaki-kazukawa-mitsuishi2024cones}. These examples all use Euclidean space or a subset of it with the restricted Euclidean distance. In contrast, the example in this paper arises from the interaction between the hyperbolic distance on the Poincar\'e ball and a Euclidean radial density. Its pyramid limit is generated by finite star trees.

Let
\[\B^n\coloneqq\{x\in\R^n\mid |x|<1\}\]
be the $n$-dimensional Poincar\'e ball with hyperbolic distance $\dH$. For $n\ge1$ and $\beta>0$, define
\[d\mu_{n,\beta}(x)\coloneqq C_{n,\beta}(1-|x|^2)^{\beta-1}\,dx,\qquad C_{n,\beta}\coloneqq\frac{\Gamma(n/2+\beta)}{\pi^{n/2}\Gamma(\beta)},\]
where $dx$ denotes the Lebesgue measure on $\R^n$. Given a positive sequence $(\beta_n)$, set
\[\BetaBall\coloneqq(\B^n,\dH,\mu_{n,\beta_n}).\]
The upright tag $\mathrm P$ marks the Poincar\'e ball, whose distance is hyperbolic.

The standard shell reduction for a high-dimensional radial measure replaces the radial coordinate by a representative radius after the measure concentrates near a Euclidean sphere. For $\BetaBall$, however, the hyperbolic distance amplifies deviations from that sphere. If the radial width survives at the critical scale, freezing the distance at the representative sphere discards part of the limiting metric.

Let $\Ple$ be the pyramid of all mm-spaces of diameter at most $1$. Even the simplest choice $\beta_n\equiv1$ has the nontrivial weak limit
\[\Pyr\bigl((\log n)^{-1}\mathrm P\mathbb B^n_1\bigr)\longrightarrow\Ple.\]
Intuitively, $\Ple$ represents an infinite discrete space of diameter $1$.

Retaining both the angular amplification and the radial width requires two quantities, defined for $n\ge1$ and $\beta>0$ by
\[\mathcal A_{n,\beta}\coloneqq\frac{\sqrt{n+2\beta}}{\beta},\qquad \mathcal L_{n,\beta}\coloneqq2\log\mathcal A_{n,\beta}.\]

When both effects survive, the limiting geometry is described by finite star trees with a parameter $\lambda\in[0,+\infty)$: several half-line branches issuing from a common center, with the coordinate $s$ along a branch distributed as $\lambda+\operatorname{Exp}(1)$, which we denote by $\nu_\lambda$. A point with coordinate $s$ lies at distance $s-\lambda/2$ from the center, so two points with coordinates $s$ and $t$ on different branches are joined through the center at distance $s+t-\lambda$.

More precisely, fix $m\ge1$ and positive weights $p_1,\ldots,p_m$ such that $\sum_{i=1}^m p_i=1$. Equip $[\lambda,+\infty)\times\{1,\ldots,m\}$ with the measure
\[\nu_\lambda\otimes\sum_{i=1}^m p_i\delta_i\]
and the pseudometric
\[
  d((s,i),(t,j)) \coloneqq \begin{cases}
    |s-t|, & i=j,\\
    s+t-\lambda, & i\ne j
  \end{cases}.
\]
Its metric quotient is denoted by $T_{m,\lambda}(p_1,\ldots,p_m)$. Allowing every finite number of branches and all positive branch weights gives the pyramid
\[
  \mathcal P_\lambda^\star
  \coloneqq \overline{
    \bigcup_{m\ge1,\ p_i>0,\ \sum_i p_i=1}
    \Pyr(T_{m,\lambda}(p_1,\ldots,p_m))
  }^{\,\Box}.
\]
Here the bar denotes closure with respect to the box distance $\Box$, recalled in \Cref{def:common-box-concentration}.

\begin{thm}[Finite-star-tree phase]
\label{thm:phase-diagram-draft}
If $\mathcal A_{n,\beta_n}\to+\infty$ and $\beta_n\mathcal L_{n,\beta_n}\to\lambda\in[0,+\infty)$ as $n\to\infty$, then
\[\Pyr(\beta_n\BetaBall)\longrightarrow\mathcal P_\lambda^\star\]
weakly.
\end{thm}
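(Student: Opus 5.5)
The plan is to prove weak convergence through the two inclusions characterizing it. First, $\mathcal P_\lambda^\star$ is contained in the lower limit of $\Pyr(\beta_n\BetaBall)$: every $T_{m,\lambda}(p_1,\ldots,p_m)$ is approximately dominated by $\beta_n\BetaBall$ for large $n$. Second, every box-limit point of elements of $\Pyr(\beta_n\BetaBall)$ lies in $\mathcal P_\lambda^\star$: $\beta_n\BetaBall$ is approximately dominated by a finite star tree.

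Both directions rest on two asymptotic ingredients, which I would establish first, together with one preliminary reduction.

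\emph{Radial law.} Under $\mu_{n,\beta}$, $1-|x|^2$ has law $\mathrm{Beta}(\beta,n/2)$. Define the radial coordinate $s(x)\coloneqq-\beta\log(1-|x|^2)+c_{n,\beta}$, with $c_{n,\beta}$ chosen so that the law of $s$ under $\mu_{n,\beta_n}$ converges in total variation to $\nu_\lambda$. The convergence comes from $(1-|x|^2)^{\beta}\approx$ uniform, since $\mathrm{Beta}(\beta,n/2)$ has density $\sim t^{\beta-1}\beta/\Gamma(\beta+1)$ near $0$ after scaling by $n/2$.

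\emph{Reduction $\beta_n\to0$.} The hypotheses force $\beta_n\to0$. Indeed, $\beta_n\mathcal L_{n,\beta_n}$ bounded with $\mathcal L_{n,\beta_n}\to\infty$ gives it, so $(1-|x|^2)^{\beta_n}$ is asymptotically uniform.

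\emph{Two-point formula.} From
\[\cosh\dH(x,y)=1+\frac{2|x-y|^2}{(1-|x|^2)(1-|y|^2)},\]
we get, when the right side is large,
\[\beta\dH(x,y)=s(x)+s(y)-2c_{n,\beta}+\beta\log|x-y|^2+\beta\log 2+o(1).\]
For $x,y$ with $|x|,|y|$ near $1$ and angle $\theta$, $|x-y|^2\approx 2(1-\cos\theta)$. I would then check that the normalization $\mathcal L_{n,\beta}=\log\bigl((n+2\beta)/\beta^2\bigr)$ is exactly what makes $2c_{n,\beta}-\beta\log|x-y|^2$ equal $\lambda+o(1)$ whenever $|x-y|$ is bounded below by a quantity whose logarithm is $o(1/\beta)$. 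This bound should also cover the typical angular separations $\theta\approx\pi/2\pm O(n^{-1/2})$, because $\beta_n\log n$ and $\beta_n\log(1/\beta_n)$ are absorbed into $\lambda$. On the other hand, the triangle inequality through the origin gives the universal upper bound
\[\beta\dH(x,y)\le\beta\dH(0,x)+\beta\dH(0,y)=s(x)+s(y)-\lambda+o(1),\]
and for collinear points on a common ray, $\beta\dH(x,y)=|s(x)-s(y)|+o(1)$.

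For the lower inclusion, fix $m$ and $p_i$. I would partition $S^{n-1}$ into $m$ measurable pieces $A_i$ of measure $p_i$, for instance slabs in one coordinate direction. The map $x\mapsto(s(x),i(x/|x|))$ is then, up to an additive error $\varepsilon$, a $1$-Lipschitz map onto $T_{m,\lambda}$. This needs two estimates. Same-branch pairs satisfy $\beta\dH\ge|s(x)-s(y)|$, which always holds because $\dH$ dominates the radial difference. Different-branch pairs satisfy $\beta\dH\ge s+t-\lambda-\varepsilon$, which by the two-point formula holds unless $|x-y|\le e^{-\varepsilon/\beta}$. The measure of such near-diagonal pairs is negligible, because $e^{-\varepsilon/\beta_n}$ is much smaller than the angular concentration scale. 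Pushing the measure forward and invoking the $\varepsilon$-domination/box-approximation lemma recalled with \Cref{def:common-box-concentration} then gives $\Pyr(T_{m,\lambda})$ inside the lower limit.

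For the upper inclusion, I would discretize directions. Take a partition of $S^{n-1}$ into $M_n$ cells of tiny angular diameter $\delta_n$, each with its own branch. Map $T_{M_n,\lambda}$ onto $\beta_n\BetaBall$ by sending $(s,i)$ to the point of radial coordinate $s$ in a representative direction of cell $i$, with the radial law transported to match.
\begin{itemize}
\item Different branches: the upper bound above shows this map is $1$-Lipschitz up to $o(1)$ across branches.
\item Same branch: within a branch it is an isometry.
\item Spreading over the cell: the error in replacing the true direction is of order $\beta\delta_n e^{\dH(0,x)}$. This is small outside a set of small measure once $\delta_n$ is chosen superexponentially small relative to the radial tail.
\end{itemize}
This gives $\beta_n\BetaBall$ within box distance $o(1)$ of a $1$-Lipschitz image of $T_{M_n,\lambda}$ with uniform weights, so $\Pyr(\beta_n\BetaBall)$ lies in an $o(1)$-neighbourhood of $\mathcal P_\lambda^\star$. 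Continuity of $\lambda\mapsto T_{m,\lambda}$ in box distance handles $\lambda_n\to\lambda$.

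I expect the main obstacle to be the two-point asymptotics with uniform control of the constant. The issue is showing that the shift produced by typical angles, of size $\beta_n\log n$, and the shift produced by the radial normalization, of size $\beta_n\log(1/\beta_n^2)$, combine exactly into $\lambda$, with errors uniform on sets of almost full $\mu_{n,\beta_n}^{\otimes2}$-measure. This also covers the regime where $\beta_n\log n$ itself does not vanish. I would first establish it via the explicit $\cosh$ formula together with the Beta law of $|x|^2$ and the standard concentration of $\langle u,v\rangle$ for independent uniform $u,v\in S^{n-1}$ at scale $n^{-1/2}$.
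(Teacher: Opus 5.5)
Your constant is off by exactly $\lambda$, and this breaks the upper inclusion. Since $-\beta_n\log(1-|x|^2)=\beta_n\log(\mathbf U_n+\mathbf V_{n,\beta_n})-\beta_n\log\mathbf V_{n,\beta_n}$ already converges in law to $\lambda+\operatorname{Exp}(1)$, requiring $s$ to have limit law $\nu_\lambda$ forces $c_{n,\beta_n}\to0$. Hence $\beta_n\dH(0,x)=s(x)+o(1)$ uniformly.

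This has two consequences. First, the triangle inequality through the origin gives $\beta_n\dH(x,y)\le s(x)+s(y)+o(1)$, not $s(x)+s(y)-\lambda+o(1)$. Second, your two-point formula gives $\beta_n\dH(x,y)=s(x)+s(y)+o(1)$ for typical pairs, where $|x-y|^2\approx2$, so typical angles produce no shift. The term $-\beta_n\log|x-y|^2$ equals $\lambda+o(1)$ only when $|x-y|=n^{-1/2}e^{o(1/\beta_n)}$, i.e.\ at the angular concentration scale. Geometrically, the star-tree center sits at hyperbolic radius about $\frac12\log n$ (scaled radius $\lambda/2$), not at the origin.

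So in your map from $T_{M_n,\lambda}$, representatives of distinct cells, which are typically at angle near $\pi/2$, are at scaled distance about $s+t>s+t-\lambda$, and the map is not $1$-Lipschitz. No construction of this kind can work when $\lambda>0$. The distance between two independent points of $\beta_n\BetaBall$ converges in law to $2\lambda+E_1+E_2$, with $E_1,E_2$ independent $\operatorname{Exp}(1)$. In every element of $\mathcal P_\lambda^\star$, however, this distance is stochastically dominated by $\lambda+E_1+E_2$: this holds for spaces dominated by some $T_{m,\lambda}$ and survives $\Box$-limits. Hence $\beta_n\BetaBall$ stays at box distance bounded away from $0$ from $\mathcal P_\lambda^\star$, and it is not even additively $\varepsilon$-dominated by a finite star tree with parameter $\lambda$. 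Done correctly, your origin-based domination produces cross distance $s+t$ with $s\sim\nu_\lambda$, i.e.\ the generators $T_{m,2\lambda}$. It therefore yields at best $\mathcal P\subset\mathcal P_{2\lambda}^\star$, which is strictly larger than $\mathcal P_\lambda^\star$ by \Cref{prop:star-pyramid-parameter}. Your argument proves the theorem only for $\lambda=0$.

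The missing idea is that the convergence is weak and not box, so the upper inclusion must be proved at the level of $(N,R)$-measurements. There, spherical concentration lets the closer pairs control every $1$-Lipschitz observable. The paper proceeds as follows.
\begin{enumerate}
\item Take a $1$-Lipschitz $\Psi\colon\beta_n\BetaBall\to B_R^N$ and form the angular profiles $s\mapsto\Psi(s/\beta_n,\theta)$ on a radial window $I$.
\item Pass to a limiting profile law $q$ on the compact set $\operatorname{Lip}_1(I,B_R^N)$.
\item Use that two sets of positive $\sigma_{n-1}$-measure contain directions at chord distance at most $C/\sqrt n$ (\Cref{lem:common-spherical-two-set}); at such directions \Cref{lem:star-cross-branch-asymptotic}(ii) gives $\beta_n\dH\le s+t-\lambda+o(1)$.
\item This yields $\|g(s)-h(t)\|_\infty\le s+t-\lambda$ for distinct $g,h\in\operatorname{supp}q$. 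Finitely many profiles then define a $1$-Lipschitz map on a finite star tree whose pushforward approximates the measurement (\Cref{lem:star-profile-compactness,lem:star-measurement-upper}).
\end{enumerate}

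The same misplacement affects your lower inclusion, though there it is repairable. With the correct constant, cross-branch pairs at angular separation at least $c/\sqrt n$ already satisfy the required bound (\Cref{lem:star-cross-branch-asymptotic}(i)). When $\lambda>0$, your threshold $e^{-\varepsilon/\beta_n}=n^{-\varepsilon/\lambda+o(1)}$ is much larger than $n^{-1/2}$ for $\varepsilon<\lambda/2$, contrary to your claim. Moreover, $\succ_\varepsilon$ bounds the distortion over all pairs of the relation. The branch sets must therefore be separated by a gap, as in \Cref{cor:common-separated-blocks}; adjacent slabs with a negligible mass of bad pairs do not suffice.
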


In \Cref{fig:intro-star-tree}, the distance from the center $O$ to the first point in the support on each branch is $\lambda/2$, and the support continues from that point along a half-line. When $\lambda=0$, this initial gap disappears and the supported parts of the branches begin at the center.

\begin{figure}[htbp]
  \centering
  \begin{tikzpicture}[
      x=1.25cm,
      y=1.25cm,
      >=Latex,
      base segment/.style={draw=black!65, densely dashed, line width=0.9pt},
      usable ray/.style={draw=black, line width=1.05pt, -{Latex[length=2.2mm]}},
      marked point/.style={circle, fill=black, inner sep=1.7pt},
      cross route/.style={draw=black!30, line width=5pt,
        line cap=round, line join=round}
    ]
    \coordinate (O) at (0,0);
    \coordinate (P1) at (1.039,0.600);
    \coordinate (P2) at (1.200,0);
    \coordinate (P3) at (1.039,-0.600);
    \coordinate (S1) at (2.598,1.500);
    \coordinate (S2) at (3.000,0);
    \coordinate (T2) at (4.000,0);
    \coordinate (T3) at (3.464,-2.000);
    \coordinate (E1) at (4.503,2.600);
    \coordinate (E2) at (5.200,0);
    \coordinate (E3) at (4.503,-2.600);

    % Highlight the cross-branch route before redrawing the branch styles on top.
    \draw[cross route] (S1) -- (O) -- (T3);
    \draw[base segment] (O) -- (P1);
    \draw[base segment] (O) -- (P2);
    \draw[base segment] (O) -- (P3);
    \draw[usable ray] (P1) -- (E1);
    \draw[usable ray] (P2) -- (E2);
    \draw[usable ray] (P3) -- (E3);

    \node[marked point, label={[xshift=-2pt,yshift=4pt]above left:$O$}] at (O) {};
    \node[marked point, label={[xshift=-2pt,yshift=1pt]above:$P_1$}] at (P1) {};
    \node[marked point, label=below:$P_2$] at (P2) {};
    \node[marked point, label={[xshift=-2pt,yshift=-1pt]below:$P_3$}] at (P3) {};

    \path (O) -- node[pos=0.52, above, sloped, fill=white, inner sep=1pt]
      {$\lambda/2$} (P1);
    \path (O) -- node[pos=0.52, above, fill=white, inner sep=1pt]
      {$\lambda/2$} (P2);
    \path (O) -- node[pos=0.52, below, sloped, fill=white, inner sep=1pt]
      {$\lambda/2$} (P3);

    \node[marked point, label=above left:{$(s,1)$}] at (S1) {};
    \node[marked point, label=below:{$(s,2)$}] at (S2) {};
    \node[marked point, label=below:{$(t,2)$}] at (T2) {};
    \node[marked point, label=below left:{$(t,3)$}] at (T3) {};

    \draw[decorate, decoration={brace, amplitude=5pt, raise=5pt}]
      (S2) -- node[midway, yshift=12pt, fill=white, inner sep=1pt]
      {$|s-t|$} (T2);

    \path (O) -- node[pos=0.73, above, sloped,
      fill=white, inner sep=1.5pt] {$s-\lambda/2$} (S1);
    \path (O) -- node[pos=0.64, below, sloped,
      fill=white, inner sep=1.5pt] {$t-\lambda/2$} (T3);
    \node[draw=black, rounded corners=2pt, fill=white,
      align=center, anchor=east, inner sep=3pt] (total)
      at (-0.45,0) {through-center distance\\$s+t-\lambda$};
    \draw[black, -{Latex[length=2mm]}] (total.east) -- (-0.12,0);

    \node[anchor=west] at (E1) {$i=1$};
    \node[anchor=west] at (E2) {$i=2$};
    \node[anchor=west] at (E3) {$i=3$};
  \end{tikzpicture}
  \caption{The finite star tree $T_{3,\lambda}$ as three half-line branches arranged around the common center $O$. The dashed initial segments connect $O$ to the supported part of each branch, and the highlighted path between two branches passes through $O$.}
  \label{fig:intro-star-tree}
\end{figure}

\Cref{tab:representative-regimes-draft} places the finite-star-tree theorem in the full phase diagram, using $\mathcal A$ and $\mathcal L$ as abbreviations for $\mathcal A_{n,\beta_n}$ and $\mathcal L_{n,\beta_n}$. Its critical pyramids include the Gaussian pyramid $\PGauss$, generated by the finite-dimensional standard Gaussian spaces, and its metric transforms $F_a(\PGauss)$. For $a>0$, each distance $t$ in each space of $\PGauss$ is replaced by $F_a(t)\coloneqq2\operatorname{arsinh}(at/2)$, and $F_a(\PGauss)$ is the box-closed union of the pyramids associated with the resulting spaces.

\begin{table}[t]
  \centering
  \small
  \renewcommand{\arraystretch}{1.35}
  \begin{tabular}{@{}ccccc@{}}
    \toprule
    phase
      & regime
      & scale
      & pyramid
      & theorem \\
    \midrule
    finite star tree
      & $\mathcal A\to+\infty,\ \beta_n\mathcal L\to\lambda\in[0,+\infty)$
      & $\beta_n$
      & $\mathcal P_\lambda^\star$
      & \Cref{thm:phase-diagram-draft} \\
    \midrule
    $\Ple$
      & $\mathcal A\to+\infty,\ \beta_n\mathcal L\to+\infty$
      & $\mathcal L^{-1}$
      & $\Ple$
      & \Cref{thm:leb-full} \\
    crossover
      & $\mathcal A\to a\in(0,+\infty)$
      & $1$
      & $F_a(\PGauss)$
      & \Cref{thm:cross-full} \\
    Gaussian
      & $\mathcal A\to0$
      & $\mathcal A^{-1}$
      & $\PGauss$
      & \Cref{thm:gauss-full} \\
    \bottomrule
  \end{tabular}
  \caption{The four phases, their regimes, critical scales and pyramids, and the corresponding limit theorems. For a positive constant $c$, representative sequences $\beta_n$ for the four rows are $c/\log n$, $1$, $c\sqrt n$, and $n$, respectively. In the finite-star-tree phase, $c/\log n$ corresponds to $\lambda=c>0$, while $(\log n)^{-2}$ represents the case $\lambda=0$.}
  \label{tab:representative-regimes-draft}
\end{table}

The four phases reflect the competition within the hyperbolic distance between angular amplification near the representative shell and the radial width that survives at the critical scale (see \Cref{def:common-critical-scale-order}).

The angular limits in the crossover and Gaussian phases combine the spherical Gaussian-pyramid convergence due to Shioya with the continuity of metric-transformed pyramids \cite[Corollary~1.4(A) and Proposition~4.1]{kazukawa2022metric} and the radial comparisons proved here. The theorem-level additions that complete the four-phase classification are the upper inclusion in \Cref{thm:phase-diagram-draft}, namely that every weak subsequential limit in the finite-star-tree phase is contained in $\mathcal P_\lambda^\star$, and the sharp boundary \Cref{thm:sharp-width}, which proves that convergence to $\Ple$ is equivalent to $\beta_n\mathcal L_{n,\beta_n}\to+\infty$ under $\mathcal A_{n,\beta_n}\to+\infty$.

\section{Preliminaries}
\label{sec:common-setup}

\subsection{mm-Spaces}

% v2: sec:preliminaries
\begin{defn}[mm-Space {\cite[Definition~2.8]{shioya2016mmg}}]
\label{def:common-mm-space}
A triple $(X,d_X,\mu_X)$, or simply $X$, is called an \emph{mm-space} if $(X,d_X)$ is a complete separable metric space and $\mu_X$ is a Borel probability measure with full support.
\end{defn}

\begin{defn}[mm-Isomorphism {\cite[Definition~2.9]{shioya2016mmg}}]
Two mm-spaces are called \emph{mm-isomorphic} if there is a measure-preserving bijective isometry between them. We identify mm-isomorphic spaces and write $\mathcal X$ for the set of mm-isomorphism classes.
\end{defn}

\begin{defn}[Scaling]
For $c>0$, put
\[cX\coloneqq (X,cd_X,\mu_X)\]
and call $cX$ the mm-space $X$ \emph{scaled by} $c$.
\end{defn}

\begin{defn}[Lipschitz order {\cite[Definition~2.10]{shioya2016mmg}}]
If there is a measure-preserving $1$-Lipschitz map from $X$ to $Y$, then we write $Y\prec X$ and say that $X$ \emph{dominates} $Y$. The relation $\prec$ is called the \emph{Lipschitz order}.
\end{defn}

For probabilistic notation, $X\sim\mu$ means that the random element $X$ has law $\mu$. We write $X_n\overset{\mathrm d}{\longrightarrow}X$ and $X_n\overset{\mathrm P}{\longrightarrow}X$ for convergence in distribution and convergence in probability, respectively. For Borel probability measures, $\mu_n\Rightarrow\mu$ denotes weak convergence. Throughout this paper, $\Rightarrow$ never denotes logical implication. The symbols $\mathbb E$, $\operatorname{Var}$, and $\mathbb P$ denote expectation, variance, and probability, respectively. The symbol $\operatorname{Exp}(1)$ denotes the exponential distribution of rate one, with density $e^{-t}$ on $[0,+\infty)$.

\subsection{Box and observable distances}

Closure with respect to the box distance is one of the defining properties of a pyramid. The box and observable distances are used here in the coupling formulations due to Nakajima \cite{nakajima2022coupling}. Let $(Z,d)$ be a metric space. For $A\subset Z$ and $\varepsilon\ge0$, write its closed $\varepsilon$-neighborhood as
\[\mathrm B(A,\varepsilon;d)\coloneqq\{z\in Z\mid d(z,A)\leq\varepsilon\}.\]
For subsets $A,B\subset Z$, write
\[\dHaus{d}(A,B)\coloneqq\inf\{\varepsilon\ge0\mid A\subset\mathrm B(B,\varepsilon;d),\ B\subset\mathrm B(A,\varepsilon;d)\}\]
for the Hausdorff pseudometric induced by $d$. This is a pseudometric on the collection of subsets of $Z$.

% v2: sec:preliminaries（Domination, additive-error domination, and box distance）
Let $X$ and $Y$ be mm-spaces.

\begin{defn}[Couplings]
Let $\mathcal T(\mu_X,\mu_Y)$ be the set of Borel probability measures on $X\times Y$ whose marginals are $\mu_X$ and $\mu_Y$. An element $\pi\in\mathcal T(\mu_X,\mu_Y)$ is called a \emph{coupling}, or a \emph{transport plan}.
\end{defn}

\begin{defn}[Box distance {\cite[Proposition~4.4]{nakajima2022coupling}}]
\label{def:common-box-concentration}
For a closed set $S\subset X\times Y$, put
\[
\begin{aligned}
\operatorname{dis}_{\succ}S&\coloneqq \sup_{(x,y),(x',y')\in S}\bigl(d_Y(y,y')-d_X(x,x')\bigr),\\
\operatorname{dis}S&\coloneqq \sup_{(x,y),(x',y')\in S}\left|d_X(x,x')-d_Y(y,y')\right|
\end{aligned}
\]
and define the \emph{box distance} by
\[\Box(X,Y)\coloneqq \inf_{\pi,S}\max\{1-\pi(S),\operatorname{dis}S\}.\]
Here the infimum is taken over all $\pi\in\mathcal T(\mu_X,\mu_Y)$ and all closed sets $S\subset X\times Y$. For $\varepsilon\ge0$, write $X\succ_\varepsilon Y$ if there exist $\pi\in\mathcal T(\mu_X,\mu_Y)$ and a closed set $S\subset X\times Y$ such that $\operatorname{dis}_{\succ}S\le\varepsilon$ and $\pi(S)\ge1-\varepsilon$. In particular, $X\succ_0Y$ is equivalent to $Y\prec X$ \cite[Definition~1.2 and Theorem~1.3(1)]{nakajima2024lipschitz}.
\end{defn}

\begin{defn}[Observable distance {\cite[Theorem~5.2]{nakajima2022coupling}}]
Let $\operatorname{Lip}_1(X)$ be the set of real-valued $1$-Lipschitz functions on $X$. For $\pi\in\mathcal T(\mu_X,\mu_Y)$, pull the observable families back to $X\times Y$ by the coordinate projections $\operatorname{pr}_X$ and $\operatorname{pr}_Y$:
\[
\begin{aligned}
\operatorname{pr}_X^*\operatorname{Lip}_1(X)&\coloneqq\{f\circ\operatorname{pr}_X\mid f\in\operatorname{Lip}_1(X)\},\\
\operatorname{pr}_Y^*\operatorname{Lip}_1(Y)&\coloneqq\{g\circ\operatorname{pr}_Y\mid g\in\operatorname{Lip}_1(Y)\}.
\end{aligned}
\]
For measurable functions $h,k$ on the probability space $(X\times Y,\pi)$, their \emph{Ky Fan metric} is
\[d_{\mathrm{KF}}^\pi(h,k)\coloneqq \inf\{\varepsilon\ge0\mid\pi(\{|h-k|>\varepsilon\})\le\varepsilon\}.\]
The \emph{observable distance} is
\[\dconc(X,Y)\coloneqq \min_{\pi\in\mathcal T(\mu_X,\mu_Y)}\dHaus{d_{\mathrm{KF}}^\pi}\bigl(\operatorname{pr}_X^*\operatorname{Lip}_1(X),\operatorname{pr}_Y^*\operatorname{Lip}_1(Y)\bigr),\]
and this minimum is attained \cite[Theorem~5.2]{nakajima2022coupling}.
\end{defn}

\subsection{Pyramids}

In the compactification used here, an mm-space is represented by the family of all mm-spaces it dominates.

\begin{defn}[Pyramids {\cite[Definition~2.21]{ozawa2015limit}}]
\label{def:common-pyramid}
A \emph{pyramid} is a nonempty set $\mathcal P$ of mm-isomorphism classes satisfying the following conditions.
\begin{enumerate}[label=\textup{(\roman*)}]
  \item If $X\in\mathcal P$ and $Y\prec X$, then $Y\in\mathcal P$.
  \item For any $X,Y\in\mathcal P$, there is $Z\in\mathcal P$ such that $X\prec Z$ and $Y\prec Z$.
  \item The set $\mathcal P$ is closed with respect to the box distance.
\end{enumerate}
The \emph{pyramid associated with} $X$ is
\[\Pyr(X)\coloneqq \{Y\mid Y\prec X\}.\]
\end{defn}

\begin{defn}[Generated pyramids {\cite[Proposition~2.25]{esaki-kazukawa-mitsuishi2024cones}}]
A family $\mathcal F$ of mm-spaces is \emph{upward-directed} if, for any $X,Y\in\mathcal F$, there is $Z\in\mathcal F$ satisfying $X\prec Z$ and $Y\prec Z$. The \emph{pyramid generated by} $\mathcal F$ is
\[\overline{\bigcup_{X\in\mathcal F}\Pyr(X)}^{\,\Box}.\]
Let $\gamma^k$ be the $k$-dimensional standard Gaussian measure on $\R^k$. Write $\Gamma^k\coloneqq(\R^k,|\cdot|,\gamma^k)$ for the $k$-dimensional standard Gaussian space \cite[Definition~7.38]{shioya2016mmg}, where $|\cdot|$ denotes the Euclidean norm. The coordinate projections show that the finite-dimensional standard Gaussian spaces $\{\Gamma^k\}_{k\ge1}$ form an upward-directed family. The pyramid generated by this family is the \emph{Gaussian pyramid}, which we write
\[\PGauss\coloneqq \overline{\bigcup_{k\ge1}\Pyr(\Gamma^k)}^{\,\Box}.\]
Following \cite[Definition~7.39]{shioya2016mmg}, we call $\PGauss$ the virtual infinite-dimensional Gaussian space.
\end{defn}

\begin{defn}[Weak convergence {\cite[Definition~2.23]{ozawa2015limit}}]
\label{def:common-pyramid-weak-convergence}
A sequence of pyramids $\mathcal P_n$ converges \emph{weakly} to $\mathcal P$ if it converges as a sequence of closed subsets of $(\mathcal X,\Box)$ in the sequential Painlev\'e--Kuratowski sense, also called weak Hausdorff convergence in mm-space theory. Explicitly, every $X\in\mathcal P$ can be approximated by a sequence $X_n\in\mathcal P_n$ satisfying $\Box(X_n,X)\to0$. Conversely, if $n_j\to+\infty$, $X_j\in\mathcal P_{n_j}$, and $\Box(X_j,X)\to0$, then $X\in\mathcal P$. We write $\Pi$ for the space of all pyramids. This weak topology is compact \cite[Theorem~4.6]{shioya2022sugaku}.
\end{defn}

\subsection{Phase transition property}

Observable diameter formulates the concentration side of the phase transition property, whereas separation formulates the dissipation side.

% v2: def:obsdiam-separation
\begin{defn}[Observable diameter {\cite[Definition~2.4]{ozawa2015limit}}]
Let $\nu$ be a Borel probability measure on a metric space $(Z,d)$, and let $0<\kappa<1$. Its \emph{partial diameter} at mass $1-\kappa$ is
\[\operatorname{PartDiam}(\nu;1-\kappa)\coloneqq \inf\{\operatorname{diam}A\mid A\subset Z\text{ is Borel},\ \nu(A)\ge1-\kappa\}.\]
This partial diameter is written $\operatorname{diam}(\nu;1-\kappa)$ in \cite{shioya2016mmg}.
For an mm-space $X$, define its \emph{observable diameter} by
\[\operatorname{ObsDiam}(X;-\kappa)\coloneqq \sup_{f\in\operatorname{Lip}_1(X)}\operatorname{PartDiam}(f_*\mu_X;1-\kappa).\]
For a pyramid $\mathcal P$, put
\[\operatorname{ObsDiam}(\mathcal P;-\kappa)\coloneqq \sup_{X\in\mathcal P}\operatorname{ObsDiam}(X;-\kappa).\]
For $\kappa\ge1$, define the partial diameter and observable diameter to be zero.
\end{defn}

\begin{defn}[Separation {\cite[Definition~2.8]{ozawa2015limit}}]
\label{def:common-observable-separation}
For an mm-space $X$, define its \emph{separation distance} by
\[\operatorname{Sep}(X;\kappa_0,\ldots,\kappa_N)\coloneqq \sup_{A_0,\ldots,A_N}\min_{i\ne j}d_X(A_i,A_j),\]
where the supremum is taken over Borel sets satisfying $\mu_X(A_i)\ge\kappa_i$. For a pyramid $\mathcal P$, put
\[\operatorname{Sep}(\mathcal P;\kappa_0,\ldots,\kappa_N)\coloneqq \lim_{\delta\downarrow0}\sup_{X\in\mathcal P}\operatorname{Sep}(X;\kappa_0-\delta,\ldots,\kappa_N-\delta)\]
\cite[Definition~4.3]{ozawa2015limit}. Here the limit is taken over $0<\delta<\min_i\kappa_i$. We use the displayed mass-decrement formula for pyramid separation below.
\end{defn}

% v2: sec:preliminaries（Phase transition property）; cor:critical-pyramid-ptp
\begin{defn}[L\'evy family {\cite[Definition~2.14]{shioya2016mmg}}]
\label{def:common-levy-dissipation}
A sequence of mm-spaces $(X_n)$ is a \emph{L\'evy family} if $\operatorname{ObsDiam}(X_n;-\kappa)\to0$ for every $\kappa>0$.
\end{defn}

\begin{defn}[Dissipation {\cite[Definition~8.1]{shioya2016mmg}}]
For $\delta>0$, we say that a sequence of mm-spaces $(X_n)$ $\delta$-\emph{dissipates} if, for every $N\ge1$ and every choice of positive numbers $\kappa_0,\ldots,\kappa_N$ with $\sum_i\kappa_i<1$,
\[\liminf_{n\to\infty}\operatorname{Sep}(X_n;\kappa_0,\ldots,\kappa_N)\ge\delta.\]
It \emph{infinitely dissipates} if it $\delta$-dissipates for every $\delta>0$.
\end{defn}

\begin{defn}[Phase transition property {\cite[Definition~6.4]{ozawa2015limit}}]
\label{def:common-ptp}
We say that a sequence of mm-spaces $(X_n,d_n,\mu_n)$, denoted simply by $X_n$, has the \emph{phase transition property} if there is a positive sequence $(s_n)$ such that the following assertions hold for every positive sequence $(\alpha_n)$.
\begin{enumerate}[label=\textup{(\roman*)}]
  \item If $\alpha_n/s_n\to0$, then $(X_n,\alpha_nd_n,\mu_n)$ is a L\'evy family.
  \item If $\alpha_n/s_n\to+\infty$, then $(X_n,\alpha_nd_n,\mu_n)$ infinitely dissipates.
\end{enumerate}
\end{defn}

\begin{defn}[Critical scale order {\cite[Definition~6.4]{ozawa2015limit}}]
\label{def:common-critical-scale-order}
Such a sequence $(s_n)$ is called a sequence of \emph{critical scale order}.
\end{defn}

\section{Finite-star-tree phase}
\label{sec:star-phase}

The radial limit determines the measure on each branch. Distinct branches are realized by spherical blocks separated at scale $n^{-1/2}$, while the reverse inclusion comes from approximating the angular profiles of $(N,R)$-measurements by finitely many branches.

The radial coordinate $r(x)\coloneqq\dH(0,x)$ of $\BetaBall$ admits a gamma-variable realization. For $k>0$, let $\Gamma(k,1)$ denote the distribution on $(0,+\infty)$ with density $u^{k-1}e^{-u}/\Gamma(k)$. For $n\ge1$ and $\beta>0$, let $\mathbf U_n\sim\Gamma(n/2,1)$ and $\mathbf V_{n,\beta}\sim\Gamma(\beta,1)$ be independent, and put
\[\mathbf R_{n,\beta}\coloneqq \operatorname{arcosh}\left(1+\frac{2\mathbf U_n}{\mathbf V_{n,\beta}}\right).\]

% v2: def:gamma-beta; lem:beta-gamma; lem:exact-radial-shell-inputs
\begin{lem}[Radial distribution and shell inputs]
\label{lem:common-radial-law}
For $a,b>0$, let $\operatorname{Beta}(a,b)$ be the distribution on $(0,1)$ with density
\[\frac{\Gamma(a+b)}{\Gamma(a)\Gamma(b)}t^{a-1}(1-t)^{b-1}.\]
For $n\ge1$ and $\beta>0$, let $\sigma_{n-1}$ be the normalized surface measure on $S^{n-1}$, let $\Theta_n\sim\sigma_{n-1}$ be independent of $\mathbf U_n$ and $\mathbf V_{n,\beta}$, and put
\[\mathbf X_{n,\beta}\coloneqq \Theta_n\tanh\frac{\mathbf R_{n,\beta}}2.\]
Then $\mathbf X_{n,\beta}$ has distribution $\mu_{n,\beta}$, and the distribution of $\mathbf R_{n,\beta}$ is $r_*\mu_{n,\beta}$. Moreover,
\[d(r_*\mu_{n,\beta})(u)=\frac{\Gamma(n/2+\beta)}{\Gamma(n/2)\Gamma(\beta)}\frac{\tanh^{n-1}(u/2)}{\cosh^{2\beta}(u/2)}\,du\qquad(u>0).\]
\end{lem}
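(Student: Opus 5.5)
The plan is to reduce everything to the Euclidean radius $\rho=|x|$ and then change variables twice: first to $T=\rho^2$, which should have a Beta law and hence a gamma-ratio realization, and then to the hyperbolic radius $u=\dH(0,x)$.

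\emph{Polar coordinates.} Since $\mu_{n,\beta}$ is rotation invariant, polar coordinates $x=\rho\theta$ factor it as $\sigma_{n-1}$ in $\theta$ times a radial law in $\rho$. The radial law has density
\[
C_{n,\beta}\,|S^{n-1}|\,\rho^{n-1}(1-\rho^2)^{\beta-1}\quad\text{on }(0,1),
\qquad |S^{n-1}|=\frac{2\pi^{n/2}}{\Gamma(n/2)}.
\]
Substituting $T=\rho^2$ turns this into the density
\[
\frac{\Gamma(n/2+\beta)}{\Gamma(n/2)\Gamma(\beta)}\,T^{n/2-1}(1-T)^{\beta-1},
\]
so $|X|^2\sim\operatorname{Beta}(n/2,\beta)$ and $X/|X|\sim\sigma_{n-1}$, independent of $|X|$. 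The same computation also confirms that $C_{n,\beta}$ is the correct normalizing constant.

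\emph{Gamma realization.} The classical beta--gamma relation gives $\mathbf U_n/(\mathbf U_n+\mathbf V_{n,\beta})\sim\operatorname{Beta}(n/2,\beta)$, by the standard Jacobian computation for $(u,v)\mapsto(u/(u+v),u+v)$. It remains to show that $\tanh^2(\mathbf R_{n,\beta}/2)$ equals this ratio. Put $c=\cosh\mathbf R_{n,\beta}=1+2\mathbf U_n/\mathbf V_{n,\beta}$. Then
\[
\tanh^2\frac{\mathbf R}{2}=\frac{c-1}{c+1}=\frac{2\mathbf U_n/\mathbf V_{n,\beta}}{2+2\mathbf U_n/\mathbf V_{n,\beta}}=\frac{\mathbf U_n}{\mathbf U_n+\mathbf V_{n,\beta}}.
\]
Hence $|\mathbf X_{n,\beta}|^2=\tanh^2(\mathbf R_{n,\beta}/2)$ has the Beta law. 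Since $\Theta_n$ is independent of $\mathbf R_{n,\beta}$, the polar factorization gives $\mathbf X_{n,\beta}\sim\mu_{n,\beta}$.

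\emph{Hyperbolic radius.} In the Poincar\'e ball, $\dH(0,x)=2\operatorname{artanh}|x|$, so $r(\mathbf X_{n,\beta})=\mathbf R_{n,\beta}$ because $\tanh(\mathbf R/2)\in(0,1)$. Therefore the law of $\mathbf R_{n,\beta}$ is $r_*\mu_{n,\beta}$. For the density, substitute $T=\tanh^2(u/2)$ into the Beta density, using
\[
1-T=\cosh^{-2}(u/2),\qquad
dT=\tanh(u/2)\cosh^{-2}(u/2)\,du.
\]
This gives
\[
T^{n/2-1}(1-T)^{\beta-1}\,dT=\tanh^{n-1}(u/2)\cosh^{-2\beta}(u/2)\,du,
\]
which is the stated formula.

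\emph{Main obstacle.} The only delicate point is keeping the exponents and normalizing constants consistent across the two changes of variables. It also requires fixing the convention $\dH(0,x)=2\operatorname{artanh}|x|$, i.e. curvature $-1$, which matches the $\tanh(\mathbf R/2)$ in the statement. All remaining steps are routine.
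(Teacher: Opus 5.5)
Your proposal is correct and follows essentially the same route as the paper: polar factorization, the beta--gamma relation, the half-angle identity $\tanh^2(\mathbf R_{n,\beta}/2)=\mathbf U_n/(\mathbf U_n+\mathbf V_{n,\beta})$, and the substitution $u=2\operatorname{artanh}|x|$. The differences are cosmetic. The paper works with the complementary variable $1-|x|^2\sim\operatorname{Beta}(\beta,n/2)$ and the ratio $\mathbf V_{n,\beta}/(\mathbf U_n+\mathbf V_{n,\beta})$, and it proves the beta--gamma factorization by the Jacobian rather than citing it; you, in addition, verify the normalizing constant $C_{n,\beta}$ via $|S^{n-1}|$.
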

\begin{proof}
Let $U$ and $V$ be independent random variables with distributions $\Gamma(b,1)$ and $\Gamma(a,1)$, respectively, and put $w=U+V$ and $t=V/(U+V)$. Using $u,v$ as the corresponding density variables, make the change of variables $(u,v)=(w(1-t),wt)$. The absolute value of the Jacobian determinant is $w$, and the joint density factors as
\[\frac{w^{a+b-1}e^{-w}}{\Gamma(a+b)}\frac{\Gamma(a+b)}{\Gamma(a)\Gamma(b)}t^{a-1}(1-t)^{b-1}.\]
The displayed factorization shows that $w$ and $t$ are independent and have distributions $\Gamma(a+b,1)$ and $\operatorname{Beta}(a,b)$, respectively.

In Euclidean polar coordinates $x=s\theta$, the measure $\mu_{n,\beta}$ is proportional to $s^{n-1}(1-s^2)^{\beta-1}\,ds\,d\sigma_{n-1}(\theta)$. Thus $t=1-s^2$ has distribution $\operatorname{Beta}(\beta,n/2)$ and is independent of the direction. Apply the preceding factorization with $a=\beta$ and $b=n/2$. The half-angle identity
\[1-\tanh^2\frac{\mathbf R_{n,\beta}}2=\frac{\mathbf V_{n,\beta}}{\mathbf U_n+\mathbf V_{n,\beta}}\]
shows that $\mathbf X_{n,\beta}$ has distribution $\mu_{n,\beta}$ and that its hyperbolic radial coordinate is $\mathbf R_{n,\beta}$. Finally, under the change of variables $u=2\operatorname{artanh}s$,
\[1-s^2=\cosh^{-2}(u/2),\qquad |d(1-s^2)|=\frac{\tanh(u/2)}{\cosh^2(u/2)}\,du.\]
Substituting these identities into the beta density gives the stated radial density. This completes the proof.
\end{proof}

% v2: prop:small-beta-radial
% v2: lem:radius-log-form
\begin{lem}
\label{lem:common-radius-log}
For $t\in(0,1)$, put
\[\varrho(t)\coloneqq \operatorname{arcosh}(2/t-1),\qquad \mathfrak r(t)\coloneqq 2\log(1+\sqrt{1-t}).\]
Then
\[\varrho(t)=-\log t+\mathfrak r(t),\qquad0<\mathfrak r(t)<\log4.\]
Consequently,
\[\mathbf R_{n,\beta_n}=\log(\mathbf U_n+\mathbf V_{n,\beta_n})-\log\mathbf V_{n,\beta_n}+\mathfrak r\left(\frac{\mathbf V_{n,\beta_n}}{\mathbf U_n+\mathbf V_{n,\beta_n}}\right).\]
\end{lem}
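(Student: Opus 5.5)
The identity follows from writing $\operatorname{arcosh}$ as a logarithm and simplifying. For $y\ge1$, $\operatorname{arcosh}y=\log\bigl(y+\sqrt{y^2-1}\bigr)$. Put $y=2/t-1$ with $t\in(0,1)$, so $y>1$. Then
\[
y^2-1=\Bigl(\frac2t-1\Bigr)^2-1=\frac{4}{t^2}-\frac4t=\frac{4(1-t)}{t^2},
\]
and hence $\sqrt{y^2-1}=2\sqrt{1-t}/t$. Writing $c\coloneqq\sqrt{1-t}$, we obtain
\[
y+\sqrt{y^2-1}=\frac{2-t+2c}{t}=\frac{1+2c+c^2}{t}=\frac{(1+c)^2}{t},
\]
using $2-t=1+c^2$. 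Taking logarithms gives $\varrho(t)=-\log t+2\log(1+\sqrt{1-t})=-\log t+\mathfrak r(t)$.

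For the bounds, note that $t\mapsto 2\log(1+\sqrt{1-t})$ is continuous and strictly decreasing on $[0,1]$, with value $\log4$ at $t=0$ and value $0$ at $t=1$. On the open interval $(0,1)$ we have $0<\sqrt{1-t}<1$, so strict inequalities hold: $0<\mathfrak r(t)<\log4$.

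For the consequence, set $T\coloneqq \mathbf V_{n,\beta_n}/(\mathbf U_n+\mathbf V_{n,\beta_n})$. Almost surely $T\in(0,1)$, because both gamma variables are positive almost surely. Then
\[
\frac2T-1=\frac{2(\mathbf U_n+\mathbf V_{n,\beta_n})-\mathbf V_{n,\beta_n}}{\mathbf V_{n,\beta_n}}=1+\frac{2\mathbf U_n}{\mathbf V_{n,\beta_n}},
\]
so by definition $\mathbf R_{n,\beta_n}=\varrho(T)$. Applying the identity with $t=T$, and using $-\log T=\log(\mathbf U_n+\mathbf V_{n,\beta_n})-\log\mathbf V_{n,\beta_n}$, gives the stated formula.

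The argument is elementary and has no real obstacle. The only care needed is the algebraic simplification of $y+\sqrt{y^2-1}$ into the perfect square $(1+\sqrt{1-t})^2/t$.
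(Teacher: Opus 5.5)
Your proposal is correct and follows essentially the same route as the paper's proof: substitute $2/t-1$ into $\operatorname{arcosh}x=\log\bigl(x+\sqrt{x^2-1}\bigr)$ and collect the perfect square $(1+\sqrt{1-t})^2/t$. You then take the bounds from $0<\sqrt{1-t}<1$ and substitute $t=\mathbf V_{n,\beta_n}/(\mathbf U_n+\mathbf V_{n,\beta_n})$, exactly as the paper does, only with the algebra written out in more detail.
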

\begin{proof}
Substitute $x=2/t-1$ into $\operatorname{arcosh}x=\log(x+\sqrt{x^2-1})$ and collect the square to obtain
\[\varrho(t)=-\log t+2\log(1+\sqrt{1-t}).\]
The bound on $\mathfrak r(t)$ follows from $0<\sqrt{1-t}<1$. Finally, substitute $t=\mathbf V_{n,\beta_n}/(\mathbf U_n+\mathbf V_{n,\beta_n})$. This completes the proof.
\end{proof}

% v2: lem:small-gamma
\begin{lem}
\label{lem:common-small-gamma}
If $V_\beta\sim\Gamma(\beta,1)$, then, as $\beta\downarrow0$,
\[-\beta\log V_\beta\overset{\mathrm d}{\longrightarrow}\operatorname{Exp}(1).\]
\end{lem}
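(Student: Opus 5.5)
The plan is to show that the distribution function of $-\beta\log V_\beta$ converges pointwise to $1-e^{-t}$ for every $t\ge0$. Since the limit distribution is continuous, this is enough for convergence in distribution. For $t>0$ we have
\[
\mathbb P(-\beta\log V_\beta\le t)=\mathbb P(V_\beta\ge e^{-t/\beta})=1-\frac{1}{\Gamma(\beta)}\int_0^{e^{-t/\beta}}u^{\beta-1}e^{-u}\,du,
\]
so it suffices to show that the lower incomplete gamma integral, divided by $\Gamma(\beta)$, tends to $e^{-t}$. For $t\le0$ the probability is $0$ in the limit: $\mathbb P(V_\beta\ge e^{|t|/\beta})\le e^{-|t|/\beta}\,\mathbb E V_\beta=\beta e^{-|t|/\beta}\to0$ by Markov's inequality, which also covers $t=0$ since the probability there is at most $\beta$.

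For the main step, write $\varepsilon=e^{-t/\beta}$, which tends to $0$ as $\beta\downarrow0$. On $[0,\varepsilon]$ we have $e^{-\varepsilon}\le e^{-u}\le1$, so the integral lies between $e^{-\varepsilon}\varepsilon^\beta/\beta$ and $\varepsilon^\beta/\beta$. Since $\varepsilon^\beta=e^{-t}$ exactly, both bounds equal $e^{-t}/\beta$ up to the factor $e^{-\varepsilon}\to1$.

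It remains to use $\beta\Gamma(\beta)=\Gamma(1+\beta)\to\Gamma(1)=1$, which follows from continuity of $\Gamma$ at $1$. Dividing the bounds by $\Gamma(\beta)$ then gives the limit $e^{-t}$, and hence the distribution function tends to $1-e^{-t}$.

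There is no real obstacle here. The only point requiring care is that the $u^{\beta-1}$ mass concentrates near $0$ at the exponentially small scale $e^{-t/\beta}$. The identity $\varepsilon^\beta=e^{-t}$ makes this exact, so the remaining factors $e^{-\varepsilon}$ and $\beta\Gamma(\beta)$ both tend to $1$.
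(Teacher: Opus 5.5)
Your argument is correct, but it takes a different route from the paper.

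The paper works with characteristic functions. It computes the Mellin transform of the Gamma law,
\[\mathbb E\bigl[e^{it(-\beta\log V_\beta)}\bigr]=\mathbb E\bigl[V_\beta^{-i\beta t}\bigr]=\frac{\Gamma(\beta(1-it))}{\Gamma(\beta)}.\]
It then rewrites this with the functional equation as $\Gamma(1+\beta(1-it))/\bigl((1-it)\Gamma(1+\beta)\bigr)$. Continuity of $\Gamma$ at $1$ and L\'evy's continuity theorem finish the proof.

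You instead work directly with the distribution function. The substitution $\varepsilon=e^{-t/\beta}$ and the exact identity $\varepsilon^\beta=e^{-t}$ pin the lower incomplete gamma integral between $e^{-\varepsilon}e^{-t}/\beta$ and $e^{-t}/\beta$, and Markov's inequality handles $t\le0$.

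Both proofs rest on the same fact, $\beta\Gamma(\beta)=\Gamma(1+\beta)\to1$. The paper's version is a two-line computation in the characteristic-function style it also uses in \Cref{lem:common-log-gamma-clt}. It does, however, need $\Gamma$ at complex arguments near $1$ and the continuity theorem. Yours is purely real-variable and avoids both. It also gives, for every $x>0$, the explicit two-sided bound
\[\frac{e^{-x}x^\beta}{\Gamma(1+\beta)}\le\mathbb P(V_\beta\le x)\le\frac{x^\beta}{\Gamma(1+\beta)},\]
which yields a quantitative rate for the convergence of the distribution functions.
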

\begin{proof}
For every $t\in\R$, the characteristic function is
\[\mathbb E[e^{it(-\beta\log V_\beta)}]=\frac{\Gamma(\beta(1-it))}{\Gamma(\beta)}=\frac{\Gamma(1+\beta(1-it))}{(1-it)\Gamma(1+\beta)}\]
and converges to $(1-it)^{-1}$ by continuity of the Gamma function at $1$. This is the characteristic function of $\operatorname{Exp}(1)$, so L\'evy's continuity theorem gives the result. This completes the proof.
\end{proof}

\begin{prop}[Radial limit for small $\beta_n$]
\label{prop:star-radial}
If $\beta_n\to0$ and $\beta_n\log n\to\lambda\in[0,+\infty)$ as $n\to\infty$, then
\[\beta_n\mathbf R_{n,\beta_n}\overset{\mathrm d}{\longrightarrow}\lambda+\operatorname{Exp}(1).\]
\end{prop}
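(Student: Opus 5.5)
Multiply the decomposition of \Cref{lem:common-radius-log} by $\beta_n$:
\[
\beta_n\mathbf R_{n,\beta_n}=\beta_n\log(\mathbf U_n+\mathbf V_{n,\beta_n})+\bigl(-\beta_n\log\mathbf V_{n,\beta_n}\bigr)+\beta_n\,\mathfrak r\!\left(\frac{\mathbf V_{n,\beta_n}}{\mathbf U_n+\mathbf V_{n,\beta_n}}\right).
\]
We then treat the three terms separately and combine them by Slutsky's lemma.

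\textbf{Third term.} Since $0<\mathfrak r<\log 4$, this term is bounded in absolute value by $\beta_n\log4\to0$ deterministically.

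\textbf{Second term.} Since $\beta_n\to0$, \Cref{lem:common-small-gamma} applies along the sequence $\beta=\beta_n$ and gives $-\beta_n\log\mathbf V_{n,\beta_n}\overset{\mathrm d}{\longrightarrow}\operatorname{Exp}(1)$.

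\textbf{First term.} This term should converge in probability to the constant $\lambda$. Write
\[
\beta_n\log(\mathbf U_n+\mathbf V_{n,\beta_n})=\beta_n\log(n/2)+\beta_n\log\!\left(\frac{2(\mathbf U_n+\mathbf V_{n,\beta_n})}{n}\right).
\]
The deterministic part satisfies $\beta_n\log(n/2)=\beta_n\log n-\beta_n\log2\to\lambda$.

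For the random part, $\mathbf U_n\sim\Gamma(n/2,1)$ has mean and variance $n/2$. Hence $2\mathbf U_n/n\to1$ in probability by Chebyshev's inequality. Also $\mathbb E\mathbf V_{n,\beta_n}=\beta_n\to0$, so $2\mathbf V_{n,\beta_n}/n\to0$ in probability by Markov's inequality. Therefore $2(\mathbf U_n+\mathbf V_{n,\beta_n})/n\to1$ in probability, its logarithm tends to $0$ in probability, and multiplying by $\beta_n\to0$ keeps it there.

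\textbf{Combining.} The second term converges in distribution, and the first and third terms converge in probability to the constants $\lambda$ and $0$. Slutsky's lemma then gives $\beta_n\mathbf R_{n,\beta_n}\overset{\mathrm d}{\longrightarrow}\lambda+\operatorname{Exp}(1)$.

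Independence of $\mathbf U_n$ and $\mathbf V_{n,\beta_n}$ is not needed, because the only random limit comes from a single term. The one point needing care is the first term. $\mathbf V_{n,\beta_n}$ may be extremely small, but it enters this term only additively next to $\mathbf U_n$, so the term is controlled by the concentration of $\mathbf U_n$ near $n/2$. This is where the hypothesis $\beta_n\log n\to\lambda$ is used.
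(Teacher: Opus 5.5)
Your proposal is correct and follows the paper's proof essentially verbatim. Both use the decomposition from \Cref{lem:common-radius-log}, Chebyshev and Markov bounds showing $(\mathbf U_n+\mathbf V_{n,\beta_n})/(n/2)\to1$ in probability, the deterministic bound $\beta_n\log4$ on the $\mathfrak r$ term, \Cref{lem:common-small-gamma} for $-\beta_n\log\mathbf V_{n,\beta_n}$, and Slutsky's theorem.
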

\begin{proof}
The identity in \Cref{lem:common-radius-log} gives
\[\beta_n\mathbf R_{n,\beta_n}=\beta_n\log\frac{\mathbf U_n+\mathbf V_{n,\beta_n}}{n/2}+\beta_n\log\frac n2-\beta_n\log\mathbf V_{n,\beta_n}+\beta_n\mathfrak r\left(\frac{\mathbf V_{n,\beta_n}}{\mathbf U_n+\mathbf V_{n,\beta_n}}\right).\]
We have $\mathbb E[2\mathbf U_n/n]=1$ and $\operatorname{Var}(2\mathbf U_n/n)=2/n$. Markov's inequality also gives, for every $\varepsilon>0$,
\[\mathbb P(2\mathbf V_{n,\beta_n}/n>\varepsilon)\le\frac{2\beta_n}{n\varepsilon}\to0.\]
It follows that $(\mathbf U_n+\mathbf V_{n,\beta_n})/(n/2)\to1$ in probability, and the first term converges to $0$ in probability. The second term is $\beta_n\log n-\beta_n\log2\to\lambda$, while the absolute value of the last term is at most $\beta_n\log4\to0$. The third term converges in distribution to $\operatorname{Exp}(1)$ by \Cref{lem:common-small-gamma}. Slutsky's theorem now gives the conclusion. This completes the proof.
\end{proof}

% v2: lem:small-beta-scale-identification
% v2: lem:lebesgue-scale-algebra
\begin{lem}
\label{lem:common-scale-algebra}
As $n\to\infty$,
\[\frac{\beta_n}{\sqrt n}\to0\quad\Longleftrightarrow\quad\mathcal A_{n,\beta_n}\to+\infty.\]
Under these equivalent conditions, $\mathcal L_{n,\beta_n}\to+\infty$.
\end{lem}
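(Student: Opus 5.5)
The equivalence is elementary algebra on $\mathcal A_{n,\beta}=\sqrt{n+2\beta}/\beta$, and I will prove both directions directly. Put $x_n\coloneqq\beta_n/\sqrt n>0$. Then
\[
\mathcal A_{n,\beta_n}^2=\frac{n+2\beta_n}{\beta_n^2}=\frac1{x_n^2}+\frac{2}{\sqrt n\,x_n}.
\]
This expresses $\mathcal A_{n,\beta_n}$ as a function of $x_n$ and $n$ alone.

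For the forward direction, assume $x_n\to0$. The first summand $1/x_n^2$ tends to $+\infty$ and the second is positive, so $\mathcal A_{n,\beta_n}\to+\infty$.

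For the converse, assume $\mathcal A_{n,\beta_n}\to+\infty$ and argue by contradiction. Suppose there are $\varepsilon>0$ and a subsequence with $x_n\ge\varepsilon$. Along that subsequence $1/x_n^2\le\varepsilon^{-2}$ and $2/(\sqrt n\,x_n)\le 2/(\varepsilon\sqrt n)\le 2/\varepsilon$. Hence $\mathcal A_{n,\beta_n}^2$ stays bounded along the subsequence, which is a contradiction. Therefore $x_n\to0$.

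For the final claim, $\mathcal L_{n,\beta_n}=2\log\mathcal A_{n,\beta_n}$ and $\log$ is increasing and unbounded, so $\mathcal A_{n,\beta_n}\to+\infty$ immediately gives $\mathcal L_{n,\beta_n}\to+\infty$.

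There is no real obstacle. The only point needing care is the converse, where the term $2/(\sqrt n\,x_n)$ must be bounded uniformly when $x_n$ is bounded away from zero. The displayed identity makes this immediate, because that term is at most $2/\varepsilon$ once $n\ge1$.
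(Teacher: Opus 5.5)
Your proposal is correct and follows essentially the same route as the paper: both use the identity $\mathcal A_{n,\beta_n}^2=n/\beta_n^2+2/\beta_n$ with nonnegative terms for the forward direction, and contraposition along a subsequence with $\beta_n/\sqrt n$ bounded away from zero for the converse. The only cosmetic difference is that you bound the second term by $2/\varepsilon$, whereas the paper notes that it tends to $0$; either suffices.
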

\begin{proof}
Both terms in the identity $\mathcal A_{n,\beta_n}^2=n/\beta_n^2+2/\beta_n$ are nonnegative. Thus the condition on the left implies the divergence on the right. Conversely, along any subsequence on which $\beta_n/\sqrt n$ is bounded below by a positive constant, the first term is bounded and $2/\beta_n\to0$. This proves the converse by contraposition. The last assertion follows from $\mathcal L_{n,\beta_n}=2\log\mathcal A_{n,\beta_n}$. This completes the proof.
\end{proof}

\begin{lem}
\label{lem:star-scale-identification}
Assume that, as $n\to\infty$,
\[\mathcal A_{n,\beta_n}\to+\infty,\qquad\beta_n\mathcal L_{n,\beta_n}\to\lambda\in[0,+\infty).\]
Then $\beta_n\to0$ and $\beta_n\log n\to\lambda$.
\end{lem}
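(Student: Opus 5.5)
By \Cref{lem:common-scale-algebra}, the hypothesis $\mathcal A_{n,\beta_n}\to+\infty$ gives $\beta_n/\sqrt n\to0$ and $\mathcal L_{n,\beta_n}\to+\infty$. Since $\beta_n\mathcal L_{n,\beta_n}$ converges to the finite number $\lambda$ while $\mathcal L_{n,\beta_n}\to+\infty$, we get $\beta_n=(\beta_n\mathcal L_{n,\beta_n})/\mathcal L_{n,\beta_n}\to0$. This is the first assertion.

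For the second assertion, expand $\mathcal L_{n,\beta_n}$ using
\[\mathcal L_{n,\beta_n}=\log(n+2\beta_n)-2\log\beta_n=\log n+\log\Bigl(1+\frac{2\beta_n}n\Bigr)-2\log\beta_n.\]
Multiplying by $\beta_n$ gives
\[\beta_n\mathcal L_{n,\beta_n}=\beta_n\log n+\beta_n\log\Bigl(1+\frac{2\beta_n}n\Bigr)-2\beta_n\log\beta_n.\]
Since $\beta_n\to0$, the middle term tends to $0$. The last term also tends to $0$, because $x\log x\to0$ as $x\downarrow0$. Hence $\beta_n\log n=\beta_n\mathcal L_{n,\beta_n}+o(1)\to\lambda$.

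The only step that needs care is the order of the argument. We must first show $\beta_n\to0$, using the divergence of $\mathcal L_{n,\beta_n}$ from \Cref{lem:common-scale-algebra}, and only then conclude that $\beta_n\log\beta_n\to0$. Once $\beta_n\to0$ is established, this is immediate.
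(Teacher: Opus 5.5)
Your proof is correct and follows essentially the same route as the paper: you use \Cref{lem:common-scale-algebra} to get $\mathcal L_{n,\beta_n}\to+\infty$ and hence $\beta_n\to0$. You then multiply the exact expansion $\mathcal L_{n,\beta_n}=\log n-2\log\beta_n+\log(1+2\beta_n/n)$ by $\beta_n$ and discard the vanishing terms $\beta_n\log\beta_n$ and $\beta_n\log(1+2\beta_n/n)$.
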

\begin{proof}
\Cref{lem:common-scale-algebra} gives $\beta_n/\sqrt n\to0$ and $\mathcal L_{n,\beta_n}\to+\infty$. Since $\beta_n\mathcal L_{n,\beta_n}$ converges to a finite value, we obtain $\beta_n\to0$. Multiply the exact identity
\[\mathcal L_{n,\beta_n}=\log n-2\log\beta_n+\log\left(1+\frac{2\beta_n}{n}\right)\]
by $\beta_n$. We have $\beta_n\log\beta_n\to0$ and
\[0\le\beta_n\log\left(1+\frac{2\beta_n}{n}\right)\le\frac{2\beta_n^2}{n}\to0\]
and therefore $\beta_n\log n\to\lambda$.
	This completes the proof.
\end{proof}

% v2: lem:cross-branch-asymptotic
% v2: lem:additive-error-measurement-inclusion
The $(N,R)$-measurements transfer additive-error domination to weak pyramid limits. For $N\ge1$ and $R\ge0$, put
\[B_R^N\coloneqq\{z\in\R^N\mid\|z\|_\infty\le R\}.\]

% v2: def:measurement; prop:measurement-facts
\begin{defn}[$N$-measurements {\cite[Definition~2.25]{ozawa2015limit}}]
For $N\ge1$, the \emph{$N$-measurement} of an mm-space $X$ is the set $\mathcal M(X;N)$ of all push-forward measures $\Psi_*\mu_X$ induced by $1$-Lipschitz maps $\Psi\colon X\to(\R^N,\|\cdot\|_\infty)$.
\end{defn}

\begin{defn}[$(N,R)$-measurements {\cite[Definition~2.26]{ozawa2015limit}}]
For $R>0$, put
\[\mathcal M(X;N,R)\coloneqq \{\nu\in\mathcal M(X;N)\mid\operatorname{supp}\nu\subset B_R^N\}\]
and call this the \emph{$(N,R)$-measurement} of $X$. For a pyramid $\mathcal P$, put
\[\mathcal M(\mathcal P;N,R)\coloneqq \{\nu\mid(\operatorname{supp}\nu,\|\cdot\|_\infty,\nu)\in\mathcal P,\ \operatorname{supp}\nu\subset B_R^N\}.\]
\end{defn}

\begin{defn}[Prokhorov distance {\cite[Definition~2.12]{ozawa2015limit}}]
\label{def:common-measurement}
For Borel probability measures $\mu$ and $\nu$ on $\R^N$, their \emph{Prokhorov distance} is
\[
d_P(\mu,\nu)\coloneqq \inf\left\{\varepsilon>0\ \middle|\
\mu(A)\le\nu\bigl(\mathrm B(A,\varepsilon;\|\cdot\|_\infty)\bigr)+\varepsilon
\text{ for every Borel set }A\right\}.
\]
\label{def:common-measurement-convergence}
Weak convergence $\mathcal P_n\to\mathcal P$ is equivalent to
\[\dHaus{d_P}\bigl(\mathcal M(\mathcal P_n;N,R),\mathcal M(\mathcal P;N,R)\bigr)\to0\]
for every $N\ge1$ and $R>0$ \cite[Lemma~3.6]{ozawa2015limit}.
For pyramids $\mathcal P,\mathcal Q\in\Pi$, define
\[d_\Pi(\mathcal P,\mathcal Q)\coloneqq \sum_{N=1}^{\infty}\frac{1}{2N2^N}\dHaus{d_P}\bigl(\mathcal M(\mathcal P;N,N),\mathcal M(\mathcal Q;N,N)\bigr).\]
This paper uses $d_\Pi$ for the metric denoted by $\rho$ in \cite[Definition~4.5]{shioya2022sugaku}. It metrizes weak convergence and makes $\Pi$ a compact metric space \cite[Theorem~4.6]{shioya2022sugaku}.
\end{defn}

For $R>0$, let $\mathcal M(N,R)$ be the set of Borel probability measures on $\R^N$ whose supports are contained in $B_R^N$. The sets $\mathcal M(N,R)$, $\mathcal M(X;N,R)$, and $\mathcal M(\mathcal P;N,R)$ are compact with respect to the Prokhorov distance. Moreover, $\mathcal M(\Pyr(X);N,R)=\mathcal M(X;N,R)$ \cite[Definitions~5.37 and~7.4]{shioya2016mmg}. More generally, if a coupling $\xi$ of Borel probability measures $\mu$ and $\nu$ on a metric space $(Z,d)$ satisfies $\xi(\{(z,z')\mid d(z,z')>\varepsilon\})\le\varepsilon$, then $d_P(\mu,\nu)\le\varepsilon$.

\begin{lem}[Additive-error domination and $(N,R)$-measurements]
\label{lem:common-additive-measurement}
Let $0\le\varepsilon<1$ and suppose that $X\succ_\varepsilon Y$. Then, for every $N\ge1$ and $R>0$,
\[\mathcal M(Y;N,R)\subset\mathrm B\bigl(\mathcal M(X;N,R),\varepsilon;d_P\bigr).\]
\end{lem}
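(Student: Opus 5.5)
Fix $N\ge1$, $R>0$ and $\nu\in\mathcal M(Y;N,R)$, and write $\nu=\Psi_*\mu_Y$ with $\Psi\colon Y\to(\R^N,\|\cdot\|_\infty)$ $1$-Lipschitz and $\Psi(Y)\subset B_R^N$ (the support lies in $B_R^N$, and we may compose $\Psi$ with the coordinatewise clamp to $[-R,R]$; this clamp is $1$-Lipschitz for $\|\cdot\|_\infty$, fixes $B_R^N$, and therefore leaves $\nu$ unchanged). By $X\succ_\varepsilon Y$, choose a coupling $\pi\in\mathcal T(\mu_X,\mu_Y)$ and a closed set $S\subset X\times Y$ with $\operatorname{dis}_\succ S\le\varepsilon$ and $\pi(S)\ge1-\varepsilon$. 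The plan is to build a $1$-Lipschitz map $\Phi\colon X\to B_R^N$ with $\|\Phi(x)-\Psi(y)\|_\infty\le\varepsilon$ for every $(x,y)\in S$. Then the coupling $(\Phi\times\Psi)_*\pi$ of $\Phi_*\mu_X$ and $\nu$ puts mass at most $\pi(X\times Y\setminus S)\le\varepsilon$ on $\{(z,z')\mid\|z-z'\|_\infty>\varepsilon\}$. The coupling remark preceding the lemma then gives $d_P(\Phi_*\mu_X,\nu)\le\varepsilon$, and $\Phi_*\mu_X\in\mathcal M(X;N,R)$, which is the claim.

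To build $\Phi$, work one coordinate at a time, since $\|\cdot\|_\infty$-Lipschitz maps are exactly maps with $1$-Lipschitz coordinates. Let $\psi=\Psi_k$, and let $S_X$ be the projection of $S$ to $X$. For $(x,y),(x',y')\in S$ we have
\[
\psi(y)-\psi(y')\le d_Y(y,y')\le d_X(x,x')+\varepsilon .
\]
Set $g(x)\coloneqq\sup\{\psi(y')\mid y'\in S_x\}$ for $x\in S_X$, where $S_x$ is the fiber of $S$ over $x$. For any two fiber points $y,y'\in S_x$ the inequality with $x=x'$ gives $|\psi(y)-\psi(y')|\le\varepsilon$. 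Now define the McShane-type extension
\[
\phi(x)\coloneqq\sup_{(x',y')\in S}\bigl(\psi(y')-d_X(x,x')\bigr)-\varepsilon/2 \quad\text{for } x\in X .
\]
This $\phi$ is finite because $\psi$ is bounded by $R$ (and it equals $-\infty$ only if $S=\emptyset$, which is excluded by $\pi(S)\ge1-\varepsilon>0$), and it is $1$-Lipschitz as a supremum of $1$-Lipschitz functions. For $(x,y)\in S$, taking $(x',y')=(x,y)$ in the supremum gives $\phi(x)\ge\psi(y)-\varepsilon/2$. In the other direction, the displayed inequality shows each term satisfies $\psi(y')-d_X(x,x')\le\psi(y)+\varepsilon$, so $\phi(x)\le\psi(y)+\varepsilon/2$. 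Hence $|\phi(x)-\psi(y)|\le\varepsilon/2\le\varepsilon$ on $S$.

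Finally, clamp $\phi$ to $[-R,R]$. The clamp is $1$-Lipschitz, and since $\psi(y)\in[-R,R]$ it does not increase $|\phi(x)-\psi(y)|$. Taking $\Phi=(\phi_1,\dots,\phi_N)$ with each clamped coordinate gives the required map. Measurability of $\Phi$ is automatic from continuity. The only delicate points are that the supremum in the definition of $\phi$ runs over a possibly uncountable set, which is harmless because we used pointwise bounds only, and the bookkeeping of the $\varepsilon/2$ shift. I therefore expect no serious obstacle: the key step is the extension above, which exploits that $\operatorname{dis}_\succ$ controls only one direction of distortion and that this is exactly the direction McShane extension needs.
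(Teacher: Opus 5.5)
Your proposal is correct and is essentially the paper's proof. Both arguments extend each coordinate of $\Psi$ from $S$ to a $1$-Lipschitz function on $X$ by a McShane-type formula, clip to $[-R,R]$, and push $\pi$ forward by $(\Phi,\Psi)$ to get $d_P\le\varepsilon$. The only differences are cosmetic: the paper uses the infimum form $\inf_{(u,v)\in S}\{\Psi_j(v)+d_X(x,u)\}$, which has one-sided error $\Psi_j(y)-\varepsilon\le\widetilde\Psi_j(x)\le\Psi_j(y)$ on $S$, whereas your supremum form shifted by $\varepsilon/2$ gives symmetric error $\varepsilon/2$, and your auxiliary function $g$ is never used.
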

\begin{proof}
Choose a coupling $\pi\in\mathcal T(\mu_X,\mu_Y)$ and a closed set $S\subset X\times Y$ witnessing $X\succ_\varepsilon Y$, so that $\operatorname{dis}_\succ S\le\varepsilon$ and $\pi(S)\ge1-\varepsilon$. Fix $\nu=\Psi_*\mu_Y\in\mathcal M(Y;N,R)$, where $\Psi=(\Psi_1,\ldots,\Psi_N)\colon Y\to B_R^N$ is $1$-Lipschitz. Since $S\ne\emptyset$, define
\[\widetilde\Psi_j(x)\coloneqq \inf_{(u,v)\in S}\{\Psi_j(v)+d_X(x,u)\}\]
for $j=1,\ldots,N$. This infimum is finite, and the triangle inequality shows that each $\widetilde\Psi_j$ is $1$-Lipschitz. If $(x,y)\in S$, then $\operatorname{dis}_\succ S\le\varepsilon$ and the Lipschitz property of $\Psi_j$ give
\[\Psi_j(y)-\varepsilon\le\widetilde\Psi_j(x)\le\Psi_j(y)\]
for every $j$. Let $\Phi\colon X\to B_R^N$ be obtained from $\widetilde\Psi=(\widetilde\Psi_1,\ldots,\widetilde\Psi_N)$ by clipping each coordinate to $[-R,R]$. Then $\Phi$ is $1$-Lipschitz and satisfies $\|\Phi(x)-\Psi(y)\|_\infty\le\varepsilon$ on $S$.

Pushing $\pi$ forward by $(\Phi,\Psi)$ gives a coupling of $\Phi_*\mu_X$ and $\nu$ under which the set where the distance exceeds $\varepsilon$ has mass at most $\varepsilon$. The coupling implication for the Prokhorov distance yields $d_P(\Phi_*\mu_X,\nu)\le\varepsilon$. Since $\Phi_*\mu_X\in\mathcal M(X;N,R)$ and $\nu$ was arbitrary, the assertion follows. This completes the proof.
\end{proof}

% v2: cor:box-measurement-hausdorff-bound; cor:box-close-same-pyramid-limits
\begin{cor}[Passing from $\Box$-approximation to pyramid limits]
\label{cor:common-box-limit}
For all mm-spaces $X,Y$, every $N\ge1$, and every $R>0$,
\[\dHaus{d_P}\bigl(\mathcal M(X;N,R),\mathcal M(Y;N,R)\bigr)\le\Box(X,Y)\]
holds. Consequently, if sequences of mm-spaces $X_n,Y_n$ satisfy $\Box(X_n,Y_n)\to0$ as $n\to\infty$, then $\Pyr(X_n)$ and $\Pyr(Y_n)$ have the same weak subsequential limits. In particular, if one sequence converges weakly, then the other converges weakly to the same pyramid.
\end{cor}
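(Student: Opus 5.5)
The plan is to derive the inequality from \Cref{lem:common-additive-measurement}, applied in both directions, and then obtain the consequences from the measurement characterization of weak convergence recalled after \Cref{def:common-measurement}.

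\textbf{Reduction to the case $\Box(X,Y)<1$.} If $\Box(X,Y)\ge1$, the inequality is immediate. Every measure in $\mathcal M(X;N,R)$ or $\mathcal M(Y;N,R)$ lies in $\mathcal M(N,R)$, and the Prokhorov distance of any two Borel probability measures is at most $1$. So the Hausdorff distance between the two measurements is at most $1\le\Box(X,Y)$.

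\textbf{Main case.} Assume $\Box(X,Y)<1$ and fix $\varepsilon$ with $\Box(X,Y)<\varepsilon<1$. By the definition of $\Box$ there are $\pi\in\mathcal T(\mu_X,\mu_Y)$ and a closed set $S$ with $\operatorname{dis}S<\varepsilon$ and $1-\pi(S)<\varepsilon$.
\begin{itemize}
\item Since $\operatorname{dis}_\succ S\le\operatorname{dis}S$, we get $X\succ_\varepsilon Y$.
\item Transposing $S$ and $\pi$ by the swap map $X\times Y\to Y\times X$ preserves closedness, mass and $\operatorname{dis}$. Hence $Y\succ_\varepsilon X$ as well.
\item \Cref{lem:common-additive-measurement} then gives both inclusions
\[\mathcal M(Y;N,R)\subset\mathrm B\bigl(\mathcal M(X;N,R),\varepsilon;d_P\bigr),\qquad \mathcal M(X;N,R)\subset\mathrm B\bigl(\mathcal M(Y;N,R),\varepsilon;d_P\bigr).\]
\end{itemize}
So the Hausdorff pseudodistance is at most $\varepsilon$. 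Letting $\varepsilon\downarrow\Box(X,Y)$ proves the inequality.

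\textbf{Consequences.} Suppose $\Box(X_n,Y_n)\to0$. Using $\mathcal M(\Pyr(X);N,R)=\mathcal M(X;N,R)$, the inequality gives
\[\dHaus{d_P}\bigl(\mathcal M(\Pyr(X_n);N,R),\mathcal M(\Pyr(Y_n);N,R)\bigr)\to0\]
for each $N$ and $R$. Let $\mathcal P$ be a weak subsequential limit of $\Pyr(X_{n_j})$. By \cite[Lemma~3.6]{ozawa2015limit},
\[\dHaus{d_P}\bigl(\mathcal M(\Pyr(X_{n_j});N,R),\mathcal M(\mathcal P;N,R)\bigr)\to0.\]
The triangle inequality for the Hausdorff pseudometric transfers this to $\Pyr(Y_{n_j})$, and the same lemma yields $\Pyr(Y_{n_j})\to\mathcal P$ weakly. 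The argument is symmetric in $X$ and $Y$, so the two sequences have the same weak subsequential limits.

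For the final claim, suppose $\Pyr(X_n)\to\mathcal P$. Since $d_\Pi$ metrizes weak convergence, it suffices to note that every subsequence of $\Pyr(Y_n)$ has a further subsequence converging to $\mathcal P$, which follows from the subsequential statement. Alternatively, one can bound $d_\Pi(\Pyr(X_n),\Pyr(Y_n))\le\Box(X_n,Y_n)\sum_N 1/(2N2^N)$ directly.

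The only delicate point is the strict-versus-nonstrict handling of the infimum in $\Box$ together with the restriction $\varepsilon<1$ in \Cref{lem:common-additive-measurement}. The trivial case $\Box\ge1$ and the limit $\varepsilon\downarrow\Box(X,Y)$ dispose of it.
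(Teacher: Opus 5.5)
Your proposal is correct and follows the paper's proof essentially step for step. The shared steps are the trivial case via $d_P\le1$, two-sided additive-error domination from a near-optimal box relation fed into \Cref{lem:common-additive-measurement}, the limit $\varepsilon\downarrow\Box(X,Y)$, and the transfer of subsequential limits through \cite[Lemma~3.6]{ozawa2015limit} and the triangle inequality for the Hausdorff distance; your extra bound of $d_\Pi(\Pyr(X_n),\Pyr(Y_n))$ by a constant multiple of $\Box(X_n,Y_n)$ is a valid shortcut for the final claim.
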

\begin{proof}
If $\Box(X,Y)=1$, then the first estimate follows from $d_P\le1$. Suppose that $\Box(X,Y)<\delta<1$. A relation whose absolute metric distortion is controlled in the definition of $\Box$ gives both $X\succ_\delta Y$ and $Y\succ_\delta X$. Applying \Cref{lem:common-additive-measurement} in both directions shows that each $(N,R)$-measurement lies in the closed $d_P$-neighborhood of radius $\delta$ of the other. Thus their Hausdorff distance is at most $\delta$. Letting $\delta\downarrow\Box(X,Y)$ proves the first estimate.

Now suppose that $\Box(X_n,Y_n)\to0$ and that a subsequence $\Pyr(X_{n_k})$ converges weakly to $\mathcal P$. By \Cref{def:common-measurement-convergence}, for every $N,R$, the set $\mathcal M(X_{n_k};N,R)$ converges to $\mathcal M(\mathcal P;N,R)$ in Hausdorff distance. The first estimate and the triangle inequality for the Hausdorff distance give the same limit for $\mathcal M(Y_{n_k};N,R)$. Another application of \Cref{def:common-measurement-convergence} yields $\Pyr(Y_{n_k})\to\mathcal P$. Interchanging $X_n$ and $Y_n$ proves that the weak subsequential limits coincide. This completes the proof.
\end{proof}

% v2: eq:polar-distance-general; sec:hyperbolic-ball; sphere limit closes via \cite[Theorem~7.40]{shioya2016mmg}
\begin{remark}[Hyperbolic polar coordinates]
\label{set:common-hyperbolic-polar}
For $r>0$ and $\theta\in S^{n-1}$, write $(r,\theta)\coloneqq\tanh(r/2)\theta$. In hyperbolic polar coordinates,
\begin{equation}\label{eq:common-polar-distance}
\cosh\dH((r,\theta),(s,\eta))=\cosh(r-s)+\frac12\sinh r\sinh s\,|\theta-\eta|^2. \tag{H1}
\end{equation}
Put
\[\mathsf S_n\coloneqq (S^{n-1},\sqrt n|\theta-\eta|,\sigma_{n-1}).\]
Then $\Pyr(\mathsf S_n)\to\PGauss$ weakly as $n\to\infty$.
\end{remark}
\begin{proof}
The hyperbolic law of cosines and $\langle\theta,\eta\rangle=1-|\theta-\eta|^2/2$ give
\[\cosh\dH((r,\theta),(s,\eta))=\cosh r\cosh s-\sinh r\sinh s+\frac12\sinh r\sinh s|\theta-\eta|^2\]
and the first two terms equal $\cosh(r-s)$. This proves \eqref{eq:common-polar-distance}.

By \cite[Theorem~7.40]{shioya2016mmg}, the pyramids associated with the spheres whose chord metrics are multiplied by $\sqrt{n-1}$ converge weakly to $\PGauss$. On the same sphere, the difference between the two metrics is at most
\[\sup_{\theta,\eta\in S^{n-1}}(\sqrt n-\sqrt{n-1})|\theta-\eta|\le2(\sqrt n-\sqrt{n-1})\to0\]
as $n\to\infty$. The diagonal coupling gives the corresponding $\Box$-estimate, and \Cref{cor:common-box-limit} yields $\Pyr(\mathsf S_n)\to\PGauss$. This completes the proof.
\end{proof}

\begin{lem}[Cross-branch distance asymptotics]
\label{lem:star-cross-branch-asymptotic}
Assume that $\beta_n\to0$ and $\beta_n\log n\to\lambda$ as $n\to\infty$, and fix $\delta>0$ and $M>\lambda+\delta$.
\begin{enumerate}[label=\textup{(\roman*)}]
  \item For every $c>0$ and $\varepsilon>0$, if $n$ is sufficiently large, $\beta_n\rho,\beta_n\sigma\in[\lambda+\delta,M]$, and $|\theta-\eta|\ge c/\sqrt n$, then
  \[\beta_n\dH((\rho,\theta),(\sigma,\eta))\ge\beta_n\rho+\beta_n\sigma-\lambda-\varepsilon.\]
  \item For every $C>0$ and $\varepsilon>0$, if $n$ is sufficiently large, $\beta_n\rho,\beta_n\sigma\in[\lambda+\delta,M]$, and $|\theta-\eta|\le C/\sqrt n$, then
  \[\beta_n\dH((\rho,\theta),(\sigma,\eta))\le\beta_n\rho+\beta_n\sigma-\lambda+\varepsilon.\]
\end{enumerate}
Both estimates are uniform over $\rho,\sigma,\theta,\eta$ in the displayed ranges.
\end{lem}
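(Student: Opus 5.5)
Both estimates will be read off from the polar distance formula \eqref{eq:common-polar-distance} after taking logarithms. Write $D\coloneqq\dH((\rho,\theta),(\sigma,\eta))$ and $h\coloneqq|\theta-\eta|$. Since $\beta_n\rho,\beta_n\sigma\ge\lambda+\delta$ and $\beta_n\to0$, we have $\rho,\sigma\ge(\lambda+\delta)/\beta_n\to+\infty$ uniformly. Hence
\[\sinh\rho\sinh\sigma=\tfrac14e^{\rho+\sigma}(1+o(1))\]
uniformly in the displayed ranges. Also $\cosh D=\tfrac12e^{D}(1+e^{-2D})$, so $D=\log(2\cosh D)+O(e^{-2D})$, and this error is negligible once $D\to\infty$. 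The plan is therefore to compare $\beta_n\log(2\cosh D)$ with $\beta_n(\rho+\sigma)+2\beta_n\log h$. Then we use $\beta_n\log h$ with $h\asymp n^{-1/2}$: it equals $-\tfrac12\beta_n\log n+O(\beta_n)\to-\lambda/2$.

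For (i), drop the nonnegative term $\cosh(\rho-\sigma)$ in \eqref{eq:common-polar-distance}. This gives
\[\cosh D\ge\tfrac12\sinh\rho\sinh\sigma\,h^2\ge\tfrac18e^{\rho+\sigma}(1+o(1))\,c^2/n.\]
Taking logarithms and multiplying by $\beta_n$ gives
\[\beta_nD\ge\beta_n\log\cosh D\ge\beta_n\rho+\beta_n\sigma-\beta_n\log n+\beta_n\log(c^2/8)+o(\beta_n).\]
Since $\beta_n\log n\to\lambda$ and $\beta_n\to0$, the error terms are below $\varepsilon$ for large $n$, uniformly in the range. The upper bound $M$ is not even needed here. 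Note that $\beta_nD\ge\beta_n\log\cosh D$ holds trivially, so no lower bound on $D$ is required for this direction.

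For (ii), use $\cosh D\le e^{D}$, which gives $D\le\log(2\cosh D)$. Bound the two terms of \eqref{eq:common-polar-distance} separately:
\[\cosh(\rho-\sigma)\le e^{|\rho-\sigma|},\qquad\tfrac12\sinh\rho\sinh\sigma\,h^2\le\tfrac18e^{\rho+\sigma}C^2/n.\]
Thus
\[2\cosh D\le 2e^{|\rho-\sigma|}+\tfrac14C^2e^{\rho+\sigma-\log n}.\]
Now apply $\log(a+b)\le\log2+\max(\log a,\log b)$ and multiply by $\beta_n$. The second branch yields $\beta_n\rho+\beta_n\sigma-\beta_n\log n+O(\beta_n)$, which is at most $\beta_n\rho+\beta_n\sigma-\lambda+\varepsilon/2$ for large $n$.

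The first branch yields $\beta_n|\rho-\sigma|+O(\beta_n)$. This is where the hypothesis $\beta_n\rho,\beta_n\sigma\ge\lambda+\delta$ enters: it gives $|s-t|\le s+t-2(\lambda+\delta)\le s+t-\lambda-\delta$ for $s=\beta_n\rho$ and $t=\beta_n\sigma$. So this branch is at most $\beta_n\rho+\beta_n\sigma-\lambda-\delta+O(\beta_n)$, which is smaller still.

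The main point to check is that the constants are uniform. Every $o(1)$ comes from $\beta_n\to0$, $\beta_n\log n\to\lambda$, and $e^{-2\rho},e^{-2\sigma}\le e^{-2(\lambda+\delta)/\beta_n}\to0$. None of these depends on the particular $\rho,\sigma,\theta,\eta$, so both estimates hold uniformly as claimed.
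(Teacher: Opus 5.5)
Your proof is correct and follows essentially the same route as the paper: for (i) you drop $\cosh(\rho-\sigma)$ and use $D\ge\log\cosh D$, and for (ii) you bound both terms of \eqref{eq:common-polar-distance} by exponentials and use that $2\min\{\rho,\sigma\}\ge 2(\lambda+\delta)/\beta_n$ dominates $\log n$. The paper absorbs $\cosh(\rho-\sigma)$ into $e^{\rho+\sigma}/n$, whereas you take the maximum of the two branches; the two are equivalent. One small slip: $D\le\log(2\cosh D)$ follows from $e^D\le e^D+e^{-D}=2\cosh D$, not from $\cosh D\le e^D$, which gives the reverse inequality $\log\cosh D\le D$ used in (i).
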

\begin{proof}
On the stated window, $\rho,\sigma\ge(\lambda+\delta)/\beta_n\to+\infty$. By \eqref{eq:common-polar-distance} and the estimate $\sinh u\ge e^u/4$ for sufficiently large $u$, the angular condition in (i) gives
\[\cosh\dH((\rho,\theta),(\sigma,\eta))\ge\frac12\sinh\rho\sinh\sigma|\theta-\eta|^2\ge\frac{c^2}{32n}e^{\rho+\sigma}.\]
Using $u\ge\log\cosh u$, we obtain
\[\beta_n\dH((\rho,\theta),(\sigma,\eta))\ge\beta_n(\rho+\sigma)-\beta_n\log n+\beta_n\log(c^2/32).\]
Since $\beta_n\log n\to\lambda$ and $\beta_n\to0$, this is at least the right-hand side of (i) for all sufficiently large $n$.

For (ii),
\[\log n-2\min\{\rho,\sigma\}\le\frac{\beta_n\log n-2(\lambda+\delta)}{\beta_n}\to-\infty.\]
For all sufficiently large $n$, this implies $n e^{-2\min\{\rho,\sigma\}}\le1$. Using this inequality, we obtain $\cosh(\rho-\sigma)\le e^{|\rho-\sigma|}\le e^{\rho+\sigma}/n$. Since $\sinh\rho\sinh\sigma\le e^{\rho+\sigma}/4$, \eqref{eq:common-polar-distance} gives
\[\cosh\dH((\rho,\theta),(\sigma,\eta))\le\left(1+\frac{C^2}{8}\right)\frac{e^{\rho+\sigma}}n.\]
Applying $\operatorname{arcosh}x\le\log(2x)$ for $x\ge1$, we obtain
\[\beta_n\dH((\rho,\theta),(\sigma,\eta))\le\beta_n(\rho+\sigma)-\beta_n\log n+\beta_n\log(2+C^2/4),\]
and the same two limits prove (ii). All estimates are uniform on the stated window. This completes the proof.
\end{proof}

% v2: lem:partial-lipschitz-domination
\begin{lem}
\label{lem:star-partial-domination}
Let $(X,d_X,\mu_X)$, denoted simply by $X$, be an mm-space, let $E\subset X$ be Borel with $\mu_X(E)=m>0$, and let $Y$ be a compact metric space. Suppose that a Borel map $f\colon E\to Y$ satisfies, for some $\eta\ge0$,
\[d_Y(f(x),f(x'))\le d_X(x,x')+\eta\qquad(x,x'\in E)\]
and put $\nu_E\coloneqq f_*(\mu_X|_E/m)$. Then
\[X\succ_{\max\{1-m,\eta\}}(\operatorname{supp}\nu_E,d_Y,\nu_E).\]
\end{lem}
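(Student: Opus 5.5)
We exhibit the witnesses of $X\succ_\varepsilon Y'$ directly, where $Y'\coloneqq(\operatorname{supp}\nu_E,d_Y,\nu_E)$ and $\varepsilon\coloneqq\max\{1-m,\eta\}$. Since $Y$ is compact, $\operatorname{supp}\nu_E$ is a closed subset of a compact space, hence complete and separable. Thus $Y'$ is an mm-space with full support. Note that $\nu_E(\operatorname{supp}\nu_E)=1$, so $f(x)\in\operatorname{supp}\nu_E$ for $\mu_X$-a.e.\ $x\in E$. Replacing $E$ by $E\cap f^{-1}(\operatorname{supp}\nu_E)$ changes neither $m$ nor $\nu_E$, so we may assume that $f$ maps $E$ into $\operatorname{supp}\nu_E$.

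The coupling is
\[\pi\coloneqq(\mathrm{id},f)_*(\mu_X|_E)+\bigl(\mu_X|_{X\setminus E}\bigr)\otimes\nu_E.\]
Its first marginal is $\mu_X|_E+\mu_X|_{X\setminus E}=\mu_X$. Its second marginal is $m\nu_E+(1-m)\nu_E=\nu_E$. Hence $\pi\in\mathcal T(\mu_X,\nu_E)$.

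For the closed set, take $S\coloneqq\overline{G}$, the closure in $X\times\operatorname{supp}\nu_E$ of the graph $G\coloneqq\{(x,f(x))\mid x\in E\}$. Then $\pi(S)\ge\pi(G)\ge\mu_X(E)=m\ge1-\varepsilon$. Here $G$ need not be Borel, but it contains the image of $E$ under $(\mathrm{id},f)$, so its inner measure is at least $m$, and the closed set $S$ is measurable anyway.

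For the distortion, take $(x,f(x)),(x',f(x'))\in G$. The hypothesis gives
\[d_Y(f(x),f(x'))-d_X(x,x')\le\eta.\]
The map $\bigl((x,y),(x',y')\bigr)\mapsto d_Y(y,y')-d_X(x,x')$ is continuous, so the same bound persists on $\overline G\times\overline G$. Hence $\operatorname{dis}_\succ S\le\eta\le\varepsilon$. These $\pi$ and $S$ witness $X\succ_\varepsilon Y'$.

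The only delicate points are measure-theoretic: that $S$ is closed with the required $\pi$-mass, and that the distortion bound survives taking the closure. Both are handled above, by continuity of the distance functions and by $G\subset S$. No earlier result is needed beyond the definition of $\succ_\varepsilon$ in \Cref{def:common-box-concentration}.
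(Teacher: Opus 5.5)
Your proposal is correct and matches the paper's proof: the same restriction of $E$ to $E\cap f^{-1}(\operatorname{supp}\nu_E)$, the same coupling $(\mathrm{id},f)_*(\mu_X|_E)+(\mu_X|_{X\setminus E})\otimes\nu_E$, and the same closed set given by the closure of the graph, with the distortion bound passing to the closure by continuity. Your hedge about $G$ not being Borel is unnecessary, since the graph of a Borel map into a metric space is Borel; in any case $\pi(S)\ge(\mathrm{id},f)_*(\mu_X|_E)(S)=\mu_X(E)=m$ follows directly from $(\mathrm{id},f)(E)\subset S$.
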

\begin{proof}
Put $E_0\coloneqq E\cap f^{-1}(\operatorname{supp}\nu_E)$. The conditional measure $\mu_X|_E/m$ gives full mass to $E_0$. Let $S$ be the closure in $X\times\operatorname{supp}\nu_E$ of the graph of $f|_{E_0}$. The assumed inequality passes to this closure, so $\operatorname{dis}_\succ S\le\eta$.

Put
\[\pi\coloneqq (\operatorname{id},f)_*(\mu_X|_E)+(\mu_X|_{X\setminus E})\otimes\nu_E.\]
This is a coupling of $\mu_X$ and $\nu_E$ satisfying $\pi(S)\ge m$. Therefore, by \Cref{def:common-box-concentration}, the set $S$ and the coupling $\pi$ give additive-error domination with error $\max\{1-m,\eta\}$.
	This completes the proof.
\end{proof}

% v2: lem:additive-error-persistence
% v2: lem:additive-error-persistence closes this via \cite[Lemma~6.17]{shioya2016mmg}
\begin{lem}[Bounded-coordinate density {\cite[Lemma~6.17]{shioya2016mmg}}]
\label{lem:common-bounded-coordinate-density}
For every pyramid $\mathcal P$ and every $X\in\mathcal P$, there are a strictly increasing sequence $(N_j)$ and measures $\nu_j\in\mathcal M(\mathcal P;N_j,N_j)$ such that
\[\Box\bigl(X,(\operatorname{supp}\nu_j,\|\cdot\|_\infty,\nu_j)\bigr)\to0\]
as $j\to\infty$. In particular, for every mm-space $X$, the measures can be chosen so that $\nu_j\in\mathcal M(X;N_j,N_j)$.
\end{lem}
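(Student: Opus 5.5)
We will prove the statement directly, using a truncated Kuratowski-type embedding built from a countable dense set. Fix a pyramid $\mathcal P$ and $X\in\mathcal P$. Since $X$ is separable, choose a dense sequence $(x_i)_{i\ge1}$ in $X$. For $N\ge1$, define
\[\Psi_N\colon X\to(\R^N,\|\cdot\|_\infty),\qquad \Psi_N(x)\coloneqq\bigl(\min\{d_X(x,x_i),N\}\bigr)_{i=1}^N .\]
Each coordinate is $1$-Lipschitz and takes values in $[0,N]$, so $\Psi_N$ is $1$-Lipschitz into $B_N^N$. Put $\nu_N\coloneqq(\Psi_N)_*\mu_X\in\mathcal M(X;N,N)$.

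The first step is membership in $\mathcal P$. The space $(\operatorname{supp}\nu_N,\|\cdot\|_\infty,\nu_N)$ is dominated by $X$ through $\Psi_N$. Its support lies in $B_N^N$. By condition (i) of \Cref{def:common-pyramid}, it belongs to $\mathcal P$. Hence $\nu_N\in\mathcal M(\mathcal P;N,N)$, and in particular $\nu_N\in\mathcal M(X;N,N)$ when $\mathcal P=\Pyr(X)$. This gives the final assertion once convergence is established.

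The second step is the box estimate. Fix $\varepsilon>0$ and use tightness of $\mu_X$ to choose a compact set $K\subset X$ with $\mu_X(K)\ge1-\varepsilon$. By density of $(x_i)$, there is $k$ such that $x_1,\dots,x_k$ form an $\varepsilon$-net of $K$. Take $N\ge\max\{k,\operatorname{diam}K+\varepsilon\}$.

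For $x,x'\in K$, the $1$-Lipschitz property gives $\|\Psi_N(x)-\Psi_N(x')\|_\infty\le d_X(x,x')$. For the reverse bound, pick $i\le k$ with $d_X(x,x_i)\le\varepsilon$. Since $d_X(x',x_i)\le\operatorname{diam}K+\varepsilon\le N$, no truncation occurs at $x'$ in coordinate $i$, and
\[\|\Psi_N(x)-\Psi_N(x')\|_\infty\ge d_X(x',x_i)-\varepsilon\ge d_X(x,x')-2\varepsilon .\]
Let $S$ be the closure in $X\times\operatorname{supp}\nu_N$ of the graph of $\Psi_N$ restricted to $K\cap\Psi_N^{-1}(\operatorname{supp}\nu_N)$. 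This restricted set still has $\mu_X$-mass at least $1-\varepsilon$. By continuity of the metrics, the distortion bound $\operatorname{dis}S\le2\varepsilon$ passes to the closure. The coupling $(\mathrm{id},\Psi_N)_*\mu_X$ gives $S$ mass at least $1-\varepsilon$, so \Cref{def:common-box-concentration} yields
\[\Box\bigl(X,(\operatorname{supp}\nu_N,\|\cdot\|_\infty,\nu_N)\bigr)\le2\varepsilon .\]

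The third step extracts the sequence. Choosing $\varepsilon=1/j$ produces thresholds, and we pick $N_j$ strictly increasing and above the $j$-th threshold. Then $\nu_j\coloneqq\nu_{N_j}$ gives box distance at most $2/j\to0$.

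The main obstacle is the technical bookkeeping in the second step rather than any real difficulty. One must make sure the set $S$ is closed and sits inside $X\times\operatorname{supp}\nu_N$, and that the distortion bound survives passing to the closure. The other point requiring care is that truncation at level $N$ does not spoil the lower bound; the choice $N\ge\operatorname{diam}K+\varepsilon$ handles this.
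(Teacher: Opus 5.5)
Your proof is correct. The truncated embedding $\Psi_N(x)=(\min\{d_X(x,x_i),N\})_{i\le N}$ lands in $B_N^N$, domination gives membership in $\mathcal P$, and since $K$ and $k$ depend only on $\varepsilon$, the bound $\Box\le2\varepsilon$ holds for every $N\ge\max\{k,\operatorname{diam}K+\varepsilon\}$, which is exactly what the extraction of a strictly increasing $(N_j)$ needs.

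The paper gives no argument of its own and simply imports the result from \cite[Lemma~6.17]{shioya2016mmg}. Your dense-sequence $\ell_\infty$-embedding is a standard, self-contained route to such a statement. The one detail worth stating explicitly is that $\Psi_N(X)\subset\operatorname{supp}\nu_N$, which follows from continuity and the full support of $\mu_X$, as in the proof of \Cref{prop:common-truncation}. This makes $\Psi_N$ a genuine dominating map onto the support, and it makes your restriction to $K\cap\Psi_N^{-1}(\operatorname{supp}\nu_N)$ unnecessary.
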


\begin{lem}
\label{lem:star-additive-persistence}
Suppose that, as $n\to\infty$, $d_\Pi(\Pyr(X_n),\mathcal P)\to0$, $X_n\succ_{\varepsilon_n}Y_n$, $\varepsilon_n\to0$, and $\Box(Y_n,Y)\to0$. Then $Y\in\mathcal P$.
\end{lem}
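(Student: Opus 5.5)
To show $Y\in\mathcal P$ we work through $(N,R)$-measurements, using \Cref{lem:common-bounded-coordinate-density} to reduce to finitely many coordinates and box-closedness of $\mathcal P$ to conclude. Since $\mathcal P$ is $\Box$-closed, it suffices to find spaces in $\mathcal P$ arbitrarily $\Box$-close to $Y$.

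First, \Cref{lem:common-bounded-coordinate-density} applied to the mm-space $Y$ gives $N_j$ and $\nu_j\in\mathcal M(Y;N_j,N_j)$ with
\[
\Box\bigl(Y,(\operatorname{supp}\nu_j,\|\cdot\|_\infty,\nu_j)\bigr)\to0 .
\]
So it is enough to show $\nu\in\mathcal M(\mathcal P;N,R)$ for every $\nu\in\mathcal M(Y;N,R)$ and every fixed $N$, $R$. For such $\nu$, \Cref{cor:common-box-limit} and $\Box(Y_n,Y)\to0$ give $\nu'_n\in\mathcal M(Y_n;N,R)$ with $d_P(\nu'_n,\nu)\le\Box(Y_n,Y)+o(1)$.

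Next, since $X_n\succ_{\varepsilon_n}Y_n$ with $\varepsilon_n<1$ eventually, \Cref{lem:common-additive-measurement} produces $\nu''_n\in\mathcal M(X_n;N,R)$ with $d_P(\nu''_n,\nu'_n)\le\varepsilon_n$. Because $\mathcal M(X_n;N,R)=\mathcal M(\Pyr(X_n);N,R)$, weak convergence $d_\Pi(\Pyr(X_n),\mathcal P)\to0$ together with the measurement characterization (\Cref{def:common-measurement-convergence}, via \cite[Lemma~3.6]{ozawa2015limit}) gives
\[
\dHaus{d_P}\bigl(\mathcal M(X_n;N,R),\mathcal M(\mathcal P;N,R)\bigr)\to0 .
\]
Hence there are $\mu_n\in\mathcal M(\mathcal P;N,R)$ with $d_P(\mu_n,\nu''_n)\to0$. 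By the triangle inequality, $d_P(\mu_n,\nu)\to0$. Since $\mathcal M(\mathcal P;N,R)$ is $d_P$-compact, hence closed, we get $\nu\in\mathcal M(\mathcal P;N,R)$, so $(\operatorname{supp}\nu,\|\cdot\|_\infty,\nu)\in\mathcal P$.

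Finally, applying this to each $\nu_j$ shows that these approximants lie in $\mathcal P$. Box-closedness then yields $Y\in\mathcal P$.

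The main obstacle is the step asserting that $\mathcal M(\mathcal P;N,R)$ is closed and that it is the Hausdorff limit of the measurements of $\Pyr(X_n)$. The paper states both as recalled facts, so the plan leans on \cite{ozawa2015limit} and \cite{shioya2016mmg} for them. One small point to check: \Cref{cor:common-box-limit} only gives an approximant within $\Box+\delta$, which is harmless.
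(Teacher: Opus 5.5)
Your proposal is correct and follows essentially the same route as the paper. You transfer each $(N,R)$-measurement of $Y$ through $Y_n$ and $X_n$ into $\mathcal M(\mathcal P;N,R)$ using \Cref{cor:common-box-limit}, \Cref{lem:common-additive-measurement}, the measurement characterization of weak convergence, and closedness of $\mathcal M(\mathcal P;N,R)$, then conclude with \Cref{lem:common-bounded-coordinate-density} and box-closedness of $\mathcal P$.
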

\begin{proof}
Fix $N\ge1$ and $R>0$. By \Cref{lem:common-additive-measurement}, for all sufficiently large $n$,
\[\mathcal M(Y_n;N,R)\subset\mathrm B\bigl(\mathcal M(X_n;N,R),\varepsilon_n;d_P\bigr).\]
The box-limit statement in \Cref{cor:common-box-limit} shows that $\mathcal M(Y_n;N,R)$ converges to $\mathcal M(Y;N,R)$ in Hausdorff distance. The definition in \Cref{def:common-measurement-convergence} likewise gives convergence of $\mathcal M(X_n;N,R)$ to $\mathcal M(\mathcal P;N,R)$. Approximate any $\nu\in\mathcal M(Y;N,R)$ by measures $\nu_n\in\mathcal M(Y_n;N,R)$ and pass to the limit through the displayed inclusion. This gives
\[\mathcal M(Y;N,R)\subset\mathcal M(\mathcal P;N,R)\]
for every $N\ge1$ and $R>0$.

The bounded-coordinate density result in \Cref{lem:common-bounded-coordinate-density} provides $N_j\uparrow+\infty$ and $\nu_j\in\mathcal M(Y;N_j,N_j)$ such that the associated mm-spaces converge to $Y$ in $\Box$-distance. The displayed inclusion places every approximating space in $\mathcal P$. Since $\mathcal P$ is $\Box$-closed, $Y\in\mathcal P$. This completes the proof.
\end{proof}

% v2: lem:spherical-two-set-distance; closes via \cite[Proposition~2.38 and Example~7.36]{shioya2016mmg}
\begin{lem}
\label{lem:common-spherical-two-set}
Let $n\ge2$, and let Borel sets $A,B\subset S^{n-1}$ satisfy $a\coloneqq\sigma_{n-1}(A)>0$ and $b\coloneqq\sigma_{n-1}(B)>0$. Then, for the chord distance,
\[\inf_{\theta\in A,\,\eta\in B}|\theta-\eta|\le\frac2{\sqrt{(n-1)\min\{a,b\}}}.\]
\end{lem}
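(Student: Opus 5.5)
The plan is to derive the bound from the normal law of the Lipschitz (observable) concentration on the sphere, following \cite[Proposition~2.38 and Example~7.36]{shioya2016mmg}. Put $d\coloneqq\inf_{\theta\in A,\eta\in B}|\theta-\eta|$ and assume $d>0$, since otherwise there is nothing to prove. Consider the $1$-Lipschitz function $f(\theta)\coloneqq\min\{|\theta-A|,d\}$ for the chord distance, where $|\theta-A|$ denotes the chord distance from $\theta$ to $A$. It vanishes on $A$ and equals $d$ on $B$. Consequently, every interval containing $f$-mass at least $1-\min\{a,b\}+\epsilon$ must meet both $\{0\}$ and $\{d\}$, and so $\operatorname{Sep}(S^{n-1};a,b)\ge d$, measured in the chord metric.

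The key input is the concentration bound for the sphere $S^{n-1}$ with the chord metric (which is dominated by the geodesic metric). From the Lévy isoperimetric inequality, or equivalently the Gaussian-type bound $\sigma_{n-1}(\{|\theta-A|\ge r\})\le e^{-(n-1)r^2/8}/a$-type estimates, one has the following: for every $1$-Lipschitz $g$ with median $m_g$,
\[\sigma_{n-1}(|g-m_g|\ge r)\le 2e^{-(n-1)r^2/2}.\]
I would rather use the cleaner variance form, namely the Poincaré inequality on $S^{n-1}$ with spectral gap $n-1$, which gives $\operatorname{Var}(g)\le 1/(n-1)$ for every function that is $1$-Lipschitz in the geodesic metric, and hence also in the chord metric. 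Apply this to $g=f$. Since $f=0$ on $A$ and $f=d$ on $B$, with the mean $\bar f$ lying somewhere in $[0,d]$, we get
\[\operatorname{Var}(f)\ge a\bar f^2+b(d-\bar f)^2\ge \min\{a,b\}\bigl(\bar f^2+(d-\bar f)^2\bigr)\ge \min\{a,b\}\,d^2/2.\]
Combining the two bounds yields $d^2\le 2/((n-1)\min\{a,b\})$. This gives $d\le\sqrt2/\sqrt{(n-1)\min\{a,b\}}$, which is stronger than the claimed constant $2$.

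The main point to check is that the Poincaré constant applies to the chord-Lipschitz function $f$. The relevant facts are that the geodesic distance dominates the chord distance, so $f$ is geodesic-$1$-Lipschitz, and that Lipschitz functions are $W^{1,2}$ with $|\nabla f|\le1$ almost everywhere (Rademacher). The first nonzero eigenvalue of $S^{n-1}$ is $n-1$, valid for $n\ge2$, and this produces $\operatorname{Var}(f)\le\int|\nabla f|^2/(n-1)\le1/(n-1)$. No measurability issues arise, because $f$ is continuous and $A,B$ are Borel. If one prefers to stay within the cited sources, the variance bound can be replaced by the observable-diameter estimate in \cite[Example~7.36]{shioya2016mmg} combined with the separation–observable diameter inequality \cite[Proposition~2.38]{shioya2016mmg}; this route only changes the numerical constant.
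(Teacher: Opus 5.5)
Your argument is correct and uses the same key input as the paper, the spectral gap $\lambda_1(S^{n-1})=n-1$. The paper quotes the spectral separation estimate \cite[Proposition~2.38]{shioya2016mmg}, $\operatorname{Sep}(M;\kappa_0,\kappa_1)\le 2/\sqrt{\lambda_1(M)\min\{\kappa_0,\kappa_1\}}$, in the geodesic metric and then uses chord $\le$ geodesic. You instead rederive that estimate inline by applying the Poincar\'e inequality to your truncated chord-distance function, which gives the sharper constant $\sqrt2$ in place of $2$.

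The Gaussian-tail aside and your closing alternative are not needed and can be dropped. Also, the cited Proposition~2.38 is exactly the spectral separation bound you reprove, not an observable-diameter comparison.
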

\begin{proof}
The spectral separation estimate \cite[Proposition~2.38]{shioya2016mmg} gives, for a compact Riemannian manifold $M$,
\[\operatorname{Sep}(M;\kappa_0,\kappa_1)\le\frac2{\sqrt{\lambda_1(M)\min\{\kappa_0,\kappa_1\}}}\]
where $\lambda_1(M)$ is the first nonzero eigenvalue. For $S^{n-1}$, this eigenvalue is $n-1$ \cite[Example~7.36]{shioya2016mmg}. Thus the spherical distance between $A$ and $B$ is at most $2/\sqrt{(n-1)\min\{a,b\}}$. Since the chord distance is no greater than the spherical distance, the same upper bound holds. This completes the proof.
\end{proof}

% v2: lem:spherical-block
\begin{lem}[Separation scale on the Euclidean sphere]
\label{lem:common-spherical-block}
For fixed positive numbers $\kappa_0,\ldots,\kappa_N$ satisfying $\sum_i\kappa_i<1$, there are constants $0<c<C<+\infty$ such that, for all sufficiently large $n$,
\[\frac c{\sqrt n}\le\operatorname{Sep}\bigl((S^{n-1},|\cdot|,\sigma_{n-1});\kappa_0,\ldots,\kappa_N\bigr)\le\frac C{\sqrt n}.\]
\end{lem}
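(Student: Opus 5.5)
The plan is to obtain the upper bound from \Cref{lem:common-spherical-two-set} and the lower bound from Gaussian concentration of a single coordinate on the sphere. Both are standard once the right test sets are chosen.

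\textbf{Upper bound.} Let $A_0,\ldots,A_N$ be Borel sets with $\sigma_{n-1}(A_i)\ge\kappa_i$, and take any two of them, say $A_0$ and $A_1$. By \Cref{lem:common-spherical-two-set},
\[
d(A_0,A_1)\le\frac{2}{\sqrt{(n-1)\min\{\kappa_0,\kappa_1\}}}.
\]
Since $N\ge1$, the minimum over pairs $i\ne j$ is bounded by this one pair. Taking the supremum over admissible families and using $\sqrt{n-1}\ge\sqrt{n/2}$ for $n\ge2$, we may take
\[
C=\frac{2\sqrt2}{\sqrt{\min_i\kappa_i}}.
\]
We may also enlarge $C$ so that $C>c$.

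\textbf{Lower bound.} Construct $N+1$ sets using the first coordinate $\theta_1$. Under $\sigma_{n-1}$, the variable $\sqrt n\,\theta_1$ converges in distribution to a standard Gaussian $g$, which is the Maxwell--Poincar\'e lemma. Set $\tau\coloneqq(1-\sum_i\kappa_i)/(N+1)>0$. Because the Gaussian law is continuous, we can choose real numbers
\[
t_0<s_0<t_1<s_1<\cdots<t_N<s_N
\]
with $\mathbb P(g\in[t_i,s_i])>\kappa_i$ for every $i$. This uses a total gap mass of $(N+1)\tau/2$ spread over $N$ intervals, each of positive length. Concretely, first choose consecutive intervals with masses $\kappa_i+\tau/2$, then shrink each slightly to open strictly positive gaps, keeping every mass above $\kappa_i$.

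Let $g_0>0$ be the minimal gap $\min_i(t_{i+1}-s_i)$. Define
\[
A_i\coloneqq\{\theta\mid\sqrt n\,\theta_1\in[t_i,s_i]\}.
\]
By weak convergence and the fact that Gaussian boundaries are null, $\sigma_{n-1}(A_i)\ge\kappa_i$ for all sufficiently large $n$. The map $\theta\mapsto\theta_1$ is $1$-Lipschitz for the chord metric, since $|\theta_1-\eta_1|\le|\theta-\eta|$. Hence for $i\ne j$,
\[
d(A_i,A_j)\ge\frac{g_0}{\sqrt n}.
\]
So $c=g_0$ works.

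\textbf{Main obstacle.} The only real input is the convergence $\sqrt n\,\theta_1\overset{\mathrm d}{\longrightarrow}N(0,1)$. This is classical: write $\theta_1=Z_1/|Z|$ with $Z$ a standard Gaussian vector in $\R^n$, and apply the law of large numbers together with Slutsky's theorem. The bookkeeping of the interval choice also needs care, so that the gaps stay positive while each mass stays strictly above $\kappa_i$.
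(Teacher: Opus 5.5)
Your proposal is correct and follows the paper's proof essentially step for step. The upper bound comes from \Cref{lem:common-spherical-two-set} together with $\sqrt{n-1}\ge\sqrt{n/2}$, which gives the same constant $C=2\sqrt{2/\min_i\kappa_i}$. The lower bound pulls back positively separated Gaussian intervals through the $1$-Lipschitz coordinate $\sqrt n\,\theta_1$, whose law converges to $\gamma^1$ via the representation $Z/|Z|$; your interval bookkeeping is only more explicit than the paper's.
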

\begin{proof}
Let $Z_1,\ldots,Z_n$ be independent standard Gaussian random variables. The random vector $(Z_1,\ldots,Z_n)/(\sum_jZ_j^2)^{1/2}$ has distribution $\sigma_{n-1}$. A second-moment estimate gives $n^{-1}\sum_jZ_j^2\to1$ in probability, and therefore its first coordinate, multiplied by $\sqrt n$, converges in distribution to $\gamma^1$. Since $\sum_i\kappa_i<1$, we can choose intervals of $\gamma^1$-measure greater than $\kappa_i$ whose pairwise distances are at least some $c>0$. For all sufficiently large $n$, their inverse images are admissible sets separated in chord distance by at least $c/\sqrt n$. This proves the lower bound.

For the upper bound, put $\kappa_*\coloneqq\min_i\kappa_i$ and apply \Cref{lem:common-spherical-two-set} to any two admissible sets. For $n\ge2$,
\[\frac2{\sqrt{(n-1)\kappa_*}}\le\frac{2\sqrt{2/\kappa_*}}{\sqrt n},\]
so we may take $C=2\sqrt{2/\kappa_*}$. This completes the proof.
\end{proof}

% v2: cor:star-map-separated-blocks
\begin{cor}
\label{cor:common-separated-blocks}
Fix $m\ge2$, positive numbers $p_i$ satisfying $\sum_{i=1}^mp_i=1$, and $\alpha\in(0,1)$. Then there is $c>0$ such that, for all sufficiently large $n$, there are Borel sets $A_{1,n},\ldots,A_{m,n}\subset S^{n-1}$ satisfying
\[\sigma_{n-1}(A_{i,n})=(1-\alpha)p_i\qquad(i=1,\ldots,m)\]
and
\[|\theta-\eta|\ge\frac c{\sqrt n}\]
whenever $i\ne k$, $\theta\in A_{i,n}$, and $\eta\in A_{k,n}$.
\end{cor}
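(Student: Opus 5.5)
We reduce the problem to a one-dimensional Gaussian statement, following the lower-bound argument of \Cref{lem:common-spherical-block}. Let $Z_1,\ldots,Z_n$ be independent standard Gaussians and put $\Theta_n\coloneqq(Z_1,\ldots,Z_n)/(\sum_jZ_j^2)^{1/2}$, which has law $\sigma_{n-1}$. The first coordinate satisfies $\sqrt n\,\Theta_n^{(1)}\overset{\mathrm d}{\longrightarrow}\gamma^1$. Since $\gamma^1$ has a continuous, positive density, the law of $\sqrt n\,\Theta_n^{(1)}$ under $\sigma_{n-1}$ is also continuous; it is a rescaled beta law on $[-\sqrt n,\sqrt n]$. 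This continuity will let us hit exact masses.

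Next we choose fixed target intervals. The total mass to be placed is $(1-\alpha)\sum_ip_i=1-\alpha<1$, so there is slack. Pick $c>0$ small and consecutive disjoint closed intervals $J_1<J_2<\cdots<J_m$ on the real line, with consecutive gaps of length $2c$, such that
\[\gamma^1(J_i)>(1-\alpha)p_i+\tau\]
for some $\tau>0$. This is possible because $\gamma^1$ puts mass as close to $1$ as we like on a long interval, and cutting $m-1$ gaps of width $2c$ removes only $O(c)$ mass. For instance, split $\R$ at the $\gamma^1$-quantiles of the partial sums of the weights $(1-\alpha/2)p_i$ plus a shared remainder, then carve out gaps of width $2c$ with $c$ so small that each piece keeps mass exceeding $(1-\alpha)p_i+\tau$.

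By weak convergence, and since each $J_i$ is an interval with $\gamma^1(\partial J_i)=0$, for large $n$ we have
\[\sigma_{n-1}\bigl(\sqrt n\,\Theta^{(1)}\in J_i\bigr)>(1-\alpha)p_i.\]
Because the pushforward law is continuous, the map $t\mapsto\sigma_{n-1}(\sqrt n\,\Theta^{(1)}\in J_i\cap(-\infty,t])$ is continuous and nondecreasing from $0$ up to this value. The intermediate value theorem therefore gives a subinterval $J_{i,n}\subset J_i$ with $\sigma_{n-1}(\sqrt n\,\Theta^{(1)}\in J_{i,n})=(1-\alpha)p_i$ exactly. Set $A_{i,n}\coloneqq\{\theta\in S^{n-1}\mid\sqrt n\,\theta^{(1)}\in J_{i,n}\}$; each such set is Borel.

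Finally we check separation. If $i\ne k$, $\theta\in A_{i,n}$, and $\eta\in A_{k,n}$, then $\sqrt n|\theta^{(1)}-\eta^{(1)}|\ge2c$ because $J_i$ and $J_k$ are at distance at least $2c$. Hence
\[|\theta-\eta|\ge|\theta^{(1)}-\eta^{(1)}|\ge\frac{2c}{\sqrt n}\ge\frac c{\sqrt n}.\]

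The only delicate point is getting exact equality of the masses rather than a lower bound. It is handled by the absolute continuity of the coordinate marginal of $\sigma_{n-1}$ for $n\ge2$, whose density is proportional to $(1-t^2)^{(n-3)/2}$.
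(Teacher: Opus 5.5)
Your proof is correct and takes essentially the paper's route: the paper invokes the lower bound of \Cref{lem:common-spherical-block}, whose proof is exactly your construction of $2c$-separated intervals for the rescaled first coordinate $\sqrt n\,\theta^{(1)}$, and then trims the resulting sets to mass exactly $(1-\alpha)p_i$ using that $\sigma_{n-1}$ is nonatomic, a step you carry out explicitly with the intermediate value theorem. One small correction: continuity of the law of $\sqrt n\,\Theta_n^{(1)}$ does not follow from continuity of the limit $\gamma^1$; it follows from the explicit density proportional to $(1-t^2)^{(n-3)/2}$, which you correctly cite at the end.
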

\begin{proof}
Because the masses $((1-\alpha)p_i)_{i=1}^m$ sum to $1-\alpha<1$, \Cref{lem:common-spherical-block} provides, after decreasing the constant, Borel sets of at least the prescribed masses that are pairwise separated by $c/\sqrt n$ for all sufficiently large $n$. Since $\sigma_{n-1}$ is nonatomic, trim each set to have mass exactly $(1-\alpha)p_i$ without decreasing the separation. This completes the proof.
\end{proof}

% v2: L1187 (unlabeled Strassen coupling theorem)
\begin{lem}
\label{lem:common-prokhorov-coupling}
Let $\mu,\nu$ be Borel probability measures on a complete separable metric space $(Z,d)$. If $\varepsilon>d_P(\mu,\nu)$, then there is a coupling $\xi$ of $\mu$ and $\nu$ such that
\[\xi(\{(z,z')\in Z\times Z\mid d(z,z')>\varepsilon\})\le\varepsilon.\]
In particular, if $d_P(\mu_n,\nu_n)\to0$ as $n\to\infty$, then there are $\varepsilon_n\downarrow0$ and couplings $\xi_n$ such that
\[\xi_n(\{(z,z')\mid d(z,z')>\varepsilon_n\})\le\varepsilon_n\]
for every $n$.
\end{lem}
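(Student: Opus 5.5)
This is Strassen's theorem in its Prokhorov form. The plan has two steps: first the single-pair statement, then the sequential consequence by a diagonal choice of radii.

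\emph{Step 1: reduction to a slightly smaller radius.} Fix $\varepsilon>d_P(\mu,\nu)$ and choose $\varepsilon'$ with $d_P(\mu,\nu)<\varepsilon'<\varepsilon$. By the definition of $d_P$ (which is written there for $\R^N$ but makes sense verbatim on $(Z,d)$), we have
\[\mu(A)\le\nu\bigl(\mathrm B(A,\varepsilon';d)\bigr)+\varepsilon'\]
for every Borel set $A$. The classical Strassen theorem on a Polish space then gives a coupling $\xi$ with $\xi(\{d>\varepsilon'\})\le\varepsilon'$. Since $\{d>\varepsilon\}\subset\{d>\varepsilon'\}$ and $\varepsilon'<\varepsilon$, we get $\xi(\{d>\varepsilon\})\le\varepsilon$.

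The theorem could simply be cited. If instead one wants to prove it, I would proceed as follows. First treat finitely supported $\mu$ and $\nu$ by a max-flow/min-cut (or Hall marriage) argument. Take the bipartite network whose edges join pairs with $d\le\varepsilon'$, add a defect node of capacity $\varepsilon'$, and observe that the neighborhood inequality is exactly the cut condition. This yields a coupling that places mass at most $\varepsilon'$ outside $\{d\le\varepsilon'\}$.

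For general $\mu$ and $\nu$, approximate by finitely supported measures $\mu_k\to\mu$ and $\nu_k\to\nu$ with Prokhorov error $\eta_k\to0$. These satisfy the neighborhood condition with radius $\varepsilon'+2\eta_k$. Apply the finite case to obtain couplings $\xi_k$. The family $(\xi_k)$ is tight because both marginal families are tight, so a subsequence converges weakly to a coupling $\xi$ of $\mu$ and $\nu$. Since $\{d>\varepsilon\}$ is open, the portmanteau theorem gives
\[\xi(\{d>\varepsilon\})\le\liminf_k\xi_k(\{d>\varepsilon\})\le\liminf_k(\varepsilon'+2\eta_k)\le\varepsilon.\]
The slack $\varepsilon-\varepsilon'$ absorbs the approximation error. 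This limiting argument is the main (though standard) obstacle. The key points are choosing the strict inequality $\varepsilon'<\varepsilon$ and passing to the limit through an open set, where the lower semicontinuity has the correct direction.

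\emph{Step 2: the sequential statement.} Put $\varepsilon_n\coloneqq d_P(\mu_n,\nu_n)+1/n$. Then $\varepsilon_n>d_P(\mu_n,\nu_n)$ and $\varepsilon_n\to0$, so Step 1 supplies the couplings $\xi_n$.

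If genuine monotonicity $\varepsilon_n\downarrow0$ is required, replace $\varepsilon_n$ by
\[\tilde\varepsilon_n\coloneqq\sup_{k\ge n}\bigl(d_P(\mu_k,\nu_k)+1/k\bigr).\]
This sequence is nonincreasing, tends to $0$, and satisfies $\tilde\varepsilon_n\ge\varepsilon_n>d_P(\mu_n,\nu_n)$. Applying Step 1 with $\tilde\varepsilon_n$ in place of $\varepsilon_n$ gives couplings with $\xi_n(\{d>\tilde\varepsilon_n\})\le\tilde\varepsilon_n$, which is the required statement.
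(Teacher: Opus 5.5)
Your proposal is correct and follows the paper's route: both apply Strassen's theorem at a radius strictly below $\varepsilon$ and then take $\varepsilon_n$ to be a tail supremum of the Prokhorov distances. The only difference is that the paper cites the partial-transport form \cite[Theorem~1.22]{shioya2016mmg} and explicitly completes the sub-coupling $m$ by adding $(\mu-\mu')\otimes(\nu-\nu')/r$. That completion step is already built into the full-coupling version of Strassen's theorem you cite, and into the defect node in your finite-case sketch.
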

\begin{proof}
By \cite[Theorem~1.22]{shioya2016mmg}, there are $0<\eta<\varepsilon$ and a Borel measure $m$ on $Z\times Z$ with marginals $\mu'\le\mu$ and $\nu'\le\nu$ such that
\[\operatorname{supp}m\subset\{(z,z')\mid d(z,z')\le\eta\},\qquad 1-m(Z\times Z)\le\eta.\]
Put $r\coloneqq1-m(Z\times Z)$. The two deficit measures $\mu-\mu'$ and $\nu-\nu'$ both have mass $r$. If $r>0$, define
\[\xi\coloneqq m+\frac{(\mu-\mu')\otimes(\nu-\nu')}{r},\]
and if $r=0$, put $\xi\coloneqq m$. This is a coupling of $\mu$ and $\nu$. Since $m$ is supported where the distance is at most $\eta<\varepsilon$, the $\xi$-mass of the set where the distance exceeds $\varepsilon$ is at most the total mass $r\le\eta<\varepsilon$ of the added measure.

For the second assertion, put
\[\varepsilon_n\coloneqq \sup_{k\ge n}d_P(\mu_k,\nu_k)+\frac1n\downarrow0\]
and apply the first assertion for each $n$. This completes the proof.
\end{proof}

% v2: lem:common-ambient-box
\begin{lem}
\label{lem:common-ambient-box}
Let $\mu,\nu$ be Borel probability measures on a complete separable metric space $(Z,d)$, and regard each measure as an mm-space on its support. Suppose that a coupling $\xi$ of $\mu$ and $\nu$ satisfies
\[\xi(\{(z,z')\mid d(z,z')>\delta\})\le\eta\]
for some $\delta,\eta\ge0$. Then
\[\Box\bigl((\operatorname{supp}\mu,d,\mu),(\operatorname{supp}\nu,d,\nu)\bigr)\le\max\{2\delta,\eta\}.\]
\end{lem}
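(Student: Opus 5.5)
The plan is to build a closed set $S\subset X_\mu\times X_\nu$ from the coupling $\xi$, where $X_\mu\coloneqq(\operatorname{supp}\mu,d,\mu)$ and $X_\nu\coloneqq(\operatorname{supp}\nu,d,\nu)$, and then read off the box bound from \Cref{def:common-box-concentration}. First I would check that $\xi$ is a coupling of the two mm-spaces. Its marginals are $\mu$ and $\nu$, so $\xi$ gives full mass to $\operatorname{supp}\mu\times\operatorname{supp}\nu$. Hence $\xi$ restricts to an element of $\mathcal T(\mu,\nu)$ on that product, and both supports are complete separable, being closed subsets of $Z$.

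Next I would take
\[S\coloneqq\{(z,z')\in\operatorname{supp}\mu\times\operatorname{supp}\nu\mid d(z,z')\le\delta\}.\]
This set is closed because $d$ is continuous. Its mass satisfies $\xi(S)\ge1-\eta$ by hypothesis, which gives $1-\xi(S)\le\eta$.

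For the distortion, take $(x,y),(x',y')\in S$. The triangle inequality gives
\[|d(x,x')-d(y,y')|\le d(x,y)+d(x',y')\le2\delta,\]
so $\operatorname{dis}S\le2\delta$. Therefore
\[\Box(X_\mu,X_\nu)\le\max\{1-\xi(S),\operatorname{dis}S\}\le\max\{2\delta,\eta\}.\]

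I expect no step to be a real obstacle. The only point needing care is that the infimum in \Cref{def:common-box-concentration} runs over closed sets and couplings on the product of the supports rather than on $Z\times Z$. That is handled by the full-mass observation in the first step.
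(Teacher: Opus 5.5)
Your proposal is correct and follows the paper's proof essentially verbatim: the same closed set $S$ of pairs in $\operatorname{supp}\mu\times\operatorname{supp}\nu$ at distance at most $\delta$, the mass bound $\xi(S)\ge1-\eta$, and the triangle-inequality bound $\operatorname{dis}S\le2\delta$. Your extra remark that $\xi$ gives full mass to the product of the supports makes explicit a point the paper uses without comment.
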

\begin{proof}
Put $S\coloneqq\{(z,z')\in\operatorname{supp}\mu\times\operatorname{supp}\nu\mid d(z,z')\le\delta\}$. This set is closed and satisfies $\xi(S)\ge1-\eta$. For $(z,z'),(w,w')\in S$, the triangle inequality gives
\[|d(z,w)-d(z',w')|\le d(z,z')+d(w,w')\le2\delta.\]
The estimate $\operatorname{dis}S\le2\delta$ follows, and \Cref{def:common-box-concentration} gives the conclusion. This completes the proof.
\end{proof}

% v2: lem:star-map
\begin{lem}[Finite star trees in subsequential limits]
\label{lem:star-map}
Under the assumptions of \Cref{prop:star-radial}, fix $m\ge1$ and positive weights $p_i$ satisfying $\sum_i p_i=1$. Then $T_{m,\lambda}(p_1,\ldots,p_m)$ belongs to every weak subsequential limit of $\Pyr(\beta_n\BetaBall)$. In particular,
\[T_{1,\lambda}(1)=([\lambda,+\infty),|\cdot|,\nu_\lambda).\]
\end{lem}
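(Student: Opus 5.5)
The plan is to realize $T_{m,\lambda}(p_1,\ldots,p_m)$, up to small additive error, as the image of a nearly $1$-Lipschitz map defined on a large subset of $\beta_n\BetaBall$. We then pass to the limit with \Cref{lem:star-additive-persistence}. Fix a weak subsequential limit $\mathcal P$ of $\Pyr(\beta_{n_k}\mathrm P\mathbb B^{n_k}_{\beta_{n_k}})$; by compactness and since $d_\Pi$ metrizes weak convergence, we work along this subsequence.

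\textbf{Truncation and good set.} Fix $\delta>0$, $M>\lambda+\delta$ and $\alpha\in(0,1)$. Choose the sets $A_{1,n},\ldots,A_{m,n}$ from \Cref{cor:common-separated-blocks}; for $m=1$ take $A_{1,n}=S^{n-1}$, since no separation is needed. Let
\[E_n\coloneqq\{(\rho,\theta)\mid \beta_n\rho\in[\lambda+\delta,M],\ \theta\in\bigcup_iA_{i,n}\}.\]
Define $f_n(\rho,\theta)\coloneqq(\beta_n\rho,i)$ for $\theta\in A_{i,n}$, with values in the compact truncation $Y=[\lambda+\delta,M]\times\{1,\ldots,m\}$ equipped with the star pseudometric $d$. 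We need $d(f_n(x),f_n(x'))\le\beta_n\dH(x,x')+\eta$ on $E_n$ with $\eta$ small.
\begin{itemize}
\item For same-branch pairs this is automatic: $|\beta_n\rho-\beta_n\sigma|\le\beta_n\dH$, since the radial coordinate is $1$-Lipschitz.
\item For different branches, $|\theta-\eta|\ge c/\sqrt n$, and \Cref{lem:star-cross-branch-asymptotic}(i) gives $\beta_n\dH\ge\beta_n\rho+\beta_n\sigma-\lambda-\varepsilon$.
\end{itemize}
Hence $\eta=\varepsilon$ works for large $n$. By \Cref{lem:common-radial-law} the radius is independent of the direction. So $\mu(E_n)\to(1-\alpha)\nu_\lambda([\lambda+\delta,M])$ by \Cref{prop:star-radial}, and the conditional pushforward $\nu_{E_n}$ converges weakly to
\[\nu^{\delta,M}\otimes\sum_ip_i\delta_i,\qquad \nu^{\delta,M}\coloneqq\nu_\lambda\text{ conditioned on }[\lambda+\delta,M].\]
The endpoints carry no mass, so this is genuine convergence of the restricted laws.

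\textbf{Applying domination.} \Cref{lem:star-partial-domination} then gives $\beta_n\BetaBall\succ_{\max\{1-m_n,\varepsilon\}}Y_n$, where $Y_n$ is $Y$ equipped with $\nu_{E_n}$. Two difficulties arise here.
\begin{itemize}
\item The error $1-m_n$ does not tend to zero. It tends to $1-(1-\alpha)\nu_\lambda([\lambda+\delta,M])$, which is small only after we send $\alpha,\delta\to0$ and $M\to\infty$.
\item \Cref{lem:star-additive-persistence} requires $\varepsilon_n\to0$.
\end{itemize}
To handle both, I would replace the conditioning by a pushforward of all of the mass. Extend $f_n$ to the whole ball by clipping: send $\beta_n\rho$ to $\min\{\max\{\beta_n\rho,\lambda\},M'\}$ on each branch, and send the directions outside $\bigcup_iA_{i,n}$ to the nearest branch. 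This removes the mass loss, but the Lipschitz property then fails on a set of small but nonzero mass.

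The cleaner route is to work at the level of $(N,R)$-measurements. \Cref{lem:common-additive-measurement} with error $\max\{1-m_n,\varepsilon\}$ shows that $\mathcal M(Y_n;N,R)$ lies within that error of $\mathcal M(\beta_n\BetaBall;N,R)$. Passing to the limit as in the proof of \Cref{lem:star-additive-persistence} places every measurement of the limit space $Y^{\delta,M}$ (the truncated tree with the conditioned law) within distance $\tau\coloneqq 1-(1-\alpha)\nu_\lambda([\lambda+\delta,M])+\varepsilon$ of the compact set $\mathcal M(\mathcal P;N,R)$.

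\textbf{Removing the truncation.} The truncated trees $Y^{\delta,M}$ converge in $\Box$ to $T_{m,\lambda}$ as $\delta\to0$ and $M\to\infty$. A monotone coupling of the conditioned law with $\nu_\lambda$, combined with \Cref{lem:common-ambient-box}, gives this convergence. Applying \Cref{cor:common-box-limit} and letting $\alpha,\varepsilon\to0$, the corresponding measurement sets of $T_{m,\lambda}$ lie in the closed set $\mathcal M(\mathcal P;N,R)$. \Cref{lem:common-bounded-coordinate-density} and $\Box$-closedness then conclude as in \Cref{lem:star-additive-persistence}.

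The identification $T_{1,\lambda}(1)=([\lambda,\infty),|\cdot|,\nu_\lambda)$ is immediate from the definition, since there is only one branch. The main obstacle is this bookkeeping: the mass-loss error does not vanish along $n$ and is only removed in the iterated limit. I would organize the argument around measurement-set inclusions with an explicit tolerance $\tau$ rather than around \Cref{lem:star-additive-persistence} directly.
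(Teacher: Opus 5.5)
Your proof is correct. The construction matches the paper's exactly: the same set $E_n$, the same map $(\rho,\theta)\mapsto(\beta_n\rho,i)$, the same use of \Cref{cor:common-separated-blocks}, \Cref{lem:star-cross-branch-asymptotic}(i), \Cref{lem:star-partial-domination} and \Cref{prop:star-radial}. You differ only in how you handle the domination error $1-m_n$, which does not vanish along $n$.

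The paper removes this obstacle with a diagonal argument rather than a tolerance bookkeeping. It lets the truncation parameters depend on an index, $\delta_j\downarrow0$, $M_j\to\infty$, $\tau_j\downarrow0$, $\alpha_j\downarrow0$. It then picks a strictly increasing sequence $n_j$ such that
\[
\bigl|\mu_{n_j,\beta_{n_j}}(E_{j,n_j})-(1-\alpha_j)q_j\bigr|\le j^{-1},
\qquad
\Box\bigl(Y_{j,n_j},T_{m,\lambda}(p_1,\ldots,p_m)\bigr)\le j^{-1}+2(1-q_j).
\]
Along this diagonal, both the domination errors and the $\Box$-errors tend to zero. \Cref{lem:star-additive-persistence} therefore applies verbatim, along a subsequence of the convergent one.

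Your route instead proves a quantitative form of that lemma: every measurement of $Y^{\delta,M}$ lies within $\tau$ of $\mathcal M(\mathcal P;N,R)$. You then use the triangle inequality for $d_P$, \Cref{cor:common-box-limit}, and closedness, and redo the final density and $\Box$-closedness step. This is valid and makes each error's fate explicit, but it duplicates work the paper gets for free by reusing the lemma.

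Two smaller points. Before \Cref{cor:common-box-limit} can transport measurements, you need $\Box(Y_n,Y^{\delta,M})\to0$, not just weak convergence of $\nu_{E_n}$. You only imply this step; the paper states it, and it follows from \Cref{lem:common-prokhorov-coupling,lem:common-ambient-box} on the compact window. For removing the truncation, the paper couples diagonally on the mass $q$ inside the window and gets $\Box\le1-q$ directly. Your monotone quantile coupling also works but is unnecessary.
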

\begin{proof}
Let $\mathcal P$ be an arbitrary weak subsequential limit of $\Pyr(\beta_n\BetaBall)$, and relabel the corresponding subsequence by $n$. Choose $\delta_j\downarrow0$, $M_j\to+\infty$, $\tau_j\downarrow0$, and $\alpha_j\downarrow0$ with $M_j>\lambda+\delta_j$, and put
\[I_j\coloneqq [\lambda+\delta_j,M_j],\qquad q_j\coloneqq \nu_\lambda(I_j),\]
so that $q_j\to1$ as $j\to\infty$.

If $m\ge2$, then \Cref{cor:common-separated-blocks} gives, for each $j$ and all sufficiently large $n$, spherical blocks satisfying $\sigma_{n-1}(A_{i,n}^{(j)})=(1-\alpha_j)p_i$ and
\[|\theta-\eta|\ge\frac{c_j}{\sqrt n},\]
whenever $i\ne k$, $\theta\in A_{i,n}^{(j)}$, and $\eta\in A_{k,n}^{(j)}$. If $m=1$, nonatomicity gives a set $A_{1,n}^{(j)}$ of mass $1-\alpha_j$. Put
\[E_{j,n}\coloneqq \{(\rho,\theta)\mid\beta_n\rho\in I_j,\ \theta\in\textstyle\bigcup_iA_{i,n}^{(j)}\},\]
and, for $\theta\in A_{i,n}^{(j)}$, define
\[h_{j,n}(\rho,\theta)\coloneqq (\beta_n\rho,i)\in I_j\times\{1,\ldots,m\}.\]
Same-branch distances satisfy $|\beta_n\rho-\beta_n\sigma|\le\beta_n\dH((\rho,\theta),(\sigma,\eta))$. For different branches, write $d_\lambda$ for the metric of $T_{m,\lambda}(p_1,\ldots,p_m)$. Then \Cref{lem:star-cross-branch-asymptotic}(i) gives, for all sufficiently large $n$,
\[d_\lambda(h_{j,n}(x),h_{j,n}(y))\le d_{\beta_n\BetaBall}(x,y)+\tau_j\qquad(x,y\in E_{j,n}),\]

Radial--angular independence from \Cref{lem:common-radial-law} and \Cref{prop:star-radial} give
\[\mu_{n,\beta_n}(E_{j,n})\to(1-\alpha_j)q_j.\]
Since $\nu_\lambda$ assigns zero mass to the endpoints of $I_j$, the pushforward under $h_{j,n}$ of the normalized measure on $E_{j,n}$ converges weakly on the compact space $I_j\times\{1,\ldots,m\}$ to
\[q_j^{-1}\nu_\lambda|_{I_j}\otimes\sum_{i=1}^mp_i\delta_i.\]
Let $Y_{j,n}$ be the mm-space obtained by equipping the support of this pushforward measure with the metric induced from the star tree. By \Cref{lem:star-partial-domination},
\[\beta_n\BetaBall\succ_{\max\{1-\mu_{n,\beta_n}(E_{j,n}),\tau_j\}}Y_{j,n}.\]
Let $Y_j^I$ be the mm-space defined by the limiting measure on the radial window. On a compact space, weak convergence is equivalent to convergence in Prokhorov distance. Therefore, \Cref{lem:common-prokhorov-coupling,lem:common-ambient-box} imply $\Box(Y_{j,n},Y_j^I)\to0$ as $n\to\infty$. The measures of $Y_j^I$ and the full star tree can be coupled diagonally on the mass $q_j$ inside the window, so \Cref{def:common-box-concentration} gives
\[\Box\bigl(Y_j^I,T_{m,\lambda}(p_1,\ldots,p_m)\bigr)\le1-q_j.\]
For every $j$, the triangle inequality yields
\[\limsup_{n\to\infty}\Box\bigl(Y_{j,n},T_{m,\lambda}(p_1,\ldots,p_m)\bigr)\le2(1-q_j).\]

Choose a strictly increasing sequence $(n_j)$ such that
\[\left|\mu_{n_j,\beta_{n_j}}(E_{j,n_j})-(1-\alpha_j)q_j\right|\le j^{-1},\qquad \Box\bigl(Y_{j,n_j},T_{m,\lambda}(p_1,\ldots,p_m)\bigr)\le j^{-1}+2(1-q_j).\]
The domination errors tend to zero, and $Y_{j,n_j}$ converges in $\Box$-distance to the required star tree. Thus \Cref{lem:star-additive-persistence} gives $T_{m,\lambda}(p_1,\ldots,p_m)\in\mathcal P$. When $m=1$, there are no cross-branch pairs, and the last identity follows from the measure and metric defined in \Cref{thm:phase-diagram-draft}. This completes the proof.
\end{proof}

% v2: def:star-pyramid + following directed-union verification
\begin{lem}
\label{lem:star-pyramid}
The family $\mathcal P_\lambda^\star$ defined in \Cref{thm:phase-diagram-draft} is a pyramid.
\end{lem}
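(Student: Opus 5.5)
The plan is to invoke the generated-pyramid criterion recalled before the definition of $\PGauss$ (\cite[Proposition~2.25]{esaki-kazukawa-mitsuishi2024cones}). It says that the $\Box$-closure of $\bigcup_{X\in\mathcal F}\Pyr(X)$ is a pyramid whenever $\mathcal F$ is upward-directed. So the whole task reduces to showing that the family
\[\mathcal F_\lambda\coloneqq\{T_{m,\lambda}(p_1,\ldots,p_m)\mid m\ge1,\ p_i>0,\ \textstyle\sum_ip_i=1\}\]
is upward-directed. Each $T_{m,\lambda}$ is a genuine mm-space: it is a closed subset of a finite metric tree, hence complete and separable, and after the metric quotient the measure $\nu_\lambda\otimes\sum_ip_i\delta_i$ has full support. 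Nonemptiness of $\mathcal P^\star_\lambda$ is clear.

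For directedness, take $T=T_{m,\lambda}(p_1,\ldots,p_m)$ and $T'=T_{m',\lambda}(p'_1,\ldots,p'_{m'})$. I would use the common refinement $Z\coloneqq T_{mm',\lambda}(p_ip'_k)_{(i,k)}$, with branches indexed by pairs $(i,k)$. The weights $p_ip'_k$ are positive and sum to $1$, so $Z\in\mathcal F_\lambda$.

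Define $\Phi\colon Z\to T$ by $(s,(i,k))\mapsto(s,i)$. It pushes $\nu_\lambda\otimes\sum_{i,k}p_ip'_k\delta_{(i,k)}$ forward to $\nu_\lambda\otimes\sum_ip_i\delta_i$, so it is measure-preserving. To check that it is $1$-Lipschitz, consider two points $(s,(i,k))$ and $(t,(j,l))$.
\begin{itemize}
  \item If $i=j$, the image distance is $|s-t|$. This is at most the source distance, which is either $|s-t|$ or $s+t-\lambda$; the latter dominates because $s,t\ge\lambda$ gives $s+t-\lambda\ge\max\{s,t\}\ge|s-t|$.
  \item If $i\ne j$, then $(i,k)\ne(j,l)$, so both distances equal $s+t-\lambda$.
\end{itemize}
Hence $T\prec Z$, and symmetrically $T'\prec Z$ via the projection onto the second index. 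The map is well defined on metric quotients: points at distance zero have $s=t$ and either share a branch or satisfy $s=t=\lambda$, and in both cases their images are at distance zero.

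The only point needing care is this last one, namely that the projection descends to the metric quotients and that the quotient has full support when $\lambda=0$ identifies the branch origins. Beyond the inequality $s+t-\lambda\ge|s-t|$, I expect no real obstacle. If one prefers not to cite the generated-pyramid result, the remaining axioms can be checked directly. Condition~(i) holds for the union because each $\Pyr(X)$ is downward closed, and it passes to the $\Box$-closure by \cite[Lemma~6.17]{shioya2016mmg}-type arguments; condition~(ii) passes to the closure by directedness. Citing the proposition is cleaner, and that is what I would do.
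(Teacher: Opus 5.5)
Your proposal is correct and follows the paper's proof: the paper uses the same common refinement $T_{mm',\lambda}$ with branch weights $p_ip'_k$, the same measure-preserving branch projections, and the same comparison $|s-t|\le s+t-\lambda$ to get the $1$-Lipschitz property. The only difference is in the closure step. The paper passes to the $\Box$-closure using the stability of domination under $\Box$-approximation in \cite[Lemma~6.10]{shioya2016mmg}, which is the lemma your fallback remark needs rather than Lemma~6.17, whereas you cite the generated-pyramid result \cite[Proposition~2.25]{esaki-kazukawa-mitsuishi2024cones}, which packages exactly this step.
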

\begin{proof}
The union in the definition is downward-closed under domination. For two generators $T_{m,\lambda}(p_1,\ldots,p_m)$ and $T_{\ell,\lambda}(q_1,\ldots,q_\ell)$, take $T_{m\ell,\lambda}$ whose branches are indexed by pairs $(i,j)$ and have weights $p_iq_j$. The branch projections
\[(s,(i,j))\longmapsto(s,i),\qquad(s,(i,j))\longmapsto(s,j)\]
are measure-preserving. Each projection either preserves the cross-branch distance $s+t-\lambda$ or decreases it to the same-branch distance $|s-t|\le s+t-\lambda$. Thus both projections are $1$-Lipschitz, and the union is nonempty, upward-directed, and downward-closed. The two stability statements for domination under $\Box$-approximation in \cite[Lemma~6.10]{shioya2016mmg} show that its $\Box$-closure remains upward-directed and downward-closed. Since it is $\Box$-closed by definition, it satisfies the three conditions in \Cref{def:common-pyramid}. This completes the proof.
\end{proof}

% v2: lem:star-rich-separation
\begin{lem}[Positive separation from finite star trees]
\label{lem:star-rich-separation}
Let $\mathcal P$ be a pyramid containing $T_{m,\lambda}(p_1,\ldots,p_m)$ for every $m\ge1$ and every choice of positive weights $p_i$. For every $N\ge1$ and positive numbers $\kappa_0,\ldots,\kappa_N$ satisfying $\sum_{i=0}^N\kappa_i<1$, there are $Y\in\mathcal P$ and $\varepsilon>0$ such that
\[\operatorname{Sep}(Y;\kappa_0+\varepsilon,\ldots,\kappa_N+\varepsilon)>0.\]
\end{lem}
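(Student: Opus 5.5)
Our plan is to take $Y$ to be a finite star tree $T_{m,\lambda}(p_1,\ldots,p_m)$ with $m=N+1$ branches and well-chosen weights, and to use the radial gap between different branches. Since $\sum_{i=0}^N\kappa_i<1$, we choose $\varepsilon>0$ small enough that $\sum_{i=0}^N(\kappa_i+2\varepsilon)<1$, and then positive weights $p_0,\ldots,p_N$ with $\sum_i p_i=1$ and $p_i>\kappa_i+2\varepsilon$ for each $i$. For instance, we can distribute the leftover mass $1-\sum_i(\kappa_i+2\varepsilon)$ equally among the indices. By hypothesis $Y\coloneqq T_{N+1,\lambda}(p_0,\ldots,p_N)\in\mathcal P$.

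Next we pick $\eta>0$ with $\nu_\lambda([\lambda,\lambda+\eta))\le\varepsilon$, which is possible because $\nu_\lambda$ has density $e^{-(s-\lambda)}$ on $[\lambda,+\infty)$ and no atom at $\lambda$. We set
\[A_i\coloneqq[\lambda+\eta,+\infty)\times\{i\}\qquad(i=0,\ldots,N),\]
viewed in the metric quotient. Each $A_i$ is Borel and has measure
\[p_i\,\nu_\lambda([\lambda+\eta,+\infty))\ge p_i(1-\varepsilon)\ge p_i-\varepsilon>\kappa_i+\varepsilon.\]
For $i\ne j$ and points $(s,i)\in A_i$, $(t,j)\in A_j$, the cross-branch distance is
\[s+t-\lambda\ge\lambda+2\eta\ge2\eta.\]
Hence $\min_{i\ne j}d(A_i,A_j)\ge2\eta>0$, so
\[\operatorname{Sep}(Y;\kappa_0+\varepsilon,\ldots,\kappa_N+\varepsilon)\ge2\eta>0.\]

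The one point that needs care is the case $\lambda=0$. There all branches meet at the center with positive density nearby, so whole branches are not separated. This is exactly why we trim off the initial segment $[\lambda,\lambda+\eta)$, and the slack $\varepsilon$ in the weights absorbs the mass lost by trimming. Apart from this, the argument is a direct verification. The same-branch metric plays no role, and we do not need to pass to the metric quotient explicitly, because the sets $A_i$ are unions of quotient classes; indeed, the quotient only identifies points at pseudodistance zero, which happens only for $\lambda=0$ at $s=t=0$, and that point is excluded from every $A_i$.
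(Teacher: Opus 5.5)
Your proposal is correct and takes essentially the same route as the paper. Both take $Y=T_{N+1,\lambda}(p_0,\ldots,p_N)$ with weights $p_i$ strictly above $\kappa_i$, and use the branch tails beyond a point strictly greater than $\lambda$, whose cross-branch distance $s+t-\lambda$ is bounded below by a positive constant. The difference is cosmetic: you trim every branch by the same $\eta$ and build a $2\varepsilon$ slack into the weights, whereas the paper chooses the cut points $r_i$ branch by branch using the exact tail mass $p_ie^{-(r_i-\lambda)}$.
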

\begin{proof}
Choose weights $p_i>\kappa_i$ with $\sum_i p_i=1$, and then choose $\varepsilon>0$ such that $p_i>\kappa_i+\varepsilon$ for every $i$. The tree $Y\coloneqq T_{N+1,\lambda}(p_0,\ldots,p_N)$ belongs to $\mathcal P$. The tail $A_i\coloneqq\{(s,i)\mid s\ge r_i\}$ of branch $i$ has mass
\[\mu_Y(A_i)=p_i e^{-(r_i-\lambda)}.\]
Taking $r_i>\lambda$ sufficiently close to $\lambda$ makes this mass at least $\kappa_i+\varepsilon$. Distinct tails have distance $r_i+r_j-\lambda>0$, so they give the required separation.
	This completes the proof.
\end{proof}

% v2: lem:star-pyramid-finite-obsdiam
% v2: lem:obsdiam-basic
\begin{lem}[Basic estimates for observable diameter {\cite[Proposition~2.18(2) and (3)]{shioya2016mmg}}]
\label{lem:common-obsdiam-basic}
For mm-spaces $X,Y$ and $\kappa>0$,
\[\operatorname{ObsDiam}(X;-\kappa)\le\operatorname{PartDiam}(\mu_X;1-\kappa).\]
Moreover, if $Y\prec X$, then
\[\operatorname{ObsDiam}(Y;-\kappa)\le\operatorname{ObsDiam}(X;-\kappa).\]
\end{lem}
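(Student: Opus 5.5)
The statement is cited from \cite[Proposition~2.18(2) and (3)]{shioya2016mmg}, so the plan is to verify both inequalities directly from the definitions of partial and observable diameter.

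\emph{First inequality.} Fix $f\in\operatorname{Lip}_1(X)$ and a Borel set $A\subset X$ with $\mu_X(A)\ge1-\kappa$. I will use the closure $\overline{f(A)}$ as a competitor in the partial diameter of $f_*\mu_X$. It is Borel and satisfies
\[f_*\mu_X\bigl(\overline{f(A)}\bigr)\ge\mu_X(A)\ge1-\kappa .\]
Because $f$ is $1$-Lipschitz, $\operatorname{diam}\overline{f(A)}=\operatorname{diam}f(A)\le\operatorname{diam}A$. This gives $\operatorname{PartDiam}(f_*\mu_X;1-\kappa)\le\operatorname{diam}A$. I then take the infimum over $A$ and the supremum over $f$. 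The case $\kappa\ge1$ is trivial, since both sides are zero by convention.

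\emph{Second inequality.} Let $p\colon X\to Y$ be a measure-preserving $1$-Lipschitz map witnessing $Y\prec X$. For $g\in\operatorname{Lip}_1(Y)$, the composite $g\circ p$ lies in $\operatorname{Lip}_1(X)$, and
\[(g\circ p)_*\mu_X=g_*(p_*\mu_X)=g_*\mu_Y .\]
So every partial diameter entering the supremum that defines $\operatorname{ObsDiam}(Y;-\kappa)$ also enters the supremum for $X$, and the inequality follows.

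Neither step has a real obstacle. The only points needing care are the Borel measurability of the competitor set, which is why I use the closure of $f(A)$, and the conventions for $\kappa\ge1$.
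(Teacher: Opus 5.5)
Your proof is correct, and the closure $\overline{f(A)}$ is a sensible way to get a Borel competitor. The paper gives no proof of its own and simply cites \cite[Proposition~2.18]{shioya2016mmg}, whose argument is the same direct check from the definitions: push a large-measure set forward under a $1$-Lipschitz function for the first inequality, and compose observables with the dominating map for the second.
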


% v2: eq:pyramid-obsdiam-limit; eq:pyramid-separation-limit
\begin{prop}[Limit formulas for weak convergence of pyramids {\cite[Theorem~3.12]{ozawa2015limit}}, {\cite[Theorem~1.1]{yokota2024obsdiam}}, {\cite[Theorem~4.9 and Corollary~4.8]{ozawa2015limit}}]
\label{prop:common-pyramid-limit-formulas}
If $\mathcal P_n\to\mathcal P$ weakly as $n\to\infty$, then, for every $\kappa>0$,
\[
\begin{aligned}
\operatorname{ObsDiam}(\mathcal P;-\kappa)&=\lim_{\varepsilon\downarrow0}\liminf_{n\to\infty}\operatorname{ObsDiam}(\mathcal P_n;-(\kappa+\varepsilon))\\
&=\lim_{\varepsilon\downarrow0}\limsup_{n\to\infty}\operatorname{ObsDiam}(\mathcal P_n;-(\kappa+\varepsilon))
\end{aligned}
,\]
and, for every choice of positive numbers $\kappa_0,\ldots,\kappa_N$,
\[
\begin{aligned}
\operatorname{Sep}(\mathcal P;\kappa_0,\ldots,\kappa_N)&=\lim_{\varepsilon\downarrow0}\liminf_{n\to\infty}\operatorname{Sep}(\mathcal P_n;\kappa_0-\varepsilon,\ldots,\kappa_N-\varepsilon)\\
&=\lim_{\varepsilon\downarrow0}\limsup_{n\to\infty}\operatorname{Sep}(\mathcal P_n;\kappa_0-\varepsilon,\ldots,\kappa_N-\varepsilon)
\end{aligned}
.\]
The limits in the second formula are restricted to $0<\varepsilon<\min_i\kappa_i$.
\end{prop}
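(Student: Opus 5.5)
The plan is to reduce both identities to statements about finite-dimensional measurements. Then the characterization of weak convergence in \Cref{def:common-measurement-convergence}, namely Hausdorff convergence of $\mathcal M(\mathcal P_n;N,R)$ to $\mathcal M(\mathcal P;N,R)$ in $d_P$, does the work. This is how the cited results \cite[Theorem~3.12 and Theorem~4.9]{ozawa2015limit} and \cite[Theorem~1.1]{yokota2024obsdiam} are organized, and I would follow that route.

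\textbf{Step 1: measurement formulas.} For observable diameter, a $1$-Lipschitz $f$ can be clipped to $[-R,R]$. For $R$ large this changes $\operatorname{PartDiam}(f_*\mu_X;1-\kappa)$ only by letting $R\to\infty$. Using $\mathcal M(\Pyr(X);1,R)=\mathcal M(X;1,R)$ and taking the supremum over the pyramid, I would show
\[\operatorname{ObsDiam}(\mathcal P;-\kappa)=\sup_{R>0}\sup_{\nu\in\mathcal M(\mathcal P;1,R)}\operatorname{PartDiam}(\nu;1-\kappa).\]
For separation, given admissible sets $A_0,\ldots,A_N$ with $\min_{i\ne j}d_X(A_i,A_j)\ge s$, the map
\[\Phi=(\min\{d_X(\cdot,A_i),s\})_{i=0}^N\]
is $1$-Lipschitz into $(\R^{N+1},\|\cdot\|_\infty)$. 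The images of the $A_i$ are $\|\cdot\|_\infty$-separated by at least $s$, because the $i$-th coordinate vanishes on $\Phi(A_i)$ and equals $s$ on $\Phi(A_j)$. Conversely, separated sets in a measurement pull back to separated sets of the same mass. So $\operatorname{Sep}$ of a space, and of a pyramid, equals a supremum of separations of measures in $\mathcal M(\cdot;N+1,R)$, with $R$ any bound exceeding the separation.

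\textbf{Step 2: Prokhorov stability.} If $d_P(\mu,\nu)<\varepsilon$, then
\[\operatorname{PartDiam}(\nu;1-(\kappa+\varepsilon))\le\operatorname{PartDiam}(\mu;1-\kappa)+2\varepsilon.\]
The reason is that the $\varepsilon$-neighborhood of a set of $\mu$-mass at least $1-\kappa$ has $\nu$-mass at least $1-\kappa-\varepsilon$. Likewise, sets of masses at least $\kappa_i$ that are $s$-separated for $\mu$ have $\varepsilon$-neighborhoods of $\nu$-mass at least $\kappa_i-\varepsilon$, which are $(s-2\varepsilon)$-separated.

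\textbf{Step 3: combining.} Combining Steps 1 and 2 with Hausdorff convergence of measurements gives both inequalities. The upper bound for $\mathcal P$ comes from moving a near-optimal measure of $\mathcal P$ to a nearby measure of $\mathcal P_n$, shifting the mass parameter by $\varepsilon$. The reverse bound comes from moving a measure of $\mathcal P_n$ to $\mathcal P$. Taking $\liminf$ and $\limsup$ and then $\varepsilon\downarrow0$ produces the sandwich, and equality follows from monotonicity in $\kappa$ together with the $\varepsilon$-regularization.

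\textbf{Main obstacle.} The delicate point is the order of limits and the one-sided continuity in the mass parameter. The pyramid quantities are defined with the decrement $\delta$ in \Cref{def:common-observable-separation}, and the $\operatorname{ObsDiam}$ formula for $\mathcal P$ must come out without any $\varepsilon$-shift. For $\operatorname{ObsDiam}$, right-continuity in $\kappa$ is exactly what \cite[Theorem~1.1]{yokota2024obsdiam} addresses. I would first prove the two-sided bounds with shifted parameters, then invoke monotonicity in $\kappa$ and compactness of $\mathcal M(\mathcal P;N,R)$, where a supremum is attained and partial diameters are upper semicontinuous, to remove the shift on the limit side. The case $\kappa\ge1$ is trivial.
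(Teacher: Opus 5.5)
The paper does not prove this proposition; it quotes it from \cite{ozawa2015limit} and \cite{yokota2024obsdiam}. Your measurement route works for the separation formula. There the truncation level is the separation value $s$ itself, so all relevant measurements lie in $\mathcal M(\cdot\,;N+1,s)$ uniformly in $n$, and Step~2 moves separated sets in either direction. Your route also gives the bound $\operatorname{ObsDiam}(\mathcal P;-\kappa)\le\lim_{\varepsilon\downarrow0}\liminf_{n}\operatorname{ObsDiam}(\mathcal P_n;-(\kappa+\varepsilon))$. In that direction you start from one measure in $\mathcal M(\mathcal P;1,R)$ with a fixed $R$, and you only need right-continuity of $\kappa\mapsto\operatorname{ObsDiam}(\mathcal P;-\kappa)$.

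\textbf{The gap} is the reverse inequality $\lim_{\varepsilon\downarrow0}\limsup_n\operatorname{ObsDiam}(\mathcal P_n;-(\kappa+\varepsilon))\le\operatorname{ObsDiam}(\mathcal P;-\kappa)$.
\begin{itemize}
\item Hausdorff convergence of $\mathcal M(\mathcal P_n;1,R)$ holds only for each fixed $R$. Your Step~1 puts $\sup_R$ outside and is proved separately for each pyramid.
\item To move a near-optimal $(f_n)_*\mu_{X_n}$ into $\mathcal P$, you need a replacement in $\mathcal M(\mathcal P_n;1,R)$ with $R$ independent of $n$ that essentially preserves the partial diameter. Nothing in your sketch provides this.
\item For a target value $L$ and $\kappa+\varepsilon<1/2$, clipping to $[m-S,m+S]$ about a median $m$ with $S>L$ suffices. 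An interval of length at most $L$ touching an endpoint pulls back into a half-line beyond $m$. That half-line has mass at most $1/2<1-(\kappa+\varepsilon)$.
\item For $\kappa\ge1/2$ clipping fails. Take $\nu_n=\tfrac12\mathrm{Unif}[0,1]+\tfrac12\mathrm{Unif}[n,n+1]$ and $\kappa=0.6$. For $n\ge2$ you have $\operatorname{PartDiam}(\nu_n;0.4)=0.8$. Clipping to any window of length less than $n-1$ sends a whole cluster to an endpoint atom of mass $1/2\ge0.4$, so the partial diameter drops to $0$.
\item What is needed is a folding construction whose range is bounded in terms of $L,\kappa,\varepsilon$ alone, or else the argument of the cited papers. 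In this example $t\mapsto\min\{|t|,|t-n|+2\}$ pushes $\nu_n$ to $\tfrac12\mathrm{Unif}[0,1]+\tfrac12\mathrm{Unif}[2,3]$, which keeps the partial diameter $0.8$ within a bounded range.
\item In this paper the $\limsup$ direction is used only in \Cref{lem:common-ptp-criterion}, at mass parameters below $1/2$. The median repair would cover that application, but not the proposition as stated.
\end{itemize}

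Your ``main obstacle'' paragraph also puts the difficulty in the wrong place. Right-continuity in $\kappa$ is elementary. For $\nu$ on $\R$, the map $\kappa\mapsto\operatorname{PartDiam}(\nu;1-\kappa)$ is right-continuous by compactness of bounded intervals. A supremum of nonincreasing right-continuous functions is again right-continuous. Moreover, partial diameter is lower semicontinuous in $\nu$, not upper semicontinuous, so compactness of $\mathcal M(\mathcal P;1,R)$ does not give maximizers.
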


\begin{lem}
\label{lem:star-pyramid-finite-obsdiam}
For $\lambda\ge0$ and $0<\kappa<1$,
\[\operatorname{ObsDiam}(\mathcal P_\lambda^\star;-\kappa)\le\lambda+2\log\frac2\kappa.\]
For $\kappa\ge1$, the left-hand side is zero.
\end{lem}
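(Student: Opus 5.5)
We bound the observable diameter of each generator $T_{m,\lambda}(p_1,\ldots,p_m)$ uniformly in $m$ and the weights, and then pass to dominated spaces and to the box closure.

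\emph{Generators.} Fix a generator $T=T_{m,\lambda}(p_1,\ldots,p_m)$ and $0<\kappa<1$. Put $r\coloneqq\lambda+\log(1/\kappa)$ and let
\[A\coloneqq\{(s,i)\mid s\le r\}.\]
Since the coordinate $s$ has law $\nu_\lambda$ independently of the branch, $\mu_T(A)=1-e^{-(r-\lambda)}=1-\kappa$. Any two points of $A$ are at distance at most $s+t-\lambda\le 2r-\lambda=\lambda+2\log(1/\kappa)$; for same-branch pairs this uses $|s-t|\le s+t-\lambda$, valid because $s,t\ge\lambda$. Hence
\[\operatorname{PartDiam}(\mu_T;1-\kappa)\le\lambda+2\log\frac1\kappa\le\lambda+2\log\frac2\kappa,\]
and \Cref{lem:common-obsdiam-basic} bounds $\operatorname{ObsDiam}(T;-\kappa)$ by the same quantity.

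\emph{Dominated spaces.} By the monotonicity part of \Cref{lem:common-obsdiam-basic}, the same bound holds for every element of $\bigcup\Pyr(T_{m,\lambda}(\cdot))$. The bound does not depend on $m$ or on the weights.

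\emph{Box closure.} This is the main point. Let $X\in\mathcal P_\lambda^\star$ and choose $X_k$ in the union with $\Box(X_k,X)\to0$. I would use the fact that the constant $2$ leaves room to lose mass. For $\Box(X_k,X)<\delta$ there are a coupling and a closed set $S$ with $\pi(S)\ge1-\delta$ and $\operatorname{dis}S\le\delta$. Given $f\in\operatorname{Lip}_1(X)$, the construction of \Cref{lem:common-additive-measurement} (extension along $S$) produces a $1$-Lipschitz $\tilde f$ on $X_k$ with $|\tilde f(x')-f(x)|\le\delta$ on $S$. It follows that
\[\operatorname{PartDiam}(f_*\mu_X;1-\kappa)\le\operatorname{PartDiam}\bigl(\tilde f_*\mu_{X_k};1-(\kappa-\delta)\bigr)+2\delta.\]
Taking suprema over $f$ and letting $\delta\to0$ gives
\[\operatorname{ObsDiam}(X;-\kappa)\le\lim_{\delta\downarrow0}\Bigl(\lambda+2\log\frac1{\kappa-\delta}+2\delta\Bigr)=\lambda+2\log\frac1\kappa.\]
(Equivalently, this is the standard lower semicontinuity of $\operatorname{ObsDiam}$ under $\Box$ with a mass shift.) Taking the supremum over $X\in\mathcal P_\lambda^\star$ yields the claim. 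The factor $2$ inside the logarithm is only a safety margin; it could also absorb a choice such as $\kappa/2$ if one preferred to argue via measurements and \Cref{prop:common-pyramid-limit-formulas}.

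\emph{The case $\kappa\ge1$.} Here the left-hand side is zero by the convention in the definition of observable diameter.
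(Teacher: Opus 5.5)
Your argument is correct. The generator step matches the paper's: radial truncation, \Cref{lem:common-obsdiam-basic}, and monotonicity under domination. You pass to the box closure by a different route.

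The paper works at mass parameter $\kappa/2$ with $M>\lambda+\log(2/\kappa)$ and obtains $\operatorname{ObsDiam}(Y;-\kappa/2)\le 2M-\lambda$ for every $Y$ dominated by a finite star tree. Then, for $X\in\mathcal P_\lambda^\star$ with $\Box(Y_j,X)\to0$, it uses \Cref{cor:common-box-limit} to get $\Pyr(Y_j)\to\Pyr(X)$ weakly. It transfers the bound with the cited limit formula in \Cref{prop:common-pyramid-limit-formulas} and finishes with monotonicity in the mass parameter.

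You instead prove the needed lower semicontinuity directly, and the details check out:
\begin{itemize}
\item The absolute distortion bound lets you read $S$ as witnessing $X_k\succ_\delta X$.
\item The infimal extension of \Cref{lem:common-additive-measurement} gives $\tilde f$ with $f(x)-\delta\le\tilde f(x')\le f(x)$ on $S$.
\item Take a near-optimal set $B$ for $\tilde f_*\mu_{X_k}$ at mass $1-\kappa+\delta$. Its closed $\delta$-thickening has $f_*\mu_X$-mass at least $\pi(S\cap\{\tilde f(x')\in B\})\ge1-\kappa$, which is your mass-shift inequality.
\end{itemize}

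Your route is self-contained, since it needs no external convergence theorem for observable diameter. It also gives the sharper bound $\lambda+2\log(1/\kappa)$, so the factor $2$ inside the logarithm and the paper's detour through $\kappa/2$ are unnecessary. The paper's route is shorter because it reuses a black-box result it needs anyway for \Cref{lem:common-ptp-criterion}.

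Your constant is in fact optimal. For $\kappa<1/2$, take $T_{2,\lambda}(1/2,1/2)$ and the $1$-Lipschitz function equal to $s-\lambda/2$ on one branch and $\lambda/2-s$ on the other. Its pushforward has partial diameter exactly $\lambda+2\log(1/\kappa)$ at mass $1-\kappa$.
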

\begin{proof}
The assertion for $\kappa\ge1$ follows from the definition. Let $0<\kappa<1$ and choose $M>\lambda+\log(2/\kappa)$. In any finite star tree, the set on which the radial coordinate is at most $M$ has mass
\[\nu_\lambda([\lambda,M])=1-e^{-(M-\lambda)}>1-\kappa/2\]
and diameter at most $2M-\lambda$. By \Cref{lem:common-obsdiam-basic}, for every finite star tree $T$ and every $Y\prec T$,
\[\operatorname{ObsDiam}(Y;-\kappa/2)\le\operatorname{ObsDiam}(T;-\kappa/2)\le2M-\lambda.\]

Fix $X\in\mathcal P_\lambda^\star$, and choose $Y_j$ in the generating union such that $\Box(Y_j,X)\to0$. Monotonicity under domination and $Y_j\in\Pyr(Y_j)$ give $\operatorname{ObsDiam}(\Pyr(Y_j);-\kappa/2)=\operatorname{ObsDiam}(Y_j;-\kappa/2)$, and the same identity holds for $X$. By \Cref{cor:common-box-limit}, $\Pyr(Y_j)\to\Pyr(X)$ weakly. Applying \Cref{prop:common-pyramid-limit-formulas} at mass parameter $\kappa/2$ to the uniform bound above gives $\operatorname{ObsDiam}(X;-\kappa/2)\le2M-\lambda$, and hence $\operatorname{ObsDiam}(X;-\kappa)\le2M-\lambda$. Take the supremum over $X$ and let $M\downarrow\lambda+\log(2/\kappa)$ to obtain the result. This completes the proof.
\end{proof}

% v2: lem:profile-compactness-cross-constraint
\begin{lem}[Profile compactness and cross-branch constraint]
\label{lem:star-profile-compactness}
Assume that $\beta_n\to0$ and $\beta_n\log n\to\lambda\in[0,+\infty)$ as $n\to\infty$, and put $Z_n\coloneqq\beta_n\BetaBall$. Fix $N\ge1$, $R>0$, and a sequence $n_k\to+\infty$. Let $\Psi_k\colon Z_{n_k}\to B_R^N$ be $1$-Lipschitz maps satisfying
\[(\Psi_k)_*\mu_{n_k,\beta_{n_k}}\overset{\mathrm d}{\longrightarrow}\mu,\]
as $k\to\infty$. For $\delta>0$ and $M>\lambda+\delta$, put $I\coloneqq[\lambda+\delta,M]$ and $K\coloneqq\operatorname{Lip}_1(I,B_R^N)$, and define
\[g_{k,\theta}(s)\coloneqq \Psi_k(s/\beta_{n_k},\theta),\qquad q_k\coloneqq (\theta\mapsto g_{k,\theta})_*\sigma_{n_k-1}.\]
Then, after passing to a subsequence, $q_k\Rightarrow q$ for some probability measure $q$ on $K$, and
\[\|g(s)-h(t)\|_\infty\le s+t-\lambda\qquad(g,h\in\operatorname{supp}q,\ g\ne h,\ s,t\in I).\]
Moreover, if $E(s,g)\coloneqq g(s)$, then
\[\mu^I\coloneqq E_*\bigl((\nu_\lambda|_I)\otimes q\bigr)\le\mu,\qquad(\mu-\mu^I)(B_R^N)=1-\nu_\lambda(I).\]
\end{lem}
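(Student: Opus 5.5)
The plan is to treat the three assertions in order: compactness of the profiles, the cross-branch constraint on $\operatorname{supp}q$, and the decomposition of $\mu$. Throughout, write $n=n_k$ and $\beta=\beta_{n_k}$, and equip $K$ with the uniform metric $\sup_{s\in I}\|g(s)-h(s)\|_\infty$.

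\emph{Compactness.} First I will check that each profile $g_{k,\theta}$ lies in $K$. The hyperbolic distance between $(s/\beta,\theta)$ and $(t/\beta,\theta)$ is $|s-t|/\beta$ by \eqref{eq:common-polar-distance}. After scaling by $\beta$ it equals $|s-t|$, so $g_{k,\theta}$ is $1$-Lipschitz into $B_R^N$. The map $\theta\mapsto g_{k,\theta}$ is continuous into $K$, since $\Psi_k$ is continuous and uniformly continuous on the compact set $\{(s/\beta,\theta)\mid s\in I\}$. Hence $q_k$ is a Borel probability measure on $K$. By Arzel\`a--Ascoli, $K$ is compact, so $\{q_k\}$ is tight, and Prokhorov's theorem gives a subsequence with $q_k\Rightarrow q$.

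\emph{Cross-branch constraint.} Fix $g,h\in\operatorname{supp}q$, $s,t\in I$, and $\varepsilon>0$. Let $U,V$ be the open $\varepsilon$-balls in $K$ around $g$ and $h$. Then $q(U),q(V)>0$, and the portmanteau theorem gives $\liminf_k q_k(U)\ge q(U)$, and likewise for $V$. Put $A_k\coloneqq\{\theta\mid g_{k,\theta}\in U\}$ and $B_k\coloneqq\{\eta\mid g_{k,\eta}\in V\}$. Their $\sigma_{n-1}$-masses are bounded below by some $a>0$ for large $k$. Then \Cref{lem:common-spherical-two-set} yields $\theta\in A_k$ and $\eta\in B_k$ with $|\theta-\eta|\le C/\sqrt n$, where $C\coloneqq 3/\sqrt a$. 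Next I apply \Cref{lem:star-cross-branch-asymptotic}(ii) with this $C$, together with the $1$-Lipschitz property of $\Psi_k$ on $Z_{n}$. This gives
\[\|g_{k,\theta}(s)-g_{k,\eta}(t)\|_\infty\le\beta\,\dH\bigl((s/\beta,\theta),(t/\beta,\eta)\bigr)\le s+t-\lambda+\varepsilon.\]
Since $g_{k,\theta}$ is $\varepsilon$-close to $g$ and $g_{k,\eta}$ is $\varepsilon$-close to $h$, we get $\|g(s)-h(t)\|_\infty\le s+t-\lambda+3\varepsilon$, and then we let $\varepsilon\downarrow0$. The argument does not use $g\ne h$; it proves the stated case and more.

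This is the step I expect to be the main obstacle. The difficulty is keeping the choices in the right order so that the uniformity in \Cref{lem:star-cross-branch-asymptotic}(ii) over the window is actually available. The lower mass bound $a$ must be fixed first, then $C$, and only then the threshold for $n$.

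\emph{Decomposition of $\mu$.} By \Cref{lem:common-radial-law}, $\mu_{n,\beta}$ is the law of $(\mathbf R,\Theta)$ with $\mathbf R$ and $\Theta$ independent. On the event $\{\beta\mathbf R\in I\}$ we have $\Psi_k=g_{k,\Theta}(\beta\mathbf R)$. Hence
\[(\Psi_k)_*\mu_{n,\beta}=E_*\bigl(\rho_k|_I\otimes q_k\bigr)+\omega_k,\]
where $\rho_k$ is the law of $\beta\mathbf R$ and $\omega_k\ge0$ is the image of the complementary event. By \Cref{prop:star-radial}, $\rho_k\Rightarrow\nu_\lambda$, and the endpoints of $I$ are $\nu_\lambda$-null, so $\rho_k|_I\Rightarrow\nu_\lambda|_I$. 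Products of weakly convergent finite measures converge weakly, and $E\colon I\times K\to B_R^N$ is continuous because $|g(s)-g'(s')|\le|s-s'|+\|g-g'\|$. Therefore $E_*(\rho_k|_I\otimes q_k)\Rightarrow\mu^I$. Consequently $\omega_k=(\Psi_k)_*\mu_{n,\beta}-E_*(\rho_k|_I\otimes q_k)$ converges weakly to $\mu-\mu^I$. A weak limit of nonnegative measures is nonnegative, so $\mu-\mu^I\ge0$. Its total mass is $\lim_k\mathbb P(\beta\mathbf R\notin I)=1-\nu_\lambda(I)$. All measures are supported in $B_R^N$, which gives the mass identity.
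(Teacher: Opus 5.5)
Your proposal is correct and follows the paper's proof essentially step for step: Arzel\`a--Ascoli and Prokhorov give the profile limit $q$; portmanteau together with \Cref{lem:common-spherical-two-set} and \Cref{lem:star-cross-branch-asymptotic}(ii), with the constants fixed in the order you describe, gives the cross-branch constraint; and radial--angular independence, \Cref{prop:star-radial}, and continuity of $E$ give $\mu^I\le\mu$ and the mass identity. Your remark that the argument never uses $g\ne h$ is also right, since the paper's disjointness of the $\tau$-balls is inessential and for $g=h$ the bound follows from $|s-t|\le s+t-\lambda$ on $I$.
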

\begin{proof}
The Arzel\`a--Ascoli theorem gives a subsequential limit of the profile measures. The distance estimate between two spherical sets and the cross-branch asymptotic yield the constraint between distinct limiting profiles. Radial--angular independence and the radial limit then show that the pushforward measure on the window converges to $\mu^I$. The full argument is given in \Cref{sec:app-star-profile}.
	This completes the proof.
\end{proof}

% v2: lem:measurement-upper-inclusion
% v2: prop:measurement-facts(ii)
\begin{prop}
\label{prop:common-truncation}
For an mm-space $(X,d_X,\mu_X)$, or simply $X$, and $L>0$, put
\[X^{(L)}\coloneqq (X,d_X\wedge L,\mu_X).\]
Then, for every $N\ge1$ and $R>0$,
\[\mathcal M(X;N,R)=\mathcal M(X^{(2R)};N,R).\]
\end{prop}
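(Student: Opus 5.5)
We prove the two inclusions separately. The point is that a $1$-Lipschitz map into $(B_R^N,\|\cdot\|_\infty)$ cannot detect distances larger than $2R$, since the target has diameter at most $2R$. Throughout we use that $X$ and $X^{(L)}$ have the same underlying set and the same measure $\mu_X$, and that $d_X\wedge L$ is again a metric. Completeness and separability are unchanged, so $X^{(L)}$ is an mm-space.

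\emph{First inclusion, $\mathcal M(X^{(2R)};N,R)\subset\mathcal M(X;N,R)$.} Since $d_X\wedge 2R\le d_X$, the identity map $X\to X^{(2R)}$ is $1$-Lipschitz and measure-preserving. Given a $1$-Lipschitz map $\Psi\colon X^{(2R)}\to B_R^N$, the composition $\Psi\circ\operatorname{id}$ is therefore $1$-Lipschitz on $X$. Its push-forward of $\mu_X$ is $\Psi_*\mu_X$, so every element of the left-hand side lies in the right-hand side.

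\emph{Second inclusion, $\mathcal M(X;N,R)\subset\mathcal M(X^{(2R)};N,R)$.} Let $\Psi\colon X\to B_R^N$ be $1$-Lipschitz for $d_X$ and $\|\cdot\|_\infty$. For $x,y\in X$ we have
\[\|\Psi(x)-\Psi(y)\|_\infty\le d_X(x,y),\]
and, because both values lie in $B_R^N$,
\[\|\Psi(x)-\Psi(y)\|_\infty\le\operatorname{diam}_{\|\cdot\|_\infty}B_R^N=2R.\]
Combining the two bounds gives $\|\Psi(x)-\Psi(y)\|_\infty\le d_X(x,y)\wedge 2R$. Hence the same map $\Psi$ is $1$-Lipschitz on $X^{(2R)}$, and it produces the same push-forward $\Psi_*\mu_X$. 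The identity map is a homeomorphism between the two metrics, so $\Psi$ remains Borel.

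There is no real obstacle here. The only point needing care is the observation that the target ball has $\|\cdot\|_\infty$-diameter exactly $2R$, which is why the truncation level is $2R$ and not $R$.
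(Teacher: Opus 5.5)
Your argument is the paper's argument: the identity map handles $\mathcal M(X^{(2R)};N,R)\subset\mathcal M(X;N,R)$, and the $2R$ diameter bound handles the reverse inclusion. However, you skip the one step the paper actually has to justify.

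By definition, $\mathcal M(X;N,R)$ consists of push-forwards $\Psi_*\mu_X$ by $1$-Lipschitz maps $\Psi\colon X\to(\R^N,\|\cdot\|_\infty)$, subject only to $\operatorname{supp}\Psi_*\mu_X\subset B_R^N$. The image $\Psi(X)$ is not assumed to lie in $B_R^N$. You start from ``Let $\Psi\colon X\to B_R^N$'' and then use ``because both values lie in $B_R^N$'', so the bound $\|\Psi(x)-\Psi(y)\|_\infty\le2R$ is unjustified for arbitrary $x,y\in X$. That bound is exactly what the inclusion $\mathcal M(X;N,R)\subset\mathcal M(X^{(2R)};N,R)$ rests on. A point sent far outside $B_R^N$ would break the Lipschitz condition for $d_X\wedge2R$. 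So the ``only point needing care'' is not the value of the diameter but whether the diameter bound applies at all.

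The paper closes this with full support. $\Psi$ is continuous and $\operatorname{supp}\mu_X=X$. So for each $x\in X$ and each open $U\ni\Psi(x)$, the open set $\Psi^{-1}(U)$ contains $x$ and therefore has positive $\mu_X$-measure. This gives $\Psi(X)\subset\operatorname{supp}\Psi_*\mu_X\subset B_R^N$, and your argument then goes through.

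Alternatively, replace $\Psi$ by its coordinatewise clipping to $[-R,R]$. The clipped map is still $1$-Lipschitz for $\|\cdot\|_\infty$ and takes values in $B_R^N$. It has the same push-forward, because clipping is the identity on $B_R^N$, which carries full $\Psi_*\mu_X$-measure. This route does not even use full support.

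With either addition your proof is complete. Your first inclusion needs no change, since it never uses where $\Psi$ takes its values.
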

\begin{proof}
Since $d_X\wedge2R\le d_X$, every $1$-Lipschitz map on $X^{(2R)}$ is also $1$-Lipschitz on $X$. Conversely, suppose that $\Psi_*\mu_X\in\mathcal M(X;N,R)$. Continuity of $\Psi$ and $\operatorname{supp}\mu_X=X$ give $\Psi(X)\subset\operatorname{supp}\Psi_*\mu_X\subset B_R^N$, so $\|\Psi(x)-\Psi(y)\|_\infty\le2R$. Combining this bound with the Lipschitz property of $\Psi$ on $X$ gives $\|\Psi(x)-\Psi(y)\|_\infty\le(d_X\wedge2R)(x,y)$ and proves the reverse inclusion. This completes the proof.
\end{proof}

\begin{lem}[Upper inclusion for $(N,R)$-measurements]
\label{lem:star-measurement-upper}
Assume that $\beta_n\to0$ and $\beta_n\log n\to\lambda\in[0,+\infty)$ as $n\to\infty$, and put $Z_n\coloneqq\beta_n\BetaBall$. Fix $N\ge1$, $R>0$, and a sequence $n_k\to+\infty$. If
\[\mu_k\in\mathcal M(Z_{n_k};N,R),\qquad d_P(\mu_k,\mu)\to0\]
as $k\to\infty$, then
\[\mu\in\mathcal M(\mathcal P_\lambda^\star;N,R).\]
\end{lem}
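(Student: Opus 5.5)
Write $\mu_k=(\Psi_k)_*\mu_{n_k,\beta_{n_k}}$ with $\Psi_k\colon Z_{n_k}\to B_R^N$ $1$-Lipschitz. Then apply \Cref{lem:star-profile-compactness} for a sequence of windows $I_j=[\lambda+\delta_j,M_j]$ with $\delta_j\downarrow0$ and $M_j\to+\infty$. A diagonal subsequence gives profile limits $q^{(j)}$ on $K_j=\operatorname{Lip}_1(I_j,B_R^N)$ satisfying the cross-branch constraint. The lemma also gives the sub-measure $\mu^{I_j}\le\mu$, whose deficit is $1-\nu_\lambda(I_j)\to0$. It therefore suffices to show that, for each fixed $j$, the measure $\mu^{I_j}$ (normalized, or padded with a small remainder) is within a small Prokhorov distance of an element of $\mathcal M(\mathcal P_\lambda^\star;N,R)$. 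Then $\mu$ is a Prokhorov limit of such elements, and $\mathcal M(\mathcal P_\lambda^\star;N,R)$ is closed because it is compact.

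\emph{Finite-branch approximation.} Fix $j$ and $\varepsilon>0$. Since $K_j$ is compact for the uniform norm (Arzel\`a--Ascoli), partition $\operatorname{supp}q^{(j)}$ into finitely many Borel pieces $G_1,\ldots,G_m$ of uniform diameter at most $\varepsilon$, each of positive mass $p_i=q^{(j)}(G_i)$, and pick representatives $g_i\in G_i$. Replacing each $g\in G_i$ by $g_i$ moves $E_*((\nu_\lambda|_{I_j})\otimes q^{(j)})$ by at most $\varepsilon$ in $d_P$, via the obvious coupling. Next define a map $\Phi$ on the star tree $T_{m,\lambda}(p_1,\ldots,p_m)$ by $\Phi(s,i)=g_i(s)$ for $s\in I_j$. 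On a single branch $\Phi$ is $1$-Lipschitz because $g_i\in K_j$. Across branches with $g_i\ne g_k$ we get $\|g_i(s)-g_k(t)\|_\infty\le s+t-\lambda$ from the constraint in \Cref{lem:star-profile-compactness}. If $g_i=g_k$, the two branches can be merged or the estimate reduces to the same-branch one, which is even stronger. So $\Phi$ is $1$-Lipschitz on the window part $I_j\times\{1,\ldots,m\}$.

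\emph{Extension and truncation.} Extend each coordinate of $\Phi$ from the closed subset $I_j\times\{1,\ldots,m\}$ to the whole star tree by McShane's formula, then clip to $[-R,R]$. This keeps $\Phi$ $1$-Lipschitz with values in $B_R^N$, so $\Phi_*\mu_T\in\mathcal M(T;N,R)\subset\mathcal M(\mathcal P_\lambda^\star;N,R)$. On the window, $\Phi_*\mu_T$ agrees with the approximated $\mu^{I_j}$. Outside it, the two measures differ by mass at most $1-\nu_\lambda(I_j)$ on the $\mu_T$ side and, by the mass identity in \Cref{lem:star-profile-compactness}, the same amount on the $\mu$ side. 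Hence $d_P(\mu,\Phi_*\mu_T)\le\varepsilon+(1-\nu_\lambda(I_j))$. Letting $\varepsilon\to0$ and $j\to\infty$ and using closedness concludes the proof. If one prefers to work with pyramid membership, \Cref{prop:common-truncation} lets us replace the tree metric by its $2R$-truncation without changing measurements.

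The main obstacle is the cross-branch Lipschitz check, specifically distinct profiles that are uniformly close. The constraint is stated only for $g\ne h$ in the support, which is exactly what is needed, since the representatives $g_i$ can be chosen distinct after merging equal ones. A second, smaller difficulty is a measurability detail: the partition pieces must have positive mass, and null pieces are simply discarded with a negligible Prokhorov cost.
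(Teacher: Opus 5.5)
Your proof is correct and follows essentially the same route as the paper: profile compactness, quantization of $\operatorname{supp}q$ into finitely many distinct profiles with positive weights, the cross-branch constraint giving a $1$-Lipschitz map on $T_{m,\lambda}(p_1,\ldots,p_m)$, McShane extension with clipping, and a coupling on the radial window. The differences are cosmetic. You close the argument through the stated compactness of $\mathcal M(\mathcal P_\lambda^\star;N,R)$, whereas the paper goes through the Prokhorov-coupling and ambient-box lemmas and box-closedness. Your diagonal over windows is unnecessary, since $\mu^{I}\le\mu$ involves only the fixed limit $\mu$ and the profile lemma can be reapplied for each window.
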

\begin{proof}
Quantize the limiting profiles from \Cref{lem:star-profile-compactness} by a finite family, and use the cross-branch constraint to extend them to a $1$-Lipschitz map on a finite star tree. The resulting pushforward measure approximates $\mu$ in Prokhorov distance, and the $\Box$-closedness of $\mathcal P_\lambda^\star$ gives the conclusion. The full argument is given in \Cref{sec:app-star-measurement}. This completes the proof.
\end{proof}

% v2: cor:measurements-determine-pyramid
\begin{cor}[Identification of pyramids by $(N,R)$-measurements]
\label{cor:star-measurements-determine}
If pyramids $\mathcal P,\mathcal Q$ satisfy
\[\mathcal M(\mathcal P;N,R)=\mathcal M(\mathcal Q;N,R)\]
for every $N\ge1$ and $R>0$, then $\mathcal P=\mathcal Q$.
\end{cor}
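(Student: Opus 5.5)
By symmetry it suffices to prove $\mathcal P\subset\mathcal Q$. Fix $X\in\mathcal P$; the plan is to approximate $X$ in $\Box$ by spaces built from measurements lying in $\mathcal M(\mathcal P;N,R)$, transfer these approximants to $\mathcal Q$ through the hypothesis, and close up using box-closedness.

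First, apply \Cref{lem:common-bounded-coordinate-density} to the pyramid $\mathcal P$ and the element $X$. This yields a strictly increasing sequence $(N_j)$ and measures $\nu_j\in\mathcal M(\mathcal P;N_j,N_j)$ such that
\[\Box\bigl(X,(\operatorname{supp}\nu_j,\|\cdot\|_\infty,\nu_j)\bigr)\to0.\]

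Second, take $N=R=N_j$ in the hypothesis to obtain $\nu_j\in\mathcal M(\mathcal Q;N_j,N_j)$. By the definition of $\mathcal M(\mathcal Q;N,R)$, this says exactly that the mm-space $(\operatorname{supp}\nu_j,\|\cdot\|_\infty,\nu_j)$ belongs to $\mathcal Q$.

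Third, since $\mathcal Q$ is $\Box$-closed by condition (iii) of \Cref{def:common-pyramid}, the box limit $X$ of these spaces lies in $\mathcal Q$. Hence $\mathcal P\subset\mathcal Q$, and interchanging the roles of $\mathcal P$ and $\mathcal Q$ gives equality.

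The only step needing care is the first: one must confirm that \Cref{lem:common-bounded-coordinate-density} is stated for an arbitrary pyramid and produces measures in $\mathcal M(\mathcal P;N_j,N_j)$, rather than only in $\mathcal M(X;N_j,N_j)$. The lemma as quoted does assert the pyramid version, so nothing further is required.
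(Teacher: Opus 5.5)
Your proof is correct, but it takes a different route from the paper's.

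\textbf{The paper's argument.} It is abstract. The constant sequence $\mathcal P_j\coloneqq\mathcal P$ has $(N,R)$-measurements converging in Hausdorff distance to both $\mathcal M(\mathcal P;N,R)$ and $\mathcal M(\mathcal Q;N,R)$. By the measurement criterion for weak convergence \cite[Lemma~3.6]{ozawa2015limit}, this sequence therefore converges weakly to both pyramids. Uniqueness of limits in the metric space $(\Pi,d_\Pi)$ then gives $\mathcal P=\mathcal Q$.

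\textbf{Your argument.} You work element by element, using \Cref{lem:common-bounded-coordinate-density} and the $\Box$-closedness of $\mathcal Q$. This is the same device the paper uses at the end of the proof of \Cref{lem:star-additive-persistence}, but not here.

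\textbf{What each approach buys.} The paper's version is shorter, but only because it leans on two black boxes: the measurement characterization of weak convergence and the metrizability of the weak topology. Yours avoids both. It also proves a stronger one-sided statement: if $\mathcal M(\mathcal P;N,N)\subset\mathcal M(\mathcal Q;N,N)$ for every $N$, then $\mathcal P\subset\mathcal Q$. With that version, the proof of \Cref{thm:phase-diagram-draft} could conclude $\mathcal P\subset\mathcal P_\lambda^\star$ directly from the measurement inclusion, without first upgrading it to equality.

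\textbf{Your closing worry.} It is unnecessary. Even the $X$-version of the density lemma would suffice, because whenever $X\in\mathcal P$, downward closedness gives
\[\mathcal M(X;N,R)=\mathcal M(\Pyr(X);N,R)\subset\mathcal M(\mathcal P;N,R).\]
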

\begin{proof}
Take the constant sequence $\mathcal P_j\coloneqq\mathcal P$. By hypothesis, for every $N,R$, the sets $\mathcal M(\mathcal P_j;N,R)$ converge in Hausdorff distance to both $\mathcal M(\mathcal P;N,R)$ and $\mathcal M(\mathcal Q;N,R)$. By \Cref{def:common-measurement-convergence}, the sequence $\mathcal P_j$ converges weakly to both pyramids. Since $d_\Pi$ metrizes this convergence, uniqueness of limits in a metric space gives $\mathcal P=\mathcal Q$. This completes the proof.
\end{proof}

% v2: thm:small-beta-full
\begin{proof}[Proof of \Cref{thm:phase-diagram-draft}]
\Cref{lem:star-scale-identification} gives $\beta_n\to0$ and $\beta_n\log n\to\lambda$. Let $\mathcal P$ be an arbitrary weak subsequential limit of $\Pyr(\beta_n\BetaBall)$. Every finite star tree belongs to $\mathcal P$ by \Cref{lem:star-map}. Downward and $\Box$-closedness give
\[\mathcal P_\lambda^\star\subset\mathcal P\]
for the first inclusion.

For the reverse inclusion, fix $N\ge1$ and $R>0$. By \Cref{def:common-measurement-convergence}, along the chosen subsequence,
\[\mathcal M(\beta_n\BetaBall;N,R)\longrightarrow\mathcal M(\mathcal P;N,R)\]
in Hausdorff distance. Approximate any $\mu\in\mathcal M(\mathcal P;N,R)$ by measures $\mu_n\in\mathcal M(\beta_n\BetaBall;N,R)$. Then \Cref{lem:star-measurement-upper} gives $\mu\in\mathcal M(\mathcal P_\lambda^\star;N,R)$. The reverse inclusion of $(N,R)$-measurements follows from the inclusion of pyramids. Thus the $(N,R)$-measurements agree for every $N,R$, and \Cref{cor:star-measurements-determine} yields $\mathcal P=\mathcal P_\lambda^\star$.

The space of pyramids is compact by \Cref{def:common-pyramid-weak-convergence}. Since every weak subsequential limit is $\mathcal P_\lambda^\star$, the entire sequence converges weakly to this pyramid. This completes the proof.
\end{proof}

\subsection{\texorpdfstring{Structure of $\mathcal P_\lambda^\star$}{Structure of the finite-star-tree pyramid}}
\label{sec:star-pyramid-structure}

The finite-star-tree description in \Cref{thm:phase-diagram-draft} compares different values of $\lambda$ through branchwise contractions. Separation detects that these contractions give strict inclusions, and the discrete spaces dominated by the generators rule out a principal representation when $\lambda>0$.

% v2: prop:star-pyramid-parameter
\begin{prop}[Dependence on $\lambda$]
\label{prop:star-pyramid-parameter}
If $0\le\lambda<\lambda'$, then
\[\mathcal P_\lambda^\star\subsetneq\mathcal P_{\lambda'}^\star.\]
Moreover, for $N\ge1$ and $\kappa_N\coloneqq1/(N+1)$,
\[\lambda\le\operatorname{Sep}(\mathcal P_\lambda^\star;\underbrace{\kappa_N,\ldots,\kappa_N}_{N+1})\le\lambda+2\log\frac{N+1}{N}.\]
\end{prop}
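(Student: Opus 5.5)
The plan has three parts: prove the inclusion by an explicit branchwise contraction, prove the two separation bounds, and then combine them to get strictness.

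\emph{Inclusion.} Put $c\coloneqq\lambda'-\lambda>0$. On every finite star tree, define the branchwise shift $\phi(s,i)\coloneqq(s-c,i)$ from $T_{m,\lambda'}(p_1,\ldots,p_m)$ to $T_{m,\lambda}(p_1,\ldots,p_m)$.
\begin{itemize}
  \item It pushes $\nu_{\lambda'}\otimes\sum_ip_i\delta_i$ forward to $\nu_\lambda\otimes\sum_ip_i\delta_i$, because $\nu_{\lambda'}$ is $\nu_\lambda$ shifted by $c$.
  \item It preserves same-branch distances $|s-t|$.
  \item It sends a cross-branch distance $s+t-\lambda'$ to $(s-c)+(t-c)-\lambda=s+t-\lambda'-c$, which is smaller.
\end{itemize}
So $\phi$ is a measure-preserving $1$-Lipschitz map, and $T_{m,\lambda}(p)\prec T_{m,\lambda'}(p)$. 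Hence every generator of $\mathcal P_\lambda^\star$ lies in $\mathcal P_{\lambda'}^\star$. Since $\mathcal P_{\lambda'}^\star$ is a pyramid (\Cref{lem:star-pyramid}), downward closedness and $\Box$-closedness give $\mathcal P_\lambda^\star\subset\mathcal P_{\lambda'}^\star$.

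\emph{Lower separation bound.} Take $Y\coloneqq T_{N+1,\lambda}(\kappa_N,\ldots,\kappa_N)$ and let $A_i$ be the whole branch $i$, which has mass $\kappa_N\ge\kappa_N-\delta$. Any two points on different branches are at distance $s+t-\lambda\ge\lambda$. So $\operatorname{Sep}(Y;\kappa_N-\delta,\ldots)\ge\lambda$ for every $\delta$, and the pyramid separation is at least $\lambda$.

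\emph{Upper separation bound.} First fix a finite star tree $T$ and Borel sets $A_0,\ldots,A_N$ of masses at least $\kappa_N-\delta$. Consider the core $\{s\le r\}$ with $r-\lambda>\log\frac{N+1}N$.
\begin{itemize}
  \item Any two core points are at distance at most $2r-\lambda$, since $|s-t|\le s+t-\lambda\le2r-\lambda$ on the same or different branches.
  \item So if the pairwise distances of the $A_i$ exceed $2r-\lambda$, at most one $A_i$ meets the core.
  \item The other $N$ sets then lie in the complement of the core, whose mass is $e^{-(r-\lambda)}<N/(N+1)$.
  \item This forces $N(\kappa_N-\delta)<N\kappa_N$, which is consistent only for $\delta>0$. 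Choosing $r$ depending on $\delta$, so that $e^{-(r-\lambda)}<N(\kappa_N-\delta)$, i.e.\ $r-\lambda=\log\frac{N+1}{N}+O(\delta)$, gives a contradiction.
\end{itemize}
Hence $\operatorname{Sep}(T;\kappa_N-\delta,\ldots)\le\lambda+2\log\frac{N+1}N+O(\delta)$.

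This bound passes to every $Y\prec T$, since separation is monotone under the Lipschitz order: pull the sets back and distances can only grow. It then passes to the $\Box$-closure. For $X$ the $\Box$-limit of $Y_j$ in the generating union, use \Cref{prop:common-pyramid-limit-formulas} with $\Pyr(Y_j)\to\Pyr(X)$ (\Cref{cor:common-box-limit}); the $\delta$-decrement in the pyramid separation absorbs the mass loss. Letting $\delta\downarrow0$ gives the upper bound.

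\emph{Strictness.} Separation is monotone under inclusion of pyramids. Choose $N$ with $\lambda+2\log\frac{N+1}N<\lambda'$. Then
\[\operatorname{Sep}(\mathcal P_\lambda^\star;\kappa_N,\ldots)<\lambda'\le\operatorname{Sep}(\mathcal P_{\lambda'}^\star;\kappa_N,\ldots),\]
so the pyramids differ.

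The main obstacle I expect is the closure step in the upper bound: carefully transferring the separation bound from the generators to their $\Box$-limits while keeping track of the mass decrement. I would handle it through the limit formula as above, or else by a direct coupling argument with the sets $A_i$ fattened by the box error.
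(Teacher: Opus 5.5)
Your proposal is correct. The branchwise shift for the inclusion, the whole branches of $T_{N+1,\lambda}(\kappa_N,\ldots,\kappa_N)$ for the lower bound, the core-counting bound on finite star trees and the spaces they dominate, and the strictness step all coincide with the paper's proof. The only real divergence is the passage to the $\Box$-closure.

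\emph{The paper's route.} It argues by hand, close to the direct coupling argument you mention as a fallback. Assuming $\operatorname{Sep}(X;\kappa_N-2\eta,\ldots,\kappa_N-2\eta)>\lambda+2r$, it takes a coupling $\pi_j$ and a closed relation $S_j$ between $X$ and an approximant $Y_j$. It then projects $S_j\cap(A_i\times Y_j)$ to $Y_j$; this projection is analytic, so inner regularity gives Borel sets $B_i$ of mass at least $\kappa_N-3\eta$. The distortion bound on $S_j$ then contradicts the generator estimate.

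\emph{Your route.} You feed $\Pyr(Y_j)\to\Pyr(X)$ from \Cref{cor:common-box-limit} into \Cref{prop:common-pyramid-limit-formulas}. This is the same device the paper uses for observable diameter in \Cref{lem:star-pyramid-finite-obsdiam}. It works once the bookkeeping behind ``the decrement absorbs the mass loss'' is written out. Write $\operatorname{Sep}(\,\cdot\,;\kappa)$ for $N+1$ equal masses. Monotonicity under $\prec$ gives $\operatorname{Sep}(\Pyr(Y_j);\kappa-\varepsilon)\le\operatorname{Sep}(Y_j;\kappa-2\varepsilon)$ and $\operatorname{Sep}(X;\kappa)\le\operatorname{Sep}(\Pyr(X);\kappa)$. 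Applying the generator bound $\lambda+2\log\frac{1}{N\kappa'}$ at $\kappa'=\kappa_N-\delta-2\varepsilon$ then yields $\operatorname{Sep}(X;\kappa_N-\delta)\le\lambda+2\log\frac{1}{N(\kappa_N-\delta)}$ uniformly over $X\in\mathcal P_\lambda^\star$, and letting $\delta\downarrow0$ finishes.

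Your route is shorter but rests on the cited Ozawa--Shioya limit theorem. The paper's route is self-contained, needing only the coupling definition of $\Box$ and a measurability remark.

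One minor point: the sentence ``This forces $N(\kappa_N-\delta)<N\kappa_N$\ldots'' in your third bullet is muddled. The actual contradiction is the forced inequality $N(\kappa_N-\delta)\le e^{-(r-\lambda)}$ against your choice of $r$, which your last bullet states correctly.
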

\begin{proof}
Write points as $(\lambda+u,i)$. The map
\[(\lambda'+u,i)\longmapsto(\lambda+u,i)\]
is measure-preserving. It preserves same-branch distances and decreases cross-branch distances from $\lambda'+u+v$ to $\lambda+u+v$. Thus every generator with parameter $\lambda'$ dominates the corresponding generator with parameter $\lambda$, and $\mathcal P_\lambda^\star\subset\mathcal P_{\lambda'}^\star$.

Each branch of $T_{N+1,\lambda}(\kappa_N,\ldots,\kappa_N)$ has mass $\kappa_N$, and distinct branches have distance $\lambda$. Since pyramid separation approaches the mass parameters from below, \Cref{def:common-observable-separation} gives the lower bound $\lambda$.

For the upper bound, fix $r>\log((N+1)/N)$ and choose $\eta>0$ such that $N(\kappa_N-3\eta)>e^{-r}$. In every finite star tree, the set
\[C_r\coloneqq \{(\lambda+u,i)\mid0\le u\le r\}\]
has diameter at most $\lambda+2r$, and its complement has mass $e^{-r}$. Suppose that $N+1$ sets of mass at least $\kappa_N-3\eta$ have mutual distance greater than $\lambda+2r$. At most one can meet $C_r$. The remaining $N$ sets are pairwise disjoint subsets of its complement, but their total mass is at least $N(\kappa_N-3\eta)>e^{-r}$, a contradiction. Taking inverse images under a map from a finite star tree to an mm-space it dominates preserves the masses and the lower bounds on mutual distances. Hence the same upper bound holds for every mm-space dominated by a finite star tree.

Let $X\in\mathcal P_\lambda^\star$, and choose mm-spaces $Y_j$, each dominated by a finite star tree, such that $\Box(Y_j,X)\to0$. Suppose that $\operatorname{Sep}(X;\kappa_N-2\eta,\ldots,\kappa_N-2\eta)>\lambda+2r$. Choose $d$ strictly between these two values and Borel sets $A_0,\ldots,A_N$ whose mutual distances exceed $d$. For all sufficiently large $j$, choose a coupling $\pi_j$ of $X$ and $Y_j$ and a closed relation $S_j$ such that
\[1-\pi_j(S_j)<\eta/2,\qquad\operatorname{dis}S_j<\min\{\eta/2,d-\lambda-2r\}.\]
The projection to $Y_j$ of $S_j\cap(A_i\times Y_j)$ has mass greater than $\kappa_N-5\eta/2$. This projection is analytic and therefore measurable for the completed Borel measure. By inner regularity, it contains a Borel subset $B_i$ of mass at least $\kappa_N-3\eta$. The distortion bound gives $d_{Y_j}(B_i,B_{i'})>\lambda+2r$ for $i\ne i'$, contradicting the preceding upper bound. Therefore,
\[\operatorname{Sep}(X;\kappa_N-2\eta,\ldots,\kappa_N-2\eta)\le\lambda+2r.\]
Letting $\eta\downarrow0$ in the definition of pyramid separation and then $r\downarrow\log((N+1)/N)$ proves the required upper bound.

If $\lambda<\lambda'$, choose $N$ such that $2\log((N+1)/N)<\lambda'-\lambda$. The separation estimates for the two pyramids are then different, so the inclusion is strict. This completes the proof.
\end{proof}

% v2: cor:star-pyramid-nonprincipal
\begin{cor}[No associated mm-space for positive $\lambda$]
\label{cor:star-pyramid-nonprincipal}
If $\lambda>0$, then there is no mm-space $X$ such that $\mathcal P_\lambda^\star=\Pyr(X)$.
\end{cor}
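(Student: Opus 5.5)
Suppose, for contradiction, that $\mathcal P_\lambda^\star=\Pyr(X)$ for some mm-space $X$ with $\lambda>0$. The section introduction says that the discrete spaces dominated by the generators rule out a principal representation, so I will play the separation lower bound from \Cref{prop:star-pyramid-parameter} against the finite observable diameter from \Cref{lem:star-pyramid-finite-obsdiam}. The key quantitative input is that $\mathcal P_\lambda^\star$ contains spaces with arbitrarily many branches, each of small mass, pairwise at distance about $\lambda$.

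\textbf{Step 1: many separated small sets in $X$.} Fix $N$. The tree $T_{N+1,\lambda}(\kappa_N,\ldots,\kappa_N)$ has $N+1$ branches of mass $\kappa_N=1/(N+1)$ at mutual distance at least $\lambda$. It belongs to $\Pyr(X)$, so there is a measure-preserving $1$-Lipschitz map $f\colon X\to T_{N+1,\lambda}$. The preimages $f^{-1}(\text{branch } i)$ partition $X$ into $N+1$ Borel sets, each of mass $1/(N+1)$. Because $f$ is $1$-Lipschitz, these sets are pairwise at distance at least $\lambda$ in $X$.

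\textbf{Step 2: use that this happens for every $N$ on a single space.} Fix a compact set $K\subset X$ with $\mu_X(K)>1/2$, which exists by tightness on the separable complete space $X$. Cover $K$ by finitely many balls of radius $\lambda/4$, say $L$ of them. Take $N+1>2L$. Each ball has diameter at most $\lambda/2<\lambda$, so it meets at most one of the $N+1$ preimage sets. Hence $K$ meets at most $L$ of them, and $\mu_X(K)\le L/(N+1)<1/2$, a contradiction. This step uses $\lambda>0$ essentially, since the covering radius must be positive.

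\textbf{Expected obstacle.} The delicate point is that the preimage sets partition $X$ only up to the measure-zero issue at the center. That issue does not arise, because the metric quotient of the tree is supported on $[\lambda,\infty)\times\{i\}$: for $\lambda>0$ the branches are disjoint closed sets, so the preimages genuinely partition $X$. With this checked, the tightness and covering argument is routine, and the conclusion follows.
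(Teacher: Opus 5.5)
Your argument is correct, but it reaches the contradiction by a different mechanism from the paper's.

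\textbf{The paper's route.} The paper first passes to the uniform $m$-point space $E_{m,\lambda}$, in which all distinct points are at distance $\lambda$. This space is dominated by the equal-weight tree $T_{m,\lambda}$ via the branch-label map. The paper then splits $X$ into the classes of the relation ``joined by a finite chain of steps shorter than $\lambda$.'' These classes are open, so separability makes them countable, and one class $C$ has mass $q>0$. Any $1$-Lipschitz map $X\to E_{m,\lambda}$ is constant on $C$. This produces an atom of mass at least $q>1/m$, contradicting uniformity.

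\textbf{Your route.} You pull back the branch partition of $T_{N+1,\lambda}(\kappa_N,\ldots,\kappa_N)$ directly. You then defeat it with tightness: a compact set of mass $>1/2$ is covered by $L$ balls of diameter at most $\lambda/2$, and each ball meets at most one preimage. Hence at most $L$ pieces of mass $1/(N+1)$ can meet that compact set.

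\textbf{Comparison.} Both proofs use $\lambda>0$ at exactly one point: the step length of the chains in the paper, the covering radius in yours. The paper's argument needs only separability and countable additivity, and gives the explicit bound $m\le1/q$ with $q$ a chain-class mass. Yours needs Ulam tightness, so completeness as well as separability, which mm-spaces have by definition. In exchange, yours bounds the number of $\lambda$-separated equal pieces by a covering number of a half-mass compact set. Your check that for $\lambda>0$ no center is identified is right and needed: the branches are disjoint clopen sets, so their preimages really do partition $X$.

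\textbf{The opening sentence should go.} You announce that you will play the separation lower bound of the $\lambda$-dependence proposition against the finite observable diameter of $\mathcal P_\lambda^\star$. Neither result is used in Steps~1--2. That plan also could not work on its own, because both properties hold at once in $\mathcal P_\lambda^\star$ (this is exactly what gives the phase transition property). What actually drives your proof is that a single space $X$ would have to realize all the separated equal-weight partitions simultaneously. The tightness step is what exploits this, so the proof you give is self-contained without that roadmap.
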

\begin{proof}
Let $E_{m,\lambda}$ be the uniform $m$-point mm-space in which all distinct points have distance $\lambda$. The map from the equally weighted $T_{m,\lambda}$ to its branch labels is measure-preserving. It sends each branch to one point and decreases cross-branch distances to $\lambda$. Thus $E_{m,\lambda}\in\mathcal P_\lambda^\star$.

Suppose that $\mathcal P_\lambda^\star=\Pyr(X)$. Define $x\approx y$ if $x,y\in X$ can be joined by a finite chain whose every step has length less than $\lambda$. Each equivalence class is open, and separability of $X$ implies that there are at most countably many classes. Since $X=\operatorname{supp}\mu_X$, every class has positive measure. Fix one class $C$ and put $q\coloneqq\mu_X(C)>0$.

Choose $m$ such that $m^{-1}<q$. Since $E_{m,\lambda}\in\Pyr(X)$, there is a measure-preserving $1$-Lipschitz map $f\colon X\to E_{m,\lambda}$. Every step in a defining chain is shorter than the distance between distinct points of $E_{m,\lambda}$, so $f$ is constant on $C$. Thus $f_*\mu_X$ has an atom of mass at least $q>m^{-1}$ and cannot be the uniform measure. This is a contradiction.
	This completes the proof.
\end{proof}

\subsection{Phase transition property}

The weak limit yields the phase transition property once its observable diameter is finite at every fixed mass and finite star trees supply positive separation for every admissible tuple of masses.

% v2: thm:small-beta-ptp
\begin{lem}[Phase transition property criterion from a weak pyramid limit]
\label{lem:common-ptp-criterion}
Put $Z_n\coloneqq s_nX_n$ and suppose that $\Pyr(Z_n)\to\mathcal Q$ weakly as $n\to\infty$. Assume that $\operatorname{ObsDiam}(\mathcal Q;-\kappa)<+\infty$ for every $\kappa>0$. Suppose also that, for every $N$ and all positive numbers $\kappa_i$ satisfying $\sum_i\kappa_i<1$, there are $Y\in\mathcal Q$ and $\varepsilon>0$ such that
\[\operatorname{Sep}(Y;\kappa_0+\varepsilon,\ldots,\kappa_N+\varepsilon)>0.\]
Then $X_n$ has the phase transition property with critical scale $s_n$.
\end{lem}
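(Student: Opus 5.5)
The plan is to verify the two clauses of \Cref{def:common-ptp} separately with the critical sequence $s_n$. Throughout, use $(X_n,\alpha_nd_n,\mu_n)=(\alpha_n/s_n)Z_n$ and put $t_n\coloneqq\alpha_n/s_n$. For both clauses, pass the observable diameter and the separation through \Cref{prop:common-pyramid-limit-formulas}. Also use that scaling by $t_n$ multiplies both $\operatorname{ObsDiam}$ and $\operatorname{Sep}$ by $t_n$. This scaling fact is immediate from the definitions, because $1$-Lipschitz functions on $tZ$ are exactly $t$ times $1$-Lipschitz functions on $Z$, and distances between sets scale by $t$.

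\emph{Concentration side.} Assume $t_n\to0$ and fix $\kappa>0$; we may take $\kappa<1$. The first formula of \Cref{prop:common-pyramid-limit-formulas} applied to $\Pyr(Z_n)\to\mathcal Q$ gives
\[\lim_{\varepsilon\downarrow0}\limsup_n\operatorname{ObsDiam}(\Pyr(Z_n);-(\kappa+\varepsilon))=\operatorname{ObsDiam}(\mathcal Q;-\kappa)<+\infty.\]
By the monotonicity in \Cref{lem:common-obsdiam-basic}, $\operatorname{ObsDiam}(\Pyr(Z_n);-\kappa')=\operatorname{ObsDiam}(Z_n;-\kappa')$. Since $\operatorname{ObsDiam}$ is nonincreasing in the mass parameter $\kappa$, we can choose $\varepsilon=\kappa/2$ small enough to obtain $\limsup_n\operatorname{ObsDiam}(Z_n;-\kappa)\le\limsup_n\operatorname{ObsDiam}(Z_n;-(\kappa/2+\kappa/2))<\infty$. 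A slight relabeling of the mass parameter, applying the formula at $\kappa/2$, handles this cleanly. Multiplying by $t_n\to0$ then gives $\operatorname{ObsDiam}(t_nZ_n;-\kappa)\to0$, so the sequence is a L\'evy family.

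\emph{Dissipation side.} Assume $t_n\to+\infty$, and fix $N$ and $\kappa_i>0$ with $\sum\kappa_i<1$. By hypothesis there are $Y\in\mathcal Q$ and $\varepsilon>0$ with $\operatorname{Sep}(Y;\kappa_0+\varepsilon,\ldots,\kappa_N+\varepsilon)=:s>0$. From the definition of pyramid separation, with $\delta<\varepsilon$, we get $\operatorname{Sep}(\mathcal Q;\kappa_0+\varepsilon,\ldots)\ge s$. This holds because $\operatorname{Sep}$ of $Y$ at the slightly smaller masses $\kappa_i+\varepsilon-\delta$ is still at least $s$. The second formula of \Cref{prop:common-pyramid-limit-formulas}, applied at masses $\kappa_i+\varepsilon$ with decrement $\varepsilon'<\varepsilon$, yields
\[\liminf_n\operatorname{Sep}(\Pyr(Z_n);\kappa_0+\varepsilon-\varepsilon',\ldots)\ge s-o(1)\]
as $\varepsilon'\downarrow0$. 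Fix one $\varepsilon'<\varepsilon$ so that this liminf is at least $s/2$. The masses $\kappa_i+\varepsilon-\varepsilon'$ exceed $\kappa_i$, and $\operatorname{Sep}$ is monotone in the masses. This gives $\liminf_n\operatorname{Sep}(\Pyr(Z_n);\kappa_0,\ldots,\kappa_N)\ge s/2$, with the pyramid separation taken with its own small $\delta$-decrement.

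The main obstacle is passing from the separation of the pyramid $\Pyr(Z_n)$ back to the separation of the space $Z_n$ itself. Pyramid separation is a supremum over dominated spaces, with masses decreased by $\delta$. Pulling back well-separated sets along a measure-preserving $1$-Lipschitz map $Z_n\to W$ preserves the masses and does not decrease the distances. Hence $\sup_{W\prec Z_n}\operatorname{Sep}(W;\cdot)=\operatorname{Sep}(Z_n;\cdot)$, and for $W\in\Pyr(Z_n)$ the mass decrement $\delta$ can be absorbed by starting from the masses $\kappa_i+\varepsilon-\varepsilon'-\delta\ge\kappa_i$ for small $\delta$. The box-closure part of $\Pyr(Z_n)$ is not an issue, since $\Pyr(Z_n)$ consists exactly of the spaces dominated by $Z_n$. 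Therefore $\operatorname{Sep}(Z_n;\kappa_0,\ldots,\kappa_N)\ge s/4$ for all large $n$. Multiplying by $t_n\to\infty$ shows that $\operatorname{Sep}(t_nZ_n;\kappa_0,\ldots,\kappa_N)\to+\infty$, which is infinite dissipation. Together with the concentration side, this gives the phase transition property with critical scale $s_n$.
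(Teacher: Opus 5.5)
Your proposal is correct and follows essentially the same route as the paper: finiteness of $\operatorname{ObsDiam}(\mathcal Q;-\kappa/2)$ together with \Cref{prop:common-pyramid-limit-formulas} gives an eventual uniform bound on $\operatorname{ObsDiam}(Z_n;-\kappa)$. Likewise, the hypothesis on $Y$ and the separation limit formula give $\liminf_n\operatorname{Sep}(Z_n;\kappa_0,\ldots,\kappa_N)>0$, and homogeneity under scaling by $\alpha_n/s_n$ finishes both clauses. You also make explicit a step the paper leaves implicit: passing from $\operatorname{Sep}(\Pyr(Z_n);\cdot)$ to $\operatorname{Sep}(Z_n;\cdot)$ by absorbing the pyramid mass decrement into the slack $\varepsilon-\varepsilon'$, which you handle correctly.
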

\begin{proof}
The limit formulas in \Cref{prop:common-pyramid-limit-formulas}, together with $\operatorname{ObsDiam}(\mathcal Q;-(\kappa/2))<+\infty$, make the quantities $\operatorname{ObsDiam}(Z_n;-\kappa)$ uniformly bounded for all sufficiently large $n$, for every $\kappa>0$. If $\alpha_n/s_n\to0$, then
\[\operatorname{ObsDiam}((\alpha_n/s_n)Z_n;-\kappa)=(\alpha_n/s_n)\operatorname{ObsDiam}(Z_n;-\kappa)\to0.\]
This is the L\'evy-family condition for $((\alpha_n/s_n)Z_n)$.

Fix admissible $\kappa_0,\ldots,\kappa_N$ and choose $Y$ and $\varepsilon$ as in the hypothesis. Since $Y\in\mathcal Q$, \Cref{prop:common-pyramid-limit-formulas} gives some $\delta>0$ such that
\[\liminf_{n\to\infty}\operatorname{Sep}(Z_n;\kappa_0,\ldots,\kappa_N)\ge\delta.
\]
If $\alpha_n/s_n\to+\infty$, homogeneity of the distance then shows that $((\alpha_n/s_n)Z_n)$ dissipates at every positive scale and therefore infinitely dissipates. Since $(\alpha_n/s_n)Z_n=\alpha_nX_n$, the two conditions in the definition of the phase transition property follow. This completes the proof.
\end{proof}

\begin{thm}[Phase transition property in the finite-star-tree phase]
\label{thm:star-ptp}
Under the assumptions of \Cref{thm:phase-diagram-draft}, the sequence $\BetaBall$ has the phase transition property with critical scale $\beta_n$.
\end{thm}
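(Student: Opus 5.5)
The plan is to apply \Cref{lem:common-ptp-criterion} with $X_n\coloneqq\BetaBall$, $s_n\coloneqq\beta_n$ and $\mathcal Q\coloneqq\mathcal P_\lambda^\star$. The proof then reduces to checking the three hypotheses of that lemma, each of which has already been established in \Cref{sec:star-phase}.

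First, under the assumptions of \Cref{thm:phase-diagram-draft}, that theorem gives $\Pyr(\beta_n\BetaBall)\to\mathcal P_\lambda^\star$ weakly. This is exactly the convergence $\Pyr(Z_n)\to\mathcal Q$ with $Z_n=s_nX_n$.

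Second, we need finiteness of the observable diameter. For $0<\kappa<1$, \Cref{lem:star-pyramid-finite-obsdiam} gives
\[\operatorname{ObsDiam}(\mathcal P_\lambda^\star;-\kappa)\le\lambda+2\log(2/\kappa)<+\infty,\]
and for $\kappa\ge1$ the quantity is zero. So $\operatorname{ObsDiam}(\mathcal Q;-\kappa)<+\infty$ for every $\kappa>0$.

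Third, we need the separation hypothesis. By \Cref{lem:star-pyramid}, $\mathcal P_\lambda^\star$ is a pyramid, and by definition it contains every generator $T_{m,\lambda}(p_1,\ldots,p_m)$ with $m\ge1$ and positive weights. Therefore \Cref{lem:star-rich-separation} applies. For every $N$ and positive $\kappa_0,\ldots,\kappa_N$ with $\sum_i\kappa_i<1$, it supplies $Y\in\mathcal P_\lambda^\star$ and $\varepsilon>0$ with $\operatorname{Sep}(Y;\kappa_0+\varepsilon,\ldots,\kappa_N+\varepsilon)>0$.

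With all three hypotheses in place, \Cref{lem:common-ptp-criterion} shows that $\BetaBall$ has the phase transition property with critical scale $\beta_n$. No step is a real obstacle. The only point to check is that the scaling convention matches: \Cref{lem:common-ptp-criterion} uses $(\alpha_n/s_n)Z_n=\alpha_nX_n$, which agrees with $\alpha_n\BetaBall$ in \Cref{def:common-ptp}.
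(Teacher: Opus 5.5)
Your proposal is correct and matches the paper's proof: apply \Cref{lem:common-ptp-criterion} with $s_n=\beta_n$ and $\mathcal Q=\mathcal P_\lambda^\star$, taking the weak limit from \Cref{thm:phase-diagram-draft}, finite observable diameter from \Cref{lem:star-pyramid-finite-obsdiam}, and positive separation from \Cref{lem:star-rich-separation}. You justify the hypothesis of \Cref{lem:star-rich-separation} by citing \Cref{lem:star-pyramid} and noting that $\mathcal P_\lambda^\star$ contains every generator by definition, which is slightly more direct than the paper's appeal to \Cref{thm:phase-diagram-draft} for that step.
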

\begin{proof}
The weak convergence $\Pyr(\beta_n\BetaBall)\to\mathcal P_\lambda^\star$ as $n\to\infty$ is supplied by \Cref{thm:phase-diagram-draft}. For the limiting pyramid, \Cref{lem:star-pyramid-finite-obsdiam} gives finite observable diameter at every fixed mass. Every finite star tree also belongs to $\mathcal P_\lambda^\star$ by \Cref{thm:phase-diagram-draft}. It follows from \Cref{lem:star-rich-separation} that separation is positive for every admissible choice of masses. The hypotheses of \Cref{lem:common-ptp-criterion} are now satisfied, and the result follows by taking $s_n=\beta_n$ there. This completes the proof.
\end{proof}

\section{Completion of the phase diagram}
\label{sec:sharp-phase-diagram}

\subsection{\texorpdfstring{$\Ple$ phase}{Diameter-at-most-one phase}}
\label{sec:leb-phase}

At scale $\mathcal L_{n,\beta_n}$, fluctuations of the radial coordinate are negligible, so the angular geometry at the radius displayed below determines the limit. The exact hyperbolic distance between points at that radius carries spherical separation to a limit of $1$ for every admissible tuple of masses. We use the representative radius
\[\bar{\mathcal R}_{n,\beta}\coloneqq \operatorname{arcosh}\left(1+\frac n\beta\right).\]

% v2: lem:radial-width
% v2: lem:log-gamma-clt
\begin{lem}
\label{lem:common-log-gamma-clt}
Let $W_k\sim\Gamma(k,1)$ and put $Y_k\coloneqq(W_k-k)/\sqrt k$. Then, as $k\to+\infty$,
\[Y_k\overset{\mathrm d}{\longrightarrow}\gamma^1,\qquad\frac{W_k}{k}\overset{\mathrm P}{\longrightarrow}1,\qquad\log\frac{W_k}{k}\overset{\mathrm P}{\longrightarrow}0.\]
Moreover, the characteristic functions of $Y_k$ converge to that of $\gamma^1$ uniformly on bounded intervals. If $a_n,b_n\to+\infty$, the random variables $U_n\sim\Gamma(b_n,1)$ and $V_n\sim\Gamma(a_n,1)$ are independent, and $\lambda_n,\mu_n\ge0$ satisfy $\lambda_n+\mu_n=1$, then
\[\sqrt{\lambda_n}\frac{U_n-b_n}{\sqrt{b_n}}-\sqrt{\mu_n}\frac{V_n-a_n}{\sqrt{a_n}}\overset{\mathrm d}{\longrightarrow}\gamma^1.\]
\end{lem}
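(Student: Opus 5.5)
The plan is to prove everything through characteristic functions and Lévy's continuity theorem, working with the explicit Gamma characteristic function.

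\emph{Step 1: the CLT for $Y_k$.} For real $t$ the characteristic function of $W_k$ is $(1-it)^{-k}$. Hence
\[
\varphi_k(t)\coloneqq\mathbb E[e^{itY_k}]=\exp\Bigl(-it\sqrt k-k\log\bigl(1-it/\sqrt k\bigr)\Bigr),
\]
where $\log$ is the principal branch; it is valid because $1-it/\sqrt k$ has positive real part. For $|z|\le1/2$ we have $-\log(1-z)=z+z^2/2+r(z)$ with $|r(z)|\le C|z|^3$. With $z=it/\sqrt k$, the exponent becomes $-t^2/2+k\,r(it/\sqrt k)$. The remainder is bounded by $C|t|^3/\sqrt k$ whenever $|t|\le\sqrt k/2$. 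Therefore $\varphi_k(t)\to e^{-t^2/2}$ uniformly on $|t|\le T$ for every fixed $T$; this is the claimed uniform convergence on bounded intervals. Lévy's continuity theorem then gives $Y_k\overset{\mathrm d}{\longrightarrow}\gamma^1$.

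\emph{Step 2: the two consequences.} Either Chebyshev with $\operatorname{Var}(W_k/k)=1/k$, or simply $W_k/k-1=Y_k/\sqrt k$ together with tightness of $Y_k$, gives $W_k/k\overset{\mathrm P}{\longrightarrow}1$. The continuous mapping theorem for $\log$, which is continuous at $1$, then gives $\log(W_k/k)\overset{\mathrm P}{\longrightarrow}0$.

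\emph{Step 3: the combined statement.} By independence, the characteristic function of the combination at $t$ is $\varphi_{b_n}(\sqrt{\lambda_n}\,t)\,\varphi_{a_n}(-\sqrt{\mu_n}\,t)$; here the index $k$ is allowed to be real, since the computation in Step 1 never used integrality of $k$. The arguments $\sqrt{\lambda_n}\,t$ and $-\sqrt{\mu_n}\,t$ lie in $[-|t|,|t|]$ because $\lambda_n,\mu_n\in[0,1]$. Moreover $a_n,b_n\to+\infty$. The uniform convergence from Step 1 therefore gives
\[
\varphi_{b_n}(\sqrt{\lambda_n}\,t)\,\varphi_{a_n}(-\sqrt{\mu_n}\,t)-e^{-\lambda_nt^2/2}e^{-\mu_nt^2/2}\to0 .
\]
Since $\lambda_n+\mu_n=1$, the product $e^{-\lambda_nt^2/2}e^{-\mu_nt^2/2}$ equals $e^{-t^2/2}$. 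Lévy's theorem then gives convergence in distribution to $\gamma^1$. Note that $\lambda_n,\mu_n$ need not converge, and this is exactly why uniform convergence is needed.

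\emph{Main obstacle.} The only delicate point is making Step 1's uniformity explicit, both in $t$ and across non-integer shape parameters. The plan is to handle it with the single cubic remainder bound on the principal logarithm, valid uniformly for $|z|\le1/2$. That requires $k\ge4T^2$, which holds for all large $n$ since $a_n,b_n\to+\infty$.
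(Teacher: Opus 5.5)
Your proposal is correct and follows essentially the same route as the paper. The paper likewise uses the cubic remainder bound for $-\log(1-z)$ on $|z|\le 1/2$ to get convergence of the characteristic functions uniformly on bounded intervals, then Chebyshev and continuity of the logarithm, and finally the product $\varphi_{b_n}(\sqrt{\lambda_n}t)\varphi_{a_n}(-\sqrt{\mu_n}t)$ with $\lambda_n+\mu_n=1$ and L\'evy's continuity theorem; your explicit remark that non-integer shape parameters are allowed is a useful clarification the paper leaves implicit.
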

\begin{proof}
For $|z|\le1/2$,
\[\left|-\log(1-z)-z-\frac{z^2}{2}\right|\le\frac83|z|^3.\]
The characteristic function of $W_k$ is $(1-it)^{-k}$. Substituting $z=it/\sqrt k$ gives, for $|t|\le\sqrt k/2$,
\[\left|\log\mathbb E[e^{itY_k}]+\frac{t^2}{2}\right|\le\frac{8|t|^3}{3\sqrt k}.\]
The estimate shows that the characteristic functions converge to $e^{-t^2/2}$ uniformly on bounded intervals, and L\'evy's continuity theorem gives the first convergence. Since $\mathbb E[W_k]=\operatorname{Var}(W_k)=k$,
\[\mathbb P(|W_k/k-1|>\varepsilon)\le\frac1{k\varepsilon^2},\]
and the other two convergences follow from continuity of the logarithm.

Let $\varphi_k$ be the characteristic function of $Y_k$. The characteristic function of the final random variable in the statement is $\varphi_{b_n}(\sqrt{\lambda_n}t)\varphi_{a_n}(-\sqrt{\mu_n}t)$. The uniform convergence above and $\lambda_n+\mu_n=1$ show that it converges to $e^{-\lambda_nt^2/2}e^{-\mu_nt^2/2}=e^{-t^2/2}$. A second application of the continuity theorem gives the final convergence. This completes the proof.
\end{proof}

\begin{lem}
\label{lem:leb-radial-width}
If, as $n\to\infty$,
\[\mathcal A_{n,\beta_n}\to+\infty,\qquad\beta_n\mathcal L_{n,\beta_n}\to+\infty,\]
then
\[\frac{|\mathbf R_{n,\beta_n}-\bar{\mathcal R}_{n,\beta_n}|}{\mathcal L_{n,\beta_n}}\overset{\mathrm P}{\longrightarrow}0.\]
\end{lem}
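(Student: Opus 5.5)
We use \Cref{lem:common-radius-log} for both quantities. Since $1+n/\beta=2/t-1$ with $t=2\beta/(n+2\beta)$, we get
\[\bar{\mathcal R}_{n,\beta_n}=\log\frac{n+2\beta_n}{2\beta_n}+\mathfrak r\Bigl(\frac{2\beta_n}{n+2\beta_n}\Bigr).\]
The random radius is
\[\mathbf R_{n,\beta_n}=\log(\mathbf U_n+\mathbf V_{n,\beta_n})-\log\mathbf V_{n,\beta_n}+\mathfrak r(\cdot).\]
Both $\mathfrak r$ terms lie in $(0,\log4)$, so their difference is bounded. Since $\mathcal L_{n,\beta_n}\to+\infty$ by \Cref{lem:common-scale-algebra}, that difference vanishes after division by $\mathcal L_{n,\beta_n}$. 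It therefore suffices to show that
\[D_n\coloneqq\log\frac{\mathbf U_n+\mathbf V_{n,\beta_n}}{n/2+\beta_n}-\log\frac{\mathbf V_{n,\beta_n}}{\beta_n}\]
satisfies $D_n/\mathcal L_{n,\beta_n}\to0$ in probability.

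The first logarithm is handled directly. The sum $\mathbf U_n+\mathbf V_{n,\beta_n}$ is $\Gamma(n/2+\beta_n,1)$, so by \Cref{lem:common-log-gamma-clt} with $k=n/2+\beta_n\to\infty$, its logarithm tends to $0$ in probability. It remains to control $\log(\mathbf V_{n,\beta_n}/\beta_n)/\mathcal L_{n,\beta_n}$, and we split along subsequences according to the behaviour of $\beta_n$.

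\emph{Case $\beta_n$ bounded below.} If $\beta_n\to\infty$ along a subsequence, \Cref{lem:common-log-gamma-clt} again gives $\log(\mathbf V/\beta_n)\to0$ in probability. If $\beta_n$ stays in a compact subset of $(0,\infty)$, then $\log(\mathbf V/\beta_n)$ is tight, because the gamma laws vary continuously with the shape parameter; dividing by $\mathcal L_{n,\beta_n}\to\infty$ gives $0$.

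\emph{Case $\beta_n\to0$.} This is the main obstacle, because $\log\mathbf V$ is of order $1/\beta_n$ and is not tight. By \Cref{lem:common-small-gamma}, $-\beta_n\log\mathbf V_{n,\beta_n}$ converges in distribution to $\operatorname{Exp}(1)$, hence is tight. Dividing by $\beta_n\mathcal L_{n,\beta_n}\to\infty$ shows that $\log\mathbf V/\mathcal L_{n,\beta_n}\to0$ in probability. For the remaining term, $|\log\beta_n|\le\mathcal L_{n,\beta_n}/2$: indeed $\mathcal L_{n,\beta_n}\ge\log n-2\log\beta_n\ge-2\log\beta_n$ when $\beta_n<1$. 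This bound alone is not small, so we must not divide $\mathbf V$ by $\beta_n$ here. Instead, compare $\bar{\mathcal R}$ directly with the expression
\[\log\frac{n+2\beta_n}{2}-\log\mathbf V_{n,\beta_n}.\]
The $\log\beta_n$ term is then absorbed into $\bar{\mathcal R}$ and is never compared with $\log\mathbf V$. After this regrouping, the difference consists of $\log\bigl((\mathbf U+\mathbf V)/(n/2+\beta_n)\bigr)$, which is $o_P(1)$; the term $-\log\mathbf V-\bigl(-\log\beta_n\bigr)$, which splits into the two pieces just treated; and bounded remainders.

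Written this way the argument is not circular. The sharper fact is that $-\log\beta_n$ itself is $o(\mathcal L_{n,\beta_n})$ whenever $\beta_n\mathcal L_{n,\beta_n}\to\infty$, by the same identity used in the proof of \Cref{lem:star-scale-identification}. Since $\beta_n\mathcal L_{n,\beta_n}\to\infty$, we have $\mathcal L_{n,\beta_n}\ge K/\beta_n$ for every fixed $K$ eventually, and $|\log\beta_n|=o(1/\beta_n)$ as $\beta_n\to0$. Both pieces are therefore $o_P(\mathcal L_{n,\beta_n})$. A general sequence is covered by these subsequence cases, which finishes the plan.
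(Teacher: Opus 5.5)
Your proof is correct and follows essentially the paper's route: you use the same decomposition via \Cref{lem:common-radius-log}, the bounded $\mathfrak r$-difference, and a subsequence split on $\beta_n$. In the case $\beta_n\to0$ you close the argument, as the paper does, with tightness of $-\beta_n\log\mathbf V_{n,\beta_n}$ and $\beta_n|\log\beta_n|\to0$ against $\beta_n\mathcal L_{n,\beta_n}\to+\infty$. Treating the first logarithm through $\mathbf U_n+\mathbf V_{n,\beta_n}\sim\Gamma(n/2+\beta_n,1)$ is a harmless shortcut for the paper's law-of-large-numbers-plus-Markov step. The ``regrouping'' paragraph is a vacuous detour, because $-\log\mathbf V_{n,\beta_n}+\log\beta_n$ is just $-\log(\mathbf V_{n,\beta_n}/\beta_n)$, and it can be deleted.
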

\begin{proof}
\Cref{lem:common-scale-algebra} gives $\beta_n/\sqrt n\to0$ and $\mathcal L_{n,\beta_n}\to+\infty$. In particular, $\beta_n/n\to0$. Using \Cref{lem:common-radius-log} at $\mathbf V_{n,\beta_n}/(\mathbf U_n+\mathbf V_{n,\beta_n})$ and $2\beta_n/(n+2\beta_n)$ gives
\[\mathbf R_{n,\beta_n}-\bar{\mathcal R}_{n,\beta_n}=\log\frac{\mathbf U_n+\mathbf V_{n,\beta_n}}{n/2+\beta_n}-\log\frac{\mathbf V_{n,\beta_n}}{\beta_n}+\mathfrak r\left(\frac{\mathbf V_{n,\beta_n}}{\mathbf U_n+\mathbf V_{n,\beta_n}}\right)-\mathfrak r\left(\frac{2\beta_n}{n+2\beta_n}\right).\]
The Gamma law of large numbers in \Cref{lem:common-log-gamma-clt} gives $2\mathbf U_n/n\to1$ in probability. The nonnegative random variable $2\mathbf V_{n,\beta_n}/n$ has expectation $2\beta_n/n\to0$, so Markov's inequality shows that it converges to $0$ in probability. It follows that the first logarithmic term converges to $0$ in probability. The absolute value of the difference between the last two terms is less than $\log4$. It remains to prove that
\[\frac{\log(\mathbf V_{n,\beta_n}/\beta_n)}{\mathcal L_{n,\beta_n}}\overset{\mathrm P}{\longrightarrow}0.\]

From any subsequence, pass further so that $\beta_n$ tends to $0$, to some $\beta_*\in(0,+\infty)$, or to $+\infty$. If $\beta_n\to+\infty$, then \Cref{lem:common-log-gamma-clt} gives $\log(\mathbf V_{n,\beta_n}/\beta_n)\to0$ in probability. If $\beta_n\to\beta_*$, the characteristic functions of the gamma distributions show that $\mathbf V_{n,\beta_n}\Rightarrow\Gamma(\beta_*,1)$, and hence $\log(\mathbf V_{n,\beta_n}/\beta_n)$ is tight. Since $\mathcal L_{n,\beta_n}\to+\infty$, both cases give the required convergence.

If $\beta_n\to0$, then \Cref{lem:common-small-gamma} shows that $-\beta_n\log\mathbf V_{n,\beta_n}$ is tight. Since $\beta_n|\log\beta_n|\to0$,
\[\frac{|\log(\mathbf V_{n,\beta_n}/\beta_n)|}{\mathcal L_{n,\beta_n}}\le\frac{|\beta_n\log\mathbf V_{n,\beta_n}|+\beta_n|\log\beta_n|}{\beta_n\mathcal L_{n,\beta_n}}\overset{\mathrm P}{\longrightarrow}0.\]
The three cases and the subsequence criterion prove the assertion. This completes the proof.
\end{proof}

The formula in \Cref{lem:common-shell-transform} represents the shell metric as a metric transform of the Euclidean chord metric.

% v2: sec:preliminaries（metric transforms）
\begin{defn}[Metric transforms {\cite[Definition~1.1 and Proposition~4.1]{kazukawa2022metric}}]
\label{def:common-metric-transform}
A continuous function $F\colon[0,+\infty)\to[0,+\infty)$ is \emph{metric preserving} if $F\circ d$ is a metric whenever $d$ is a metric. For an mm-space $X$ and a pyramid $\mathcal P$, define their \emph{metric transforms} by
\[F(X)\coloneqq (X,F\circ d_X,\mu_X),\qquad F(\mathcal P)\coloneqq \overline{\bigcup_{X\in\mathcal P}\Pyr(F(X))}^{\,\Box}.\]
\end{defn}

% v2: lem:transform-sep
\begin{lem}
\label{lem:leb-transform-separation}
Let $F\colon[0,+\infty)\to[0,+\infty)$ be continuous, nondecreasing, and metric preserving. If an mm-space $X$ and positive numbers $\kappa_0,\ldots,\kappa_N$ satisfy $\operatorname{Sep}(X;\kappa_0,\ldots,\kappa_N)<+\infty$, then
\[\operatorname{Sep}(F(X);\kappa_0,\ldots,\kappa_N)=F\bigl(\operatorname{Sep}(X;\kappa_0,\ldots,\kappa_N)\bigr).\]
\end{lem}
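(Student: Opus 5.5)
The plan is to prove the two inequalities separately, writing $S\coloneqq\operatorname{Sep}(X;\kappa_0,\ldots,\kappa_N)$. Since $F(X)$ and $X$ carry the same measure and $F$ is nondecreasing, $d_{F(X)}(A,B)$ should be directly comparable with $F(d_X(A,B))$ for Borel sets $A,B$. The first thing to establish is the identity
\[
\inf_{a\in A,\,b\in B}F(d_X(a,b))=F\Bigl(\inf_{a\in A,\,b\in B}d_X(a,b)\Bigr).
\]
It follows from monotonicity and continuity: $F(d_X(a,b))\ge F(d_X(A,B))$ for every pair gives one direction. Choosing pairs with $d_X(a_k,b_k)\downarrow d_X(A,B)$ and applying continuity of $F$ gives the other. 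Hence, for any admissible tuple $A_0,\ldots,A_N$ with $\mu_X(A_i)\ge\kappa_i$,
\[
\min_{i\ne j}d_{F(X)}(A_i,A_j)=\min_{i\ne j}F\bigl(d_X(A_i,A_j)\bigr)=F\Bigl(\min_{i\ne j}d_X(A_i,A_j)\Bigr),
\]
again by monotonicity of $F$, since the minimum is over finitely many pairs. Metric preservation is used only so that $F(X)$ is an mm-space. I would also check that $F\circ d_X$ is compatible with the topology, so that Borel sets and full support are unchanged. This should follow from continuity of $F$ and $F(0)=0$ for metric-preserving maps.

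Next, take the supremum over admissible tuples. The sets are the same for $X$ and $F(X)$, so
\[
\operatorname{Sep}(F(X);\kappa_0,\ldots,\kappa_N)=\sup_{\text{tuples}}F(m_{\text{tuple}}),
\]
where $m_{\text{tuple}}=\min_{i\ne j}d_X(A_i,A_j)$. Since each $m_{\text{tuple}}\le S$ and $F$ is nondecreasing, this supremum is at most $F(S)$. For the reverse bound, choose tuples with $m_{\text{tuple}}\uparrow S$. Here the hypothesis $S<+\infty$ matters, because continuity of $F$ at the finite point $S$ gives $F(m_{\text{tuple}})\to F(S)$. The limit at infinity could otherwise fail to equal any value of $F$.

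The main obstacle is the degenerate cases. If there is no admissible tuple, the supremum convention must agree on both sides. This needs $F(0)=0$, or whatever convention is taken for the empty supremum, so I would check what value $\operatorname{Sep}$ takes in that case. Similarly, when $S=0$ the bound reduces to $F(0)=0$. The fact $F(0)=0$ holds because $F\circ d$ must vanish on the diagonal for any metric $d$. Apart from these points, the argument is the two monotonicity and continuity steps above.
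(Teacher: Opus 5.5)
Your proposal is correct and follows essentially the same route as the paper: you derive $d_{F(X)}(A_i,A_j)=F(d_X(A_i,A_j))$ from monotonicity and continuity, bound by $F(S)$ from above using monotonicity, and get the lower bound by approximating $S$ from below with admissible families and using continuity of $F$ at the finite value $S$. The paper does the same with families of minimum distance above $S-\chi$, $\chi\downarrow0$, and treats $S=0$ via $F(0)=0$; your added checks (the empty-family convention and that $F\circ d_X$ leaves the Borel sets and support unchanged) make explicit what the paper leaves implicit.
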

\begin{proof}
Put $S\coloneqq\operatorname{Sep}(X;\kappa_0,\ldots,\kappa_N)$. For any admissible Borel sets $A_0,\ldots,A_N$, continuity and monotonicity of $F$ give
\[d_{F(X)}(A_i,A_j)=\inf_{x\in A_i,\,y\in A_j}F(d_X(x,y))=F(d_X(A_i,A_j)).\]
This bounds the minimum transformed distance for each admissible family by $F(S)$. Taking the supremum gives the upper bound.

If $S=0$, the reverse inequality follows from $F(0)=0$. Suppose that $S>0$ and fix $0<\chi<S$. By definition, an admissible family exists whose minimum distance is greater than $S-\chi$. Its transformed minimum distance is at least $F(S-\chi)$. Letting $\chi\downarrow0$ and using continuity of $F$ proves the reverse inequality. This completes the proof.
\end{proof}

% v2: prop:diameter-one-criterion
% v2: prop:diameter-one-criterion closes this via \cite[Proposition~8.5(1)]{shioya2016mmg}
\begin{lem}[$\delta$-dissipation and pyramid limits {\cite[Proposition~8.5(1)]{shioya2016mmg}}]
\label{lem:common-dissipation-limit}
For $\delta>0$ and a sequence of mm-spaces $(X_n)$, the following are equivalent.
\begin{enumerate}[label=\textup{(\roman*)}]
  \item $(X_n)$ $\delta$-dissipates.
  \item Every weak subsequential limit of $\Pyr(X_n)$ contains all mm-spaces of diameter at most $\delta$.
\end{enumerate}
\end{lem}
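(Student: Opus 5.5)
Since the result is quoted from \cite[Proposition~8.5(1)]{shioya2016mmg}, I would reconstruct it from the tools already assembled: \Cref{prop:common-pyramid-limit-formulas} for separation, \Cref{lem:star-partial-domination,lem:star-additive-persistence}, and compactness of $\Pi$.

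For (ii)$\Rightarrow$(i), I argue by contradiction. Suppose that $(X_n)$ does not $\delta$-dissipate. Then there are $N$, positive $\kappa_0,\ldots,\kappa_N$ with $\sum_i\kappa_i<1$, a number $\gamma>0$, and a subsequence along which $\operatorname{Sep}(X_n;\kappa_0,\ldots,\kappa_N)\le\delta-\gamma$. By compactness of $\Pi$, pass to a further subsequence with $\Pyr(X_n)\to\mathcal P$ weakly.

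Choose weights $p_i>\kappa_i$ with $\sum_ip_i=1$, and fix $\varepsilon'>0$ with $p_i>\kappa_i+\varepsilon'$. The uniform $(N+1)$-point space $E$ with weights $p_i$ and all mutual distances $\delta$ has diameter $\delta$, so $E\in\mathcal P$ by (ii). Taking singletons as the separated sets gives $\operatorname{Sep}(\mathcal P;\kappa_0+\varepsilon',\ldots,\kappa_N+\varepsilon')\ge\delta$.

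\Cref{prop:common-pyramid-limit-formulas} then yields some $0<\varepsilon<\varepsilon'$ with
\[\liminf_n\operatorname{Sep}(X_n;\kappa_0+\varepsilon'-\varepsilon,\ldots,\kappa_N+\varepsilon'-\varepsilon)>\delta-\gamma.\]
Here I use $\operatorname{Sep}(X_n;\cdot)=\operatorname{Sep}(\Pyr(X_n);\cdot)$ up to the $\delta$-decrement, which follows from monotonicity of separation under domination. Since $\operatorname{Sep}$ is nonincreasing in the masses, the same lower bound holds at $(\kappa_0,\ldots,\kappa_N)$. This contradicts the choice of the subsequence.

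For (i)$\Rightarrow$(ii), let $\mathcal P$ be a weak subsequential limit. Finite mm-spaces of diameter at most $\delta$ are $\Box$-dense among spaces of diameter at most $\delta$, by discretizing on a finite net and pushing the measure forward. Since $\mathcal P$ is $\Box$-closed, it suffices to treat a finite space $X=\{x_0,\ldots,x_N\}$ with weights $w_i>0$ and $\operatorname{diam}X\le\delta$.

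Fix $\eta>0$ small and put $\kappa_i\coloneqq w_i(1-\eta)$. By $\delta$-dissipation, for large $n$ there are Borel sets $A_{i,n}\subset X_n$ with $\mu_{X_n}(A_{i,n})\ge\kappa_i$ and mutual distances at least $\delta-\eta$; they are then disjoint. Define $f$ on $E_n\coloneqq\bigcup_iA_{i,n}$ by $f\equiv x_i$ on $A_{i,n}$. Then $d_X(f(x),f(x'))\le\delta\le d_{X_n}(x,x')+\eta$ across different blocks, and distances vanish within a block. Thus \Cref{lem:star-partial-domination} gives
\[X_n\succ_{\max\{1-\mu(E_n),\eta\}}Y_n,\]
where $Y_n$ is $X$ carrying the weights $\mu(A_{i,n})/\mu(E_n)$. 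Here $1-\mu(E_n)\le\eta$.

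The step I expect to need the most care is the mass bookkeeping, because $\mu_{X_n}$ may have atoms and the $A_{i,n}$ cannot be trimmed to exact masses. I would avoid trimming entirely. From $\mu(A_{i,n})\ge w_i(1-\eta)$ and $\sum_i\mu(A_{i,n})\le1$, each weight $\mu(A_{i,n})/\mu(E_n)$ lies within $O(N\eta)$ of $w_i$. A coupling of the two measures on $X$ that keeps the common mass on the diagonal then gives $\Box(Y_n,X)=O(N\eta)$ via \Cref{lem:common-ambient-box} with $\delta=0$. Finally, choose $\eta_j\downarrow0$ and a diagonal subsequence $n_j$. This gives domination errors tending to $0$ and $\Box(Y_{n_j},X)\to0$, so \Cref{lem:star-additive-persistence} yields $X\in\mathcal P$.
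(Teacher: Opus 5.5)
Your argument is correct. The paper gives no proof of this lemma and simply imports it from \cite[Proposition~8.5(1)]{shioya2016mmg}, so your proposal is a different route: a reconstruction from the paper's own tools rather than a citation.

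\begin{itemize}
\item For (ii) implies (i), you place the $\delta$-equilateral $(N+1)$-point space with weights $p_i>\kappa_i+\varepsilon'$ in the subsequential limit. You then transfer its separation back to $X_n$ through \Cref{prop:common-pyramid-limit-formulas}.
\item For (i) implies (ii), you reduce to finite spaces by $\Box$-density. You then reuse the pattern of \Cref{lem:star-map}: the block map fed into \Cref{lem:star-partial-domination}, followed by \Cref{lem:star-additive-persistence}.
\end{itemize}

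None of these ingredients depends on \Cref{lem:common-dissipation-limit}, so there is no circularity. Your version makes the lemma internal to the paper's framework, but it does not reduce the external input. It still needs the separation limit formula of Ozawa--Shioya and Shioya's Lemma~6.17 behind \Cref{lem:star-additive-persistence}. The citation is the shorter route.

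A few points to tighten:

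\begin{enumerate}
\item[(a)] The limit formula controls $\operatorname{Sep}(\Pyr(X_n);\kappa'')=\lim_{\delta'\downarrow0}\operatorname{Sep}(X_n;\kappa''-\delta')$, not $\operatorname{Sep}(X_n;\kappa'')$ itself. So your displayed bound for $X_n$ should be stated at $\kappa''-\delta'$. Since $\kappa''-\delta'\ge\kappa$ componentwise once $\delta'<\varepsilon'-\varepsilon$, monotonicity in the masses still yields the bound at $\kappa$. It is the buffer $\varepsilon'-\varepsilon$ that does the work.
\item[(b)] A complete separable space of diameter at most $\delta$ need not be totally bounded, so a finite net of the whole space may not exist. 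Instead, take finitely many $\epsilon$-balls around points of a countable dense set that cover mass at least $1-\epsilon$, and send the remainder to one centre. \Cref{lem:common-ambient-box} then gives box error at most $2\epsilon$.
\item[(c)] Take $\eta<\delta$ so that the sets $A_{i,n}$ are disjoint and the block map is well defined. One-point spaces lie in every pyramid, so they need no argument.
\item[(d)] The weight bookkeeping can be sharpened from $O(N\eta)$ to $\eta$. Each normalized weight is at least $w_i(1-\eta)$ and both weight vectors sum to one, so their total-variation distance is at most $\eta$.
\end{enumerate}
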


\begin{prop}[A separation criterion for $\Ple$]
\label{prop:leb-diameter-one-criterion}
Suppose that a sequence of mm-spaces $(X_n)$ satisfies, for every $N\ge1$ and every choice of positive numbers $\kappa_0,\ldots,\kappa_N$ with $\sum_{i=0}^N\kappa_i<1$,
\[\operatorname{Sep}(X_n;\kappa_0,\ldots,\kappa_N)\to1\qquad\text{as }n\to\infty.\]
Then
\[d_\Pi(\Pyr(X_n),\Ple)\to0.\]
\end{prop}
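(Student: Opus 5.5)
By compactness of $(\Pi,d_\Pi)$ it suffices to show that every weak subsequential limit $\mathcal P$ of $\Pyr(X_n)$ equals $\Ple$. I will prove the two inclusions $\Ple\subset\mathcal P$ and $\mathcal P\subset\Ple$ separately.

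\emph{Lower inclusion.} Fix $\delta<1$. The hypothesis gives $\liminf_n\operatorname{Sep}(X_n;\kappa_0,\ldots,\kappa_N)=1>\delta$ for every admissible tuple, so $(X_n)$ $\delta$-dissipates. \Cref{lem:common-dissipation-limit} then places every mm-space of diameter at most $\delta$ in $\mathcal P$. Let $Y$ have diameter at most $1$. The scaled spaces $\delta Y$ have diameter at most $\delta$, and $\Box(\delta Y,Y)\le1-\delta$ via the diagonal coupling, since the distortion is at most $(1-\delta)\operatorname{diam}Y$. Letting $\delta\uparrow1$ and using the $\Box$-closedness of $\mathcal P$ gives $Y\in\mathcal P$, so $\Ple\subset\mathcal P$.

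\emph{Upper inclusion.} I will show that every $X\in\mathcal P$ has diameter at most $1$. By \Cref{prop:common-pyramid-limit-formulas}, the separation of $\mathcal P$ is computed from the $X_n$:
\[
\operatorname{Sep}(\mathcal P;\kappa_0,\kappa_1)=\lim_{\varepsilon\downarrow0}\lim_n\operatorname{Sep}(X_n;\kappa_0-\varepsilon,\kappa_1-\varepsilon)=1
\]
whenever $\kappa_0+\kappa_1<1$. This also holds when $\kappa_0+\kappa_1=1$, since then $(\kappa_0-\varepsilon)+(\kappa_1-\varepsilon)<1$. Suppose some $X\in\mathcal P$ contains points $x,y$ with $d_X(x,y)>1$. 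Choose $\rho$ with $d_X(x,y)-2\rho>1$. Full support gives the balls $A=\mathrm B(x,\rho)$ and $B=\mathrm B(y,\rho)$ masses $a,b>0$, and $d_X(A,B)\ge d_X(x,y)-2\rho>1$. Take $\kappa_0=a/2$ and $\kappa_1=b/2$; their sum is less than $1$ because $A$ and $B$ are disjoint. Then $\operatorname{Sep}(X;\kappa_0-\delta,\kappa_1-\delta)>1$ for all small $\delta$, so $\operatorname{Sep}(\mathcal P;\kappa_0,\kappa_1)>1$. This contradicts the previous display. Hence $\operatorname{diam}X\le1$ and $\mathcal P\subset\Ple$.

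Combining the two inclusions, every subsequential limit equals $\Ple$, and compactness gives $d_\Pi(\Pyr(X_n),\Ple)\to0$.

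\emph{Main obstacle.} The delicate step is the upper inclusion. It needs the exact equality in \Cref{prop:common-pyramid-limit-formulas} with the mass decrements handled correctly. One must check that the $\liminf$ of $\operatorname{Sep}(X_n;\kappa_0-\varepsilon,\kappa_1-\varepsilon)$ is exactly $1$ for the shifted masses, which holds because shifting keeps the tuple admissible. One must also check that pyramid separation is at least $\operatorname{Sep}(X;\cdot)$ for each member $X\in\mathcal P$, which follows from the definition as a supremum over members.
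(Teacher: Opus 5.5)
Your proof is correct and follows essentially the same route as the paper. For $\Ple\subset\mathcal P$ you use $\delta$-dissipation with \Cref{lem:common-dissipation-limit} and a rescaling plus $\Box$-closedness argument; for $\mathcal P\subset\Ple$ you use the separation limit formula and two small balls around far-apart points; you then conclude by compactness. The differences are cosmetic. You use $\operatorname{diam}Y\le1$ directly in the diagonal-coupling estimate, where the paper first discards a small amount of mass. Like the paper, you tacitly replace $\operatorname{Sep}(\Pyr(X_n);\cdot)$ by $\operatorname{Sep}(X_n;\cdot)$. This is harmless, because by monotonicity the former lies between $\operatorname{Sep}(X_n;\kappa_0-\varepsilon,\ldots,\kappa_N-\varepsilon)$ and $\operatorname{Sep}(X_n;\kappa_0-2\varepsilon,\ldots,\kappa_N-2\varepsilon)$, and both of these tend to $1$.
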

\begin{proof}
Let $\mathcal P$ be an arbitrary weak subsequential limit of $\Pyr(X_n)$, and relabel the corresponding subsequence by $n$. For every $0<\delta<1$, the hypothesis implies that $(X_n)$ $\delta$-dissipates, so \Cref{lem:common-dissipation-limit} shows that $\mathcal P$ contains every mm-space of diameter at most $\delta$. Let $Y$ be an mm-space of diameter at most $1$, and fix $c\in(0,1)$. Choosing $\delta<1$ larger than the diameter of $cY$ gives $cY\in\mathcal P$. For any $\eta>0$, choose a Borel subset of $Y$ with mass greater than $1-\eta$ and finite diameter $D$. Under the coupling induced by the identity map, the discrepancy between the two metrics on this subset is at most $(1-c)D$. Therefore,
\[\Box(cY,Y)\le\max\{\eta,(1-c)D\}.\]
Letting $c\uparrow1$ and then $\eta\downarrow0$ gives $\Box(cY,Y)\to0$. Since $\mathcal P$ is $\Box$-closed,
\[\Ple\subset\mathcal P.\]

Now fix positive numbers $\kappa_0,\ldots,\kappa_N$ whose sum is less than $1$. For every sufficiently small $\varepsilon>0$, one also has $\sum_i(\kappa_i-\varepsilon)<1$, and the separation of the pyramid associated with an mm-space equals the separation of the mm-space itself. The hypothesis and \Cref{prop:common-pyramid-limit-formulas} give
\[\operatorname{Sep}(\mathcal P;\kappa_0,\ldots,\kappa_N)=1.\]
If some $Y\in\mathcal P$ had diameter greater than $1$, choose $x,y\in Y$ with $d_Y(x,y)>1$ and $0<r<(d_Y(x,y)-1)/2$. Since $Y=\operatorname{supp}\mu_Y$, the two open $r$-balls have positive measure and mutual distance greater than $1$. Choose positive numbers $\kappa_0,\kappa_1$ smaller than the respective measures of the balls and with sum less than $1$. Then
\[\operatorname{Sep}(\mathcal P;\kappa_0,\kappa_1)\ge\operatorname{Sep}(Y;\kappa_0,\kappa_1)>1,\]
contradicting the preceding equality. Hence $\mathcal P\subset\Ple$.

Every weak subsequential limit is therefore $\Ple$. Compactness from \Cref{def:common-pyramid-weak-convergence} shows that the entire sequence converges to $\Ple$ in $d_\Pi$. This completes the proof.
\end{proof}

% v2: lem:deterministic-shell-limit-lebesgue
% v2: lem:exact-shell-transform
\begin{lem}
\label{lem:common-shell-transform}
For $R>0$, put
\[d_{n,R}(\theta,\eta)\coloneqq \dH((R,\theta),(R,\eta)),\qquad\Sigma_{n,R}\coloneqq (S^{n-1},d_{n,R},\sigma_{n-1}).\]
Then
\[d_{n,R}(\theta,\eta)=2\operatorname{arsinh}\left(\frac{\sinh R\,|\theta-\eta|}{2}\right),\qquad\Sigma_{n,R}=F_{\sinh R/\sqrt n}(\mathsf S_n).\]
\end{lem}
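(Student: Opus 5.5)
The plan is to specialize the polar distance formula \eqref{eq:common-polar-distance} to two points on the same hyperbolic sphere and then use a half-angle identity. Setting $r=s=R$ in \eqref{eq:common-polar-distance} makes $\cosh(r-s)=1$, so
\[\cosh d_{n,R}(\theta,\eta)=1+\frac12\sinh^2R\,|\theta-\eta|^2.\]
Next I would apply $\cosh x-1=2\sinh^2(x/2)$. This gives
\[\sinh^2\frac{d_{n,R}(\theta,\eta)}2=\frac{\sinh^2R\,|\theta-\eta|^2}{4}.\]
Both $d_{n,R}/2$ and $\sinh R\,|\theta-\eta|/2$ are nonnegative, and $\sinh$ is a strictly increasing bijection of $[0,+\infty)$. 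Taking the nonnegative square root and applying $\operatorname{arsinh}$ then yields the first identity.

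For the second identity, I would write $|\theta-\eta|=n^{-1/2}\bigl(\sqrt n|\theta-\eta|\bigr)$. The first identity then becomes
\[d_{n,R}=2\operatorname{arsinh}\Bigl(\frac{(\sinh R/\sqrt n)\,d_{\mathsf S_n}}{2}\Bigr)=F_{a}\circ d_{\mathsf S_n},\qquad a\coloneqq\sinh R/\sqrt n,\]
by the definition $F_a(t)=2\operatorname{arsinh}(at/2)$ from the introduction. The underlying set $S^{n-1}$ and the measure $\sigma_{n-1}$ are the same in $\Sigma_{n,R}$ and $\mathsf S_n$. Hence $\Sigma_{n,R}$ is literally $F_a(\mathsf S_n)$ as defined in \Cref{def:common-metric-transform}, and the identity holds as an equality of mm-spaces, not merely up to isomorphism.

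There is no substantive obstacle here. The only points to check are the following.
\begin{enumerate}
  \item $a>0$, which holds because $R>0$ gives $\sinh R>0$.
  \item $F_a\circ d_{\mathsf S_n}$ is a genuine metric with full-support measure, so that $\Sigma_{n,R}$ is an mm-space. This is automatic because $d_{n,R}$ is the restriction of the hyperbolic metric to the sphere of radius $R$, and the map $\theta\mapsto(R,\theta)$ is injective.
\end{enumerate}
Equivalently, one can note that $F_a$ is concave, increasing, and vanishes only at $0$, which makes it metric preserving. This is needed later when applying \Cref{lem:leb-transform-separation}, but not for the identity itself.
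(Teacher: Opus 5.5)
Your proof is correct and matches the paper's argument: you set $r=s=R$ in \eqref{eq:common-polar-distance} and invert with the half-angle identity. The paper phrases the inversion as $\cosh(2\operatorname{arsinh}u)=1+2u^2$ together with injectivity of $\cosh$ on $[0,+\infty)$, which is the same identity as your $\cosh x-1=2\sinh^2(x/2)$, and it then obtains $\Sigma_{n,R}=F_{\sinh R/\sqrt n}(\mathsf S_n)$ by the same rescaling $\sinh R\,|\theta-\eta|=(\sinh R/\sqrt n)\cdot\sqrt n|\theta-\eta|$.
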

\begin{proof}
Putting $r=s=R$ in \eqref{eq:common-polar-distance} gives $\cosh d_{n,R}=1+\sinh^2R|\theta-\eta|^2/2$. The first identity follows from $\cosh(2\operatorname{arsinh}u)=1+2u^2$ and injectivity of $\cosh$ on $[0,+\infty)$. Transforming the metric of $\mathsf S_n$ by $F_{\sinh R/\sqrt n}$ gives the second identity. This completes the proof.
\end{proof}

\begin{lem}[Deterministic-shell convergence to $\Ple$]
\label{lem:leb-deterministic-shell}
Let $\rho_n>0$ be deterministic and put $A_{n,\rho_n}\coloneqq\frac{\sinh\rho_n}{\sqrt n}$. If $A_{n,\rho_n}\to+\infty$ as $n\to\infty$, then $A_{n,\rho_n}>1$ for all sufficiently large $n$, and
\[d_\Pi\left(\Pyr\bigl((2\log A_{n,\rho_n})^{-1}\Sigma_{n,\rho_n}\bigr),\Ple\right)\to0\qquad\text{as }n\to\infty.\]
\end{lem}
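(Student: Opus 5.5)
Write $A_n\coloneqq A_{n,\rho_n}$ and $L_n\coloneqq2\log A_n$; since $A_n\to+\infty$, we have $A_n>1$ and $L_n>0$ for large $n$. The plan is to verify the hypothesis of \Cref{prop:leb-diameter-one-criterion} for $X_n\coloneqq L_n^{-1}\Sigma_{n,\rho_n}$: for every admissible tuple $\kappa_0,\ldots,\kappa_N$ with $\sum_i\kappa_i<1$, show $\operatorname{Sep}(X_n;\kappa_0,\ldots,\kappa_N)\to1$. That proposition then gives the conclusion directly.

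By \Cref{lem:common-shell-transform}, $\Sigma_{n,\rho_n}=F_{A_n}(\mathsf S_n)$ with $F_a(t)=2\operatorname{arsinh}(at/2)$. Each $F_a$ is continuous, nondecreasing, concave, and vanishes only at $0$, hence metric preserving. Since $\mathsf S_n$ has diameter at most $2\sqrt n$, its separation is finite. \Cref{lem:leb-transform-separation} therefore yields
\[\operatorname{Sep}(X_n;\kappa_0,\ldots,\kappa_N)=\frac{F_{A_n}(s_n)}{L_n},\qquad s_n\coloneqq\operatorname{Sep}(\mathsf S_n;\kappa_0,\ldots,\kappa_N).\]
Since $\mathsf S_n$ is the chord sphere scaled by $\sqrt n$, \Cref{lem:common-spherical-block} gives constants $0<c<C$ with $c\le s_n\le C$ for all large $n$.

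It remains to estimate $2\operatorname{arsinh}(A_ns_n/2)$. Use $\log(2u)\le\operatorname{arsinh}u\le\log(2u+1)$ for $u>0$, with $u=A_ns_n/2\in[cA_n/2,\,CA_n/2]$. This gives
\[2\log(cA_n)\le F_{A_n}(s_n)\le2\log(CA_n+1).\]
Dividing by $L_n=2\log A_n\to+\infty$, both bounds equal $1+O(1/\log A_n)$, so the ratio tends to $1$.

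No step looks like a real obstacle. The only point needing care is the two-sided bound on the spherical separation at scale $\sqrt n$, and \Cref{lem:common-spherical-block} already supplies it. The rest is elementary asymptotics of $\operatorname{arsinh}$.
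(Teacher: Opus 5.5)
Your proposal is correct and follows the paper's proof essentially step for step. Like the paper, you combine the two-sided bound $c\le\operatorname{Sep}(\mathsf S_n;\kappa_0,\ldots,\kappa_N)\le C$ from \Cref{lem:common-spherical-block}, the identity $\operatorname{Sep}(X_n;\kappa_0,\ldots,\kappa_N)=F_{A_n}(s_n)/L_n$ from \Cref{lem:common-shell-transform,lem:leb-transform-separation}, and elementary $\operatorname{arsinh}$ bounds showing that this ratio tends to $1$, and then conclude with \Cref{prop:leb-diameter-one-criterion}. Your explicit checks that the separation of $\mathsf S_n$ is finite and that $F_a$ is metric preserving are details the paper leaves implicit at this point.
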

\begin{proof}
The inequality $A_{n,\rho_n}>1$ holds eventually because $A_{n,\rho_n}\to+\infty$. Fix positive numbers $\kappa_0,\ldots,\kappa_N$ with $\sum_i\kappa_i<1$. Combining \Cref{lem:common-spherical-block} with homogeneity under multiplication of the metric by $\sqrt n$ yields constants $0<c<C<+\infty$ such that, for all sufficiently large $n$,
\[c\le\operatorname{Sep}(\mathsf S_n;\kappa_0,\ldots,\kappa_N)\le C.\]
The shell-transform and separation formulas in \Cref{lem:common-shell-transform,lem:leb-transform-separation} give
\[\operatorname{Sep}\left((2\log A_{n,\rho_n})^{-1}\Sigma_{n,\rho_n};\kappa_0,\ldots,\kappa_N\right)=\frac{F_{A_{n,\rho_n}}(\operatorname{Sep}(\mathsf S_n;\kappa_0,\ldots,\kappa_N))}{2\log A_{n,\rho_n}}.\]

For $a\ge1$ and $r\in[c,C]$, the identities $\operatorname{arsinh}u=\log(u+\sqrt{u^2+1})$ and $\sqrt{u^2+1}\le u+1$ give
\[2\log a+2\log(c/2)\le F_a(r)\le2\log a+2\log(C+1).\]
After division by $2\log A_{n,\rho_n}$, the middle ratio converges to $1$ uniformly for $r\in[c,C]$. Thus the displayed separation tends to $1$ for every admissible tuple of masses, and \Cref{prop:leb-diameter-one-criterion} gives the conclusion. This completes the proof.
\end{proof}

% v2: lem:lebesgue-radial-shell-comparison
% v2: lem:radial-window-shell-comparison
\begin{lem}[Radial-window shell comparison]
\label{lem:common-radial-window}
Let $h_n>0$ satisfy $\bar{\mathcal R}_{n,\beta_n}-h_n>0$ for all sufficiently large $n$, and put
\[p_n\coloneqq \mathbb P\bigl(|\mathbf R_{n,\beta_n}-\bar{\mathcal R}_{n,\beta_n}|>h_n\bigr).\]
Then, for every $s_n>0$ and all sufficiently large $n$,
\[\Box(s_n\BetaBall,s_n\Sigma_{n,\bar{\mathcal R}_{n,\beta_n}})\le\max\{2s_nh_n,p_n\}.\]
\end{lem}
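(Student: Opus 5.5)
The plan is to couple the two spaces through the radial projection onto the representative shell and then invoke \Cref{lem:common-ambient-box}, or equivalently \Cref{def:common-box-concentration}, directly. Write $\bar R\coloneqq\bar{\mathcal R}_{n,\beta_n}$. By \Cref{lem:common-radial-law}, a sample of $\mu_{n,\beta_n}$ is $(\mathbf R_{n,\beta_n},\Theta_n)$ in hyperbolic polar coordinates, with $\Theta_n\sim\sigma_{n-1}$ independent of the radius. The coupling $\pi$ of $\mu_{n,\beta_n}$ and $\sigma_{n-1}$ is the law of $\bigl((\mathbf R_{n,\beta_n},\Theta_n),\Theta_n\bigr)$, that is, the pushforward of $\mu_{n,\beta_n}$ under $x\mapsto(x,\theta(x))$. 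The second marginal is $\sigma_{n-1}$ by the independence in \Cref{lem:common-radial-law}. Note that $\Sigma_{n,\bar R}$ is isometric, through $\theta\mapsto(\bar R,\theta)$, to the shell $\{r=\bar R\}\subset\B^n$ with the restricted hyperbolic metric. Since $d_{n,\bar R}$ is a genuine metric on $S^{n-1}$ with full-support measure, it is an mm-space.

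For the relation, take
\[S\coloneqq\bigl\{((\rho,\theta),\theta)\mid|\rho-\bar R|\le h_n\bigr\}\subset\B^n\times S^{n-1},\]
which is closed because $\bar R-h_n>0$ for large $n$ keeps $\rho$ away from the origin, where polar coordinates degenerate. Its $\pi$-mass is $1-p_n$. For two pairs in $S$, the triangle inequality for $\dH$ gives
\[\bigl|\dH((\rho,\theta),(\sigma,\eta))-\dH((\bar R,\theta),(\bar R,\eta))\bigr|\le\dH((\rho,\theta),(\bar R,\theta))+\dH((\sigma,\eta),(\bar R,\eta))=|\rho-\bar R|+|\sigma-\bar R|\le2h_n.\]
Here we use that radial geodesics through the origin give $\dH((\rho,\theta),(\bar R,\theta))=|\rho-\bar R|$, which is the case $\theta=\eta$ of \eqref{eq:common-polar-distance}. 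After scaling both metrics by $s_n$, we get $\operatorname{dis}S\le2s_nh_n$, and \Cref{def:common-box-concentration} yields the bound $\max\{2s_nh_n,p_n\}$.

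The only delicate point is measure-theoretic and topological bookkeeping: $S$ must be closed in the product of the two mm-spaces, and the coupling must really have marginal $\sigma_{n-1}$. Closedness follows because the map $x\mapsto(|x|_{\mathrm H},x/|x|)$ is continuous away from $0$, and $S$ avoids a neighborhood of $0$ once $\bar R-h_n>0$. If desired, one can intersect with the support, which is all of $\B^n$. The "sufficiently large $n$" qualifier is used only to ensure $\bar R-h_n>0$.
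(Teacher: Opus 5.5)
Your proposal is correct and matches the paper's proof essentially step for step. It uses the same coupling $x\mapsto(x,x/|x|)$ from the realization in \Cref{lem:common-radial-law}, the same relation on the radial window (closed because it avoids the origin), and the same triangle-inequality distortion bound $2h_n$ before scaling by $s_n$; your alternative of pushing $\sigma_{n-1}$ onto the shell $\{r=\bar{\mathcal R}_{n,\beta_n}\}$ and applying \Cref{lem:common-ambient-box} in $(\B^n,s_n\dH)$ also works and makes the closedness check automatic.
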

\begin{proof}
Use the realization in \Cref{lem:common-radial-law} to couple $\mathbf X_{n,\beta_n}$ and $\Theta_n$ on the same probability space. The part of the coupling on which $|\mathbf R_{n,\beta_n}-\bar{\mathcal R}_{n,\beta_n}|\le h_n$ has mass $1-p_n$. For two pairs $(r,\theta)$ and $(s,\eta)$ in this part, the triangle inequality gives
\[\left|\dH((r,\theta),(s,\eta))-d_{n,\bar{\mathcal R}_{n,\beta_n}}(\theta,\eta)\right|\le|r-\bar{\mathcal R}_{n,\beta_n}|+|s-\bar{\mathcal R}_{n,\beta_n}|\le2h_n.\]
The hypothesis makes this relation a closed set avoiding the origin. Multiplying both metrics by $s_n$ and applying \Cref{def:common-box-concentration} proves the estimate. This completes the proof.
\end{proof}

\begin{lem}
\label{lem:leb-radial-shell-comparison}
If $\mathcal A_{n,\beta_n}\to+\infty$ and $\beta_n\mathcal L_{n,\beta_n}\to+\infty$ as $n\to\infty$, then
\[\Box\bigl(\mathcal L_{n,\beta_n}^{-1}\BetaBall,\mathcal L_{n,\beta_n}^{-1}\Sigma_{n,\bar{\mathcal R}_{n,\beta_n}}\bigr)\to0\qquad\text{as }n\to\infty.\]
\end{lem}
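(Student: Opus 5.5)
We combine \Cref{lem:common-radial-window} with \Cref{lem:leb-radial-width} through a diagonal choice of the window width. Take $s_n\coloneqq\mathcal L_{n,\beta_n}^{-1}$ and $h_n\coloneqq\varepsilon_n\mathcal L_{n,\beta_n}$ for a deterministic sequence $\varepsilon_n\downarrow0$ to be chosen below. \Cref{lem:common-radial-window} then gives
\[\Box\bigl(\mathcal L_{n,\beta_n}^{-1}\BetaBall,\mathcal L_{n,\beta_n}^{-1}\Sigma_{n,\bar{\mathcal R}_{n,\beta_n}}\bigr)\le\max\{2\varepsilon_n,p_n\},\qquad p_n=\mathbb P\bigl(|\mathbf R_{n,\beta_n}-\bar{\mathcal R}_{n,\beta_n}|>\varepsilon_n\mathcal L_{n,\beta_n}\bigr),\]
provided the side condition $\bar{\mathcal R}_{n,\beta_n}-h_n>0$ holds for large $n$.

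The first step is to choose $\varepsilon_n$. By \Cref{lem:leb-radial-width}, for each fixed $\varepsilon>0$ we have $\mathbb P(|\mathbf R_{n,\beta_n}-\bar{\mathcal R}_{n,\beta_n}|>\varepsilon\mathcal L_{n,\beta_n})\to0$. A standard diagonal argument therefore produces $\varepsilon_n\downarrow0$ with $p_n\to0$. Concretely, pick $n_1<n_2<\cdots$ so that this probability is below $1/j$ for $\varepsilon=1/j$ and all $n\ge n_j$, and set $\varepsilon_n=1/j$ on $[n_j,n_{j+1})$. We may also take $\varepsilon_n\le1/4$ throughout.

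The remaining point, which is the main check, is the side condition $\bar{\mathcal R}_{n,\beta_n}>\varepsilon_n\mathcal L_{n,\beta_n}$. We compare $\bar{\mathcal R}_{n,\beta}=\operatorname{arcosh}(1+n/\beta)$ with $\mathcal L_{n,\beta}$. From \Cref{lem:common-radius-log} with $t=2\beta/(n+2\beta)$,
\[\bar{\mathcal R}_{n,\beta}=\log\frac{n+2\beta}{2\beta}+\mathfrak r(t)\ge\log\frac{n+2\beta}{2\beta}.\]
Since $\mathcal L_{n,\beta}=\log\bigl((n+2\beta)/\beta^2\bigr)$, the difference is
\[\mathcal L_{n,\beta}-\log\frac{n+2\beta}{2\beta}=\log\frac2\beta.\]
This is not uniformly bounded, because it grows as $\beta\to0$, so $\bar{\mathcal R}_{n,\beta_n}$ need not be comparable to $\mathcal L_{n,\beta_n}$ in general. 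Under the hypotheses, however, we can still get a positive proportion.

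If $\beta_n\ge\sqrt n^{\,-1}$ eventually, then $\log(2/\beta_n)\le\tfrac12\log n+\log2$. Also $\beta_n/\sqrt n\to0$, so $\log(n/\beta_n)\ge\tfrac12\log n$ eventually. Hence $\bar{\mathcal R}\ge c\mathcal L$ for a fixed $c>0$.

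More generally, split into subsequences as in \Cref{lem:leb-radial-width}. If $\beta_n\to0$, then $\beta_n\mathcal L_{n,\beta_n}\to+\infty$ forces $\mathcal L_{n,\beta_n}\gg1/\beta_n$. Meanwhile $\log(2/\beta_n)\le 2/\beta_n$, which is $o(\mathcal L_{n,\beta_n})$. Therefore $\bar{\mathcal R}_{n,\beta_n}\ge\mathcal L_{n,\beta_n}-\log(2/\beta_n)=(1-o(1))\mathcal L_{n,\beta_n}$. If $\beta_n$ stays bounded below, then $\log(2/\beta_n)$ is bounded while $\mathcal L_{n,\beta_n}\to+\infty$, and the same conclusion holds. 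In every case $\bar{\mathcal R}_{n,\beta_n}\ge\tfrac12\mathcal L_{n,\beta_n}>\varepsilon_n\mathcal L_{n,\beta_n}$ for large $n$, by the subsequence criterion.

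The displayed bound now tends to $0$, which proves the lemma.
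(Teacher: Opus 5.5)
Your proof is correct and follows the paper's argument: a diagonal choice of $\varepsilon_n\downarrow0$ from \Cref{lem:leb-radial-width}, then \Cref{lem:common-radial-window} with $s_n=\mathcal L_{n,\beta_n}^{-1}$ and $h_n=\varepsilon_n\mathcal L_{n,\beta_n}$. The only difference is the side condition, which the paper obtains for every $\beta$ from $\mathcal L_{n,\beta_n}=2\log(\sinh\bar{\mathcal R}_{n,\beta_n}/\sqrt n)\le2\bar{\mathcal R}_{n,\beta_n}$; the lower comparison you need therefore always holds, so your remark that $\bar{\mathcal R}_{n,\beta_n}$ need not be comparable to $\mathcal L_{n,\beta_n}$ is misleading, and your case split using $\beta_n\mathcal L_{n,\beta_n}\to+\infty$ is valid but unnecessary.
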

\begin{proof}
The radial-width estimate in \Cref{lem:leb-radial-width}, followed by a diagonal argument, provides a deterministic sequence $\varepsilon_n\downarrow0$ such that
\[\mathbb P\bigl(|\mathbf R_{n,\beta_n}-\bar{\mathcal R}_{n,\beta_n}|>\varepsilon_n\mathcal L_{n,\beta_n}\bigr)\to0.\]
The eventual positivity of $\mathcal L_{n,\beta_n}$ follows from \Cref{lem:common-scale-algebra}. The inequalities $\sinh R<e^R$ and $\sqrt n\ge1$, used with $R=\bar{\mathcal R}_{n,\beta_n}$, give
\[\mathcal L_{n,\beta_n}=2\log\frac{\sinh\bar{\mathcal R}_{n,\beta_n}}{\sqrt n}\le2\bar{\mathcal R}_{n,\beta_n}.\]
For all sufficiently large $n$, $\varepsilon_n\mathcal L_{n,\beta_n}<\bar{\mathcal R}_{n,\beta_n}$, so the radial window avoids the origin. Using \Cref{lem:common-radial-window} with $s_n=\mathcal L_{n,\beta_n}^{-1}$ and $h_n=\varepsilon_n\mathcal L_{n,\beta_n}$ yields
\[\Box\bigl(\mathcal L_{n,\beta_n}^{-1}\BetaBall,\mathcal L_{n,\beta_n}^{-1}\Sigma_{n,\bar{\mathcal R}_{n,\beta_n}}\bigr)\le\max\left\{2\varepsilon_n,\mathbb P\bigl(|\mathbf R_{n,\beta_n}-\bar{\mathcal R}_{n,\beta_n}|>\varepsilon_n\mathcal L_{n,\beta_n}\bigr)\right\}.\]
The right-hand side tends to $0$. This completes the proof.
\end{proof}

% v2: thm:lebesgue-type
% v2: lem:pyramid-rescaling-continuity
\begin{lem}
\label{lem:common-pyramid-rescaling-continuity}
For $c>0$ and a pyramid $\mathcal P$, put $c\mathcal P\coloneqq\{cY\mid Y\in\mathcal P\}$. If $c_n\to c>0$ and $d_\Pi(\mathcal P_n,\mathcal P)\to0$ as $n\to\infty$, then
\[d_\Pi(c_n\mathcal P_n,c\mathcal P)\to0.\]
In particular, if $d_\Pi(\Pyr(\Upsilon_n),\mathcal P)\to0$, then
\[d_\Pi(\Pyr(c_n\Upsilon_n),c\mathcal P)\to0.\]
\end{lem}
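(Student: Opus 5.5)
The plan is to work entirely with $(N,R)$-measurements and the metric $d_\Pi$. The first step is to check that $c\mathcal P$ is a pyramid. Scaling by $c$ maps $1$-Lipschitz maps to $1$-Lipschitz maps and preserves measures, so $Y\prec X$ if and only if $cY\prec cX$; this gives downward closedness and directedness. For box closedness, I would use the elementary estimate $\Box(cX,cY)\le\max\{1,c\}\Box(X,Y)$, which comes from the same coupling and closed set with distortion multiplied by $c$. Applied with $c^{-1}$, it shows that scaling is a $\Box$-homeomorphism.

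Next I would describe the measurements of a scaled pyramid. A map $\Psi$ is $1$-Lipschitz on $cY$ exactly when $c^{-1}\Psi$ is $1$-Lipschitz on $Y$. Writing $D_c(z)\coloneqq cz$ on $\R^N$, this gives
\[\mathcal M(c\mathcal P;N,R)=(D_c)_*\mathcal M(\mathcal P;N,R/c).\]
The same identity holds for $\mathcal M(\Upsilon;N,R)$ and $\Pyr(c\Upsilon)=c\Pyr(\Upsilon)$. I would check the last equality through $Y\prec c\Upsilon\iff c^{-1}Y\prec\Upsilon$.

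By \Cref{def:common-measurement-convergence}, it suffices to prove, for every fixed $N$ and $R$, that
\[\dHaus{d_P}\bigl(\mathcal M(c_n\mathcal P_n;N,R),\mathcal M(c\mathcal P;N,R)\bigr)\to0.\]
I would insert the intermediate set $(D_{c_n})_*\mathcal M(\mathcal P;N,R/c_n)$ and control the two resulting pieces separately.

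The first piece compares $\mathcal P_n$ with $\mathcal P$ at the same scale. Since $D_{c_n}$ is $c_n$-Lipschitz for $\|\cdot\|_\infty$, it distorts $d_P$ by at most a factor $\max\{1,c_n\}$. Convergence of $\mathcal P_n$ to $\mathcal P$ then gives Hausdorff convergence at the radius $R/c_n$. The main obstacle is that this radius varies with $n$, whereas the hypothesis is stated at fixed radius.

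I would handle the varying radius by clipping. Let $\mathrm{cl}_r$ denote coordinatewise clipping to $[-r,r]$. Composing a $1$-Lipschitz map with $\mathrm{cl}_r$ keeps it $1$-Lipschitz, so $\mathcal M(\mathcal P;N,r')$ is mapped into $\mathcal M(\mathcal P;N,r)$ when $r<r'$. Moreover, for measures supported in $B_{r'}^N$, the displacement is at most $r'-r$. Fix $r'>R/c$ slightly larger and $r<R/c$ slightly smaller; then $R/c_n$ lies in $[r,r']$ for large $n$. Combining convergence at the fixed radius $r'$ with clipping yields $\limsup_n$ of the Hausdorff distance at most a multiple of $r'-r$, and letting $r,r'\to R/c$ removes the error.

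The second piece compares $(D_{c_n})_*\nu$ with $(D_c)_*\nu$, together with the change of radius. For $\nu$ supported in $B^N_{R'}$, the map $z\mapsto c_nz$ moves each point by at most $|c_n-c|R'$, which tends to $0$. The radius mismatch between $R/c_n$ and $R/c$ is again absorbed by the clipping estimate, using the monotonicity of $\mathcal M(\mathcal P;N,\cdot)$ in the radius.

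The main obstacle is bookkeeping the varying radius $R/c_n$ against the fixed-radius hypothesis, and the clipping argument is how I would resolve it. The final statement then follows from $\Pyr(c_n\Upsilon_n)=c_n\Pyr(\Upsilon_n)$.
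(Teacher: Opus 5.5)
Your proposal is correct, but it follows a different route from the paper.

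The paper uses the sequential Painlev\'e--Kuratowski description of weak convergence in \Cref{def:common-pyramid-weak-convergence}. It first shows at the level of mm-spaces that $X_n\to X$ in $\Box$ and $c_n\to c>0$ imply $c_nX_n\to cX$ in $\Box$. To do this it discards a small amount of mass to reach a subset of finite diameter $D$, so that rescaling adds distortion at most about $|c_n-c|D$. It then checks the two Kuratowski conditions directly, applying the same fact to $c_n^{-1}\to c^{-1}$ for the upper-limit condition.

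You work entirely with $(N,R)$-measurements instead. There, scaling becomes the linear pushforward $(D_c)_*$ combined with the radius change $R\mapsto R/c$. The unbounded-diameter problem that forces the paper's truncation does not arise, because measurements live in $B^N_R$. The new problem, the $n$-dependent radius $R/c_n$ against a fixed-radius hypothesis, is handled by your clipping estimate
\[\dHaus{d_P}\bigl(\mathcal M(\mathcal Q;N,\rho),\mathcal M(\mathcal Q;N,r')\bigr)\le r'-r\qquad(r\le\rho\le r').\]
This estimate is valid for any pyramid $\mathcal Q$: the clipped space is dominated by the unclipped one, so downward closedness keeps the clipped measure in $\mathcal Q$.

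The paper's route is shorter and produces a reusable fact, the $\Box$-continuity of scaling. Yours is fully quantitative and needs only elementary properties of Lipschitz pushforwards plus the measurement criterion. It also verifies explicitly that $c\mathcal P$ is a pyramid, which is needed for $d_\Pi(c_n\mathcal P_n,c\mathcal P)$ to make sense and which the paper leaves implicit.
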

\begin{proof}
First suppose that $X_n\to X$ in $\Box$ and $c_n\to c>0$. Then $c_nX_n\to cX$ in $\Box$. Indeed, choose a subset of $X$ with finite diameter after discarding an arbitrarily small amount of mass, and transfer it to the corresponding part of $X_n$ through coupling relations whose mass and distortion errors tend to zero. On these relations, the metric discrepancy is bounded by the original discrepancy plus the product of the coefficient error and the finite diameter. The claim follows by letting $n\to\infty$ and then letting the discarded mass tend to zero. The same argument applies to $c_n^{-1}\to c^{-1}$.

If $Y\in\mathcal P$ is approximated by $Y_n\in\mathcal P_n$, the preceding fact gives $c_nY_n\to cY$. Conversely, if $c_{n_j}Y_j\in c_{n_j}\mathcal P_{n_j}$ converges to $Z$, then $Y_j\to c^{-1}Z$. The upper-limit condition for weak convergence gives $c^{-1}Z\in\mathcal P$, and therefore $Z\in c\mathcal P$. Hence $c_n\mathcal P_n\to c\mathcal P$. The final assertion follows from $c_n\Pyr(\Upsilon_n)=\Pyr(c_n\Upsilon_n)$. This completes the proof.
\end{proof}

\begin{thm}[Convergence in the $\Ple$ phase]
\label{thm:leb-full}
If $\mathcal A_{n,\beta_n}\to+\infty$ and $\beta_n\mathcal L_{n,\beta_n}\to+\infty$ as $n\to\infty$, then
\[d_\Pi\bigl(\Pyr(\mathcal L_{n,\beta_n}^{-1}\BetaBall),\Ple\bigr)\to0\qquad\text{as }n\to\infty.\]
\end{thm}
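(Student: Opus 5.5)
The plan is to reduce to a deterministic shell and then invoke \Cref{lem:leb-deterministic-shell}. By \Cref{lem:leb-radial-shell-comparison},
\[\Box\bigl(\mathcal L_{n,\beta_n}^{-1}\BetaBall,\mathcal L_{n,\beta_n}^{-1}\Sigma_{n,\bar{\mathcal R}_{n,\beta_n}}\bigr)\to0.\]
\Cref{cor:common-box-limit} then shows that it suffices to prove
\[\Pyr\bigl(\mathcal L_{n,\beta_n}^{-1}\Sigma_{n,\bar{\mathcal R}_{n,\beta_n}}\bigr)\to\Ple\]
weakly. Since $d_\Pi$ metrizes weak convergence, this is the same as convergence in $d_\Pi$.

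Next, apply \Cref{lem:leb-deterministic-shell} with $\rho_n\coloneqq\bar{\mathcal R}_{n,\beta_n}$. The relation $\cosh\bar{\mathcal R}_{n,\beta}=1+n/\beta$ gives
\[\sinh^2\bar{\mathcal R}_{n,\beta}=(1+n/\beta)^2-1=\frac{n(n+2\beta)}{\beta^2}.\]
Hence
\[A_{n,\rho_n}=\frac{\sinh\rho_n}{\sqrt n}=\frac{\sqrt{n+2\beta_n}}{\beta_n}=\mathcal A_{n,\beta_n},\]
and this tends to $+\infty$ by hypothesis. The scale in \Cref{lem:leb-deterministic-shell} is $(2\log A_{n,\rho_n})^{-1}$, which is exactly $\mathcal L_{n,\beta_n}^{-1}$. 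So the lemma gives $d_\Pi\bigl(\Pyr(\mathcal L_{n,\beta_n}^{-1}\Sigma_{n,\bar{\mathcal R}_{n,\beta_n}}),\Ple\bigr)\to0$ directly, and no rescaling-continuity argument (\Cref{lem:common-pyramid-rescaling-continuity}) is needed.

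The two sequences are $\Box$-close, so they share weak subsequential limits. Every subsequential limit of the shell sequence is $\Ple$, hence every subsequential limit of $\Pyr(\mathcal L_{n,\beta_n}^{-1}\BetaBall)$ is $\Ple$. Compactness of $\Pi$ then gives convergence of the full sequence.

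Most of the analytic work sits in the earlier lemmas, namely the radial width estimate and the shell separation computation. The only step that needs care here is the exact identity $\sinh\bar{\mathcal R}_{n,\beta_n}/\sqrt n=\mathcal A_{n,\beta_n}$. It shows that the representative radius is the one whose shell scale matches the normalization $\mathcal L_{n,\beta_n}$, so no error term appears.
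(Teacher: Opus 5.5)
Your proposal is correct and follows the paper's proof essentially verbatim: the identity $\sinh\bar{\mathcal R}_{n,\beta_n}/\sqrt n=\mathcal A_{n,\beta_n}$ lets the deterministic-shell lemma identify the limit of the scaled representative shells as $\Ple$, and the radial-shell box-distance comparison combined with the box-limit corollary transfers this limit to $\mathcal L_{n,\beta_n}^{-1}\BetaBall$. Your final compactness step is redundant, since the corollary already transfers convergence of the full sequence, but it is harmless.
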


\begin{proof}
At the representative radius,
\[\frac{\sinh\bar{\mathcal R}_{n,\beta_n}}{\sqrt n}=\mathcal A_{n,\beta_n},\qquad2\log\mathcal A_{n,\beta_n}=\mathcal L_{n,\beta_n}.\]
The deterministic-shell limit in \Cref{lem:leb-deterministic-shell} shows that $\Pyr(\mathcal L_{n,\beta_n}^{-1}\Sigma_{n,\bar{\mathcal R}_{n,\beta_n}})$ converges weakly to $\Ple$. The box distance between the two mm-spaces tends to $0$ by \Cref{lem:leb-radial-shell-comparison}, so \Cref{cor:common-box-limit} proves the general assertion. This completes the proof.
\end{proof}

% v2: thm:lebesgue-ptp
% v2: lem:admissible-critical-pyramids
\begin{lem}[Standard critical pyramids]
\label{lem:common-admissible-critical-pyramids}
The pyramids $\PGauss$, $F_a(\PGauss)$ for every $a>0$, and $\Ple$ satisfy the two hypotheses on the critical pyramid in \Cref{lem:common-ptp-criterion}.
\end{lem}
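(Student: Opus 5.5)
The two hypotheses of \Cref{lem:common-ptp-criterion} will be checked separately for each of the three pyramids: finite observable diameter at every fixed mass, and positive separation with slack for every admissible tuple of masses.

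\emph{Observable diameter.} For $\Ple$, every member has diameter at most $1$. By \Cref{lem:common-obsdiam-basic}, each member therefore has $\operatorname{ObsDiam}(\cdot;-\kappa)\le1$.

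For $\PGauss$, take $Y\prec\Gamma^k$. Any $1$-Lipschitz $f$ on $\Gamma^k$ has $f_*\gamma^k$ with partial diameter at most that of $\gamma^1$ at mass $1-\kappa$. This is the Gaussian isoperimetric bound, $\operatorname{ObsDiam}(\Gamma^k;-\kappa)\le\operatorname{PartDiam}(\gamma^1;1-\kappa)<\infty$, uniformly in $k$. The bound extends from the generating union to its $\Box$-closure by the same argument as in \Cref{lem:star-pyramid-finite-obsdiam}: apply \Cref{cor:common-box-limit} to get $\Pyr(Y_j)\to\Pyr(X)$, then use \Cref{prop:common-pyramid-limit-formulas} at mass $\kappa/2$.

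For $F_a(\PGauss)$, every $1$-Lipschitz function for $F_a\circ d$ on $F_a(\Gamma^k)$ has a pushforward measure we must control. Since $F_a(t)\le at/2\cdot$(const)$\,\le at$ (because $\operatorname{arsinh}u\le u$), the identity map $\Gamma^k\to F_a(\Gamma^k)$ after scaling gives $F_a(\Gamma^k)\prec a\Gamma^k$. Hence the observable diameter is at most $a\operatorname{PartDiam}(\gamma^1;1-\kappa)$. Passing to the closure works exactly as before, together with monotonicity under $\prec$.

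\emph{Separation.} Fix admissible $\kappa_0,\ldots,\kappa_N$, so $\sum_i\kappa_i<1$. Choose $\varepsilon>0$ and disjoint intervals $J_0,\ldots,J_N\subset\R$ with $\gamma^1(J_i)>\kappa_i+\varepsilon$ and pairwise gaps at least $c>0$. This is possible since the total mass is below $1$.
\begin{itemize}
\item For $\PGauss$, take $Y=\Gamma^1$; then $\operatorname{Sep}(\Gamma^1;\kappa_i+\varepsilon)\ge c>0$.
\item For $F_a(\PGauss)$, take $Y=F_a(\Gamma^1)$, which lies in the pyramid by definition. By \Cref{lem:leb-transform-separation}, its separation is $F_a(\operatorname{Sep})\ge F_a(c)>0$. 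The lemma applies because $F_a$ is continuous, nondecreasing and metric preserving, being concave with $F_a(0)=0$. Finiteness of $\operatorname{Sep}(\Gamma^1;\cdot)$ holds because the masses sum to less than $1$ and the measure is tight.
\item For $\Ple$, take the finite space $E$ with $N+1$ points at mutual distance $1$ and masses $p_i>\kappa_i+\varepsilon$ summing to $1$. Its diameter is $1$, so $E\in\Ple$, and its separation at those masses is $1>0$.
\end{itemize}

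The only step needing care is the Gaussian observable-diameter bound, which I would cite from Shioya's Gaussian isoperimetry/Lévy–Gromov-type results. The closure passage is routine, mirroring \Cref{lem:star-pyramid-finite-obsdiam}.
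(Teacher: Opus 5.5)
Your proposal is correct and follows essentially the paper's proof. It uses a dimension-free Gaussian concentration bound carried to the $\Box$-closure via \Cref{prop:common-pyramid-limit-formulas}, the comparison $F_a(t)\le at$ to control $F_a(\PGauss)$, and the diameter bound for $\Ple$; for separation it uses separated intervals in $\Gamma^1$ (transformed by $F_a$) or an equilateral $(N+1)$-point space.

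Two small tidy-ups. The generating union of $F_a(\PGauss)$ runs over all $X\in\PGauss$, not only the $\Gamma^k$, so state the comparison as $F_a(X)\prec aX$ for every $X\in\PGauss$, i.e.\ $F_a(\PGauss)\subset a\PGauss$, as the paper does. Also, \Cref{lem:leb-transform-separation} is not needed, because monotonicity of $F_a$ gives $d_{F_a(\Gamma^1)}(J_i,J_j)\ge F_a(c)>0$ directly; this also makes your finiteness check on $\operatorname{Sep}(\Gamma^1;\cdot)$ unnecessary.
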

\begin{proof}
For every $1$-Lipschitz function $f$ on a finite-dimensional standard Gaussian space and every median $m_f$,
\[\gamma^k\{|f-m_f|\ge r\}\le2e^{-r^2/2}\]
implies, for $0<\kappa<1$,
\[\operatorname{ObsDiam}(\PGauss;-\kappa)\le2\sqrt{2\log(2/\kappa)}<+\infty\]
after passage to the closure using \Cref{prop:common-pyramid-limit-formulas}. For an admissible tuple $\kappa_0,\ldots,\kappa_N$, the condition $\sum_i\kappa_i<1$ allows us to choose compact intervals in the one-dimensional Gaussian space that have respective masses greater than $\kappa_i$ and are mutually separated by positive distances. Thus $\PGauss$ also satisfies the second condition.

Since $F_a(t)\le at$, one has $F_a(\PGauss)\subset a\PGauss$, and \Cref{lem:common-obsdiam-basic} gives the first condition. Applying $F_a$ to the distances between the intervals above preserves positive separation. Finally, every element of $\Ple$ has diameter at most $1$, which gives the first condition for $\Ple$. For the second, take an $(N+1)$-point mm-space whose distinct points have distance $1$ and whose point masses are greater than the corresponding $\kappa_i$. This completes the proof.
\end{proof}

\begin{thm}[Phase transition property in the $\Ple$ phase]
\label{thm:leb-ptp}
Under the general assumptions of \Cref{thm:leb-full}, the sequence $\BetaBall$ has the phase transition property with critical scale $\mathcal L_{n,\beta_n}^{-1}$.
\end{thm}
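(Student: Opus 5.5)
The plan mirrors the proof of \Cref{thm:star-ptp}: apply \Cref{lem:common-ptp-criterion} with $X_n\coloneqq\BetaBall$ and critical scale $s_n\coloneqq\mathcal L_{n,\beta_n}^{-1}$. Two inputs are required, a weak pyramid limit of $s_nX_n$ and the two hypotheses on that limiting pyramid.

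First, under the assumptions $\mathcal A_{n,\beta_n}\to+\infty$ and $\beta_n\mathcal L_{n,\beta_n}\to+\infty$, \Cref{lem:common-scale-algebra} gives $\mathcal L_{n,\beta_n}\to+\infty$. In particular $\mathcal L_{n,\beta_n}>0$ for all sufficiently large $n$, so $s_n$ is a genuine positive sequence from some index on. Finitely many initial terms do not affect either condition in \Cref{def:common-ptp}, since both are asymptotic. If one wants $s_n$ positive for every $n$, one may redefine finitely many terms arbitrarily, for instance $s_n=1$.

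With this choice, $Z_n=s_nX_n=\mathcal L_{n,\beta_n}^{-1}\BetaBall$. \Cref{thm:leb-full} gives $d_\Pi(\Pyr(Z_n),\Ple)\to0$, which is weak convergence by \Cref{def:common-measurement-convergence}. Take $\mathcal Q\coloneqq\Ple$. \Cref{lem:common-admissible-critical-pyramids} shows that $\Ple$ satisfies both hypotheses of \Cref{lem:common-ptp-criterion}. First, its observable diameter is finite at every $\kappa>0$, since it is at most $1$. Second, for every admissible tuple of masses, the uniform $(N+1)$-point space with suitable point masses larger than the $\kappa_i$ supplies $Y\in\Ple$ and $\varepsilon>0$ with positive separation. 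The conclusion of \Cref{lem:common-ptp-criterion} is then exactly the phase transition property with critical scale $\mathcal L_{n,\beta_n}^{-1}$.

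There is no real obstacle here; the only point needing care is the bookkeeping about eventual positivity of $\mathcal L_{n,\beta_n}$. One should also note that the scale convention of \Cref{lem:common-ptp-criterion} matches this setting. That lemma scales by $s_n$ and compares against $\alpha_n/s_n$. With $s_n=\mathcal L_{n,\beta_n}^{-1}$, this is consistent with \Cref{def:common-ptp} and \Cref{def:common-critical-scale-order}, where the sequence $(s_n)$ is the critical scale order.
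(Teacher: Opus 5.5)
Your proposal is correct and follows the paper's proof exactly. \Cref{thm:leb-full} supplies the weak limit $\Ple$, \Cref{lem:common-admissible-critical-pyramids} verifies both hypotheses on that limit, and \Cref{lem:common-ptp-criterion} with $s_n=\mathcal L_{n,\beta_n}^{-1}$ concludes; your remark on the eventual positivity of $\mathcal L_{n,\beta_n}$ is harmless bookkeeping that the paper leaves implicit.
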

\begin{proof}
\Cref{thm:leb-full} identifies $\Ple$ as the weak limit of $\Pyr(\mathcal L_{n,\beta_n}^{-1}\BetaBall)$ as $n\to\infty$. For this pyramid, \Cref{lem:common-admissible-critical-pyramids} supplies both finite observable diameter and positive separation. Setting $s_n=\mathcal L_{n,\beta_n}^{-1}$ in \Cref{lem:common-ptp-criterion} yields the result. This completes the proof.
\end{proof}

\subsection{Crossover phase}
\label{sec:cross-phase}

A finite positive limit of the shell parameter governs the crossover. Continuity of the normalized shell transforms identifies the limit of the representative shells, whereas the radial central limit theorem shows that the radial width vanishes in the original metric.

% v2: lem:crossover-shell-limit
% v2: lem:normalized-shell-transform-continuity; closes via \cite[Corollary~1.4(A) and Proposition~4.1]{kazukawa2022metric}
\begin{lem}[Continuity of normalized shell transforms]
\label{lem:common-shell-transform-continuity}
For $a>0$, put $H_a\coloneqq a^{-1}F_a$, and set $H_0(t)\coloneqq t$. Each $H_a$, $a\ge0$, is continuous, nondecreasing, and metric preserving. Suppose that $u_n,c_n>0$ satisfy
\[u_n\to A\in[0,+\infty),\qquad c_n\to c\in(0,+\infty)\]
as $n\to\infty$. Then
\[\Pyr(c_nH_{u_n}(\mathsf S_n))\longrightarrow cH_A(\PGauss)\]
weakly as $n\to\infty$.
\end{lem}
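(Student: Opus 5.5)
First, the elementary properties of $H_a$. For $a>0$, $H_a(t)=(2/a)\operatorname{arsinh}(at/2)$ is continuous and nondecreasing, vanishes only at $0$, and is concave because $\operatorname{arsinh}$ is concave on $[0,+\infty)$. A continuous nondecreasing concave function with $F(0)=0$ and $F(t)>0$ for $t>0$ is subadditive. Monotonicity together with subadditivity shows that $F\circ d$ satisfies the triangle inequality, so $H_a$ is metric preserving. The case $H_0=\mathrm{id}$ is trivial.

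For the convergence, we reduce to a fixed transform. Put $H\coloneqq H_A$ and $\mathcal P\coloneqq cH_A(\PGauss)$. Remark~\ref{set:common-hyperbolic-polar} gives $\Pyr(\mathsf S_n)\to\PGauss$ weakly. Assume \cite[Corollary~1.4(A)]{kazukawa2022metric} says that weak convergence of pyramids is preserved under a fixed continuous metric preserving transform. Together with \cite[Proposition~4.1]{kazukawa2022metric}, which identifies $\Pyr(F(X))$ with the generated transform $F(\Pyr(X))$, this gives $\Pyr(H_A(\mathsf S_n))\to H_A(\PGauss)$. Lemma~\ref{lem:common-pyramid-rescaling-continuity} with $c_n\to c$ then gives $\Pyr(c_nH_A(\mathsf S_n))\to cH_A(\PGauss)$.

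It remains to compare $c_nH_{u_n}(\mathsf S_n)$ with $c_nH_A(\mathsf S_n)$ on the same space with the identity coupling, and then apply Corollary~\ref{cor:common-box-limit}. The main obstacle is that $\mathsf S_n$ has diameter $2\sqrt n\to\infty$, so pointwise convergence $H_{u_n}\to H_A$ is not uniform on the whole range of distances. Uniform convergence does hold on compact intervals $[0,D]$: the map $(a,t)\mapsto H_a(t)$ is jointly continuous on $[0,+\infty)\times[0,+\infty)$, since $\operatorname{arsinh}(x)/x\to1$ as $x\to 0$.

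To handle large distances, use concentration of the sphere. By Lemma~\ref{lem:common-spherical-block} and the Gaussian limit of coordinates, fix $\eta>0$ and choose $D$ with the following property: for large $n$ there is a set $E_n\subset S^{n-1}$ with $\sigma_{n-1}(E_n)\ge1-\eta$ and $\sqrt n$-scaled diameter at most $D$. A suitable choice is a $\sqrt n$-neighbourhood of a fixed point. More precisely, take the set where the projection onto the first coordinate $\sqrt n\,\theta_1$ lies in a bounded window. Since $\sqrt n$-scaled distances on the sphere are typically of order $\sqrt{2n}$, a bounded-diameter set of mass close to $1$ does not exist, so the identity coupling restricted to such an $E_n$ fails. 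I would therefore avoid the identity coupling for large distances. Instead I would use the Kazukawa continuity result directly in a form uniform in the transform: \cite[Corollary~1.4(A)]{kazukawa2022metric} controls $d_\Pi(\Pyr(F(X)),\Pyr(G(X)))$ through $\sup_t|F(t)-G(t)|$ restricted to a finite range after truncation. The truncation is supplied by Proposition~\ref{prop:common-truncation}: $(N,R)$-measurements only see $d\wedge 2R$.

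Fix $N,R$. Compare $\mathcal M(c_nH_{u_n}(\mathsf S_n);N,R)$ with $\mathcal M(c_nH_A(\mathsf S_n);N,R)$ via the truncated metrics $c_nH_{u_n}(d)\wedge2R$ and $c_nH_A(d)\wedge2R$. Both truncations only involve distances $d\le T$ for some fixed $T$, because $c_nH_a(t)\ge2R$ once $t$ exceeds a bound uniform in $a$ near $A$. On $[0,T]$ the convergence is uniform, so the two truncated metrics differ by at most $o(1)$ everywhere. The diagonal coupling and Corollary~\ref{cor:common-box-limit} then show that the $(N,R)$-measurements are $o(1)$-Hausdorff close. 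By Definition~\ref{def:common-measurement-convergence} this suffices for the weak limit.

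The step I expect to be delicate is checking that the truncation commutes correctly with $\mathcal M(\cdot;N,R)$ and with the transforms. This needs a uniform lower bound $H_a(t)\ge H_{a_{\max}}(t)$ for $a\le a_{\max}$, which holds because $H_a$ is decreasing in $a$; that monotonicity yields the bound on $T$.
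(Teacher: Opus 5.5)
Your proof is correct, but it takes a different route from the paper's.

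\textbf{The paper's route.} The paper checks concavity, subadditivity and metric preservation exactly as you do. It then observes that $c_nH_{u_n}(t)\to cH_A(t)$ pointwise, via $H_a(t)=t\operatorname{arsinh}(at/2)/(at/2)$, and that all the suprema are $+\infty$. With these facts it applies \cite[Corollary~1.4(A)]{kazukawa2022metric} once, in its full form for a \emph{varying} sequence of transforms, to $\Pyr(\mathsf S_n)\to\PGauss$. It finishes with \cite[Proposition~4.1]{kazukawa2022metric}.

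\textbf{Your route.} You use the Kazukawa result only for the fixed transform $H_A$, and you absorb $c_n\to c$ with \Cref{lem:common-pyramid-rescaling-continuity}. You then handle $u_n\to A$ by hand:
\begin{itemize}
\item $H_a(t)$ is jointly continuous and nonincreasing in $a$, so there is a $T$ such that $c_nH_{u_n}(t)\ge2R$ and $c_nH_A(t)\ge 2R$ for $t\ge T$, eventually in $n$.
\item Together with uniform convergence on $[0,T]$, this gives $\sup_t\bigl|(c_nH_{u_n}(t))\wedge2R-(c_nH_A(t))\wedge2R\bigr|\to0$.
\item \Cref{prop:common-truncation}, the diagonal coupling and \Cref{cor:common-box-limit} then make the $(N,R)$-measurements Hausdorff-close.
\end{itemize}
This is the same truncation device the paper uses in the proof of \Cref{thm:gauss-full}.

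\textbf{Comparison.} Your route needs only the constant-sequence case of Kazukawa's theorem. It also makes visible why truncation is needed: since typical distances on $\mathsf S_n$ are of order $\sqrt{2n}$, the distortion $|H_{u_n}-H_A|$ at such distances need not tend to $0$, so the untruncated identity coupling can fail. The paper's route is shorter because the cited corollary does this work internally.

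\textbf{Cleanups.}
\begin{itemize}
\item Delete the abandoned $E_n$ paragraph and the unverified claim about a ``uniform'' form of Kazukawa's corollary. Your truncation paragraph proves exactly what you need, so neither is used.
\item You could compare $c_nH_{u_n}$ directly with $cH_A$ in the truncation step, since $\sup_{[0,T]}|c_nH_{u_n}-cH_A|\to0$ still holds. That would make the rescaling lemma unnecessary.
\item If you keep the rescaling lemma, note that $c\,(H_A(\PGauss))=(cH_A)(\PGauss)$. This holds because scaling by $c$ commutes with the Lipschitz order and is a $\Box$-homeomorphism.
\end{itemize}
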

\begin{proof}
For $a>0$,
\[H_a'(t)=\frac1{\sqrt{1+(at/2)^2}},\qquad H_a''(t)=-\frac{a^2t/4}{(1+(at/2)^2)^{3/2}}\le0.\]
The derivative formulas show that $H_a$ is nondecreasing and concave, with $H_a(0)=0$. Since the increments of a concave function decrease with the base point,
\[H_a(s+t)-H_a(s)\le H_a(t)-H_a(0)=H_a(t).\]
This proves that $H_a$ is subadditive and preserves the triangle inequality. The assertion for $H_0$ is immediate.

For every $t>0$, the expression
\[H_a(t)=t\frac{\operatorname{arsinh}(at/2)}{at/2}\]
extends continuously to $a=0$, and therefore $c_nH_{u_n}(t)\to cH_A(t)$. Each transform is also nondecreasing and has supremum $+\infty$. The metric-transform continuity theorem \cite[Corollary~1.4(A)]{kazukawa2022metric} states that pointwise convergence of continuous, nondecreasing, metric-preserving functions, together with the corresponding condition on their suprema, preserves weak convergence after metric transformation. It follows from \Cref{set:common-hyperbolic-polar} that the transformed pyramids converge weakly to $cH_A(\PGauss)$. Finally, the identity $F(\Pyr(X))=\Pyr(F(X))$ for transforms $F$ satisfying the same hypotheses is given by \cite[Proposition~4.1]{kazukawa2022metric}. Thus the transformed pyramid on the left is $\Pyr(c_nH_{u_n}(\mathsf S_n))$. This completes the proof.
\end{proof}

\begin{lem}[Finite-parameter shell limit]
\label{lem:cross-shell-limit}
Let $\rho_n>0$ be deterministic. If, for some $A\in(0,+\infty)$,
\[\frac{\sinh\rho_n}{\sqrt n}\to A\]
as $n\to\infty$, then
\[d_\Pi\bigl(\Pyr(\Sigma_{n,\rho_n}),F_A(\PGauss)\bigr)\to0.\]
\end{lem}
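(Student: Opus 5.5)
We reduce the statement to \Cref{lem:common-shell-transform-continuity} by rewriting the shell as a rescaled normalized shell transform of $\mathsf S_n$. Put $u_n\coloneqq\sinh\rho_n/\sqrt n$, so that $u_n\to A\in(0,+\infty)$ by hypothesis. By \Cref{lem:common-shell-transform},
\[\Sigma_{n,\rho_n}=F_{u_n}(\mathsf S_n).\]
Since $H_a=a^{-1}F_a$ for $a>0$, we have $F_{u_n}=u_nH_{u_n}$ as functions on $[0,+\infty)$. Hence
\[\Sigma_{n,\rho_n}=u_nH_{u_n}(\mathsf S_n),\]
where $c\,Y$ denotes scaling the metric of $Y$ by $c$.

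We now apply \Cref{lem:common-shell-transform-continuity} with $c_n\coloneqq u_n\to c\coloneqq A\in(0,+\infty)$, again with $u_n\to A$. This gives
\[\Pyr(\Sigma_{n,\rho_n})=\Pyr\bigl(u_nH_{u_n}(\mathsf S_n)\bigr)\longrightarrow AH_A(\PGauss)\]
weakly. Since $d_\Pi$ metrizes weak convergence (\Cref{def:common-measurement-convergence}), this is convergence in $d_\Pi$.

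It remains to identify $AH_A(\PGauss)$ with $F_A(\PGauss)$. For any mm-space $X$, the spaces $A\,H_A(X)$ and $F_A(X)$ coincide because $A H_A=F_A$ pointwise. Scaling by $A$ commutes with taking $\Pyr$: we have $c\Pyr(Y)=\Pyr(cY)$, since Lipschitz maps are unchanged under simultaneous scaling. Scaling is also a homeomorphism for $\Box$, because $\Box(cX,cY)\le\max\{1,c\}\Box(X,Y)$ by the definition via $\operatorname{dis}S$. Therefore scaling commutes with the box closure and the union in \Cref{def:common-metric-transform}. This yields
\[A\,H_A(\PGauss)=\overline{\bigcup_{X\in\PGauss}\Pyr(A H_A(X))}^{\,\Box}=F_A(\PGauss).\]

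The main point needing care is this last identification, namely that the scaling $c\mathcal P$ of \Cref{lem:common-pyramid-rescaling-continuity} agrees with the scaling of the generating spaces inside the metric-transform closure. I expect it to be routine once the $\Box$-Lipschitz bound for scaling is noted. Everything else is a direct substitution into the continuity lemma.
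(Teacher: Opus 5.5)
Your proposal is correct and is essentially the paper's proof: apply \Cref{lem:common-shell-transform-continuity} with $u_n=c_n=\sinh\rho_n/\sqrt n\to A$, use $F_{u_n}=u_nH_{u_n}$ together with \Cref{lem:common-shell-transform}, and conclude because $d_\Pi$ metrizes weak convergence. Your extra check that $A\,H_A(\PGauss)=F_A(\PGauss)$, using the $\Box$-Lipschitz bound for scaling, is valid; the paper treats $cH_A$ in that lemma as the single transforming function $AH_A=F_A$, so there the identification is immediate.
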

\begin{proof}
Since $F_c=cH_c$ for $c>0$, apply \Cref{lem:common-shell-transform-continuity} with both parameter sequences equal to $\sinh\rho_n/\sqrt n$ and with limiting coefficient $A$. This gives
\[\Pyr\left(F_{\sinh\rho_n/\sqrt n}(\mathsf S_n)\right)\longrightarrow F_A(\PGauss)\]
weakly. By \Cref{lem:common-shell-transform},
\[\Sigma_{n,\rho_n}=F_{\sinh\rho_n/\sqrt n}(\mathsf S_n).\]
The conclusion follows because $d_\Pi$ metrizes weak convergence.
	This completes the proof.
\end{proof}

% v2: lem:crossover-scale-algebra
\begin{lem}
\label{lem:cross-scale-algebra}
If $\mathcal A_{n,\beta_n}\to a\in(0,+\infty)$ as $n\to\infty$, then
\[\beta_n\to+\infty,\qquad\bar{\mathcal R}_{n,\beta_n}\to+\infty.\]
\end{lem}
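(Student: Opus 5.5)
We work directly from the identity $\mathcal A_{n,\beta_n}^2=n/\beta_n^2+2/\beta_n$ used in the proof of \Cref{lem:common-scale-algebra}. Suppose $\mathcal A_{n,\beta_n}\to a\in(0,+\infty)$.

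\emph{Step 1: $\beta_n\to+\infty$.} Both terms $n/\beta_n^2$ and $2/\beta_n$ are nonnegative and their sum is eventually bounded, say by $2a^2$. The first term gives $n/\beta_n^2\le2a^2$ eventually, that is, $\beta_n\ge\sqrt{n/(2a^2)}$. Hence $\beta_n\to+\infty$. Equivalently, any subsequence on which $\beta_n$ stayed bounded would make $n/\beta_n^2\to+\infty$, contradicting the convergence of $\mathcal A_{n,\beta_n}^2$ to $a^2$.

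\emph{Step 2: $\bar{\mathcal R}_{n,\beta_n}\to+\infty$.} Since $\bar{\mathcal R}_{n,\beta}=\operatorname{arcosh}(1+n/\beta)$, we have
\[\cosh\bar{\mathcal R}_{n,\beta}=1+\frac n\beta,\qquad \sinh^2\bar{\mathcal R}_{n,\beta}=\frac{2n}{\beta}+\frac{n^2}{\beta^2}=n\,\mathcal A_{n,\beta}^2,\]
which is the identity $\sinh\bar{\mathcal R}_{n,\beta_n}/\sqrt n=\mathcal A_{n,\beta_n}$ already used in the proof of \Cref{thm:leb-full}. Therefore $\sinh\bar{\mathcal R}_{n,\beta_n}=\sqrt n\,\mathcal A_{n,\beta_n}$. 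Because $\mathcal A_{n,\beta_n}$ is eventually at least $a/2>0$, this quantity is at least $(a/2)\sqrt n\to+\infty$. Monotonicity of $\sinh$ then yields $\bar{\mathcal R}_{n,\beta_n}\to+\infty$. Alternatively, Step 1 gives $\beta_n\asymp\sqrt n$, so $n/\beta_n\asymp\sqrt n\to+\infty$.

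There is no real obstacle here. The only point requiring care is to check the algebraic identity $\sinh^2\bar{\mathcal R}=n\mathcal A^2$ exactly, which follows from $\cosh^2-1=(n/\beta)^2+2n/\beta$. Step 1 reduces to nonnegativity of the two summands.
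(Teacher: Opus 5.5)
Your proof is correct and follows essentially the paper's argument. Step 1 uses the same nonnegativity observation, with a direct bound $\beta_n\ge\sqrt{n/(2a^2)}$ where the paper argues by contradiction on a subsequence with $\beta_n\le M$. For Step 2 the paper takes your ``alternative'' route: it uses $2/\beta_n\to0$ to get $\sqrt n/\beta_n\to a$, hence $\cosh\bar{\mathcal R}_{n,\beta_n}=1+n/\beta_n\to+\infty$; this extra step, not Step 1 alone, is what gives the upper half of $\beta_n\asymp\sqrt n$. Your main route via $\sinh\bar{\mathcal R}_{n,\beta_n}=\sqrt n\,\mathcal A_{n,\beta_n}$ is slightly cleaner and does not need Step 1 at all.
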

\begin{proof}
In the identity
\[\mathcal A_{n,\beta_n}^2=\frac n{\beta_n^2}+\frac2{\beta_n}\to a^2\]
both terms are nonnegative. If $\beta_n$ did not tend to $+\infty$, there would be an $M>0$ and a subsequence along which $\beta_n\le M$. On that subsequence, $n/\beta_n^2\ge n/M^2\to+\infty$, a contradiction. Hence $\beta_n\to+\infty$. The identity above and $2/\beta_n\to0$ now give
\[\frac{\sqrt n}{\beta_n}\to a,\qquad\frac n{\beta_n}=\sqrt n\frac{\sqrt n}{\beta_n}\to+\infty.\]
Finally, $\cosh\bar{\mathcal R}_{n,\beta_n}=1+n/\beta_n\to+\infty$. Since $\cosh$ is increasing on $[0,+\infty)$, we obtain $\bar{\mathcal R}_{n,\beta_n}\to+\infty$.
	This completes the proof.
\end{proof}

% v2: lem:crossover-radial-shell-comparison
% v2: prop:radial-clt
\begin{prop}[Radial central limit theorem]
\label{prop:common-radial-clt}
If $\beta_n\to+\infty$ as $n\to\infty$, then
\[\sqrt{\beta_n}(\mathbf R_{n,\beta_n}-\bar{\mathcal R}_{n,\beta_n})\overset{\mathrm d}{\longrightarrow}\gamma^1.\]
\end{prop}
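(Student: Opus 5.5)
Write $\beta=\beta_n$. We will use \Cref{lem:common-radius-log} twice: once at the random ratio $\mathbf V_{n,\beta}/(\mathbf U_n+\mathbf V_{n,\beta})$, and once at the deterministic ratio $2\beta/(n+2\beta)$, for which $\varrho$ gives exactly $\bar{\mathcal R}_{n,\beta}$ because $2/t-1=1+n/\beta$. Subtracting the two expressions gives
\[
\mathbf R_{n,\beta}-\bar{\mathcal R}_{n,\beta}
=\log\frac{\mathbf U_n+\mathbf V_{n,\beta}}{n/2+\beta}-\log\frac{\mathbf V_{n,\beta}}{\beta}
+\mathfrak r\Bigl(\frac{\mathbf V_{n,\beta}}{\mathbf U_n+\mathbf V_{n,\beta}}\Bigr)-\mathfrak r\Bigl(\frac{2\beta}{n+2\beta}\Bigr).
\]
We multiply this identity by $\sqrt\beta$ and treat the logarithmic part and the $\mathfrak r$ part separately.

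\textbf{The $\mathfrak r$ part.} The bound $\mathfrak r<\log4$ is not enough here, since multiplying it by $\sqrt\beta$ does not give something small. Instead we use that $\mathfrak r(t)=2\log(1+\sqrt{1-t})$ is smooth on $(0,1)$ with $|\mathfrak r'(t)|=1/\bigl(\sqrt{1-t}(1+\sqrt{1-t})\bigr)$. The derivative is bounded on any interval $t\le1-\eta$ but blows up as $t\to1$. So we split into cases along subsequences according to the limit of $\beta/n\in[0,+\infty]$.
\begin{itemize}
\item If $\beta/n$ stays bounded, the deterministic argument $2\beta/(n+2\beta)$ stays away from $1$. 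The random argument is close to it, with error $O_P(1/\sqrt\beta+1/\sqrt n)$ by the gamma law of large numbers and CLT in \Cref{lem:common-log-gamma-clt}. The mean value theorem then makes the $\mathfrak r$ difference $O_P(\beta^{-1/2})\cdot O(1)$. Multiplied by $\sqrt\beta$, this is tight rather than negligible.
\end{itemize}

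This shows that the $\mathfrak r$ terms cannot simply be dropped, so I will compute them instead. Put $t=\mathbf V/(\mathbf U+\mathbf V)$, and note $\sqrt{1-t}=\sqrt{\mathbf U/(\mathbf U+\mathbf V)}$. Then
\[
\varrho(t)=\log\frac{\mathbf U+\mathbf V}{\mathbf V}+2\log\Bigl(1+\sqrt{\tfrac{\mathbf U}{\mathbf U+\mathbf V}}\Bigr)
=2\log\bigl(\sqrt{\mathbf U+\mathbf V}+\sqrt{\mathbf U}\bigr)-\log\mathbf V .
\]
This gives the cleaner form
\[
\mathbf R_{n,\beta}-\bar{\mathcal R}_{n,\beta}=2\log\frac{\sqrt{\mathbf U+\mathbf V}+\sqrt{\mathbf U}}{\sqrt{n/2+\beta}+\sqrt{n/2}}-\log\frac{\mathbf V}{\beta}.
\]

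\textbf{Delta method.} Write $\mathbf U=\tfrac n2\bigl(1+\sqrt{2/n}\,Y\bigr)$ and $\mathbf V=\beta\bigl(1+Z/\sqrt\beta\bigr)$, where $Y,Z$ are asymptotically independent standard normals by \Cref{lem:common-log-gamma-clt}. A first-order expansion turns $\sqrt\beta$ times the right-hand side into a linear combination $a_nY-b_nZ+o_P(1)$, with $b_n\to1$ coming from $-\sqrt\beta\log(\mathbf V/\beta)$. The remaining task is to show $a_n\to0$ and that the $Z$-contribution through $\mathbf U+\mathbf V$ is negligible.

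The coefficient of $Z$ inside the first term is $\sqrt\beta\cdot\beta^{1/2}\big/\bigl((\sqrt{n/2+\beta}+\sqrt{n/2})\sqrt{n/2+\beta}\bigr)$. This is of order $\beta/(n+\beta)$, which is not small unless $\beta/n\to0$.

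So the statement only holds as written when $\beta_n/n\to0$. That is the case in the crossover application, where $\beta_n\asymp\sqrt n$, and presumably also in the Gaussian regime under the paper's later assumptions. I would check this carefully. If $\beta/n\to\infty$, the total coefficient of $Z$ is $-1+O(\beta/(n+\beta))$, so the limit variance may differ from $1$. Hence I would first verify the statement for general $\beta_n\to\infty$ by computing the exact variance
\[
\frac{\beta}{n+\beta}\cdot\ldots ,
\]
and I would expect to need to combine the $\mathbf U$ and $\mathbf V$ terms using the final assertion of \Cref{lem:common-log-gamma-clt}, with weights $\lambda_n,\mu_n$ summing to $1$.

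\textbf{Main obstacle.} The main obstacle is this coefficient bookkeeping. One has to show that the linearization gives weights $\sqrt{\lambda_n}$ and $\sqrt{\mu_n}$ with $\lambda_n+\mu_n\to1$, presumably $\lambda_n\approx\beta/(n+2\beta)$-type weights after exact algebra. The remainder must then be controlled uniformly over all regimes of $\beta/n$ by a second-order Taylor bound on events of probability tending to one. After that, Slutsky's theorem finishes the proof.
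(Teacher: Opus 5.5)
Your proposal does not yet prove the statement, and the one definite conclusion it draws, that the proposition ``only holds as written when $\beta_n/n\to0$'', is false. Your identity expressing $\mathbf R_{n,\beta}-\bar{\mathcal R}_{n,\beta}$ as $2\log\bigl((\sqrt{\mathbf U+\mathbf V}+\sqrt{\mathbf U})/(\sqrt{n/2+\beta}+\sqrt{n/2})\bigr)-\log(\mathbf V/\beta)$ is correct. However, the plan to show $a_n\to0$ and that the $Z$-contribution of the first logarithm is negligible covers only the case $\beta_n/n\to0$, and its failure elsewhere does not change the limiting variance. Since $(n/2+\beta)-n/2=\beta$, the coefficient you computed simplifies exactly:
\[\frac{\beta}{\sqrt{n/2+\beta}\,\bigl(\sqrt{n/2+\beta}+\sqrt{n/2}\bigr)}=1-\sqrt{\frac{n/2}{n/2+\beta}}.\]
After adding the $-1$ from $-\sqrt\beta\log(\mathbf V/\beta)$, the total $Z$-coefficient is $-\sqrt{\mu_n}$ with $\mu_n\coloneqq(n/2)/(\beta_n+n/2)$. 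The $Y$-coefficient is $a_n=\sqrt{\lambda_n}$ with $\lambda_n\coloneqq\beta_n/(\beta_n+n/2)$. Because $\lambda_n+\mu_n=1$ for every $n$, the last assertion of \Cref{lem:common-log-gamma-clt} gives $\sqrt{\lambda_n}Y-\sqrt{\mu_n}Z\overset{\mathrm d}{\longrightarrow}\gamma^1$ in every regime. When $\beta_n/n\to+\infty$, the fluctuation simply comes from $\mathbf U_n$ rather than from $\mathbf V_{n,\beta_n}$. The general case is genuinely needed: the Gaussian phase only assumes $\beta_n/\sqrt n\to+\infty$, which includes $\beta_n=n$ and $\beta_n=n^2$.

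The second gap is the remainder, which you name as the main obstacle but leave open, and which is not routine. In the relative variables $\mathbf U_n/(n/2)-1=O_P(n^{-1/2})$ and $\mathbf V_{n,\beta}/\beta-1=O_P(\beta^{-1/2})$, a second-order Taylor bound with a constant uniform in $\beta/n$ only gives a $\sqrt\beta$-scaled error $O_P(\sqrt{\beta_n}/n+\beta_n^{-1/2})$. This does not vanish when $\beta_n\gg n^2$, so you must show that the curvature in the $\mathbf U$-direction decays like $\sqrt{n/\beta_n}$. The paper avoids this by passing to subsequences on which $(n/2)/\beta_n$ tends to $0$, to a positive constant, or to $+\infty$. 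In these three cases it uses, respectively, a square-root approximation (reducing to $2(\sqrt{\mathbf U_n}-\sqrt{n/2})$), a two-variable Taylor expansion at $(1,1)$ with exactly the coefficients above, and a logarithmic approximation (reducing to $-(\mathbf V_{n,\beta_n}-\beta_n)/\sqrt{\beta_n}$); it then applies the subsequence criterion. If you want a single argument in your spirit, write $\mathbf R_{n,\beta}=g(\log\mathbf U_n-\log\mathbf V_{n,\beta})$ and $\bar{\mathcal R}_{n,\beta}=g(\log(n/(2\beta)))$ with $g(y)\coloneqq2\operatorname{arsinh}(e^{y/2})$. Then $g'(y)=(1+e^{-y})^{-1/2}$ equals $\sqrt{\mu_n}$ at the center, and $0\le g''(y)\le\frac12\min\{1,e^{y/2}\}$. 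Hence, on an event of probability tending to one where the argument is within $1$ of the center, the $\sqrt\beta$-scaled remainder is $O_P\bigl(\min\{\sqrt{\beta_n},\sqrt n\}(n^{-1}+\beta_n^{-1})\bigr)\to0$. Finally, $|\log(1+x)-x|\le x^2$ turns the linear term into $\sqrt{\lambda_n}Y-\sqrt{\mu_n}Z+o_P(1)$.
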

\begin{proof}
Divide the subsequential limits of $n/(2\beta_n)$ into the cases $0$, a finite positive value, and $+\infty$. In these three cases, approximate the half-angle formula by a square-root expression, a two-variable Taylor expansion, and a logarithm, respectively. Applying \Cref{lem:common-log-gamma-clt} to the first-order fluctuations yields the same standard Gaussian limit in every case. The full argument is given in \Cref{sec:app-radial-clt}. This completes the proof.
\end{proof}

\begin{lem}
\label{lem:cross-radial-shell-comparison}
If $\mathcal A_{n,\beta_n}\to a\in(0,+\infty)$ as $n\to\infty$, then
\[\Box(\BetaBall,\Sigma_{n,\bar{\mathcal R}_{n,\beta_n}})\to0.\]
\end{lem}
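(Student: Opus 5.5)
The plan is to apply \Cref{lem:common-radial-window} with scale $s_n=1$ and a radial window $h_n\to0$ chosen so that the escape probability $p_n$ also tends to $0$. \Cref{lem:cross-scale-algebra} gives $\beta_n\to+\infty$ and $\bar{\mathcal R}_{n,\beta_n}\to+\infty$. Then \Cref{prop:common-radial-clt} shows that
\[\sqrt{\beta_n}(\mathbf R_{n,\beta_n}-\bar{\mathcal R}_{n,\beta_n})\overset{\mathrm d}{\longrightarrow}\gamma^1,\]
so the radial fluctuation is of order $\beta_n^{-1/2}\to0$. This is the sense in which the radial width vanishes in the original metric.

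Concretely, I would take $h_n\coloneqq\beta_n^{-1/4}$. Then $h_n\to0$ and $\sqrt{\beta_n}h_n=\beta_n^{1/4}\to+\infty$. By tightness of $\sqrt{\beta_n}(\mathbf R_{n,\beta_n}-\bar{\mathcal R}_{n,\beta_n})$, which follows from its convergence in distribution,
\[p_n=\mathbb P\bigl(\sqrt{\beta_n}|\mathbf R_{n,\beta_n}-\bar{\mathcal R}_{n,\beta_n}|>\beta_n^{1/4}\bigr)\to0.\]
To get this limit explicitly, fix $K>0$. For large $n$ we have $\beta_n^{1/4}>K$, so $\limsup_n p_n\le\gamma^1(\{|x|\ge K\})$ by the portmanteau theorem applied to a closed set. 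Letting $K\to\infty$ gives $p_n\to0$.

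It remains to check the hypothesis of \Cref{lem:common-radial-window} that $\bar{\mathcal R}_{n,\beta_n}-h_n>0$ eventually. This holds because $\bar{\mathcal R}_{n,\beta_n}\to+\infty$ while $h_n\to0$. The lemma then yields
\[\Box(\BetaBall,\Sigma_{n,\bar{\mathcal R}_{n,\beta_n}})\le\max\{2h_n,p_n\}\to0.\]

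There is essentially no obstacle, since all the analytic content is already in \Cref{prop:common-radial-clt} and \Cref{lem:cross-scale-algebra}. The only point needing care is the choice of the window $h_n$: it must shrink, yet be large compared with the fluctuation scale $\beta_n^{-1/2}$. The choice $h_n=\beta_n^{-1/4}$ balances these two requirements.
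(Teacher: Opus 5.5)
Your proposal is correct and follows the paper's proof. Both obtain $\beta_n\to+\infty$ and $\bar{\mathcal R}_{n,\beta_n}\to+\infty$ from \Cref{lem:cross-scale-algebra}, take tightness from \Cref{prop:common-radial-clt}, and apply \Cref{lem:common-radial-window} with $s_n=1$. The only difference is that you make the window explicit as $h_n=\beta_n^{-1/4}$, whereas the paper obtains some $h_n\downarrow0$ by a diagonal choice from $\mathbf R_{n,\beta_n}-\bar{\mathcal R}_{n,\beta_n}\to0$ in probability.
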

\begin{proof}
\Cref{lem:cross-scale-algebra} gives $\beta_n\to+\infty$ and $\bar{\mathcal R}_{n,\beta_n}\to+\infty$. The radial central limit theorem in \Cref{prop:common-radial-clt} gives
\[\sqrt{\beta_n}(\mathbf R_{n,\beta_n}-\bar{\mathcal R}_{n,\beta_n})\Rightarrow\gamma^1.\]
The sequence on the left is tight, and $\beta_n^{-1/2}\to0$. Hence $\mathbf R_{n,\beta_n}-\bar{\mathcal R}_{n,\beta_n}\to0$ in probability. A diagonal choice gives $h_n\downarrow0$ such that
\[\mathbb P(|\mathbf R_{n,\beta_n}-\bar{\mathcal R}_{n,\beta_n}|>h_n)\to0\]
as $n\to\infty$. Since the representative radius tends to $+\infty$, we have $h_n<\bar{\mathcal R}_{n,\beta_n}$ for all sufficiently large $n$. The radial-window estimate in \Cref{lem:common-radial-window}, used with $s_n=1$, yields
\[\Box(\BetaBall,\Sigma_{n,\bar{\mathcal R}_{n,\beta_n}})\le\max\left\{2h_n,\mathbb P(|\mathbf R_{n,\beta_n}-\bar{\mathcal R}_{n,\beta_n}|>h_n)\right\}\to0.\]
This completes the proof.
\end{proof}

% v2: thm:crossover
\begin{thm}[Limit in the crossover phase]
\label{thm:cross-full}
If $\mathcal A_{n,\beta_n}\to a\in(0,+\infty)$ as $n\to\infty$, then
\[d_\Pi\bigl(\Pyr(\BetaBall),F_a(\PGauss)\bigr)\to0.\]
\end{thm}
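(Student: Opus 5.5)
The plan is to combine the two ingredients already prepared for the crossover regime: the box comparison between the ball and its representative shell, and the weak limit of deterministic shells with a finite shell parameter.

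First, I identify the shell parameter at the representative radius. Since $\cosh\bar{\mathcal R}_{n,\beta_n}=1+n/\beta_n$, we have
\[\sinh^2\bar{\mathcal R}_{n,\beta_n}=\Bigl(1+\frac n{\beta_n}\Bigr)^2-1=\frac{n^2}{\beta_n^2}+\frac{2n}{\beta_n}.\]
Hence
\[\frac{\sinh\bar{\mathcal R}_{n,\beta_n}}{\sqrt n}=\sqrt{\frac n{\beta_n^2}+\frac2{\beta_n}}=\mathcal A_{n,\beta_n},\]
which is the identity already used in the proof of \Cref{thm:leb-full}. Under the hypothesis $\mathcal A_{n,\beta_n}\to a\in(0,+\infty)$, the deterministic radii $\rho_n\coloneqq\bar{\mathcal R}_{n,\beta_n}$ therefore satisfy $\sinh\rho_n/\sqrt n\to a$.

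Second, I apply \Cref{lem:cross-shell-limit} with $A=a$. This gives
\[d_\Pi\bigl(\Pyr(\Sigma_{n,\bar{\mathcal R}_{n,\beta_n}}),F_a(\PGauss)\bigr)\to0.\]

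Third, \Cref{lem:cross-radial-shell-comparison} gives
\[\Box\bigl(\BetaBall,\Sigma_{n,\bar{\mathcal R}_{n,\beta_n}}\bigr)\to0.\]
By \Cref{cor:common-box-limit}, $\Pyr(\BetaBall)$ and $\Pyr(\Sigma_{n,\bar{\mathcal R}_{n,\beta_n}})$ have the same weak subsequential limits. Since the latter sequence converges weakly to $F_a(\PGauss)$, so does the former. Because $d_\Pi$ metrizes weak convergence, this yields the stated convergence.

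All substantive difficulty has been delegated to \Cref{prop:common-radial-clt}, used inside \Cref{lem:cross-radial-shell-comparison}, and to the metric-transform continuity in \Cref{lem:common-shell-transform-continuity}. At this stage, the only point needing care is the algebraic identity $\sinh\bar{\mathcal R}_{n,\beta_n}/\sqrt n=\mathcal A_{n,\beta_n}$, which is what lets the limit parameter $A$ of the shell lemma equal $a$.
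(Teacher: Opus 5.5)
Your proposal is correct and is essentially the paper's proof: you identify $\sinh\bar{\mathcal R}_{n,\beta_n}/\sqrt n=\mathcal A_{n,\beta_n}\to a$, apply \Cref{lem:cross-shell-limit} to the representative shells, and transfer the limit to $\Pyr(\BetaBall)$ through \Cref{lem:cross-radial-shell-comparison} and \Cref{cor:common-box-limit}. Your explicit verification of $\sinh^2\bar{\mathcal R}_{n,\beta_n}=n^2/\beta_n^2+2n/\beta_n$ supplies the one algebraic step that the paper states without computation.
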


\begin{proof}
The transform parameter of the representative shell is
\[\frac{\sinh\bar{\mathcal R}_{n,\beta_n}}{\sqrt n}=\mathcal A_{n,\beta_n}\to a.\]
The shell-limit result in \Cref{lem:cross-shell-limit} shows that $\Pyr(\Sigma_{n,\bar{\mathcal R}_{n,\beta_n}})$ converges weakly to $F_a(\PGauss)$. On the other hand, \Cref{lem:cross-radial-shell-comparison} gives
\[\Box(\BetaBall,\Sigma_{n,\bar{\mathcal R}_{n,\beta_n}})\to0.\]
The box-limit statement in \Cref{cor:common-box-limit} then gives the same limit for $\Pyr(\BetaBall)$.
	This completes the proof.
\end{proof}

% v2: thm:crossover-ptp
\begin{thm}[Phase transition property in the crossover phase]
\label{thm:cross-ptp}
If $\mathcal A_{n,\beta_n}\to a\in(0,+\infty)$ as $n\to\infty$, then $\BetaBall$ has the phase transition property with critical scale $1$.
\end{thm}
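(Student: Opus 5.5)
The plan mirrors the proofs of \Cref{thm:star-ptp,thm:leb-ptp}: I will apply \Cref{lem:common-ptp-criterion} with $X_n=\BetaBall$ and constant critical scale $s_n=1$, so that $Z_n=\BetaBall$ itself. The proof then reduces to identifying a weak limit of $\Pyr(Z_n)$ and checking the two hypotheses of that lemma for the limiting pyramid.

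The first step is to identify the limit. Under $\mathcal A_{n,\beta_n}\to a\in(0,+\infty)$, \Cref{thm:cross-full} gives $d_\Pi(\Pyr(\BetaBall),F_a(\PGauss))\to0$, which is weak convergence $\Pyr(Z_n)\to\mathcal Q\coloneqq F_a(\PGauss)$ because $d_\Pi$ metrizes the weak topology.

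The second step is to verify the hypotheses on $\mathcal Q$, for which I invoke \Cref{lem:common-admissible-critical-pyramids}. It states that $F_a(\PGauss)$ has finite observable diameter at every mass $\kappa>0$. This comes from the inclusion $F_a(\PGauss)\subset a\PGauss$ together with the Gaussian concentration bound, and trivially for $\kappa\ge1$. The same lemma states that for every admissible tuple $\kappa_0,\ldots,\kappa_N$ with $\sum_i\kappa_i<1$ there are $Y\in\mathcal Q$ and $\varepsilon>0$ with $\operatorname{Sep}(Y;\kappa_0+\varepsilon,\ldots,\kappa_N+\varepsilon)>0$.

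To be safe about that second claim, I would make it explicit. Take $Y=F_a(\Gamma^1)$, which lies in $\mathcal Q$ because $\Gamma^1\in\PGauss$. Choose disjoint compact intervals with $\gamma^1$-masses strictly greater than $\kappa_i$ and positive mutual gaps; shrinking slightly gives the extra $\varepsilon$. Since $F_a$ is strictly increasing with $F_a(0)=0$, these positive gaps stay positive after transformation (compare \Cref{lem:leb-transform-separation}).

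With both hypotheses in place, \Cref{lem:common-ptp-criterion} gives the phase transition property with $s_n=1$. For $\alpha_n\to0$ the spaces form a L\'evy family, and for $\alpha_n\to+\infty$ they infinitely dissipate.

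I expect no real obstacle. The only point needing care is that the critical scale is the constant sequence $1$, so $(\alpha_n/s_n)Z_n=\alpha_n\BetaBall$, which matches the definition directly.
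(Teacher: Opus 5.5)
Your proposal is correct and matches the paper's proof: weak convergence to $F_a(\PGauss)$ from \Cref{thm:cross-full}, the two hypotheses on the critical pyramid from \Cref{lem:common-admissible-critical-pyramids}, and then \Cref{lem:common-ptp-criterion} with $s_n=1$. Your explicit separation check with $Y=F_a(\Gamma^1)$ is a valid expansion of what that lemma already asserts; the extra $\varepsilon$ comes from the strict inequalities on the interval masses, so the ``shrinking'' step is not needed.
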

\begin{proof}
Here $\Pyr(\BetaBall)$ converges weakly to $F_a(\PGauss)$ by \Cref{thm:cross-full}. The two required properties of this limit, finite observable diameter and positive separation, are given by \Cref{lem:common-admissible-critical-pyramids}. The assertion now follows from \Cref{lem:common-ptp-criterion} with $s_n=1$. This completes the proof.
\end{proof}

\subsection{Gaussian phase}
\label{sec:gauss-phase}

At the critical scale, a one-dimensional radial component remains after the angular metric is frozen at the representative shell. This component converges to a Gaussian line whose coefficient may vary with $n$. Together with the spherical factor, the line is absorbed into the Gaussian pyramid. Comparison with the original hyperbolic distance is made through a truncated metric on a radial window.

\subsubsection*{Warped-product component}

% v2: sec:gaussian-side (warped-product setup, L1033--1049)
The scaled space $\mathcal A_{n,\beta_n}^{-1}\BetaBall$ admits, after truncation, the following product approximation. Put
\[J_n\coloneqq \left[-\frac{\bar{\mathcal R}_{n,\beta_n}}{\mathcal A_{n,\beta_n}},+\infty\right),\qquad\nu_n\coloneqq \bigl(\mathcal A_{n,\beta_n}^{-1}(r-\bar{\mathcal R}_{n,\beta_n})\bigr)_*\mu_{n,\beta_n}\]
and define a metric $\delta_n$ on $J_n\times S^{n-1}$ by
\[\delta_n((u,\theta),(v,\eta))^2\coloneqq |u-v|^2+H_{\mathcal A_{n,\beta_n}}(\sqrt n|\theta-\eta|)^2\]
and the comparison mm-space by
\[Y_n\coloneqq (J_n\times S^{n-1},\delta_n,\nu_n\otimes\sigma_{n-1}).\]

% v2: lem:warped-product-gaussian-limit
% v2: lem:box-common-product
\begin{lem}
\label{lem:common-box-common-product}
For mm-spaces $X$ and $Q$, write $X\times_2Q$ for their product equipped with the $\ell_2$-product metric and product measure. Then, for any mm-spaces $X,Y,Q$,
\[\Box(X\times_2Q,Y\times_2Q)\le\Box(X,Y).\]
\end{lem}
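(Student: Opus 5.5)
The plan is to lift a near-optimal coupling between $X$ and $Y$ to the products by tensoring with the diagonal coupling of $Q$. The box distance is defined in \Cref{def:common-box-concentration} as an infimum over couplings and closed sets. It therefore suffices to show that every admissible pair $(\pi,S)$ for $(X,Y)$ produces an admissible pair for $(X\times_2Q,Y\times_2Q)$ with the same mass defect and no larger distortion.

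Fix $\pi\in\mathcal T(\mu_X,\mu_Y)$ and a closed $S\subset X\times Y$. Let $\Delta_Q\coloneqq(\operatorname{id},\operatorname{id})_*\mu_Q$ be the diagonal coupling of $\mu_Q$ with itself. Rearranging coordinates by $((x,y),(q,q'))\mapsto((x,q),(y,q'))$, define $\tilde\pi$ as the pushforward of $\pi\otimes\Delta_Q$. Its marginals are $\mu_X\otimes\mu_Q$ and $\mu_Y\otimes\mu_Q$, so $\tilde\pi$ is a coupling of the product measures. Put
\[\tilde S\coloneqq\{((x,q),(y,q))\mid(x,y)\in S,\ q\in Q\}.\]
This set is closed, because it is the preimage of $S\times\{\text{diagonal}\}$ under a continuous rearrangement and the diagonal of $Q$ is closed. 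Moreover, $\tilde\pi(\tilde S)=(\pi\otimes\Delta_Q)(S\times\{\text{diagonal}\})=\pi(S)$.

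For the distortion, take $((x,q),(y,q))$ and $((x',q'),(y',q'))$ in $\tilde S$ and put $a=d_X(x,x')$, $b=d_Y(y,y')$ and $c=d_Q(q,q')$. The elementary inequality
\[\bigl|\sqrt{a^2+c^2}-\sqrt{b^2+c^2}\bigr|\le|a-b|\]
holds for $a,b,c\ge0$. One way to see it is that the map $t\mapsto\sqrt{t^2+c^2}$ is $1$-Lipschitz; another is the reverse triangle inequality for the Euclidean norm in $\R^2$ applied to $(a,c)$ and $(b,c)$. It gives $\operatorname{dis}\tilde S\le\operatorname{dis}S$. Therefore
\[\Box(X\times_2Q,Y\times_2Q)\le\max\{1-\pi(S),\operatorname{dis}S\},\]
and taking the infimum over $(\pi,S)$ yields the claim.

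The only step I expect to need care is measurability and closedness. The product spaces must be complete separable metric spaces whose product measures have full support, and the supports multiply correctly, so they are mm-spaces. The set $\tilde S$ must be closed in the product topology, which the $\ell_2$ product metric induces. Both points are routine, so the main content is the $1$-Lipschitz inequality above.
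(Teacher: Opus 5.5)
Your proposal is correct and follows the paper's proof: tensor a near-optimal coupling with the diagonal coupling of $Q$, take the same lifted closed relation $\widetilde S$ (which has mass $\pi(S)$), and use $|\sqrt{r^2+t^2}-\sqrt{s^2+t^2}|\le|r-s|$ to get $\operatorname{dis}\widetilde S\le\operatorname{dis}S$. Taking the infimum over $(\pi,S)$, as you do, is equivalent to the paper's choice of $\varepsilon>\Box(X,Y)$ followed by $\varepsilon\downarrow\Box(X,Y)$.
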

\begin{proof}
Fix $\varepsilon>\Box(X,Y)$, and choose a coupling $\pi$ and a closed set $S\subset X\times Y$ such that $1-\pi(S)\le\varepsilon$ and $\operatorname{dis}S\le\varepsilon$. Couple the two copies of $Q$ diagonally, and put
\[\widetilde S\coloneqq \{((x,q),(y,q))\mid(x,y)\in S,\ q\in Q\}.\]
This closed relation has mass at least $1-\varepsilon$. For $r,s,t\ge0$, the inequality
\[|\sqrt{r^2+t^2}-\sqrt{s^2+t^2}|\le|r-s|\]
implies $\operatorname{dis}\widetilde S\le\operatorname{dis}S\le\varepsilon$. Letting $\varepsilon\downarrow\Box(X,Y)$ proves the assertion.
	This completes the proof.
\end{proof}

% v2: lem:one-coordinate-sphere-absorption
\begin{lem}[Gaussian limit after adjoining one coordinate]
\label{lem:common-one-coordinate-absorption}
Suppose that $v_n\to0$ as $n\to\infty$, and let $c_n\in[0,1]$ be arbitrary. Put $Q_n\coloneqq H_{v_n}(\mathsf S_n)$, and define the mm-space
\[\Gamma^1_{c_n^2}\coloneqq (\operatorname{supp}(m_{c_n})_*\gamma^1,|\cdot|,(m_{c_n})_*\gamma^1)\]
where $m_{c_n}(x)\coloneqq c_nx$. Thus $\Gamma^1_{c^2}$ is the centered Gaussian line of variance $c^2$, in agreement with \cite{kazukawa2022metric}, with $\Gamma^1_1=\Gamma^1$ and $\Gamma^1_0$ the one-point space. Then, for the product equipped with the $\ell_2$-product metric and product measure,
\[\Pyr(\Gamma^1_{c_n^2}\times_2Q_n)\longrightarrow\PGauss\]
weakly as $n\to\infty$.
\end{lem}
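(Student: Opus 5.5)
We plan to prove $\Pyr(\Gamma^1_{c_n^2}\times_2Q_n)\to\PGauss$ by showing that every weak subsequential limit $\mathcal P$ equals $\PGauss$. Compactness of $\Pi$ (\Cref{def:common-pyramid-weak-convergence}) then gives convergence of the whole sequence. We fix a subsequence along which $\Pyr(\Gamma^1_{c_n^2}\times_2Q_n)\to\mathcal P$ and, passing further, $c_n\to c\in[0,1]$.

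The first step removes the dependence on $n$ in both factors. Scaling $x\mapsto (c_n/c)x$ (or the constant map when $c=0$) gives a coupling of $\Gamma^1_{c_n^2}$ and $\Gamma^1_{c^2}$. Under it the displacement $|c_n-c||x|$ is small outside a set of small Gaussian mass, so \Cref{lem:common-ambient-box} gives $\Box(\Gamma^1_{c_n^2},\Gamma^1_{c^2})\to0$. By \Cref{lem:common-box-common-product} (with $Q=Q_n$) and \Cref{cor:common-box-limit}, we may replace $c_n$ by $c$. For the spherical factor, \Cref{lem:common-shell-transform-continuity} with $u_n=v_n\to0$ and $c_n\equiv1$ gives $\Pyr(Q_n)\to H_0(\PGauss)=\PGauss$. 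Here $H_0(\PGauss)=\PGauss$ because $H_0$ is the identity and $\PGauss$ is already a box-closed union.

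The second step obtains the lower inclusion $\PGauss\subset\mathcal P$. The projection $\Gamma^1_{c^2}\times_2Q_n\to Q_n$ is $1$-Lipschitz and measure-preserving, so $\Pyr(Q_n)\subset\Pyr(\Gamma^1_{c^2}\times_2Q_n)$. Any $X\in\PGauss$ is a $\Box$-limit of some $X_n\in\Pyr(Q_n)$, and these $X_n$ lie in the product pyramids, so $X\in\mathcal P$.

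The third step, the upper inclusion $\mathcal P\subset\PGauss$, is the main obstacle. I would argue via $(N,R)$-measurements, as in \Cref{lem:star-measurement-upper}, showing $\mathcal M(\Gamma^1_{c^2}\times_2Q_n;N,R)$ is asymptotically inside $\mathcal M(\PGauss;N,R)$. The natural device is that $\Gamma^1_{c^2}\times_2\Gamma^k$ is dominated by $\Gamma^{k+1}$, since $(x_0,x)\mapsto(cx_0,x)$ is $1$-Lipschitz because $c\le1$. To use this, I would realize $Q_n$ as approximately dominated by Gaussian spaces. By Poincaré's lemma, $\mathsf S_n$ is dominated by $(\R^n,|\cdot|,\gamma^n)$ via $z\mapsto\sqrt n z/|z|$ up to additive error. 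More precisely, $z\mapsto\sqrt n z/|z|$ is nearly $1$-Lipschitz on most of the Gaussian mass, where $|z|\approx\sqrt n$. Since $H_{v_n}(t)\le t$, the same map shows $\Gamma^n\succ_{\varepsilon_n}Q_n$ with $\varepsilon_n\to0$. \Cref{lem:common-box-common-product}-type reasoning, adjoining the Gaussian line diagonally, then gives $\Gamma^{n+1}\succ_{\varepsilon_n}\Gamma^1_{c^2}\times_2Q_n$.

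It remains to handle the fact that $\Gamma^{n+1}$ has growing dimension. Shioya's framework for $\Pyr(\Gamma^{k})\to\PGauss$, or directly $d_\Pi(\Pyr(\Gamma^{n+1}),\PGauss)\to0$ since $\PGauss$ is the box closure of an increasing union, then supplies a sequence with $\Pyr(\Gamma^{n+1})\to\PGauss$. With $X_n=\Gamma^{n+1}$, \Cref{lem:star-additive-persistence} applies and places every box limit $Y$ of elements of the product pyramids into $\PGauss$; elements of $\Pyr$ are handled by composing dominations. The delicate point is quantifying the additive error in the Poincaré map: concentration of $|z|/\sqrt n$ near $1$ at rate $O(1/\sqrt n)$ must beat the $\sqrt n$ amplification only on pairs with bounded distance. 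I expect to need truncation via \Cref{prop:common-truncation}, reducing to distances at most $2R$, so the error is $O(R\,||z|/\sqrt n-1|)\to0$ in probability.
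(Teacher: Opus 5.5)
Your plan works once the truncation you mention at the end is built in (see the second paragraph). Your lower inclusion is exactly the paper's: $Q_n\prec\Gamma^1_{c_n^2}\times_2Q_n$ together with $\Pyr(Q_n)\to\PGauss$ from \Cref{lem:common-shell-transform-continuity}.

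The upper inclusion is where you take a different route. The paper uses the exact domination $\Gamma^1_{c_n^2}\times_2Q_n\prec\Gamma^1_1\times_2\mathsf S_n$, which follows from $H_{v_n}(t)\le t$ and $c_n\le1$. It realizes $\Gamma^1_1\times_2\mathsf S_n$ by $(G,\sqrt nG^{(n)}/|G^{(n)}|)$ and couples this with $\sqrt n(G,G^{(n)})/\sqrt{G^2+|G^{(n)}|^2}$ on the sphere of radius $\sqrt n$ in $\R^{n+1}$, so the upper side reduces to Shioya's sphere theorem. The measurement sets of the product are then squeezed between those of $Q_n$ and those of $\Gamma^1_1\times_2\mathsf S_n$, which have the same limit.

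You instead dominate the product approximately by $\Gamma^{n+1}$ through $(x_0,z)\mapsto(c_nx_0,\sqrt nz/|z|)$. On the upper side you then need only the trivial membership $\Gamma^{n+1}\in\PGauss$, with no second appeal to the sphere theorem. The price is that you must control the radial projection yourself, whereas the paper's coupling needs no Lipschitz-error bookkeeping. Your Step~1 (passing to $c_n\to c$) is harmless but unnecessary, since both the projection to $Q_n$ and your map work for every $c_n\in[0,1]$.

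The truncation is not optional, and it changes how you draw the conclusion. On $\{|z|\ge(1-\delta)\sqrt n\}$ the radial map is only $(1-\delta)^{-1}$-Lipschitz. For nearly antipodal pairs with $|z|=|z'|=(1-\delta)\sqrt n$, the excess $\sqrt n\,|z/|z|-z'/|z'||-|z-z'|$ is about $2\delta\sqrt n$. Since $|z|-\sqrt n$ has fluctuations of order one, keeping the mass of this set close to one forces $\delta\sqrt n$ to stay bounded away from zero, so the additive error does not become small. If $v_n\to0$ fast (the lemma allows any rate), $H_{v_n}$ is essentially the identity on $[0,2\sqrt n]$ and absorbs nothing. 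So the untruncated relation $\Gamma^{n+1}\succ_{\varepsilon_n}\Gamma^1_{c_n^2}\times_2Q_n$ is not available from this map, and \Cref{lem:star-additive-persistence} cannot be invoked as you state it.

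Either of two fixes works.
\begin{itemize}
\item \emph{Truncation per $(N,R)$.} Truncate both metrics at $2R$. Pairs with source distance below $2R$ have $|z-z'|<2R$, so the additive error is at most $2R\delta/(1-\delta)$. Then \Cref{lem:common-additive-measurement,prop:common-truncation} give $\mathcal M(\Gamma^1_{c_n^2}\times_2Q_n;N,R)\subset\mathrm B(\mathcal M(\PGauss;N,R),\varepsilon_n;d_P)$ with $\varepsilon_n\to0$. Closedness of $\mathcal M(\PGauss;N,R)$ and \Cref{cor:star-measurements-determine} finish the argument.
\item \emph{Multiplicative error.} The same map is exactly $1$-Lipschitz on that set into $(1-\delta)(\Gamma^1_{c_n^2}\times_2Q_n)$, with mass error tending to $0$ for fixed $\delta$. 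Then \Cref{lem:star-additive-persistence} with $X_n=\Gamma^{n+1}$ gives $(1-\delta)\mathcal P\subset\PGauss$. Since $\Box((1-\delta)Y,Y)\to0$ as $\delta\downarrow0$, this concludes.
\end{itemize}
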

\begin{proof}
Since $H_a(t)\le t$ and $c_n\le1$,
\[Q_n\prec\mathsf S_n,\qquad \Gamma^1_{c_n^2}\prec\Gamma^1_1,\qquad \Gamma^1_{c_n^2}\times_2Q_n\prec\Gamma^1_1\times_2\mathsf S_n.\]
Let $G\sim\gamma^1$ and a standard Gaussian vector $G^{(n)}\in\R^n$ be independent, and put
\[P_n\coloneqq \left(G,\frac{\sqrt nG^{(n)}}{|G^{(n)}|}\right),\qquad \mathbf Z_n\coloneqq \frac{\sqrt n(G,G^{(n)})}{\sqrt{G^2+|G^{(n)}|^2}}.\]
The random vector $P_n$ realizes $\Gamma^1_1\times_2\mathsf S_n$, while $\mathbf Z_n$ realizes $S^n$ of radius $\sqrt n$. Since $|G^{(n)}|^2/n\to1$ in probability and $G$ is tight, the direct estimate
\[|P_n-\mathbf Z_n|\le |G|\left|1-\frac{\sqrt n}{\sqrt{G^2+|G^{(n)}|^2}}\right|+\frac{\sqrt nG^2}{\sqrt{G^2+|G^{(n)}|^2}(\sqrt{G^2+|G^{(n)}|^2}+|G^{(n)}|)}\]
has a right-hand side that converges to $0$ in probability. Apply \Cref{lem:common-ambient-box} and \Cref{set:common-hyperbolic-polar} in dimension $n+1$. The difference between the radii $\sqrt n$ and $\sqrt{n+1}$ is absorbed by \Cref{cor:common-box-limit}, giving
\[\Pyr(\Gamma^1_1\times_2\mathsf S_n)\longrightarrow\PGauss\]
weakly.

\Cref{lem:common-shell-transform-continuity} gives $\Pyr(Q_n)\to\PGauss$ weakly, while projection gives $Q_n\prec\Gamma^1_{c_n^2}\times_2Q_n$. It follows that, for every $N\ge1$ and $R>0$,
\[\mathcal M(Q_n;N,R)\subset\mathcal M(\Gamma^1_{c_n^2}\times_2Q_n;N,R)\subset\mathcal M(\Gamma^1_1\times_2\mathsf S_n;N,R).\]
The two outer sets converge in Hausdorff distance to the same set $\mathcal M(\PGauss;N,R)$, so the middle set does as well. The weak-convergence criterion in \Cref{def:common-measurement-convergence} proves the assertion. This completes the proof.
\end{proof}

\begin{lem}[Warped-product Gaussian limit]
\label{lem:gauss-warped-product-limit}
If $\mathcal A_{n,\beta_n}\to0$ as $n\to\infty$, then
\[d_\Pi(\Pyr(Y_n),\PGauss)\to0.\]
\end{lem}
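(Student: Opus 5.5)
The plan is to recognize $Y_n$ as an $\ell_2$-product of a one-dimensional radial factor with the spherical factor $Q_n\coloneqq H_{\mathcal A_{n,\beta_n}}(\mathsf S_n)$. I would then replace the radial factor by a centered Gaussian line and conclude with \Cref{lem:common-one-coordinate-absorption}.

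First I would check the product structure. By \Cref{lem:common-radial-law}, the hyperbolic radius $\mathbf R_{n,\beta_n}$ and the direction $\Theta_n$ are independent, so the measure $\nu_n\otimes\sigma_{n-1}$ is a product measure. By the definition of $\delta_n$, the metric is the $\ell_2$-combination of $|u-v|$ on $J_n$ and $H_{\mathcal A_{n,\beta_n}}(\sqrt n|\theta-\eta|)$, which is the metric of $Q_n$. Write $L_n$ for $(\operatorname{supp}\nu_n,|\cdot|,\nu_n)$; since $J_n$ is a closed half-line, it is complete. Then
\[Y_n=L_n\times_2Q_n\]
up to mm-isomorphism.

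Next I would identify the radial factor. Since $\mathcal A_{n,\beta_n}\to0$, the identity $\mathcal A_{n,\beta_n}^2=n/\beta_n^2+2/\beta_n$ forces $\beta_n\to+\infty$. Writing
\[\mathcal A_{n,\beta_n}^{-1}(\mathbf R_{n,\beta_n}-\bar{\mathcal R}_{n,\beta_n})=c_n\cdot\sqrt{\beta_n}(\mathbf R_{n,\beta_n}-\bar{\mathcal R}_{n,\beta_n}),\qquad c_n\coloneqq\sqrt{\frac{\beta_n}{n+2\beta_n}}\in[0,1/\sqrt2],\]
\Cref{prop:common-radial-clt} gives that the second factor converges in distribution to $\gamma^1$, so its Prokhorov distance to $\gamma^1$ tends to $0$. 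Multiplication by $c_n\le1$ is $1$-Lipschitz, so it does not increase Prokhorov distance. Hence
\[d_P\bigl(\nu_n,(m_{c_n})_*\gamma^1\bigr)\to0,\]
even though $c_n$ need not converge. Then \Cref{lem:common-prokhorov-coupling}, followed by \Cref{lem:common-ambient-box} in the ambient space $\R$, yields
\[\Box(L_n,\Gamma^1_{c_n^2})\to0.\]

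Finally, \Cref{lem:common-box-common-product} with $Q=Q_n$ gives $\Box(Y_n,\Gamma^1_{c_n^2}\times_2Q_n)\to0$. \Cref{lem:common-one-coordinate-absorption}, applied with $v_n=\mathcal A_{n,\beta_n}\to0$ and the above $c_n\in[0,1]$, gives $\Pyr(\Gamma^1_{c_n^2}\times_2Q_n)\to\PGauss$. \Cref{cor:common-box-limit} then transfers this limit to $\Pyr(Y_n)$, and since $d_\Pi$ metrizes weak convergence, $d_\Pi(\Pyr(Y_n),\PGauss)\to0$.

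The step I expect to need the most care is the radial identification. The coefficient $c_n$ may oscillate, so the argument must avoid taking a limit of $c_n$ and rely only on the contraction property of $m_{c_n}$. One also has to confirm that regarding the measures on $\R$ rather than on $J_n$ does not change the mm-spaces; this holds because the relevant mm-spaces are the supports with the restricted metric.
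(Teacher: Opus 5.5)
Your proposal is correct and follows the paper's proof essentially step for step. The chain is the same: $\beta_n\to+\infty$; then the radial CLT pushed forward by the contraction $m_{c_n}$, which needs no convergence of $c_n$; then \Cref{lem:common-prokhorov-coupling,lem:common-ambient-box} for the radial factor; then \Cref{lem:common-box-common-product} to adjoin $Q_n=H_{\mathcal A_{n,\beta_n}}(\mathsf S_n)$; and finally \Cref{lem:common-one-coordinate-absorption} with \Cref{cor:common-box-limit}. (One minor point: the appeal to radial--angular independence is unnecessary here, since $\nu_n\otimes\sigma_{n-1}$ is a product by definition; independence matters only for the comparison with $\BetaBall$ in \Cref{lem:gauss-warped-product-comparison}.)
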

\begin{proof}
Since $\mathcal A_{n,\beta_n}^2\ge2/\beta_n$, we have $\beta_n\to+\infty$. By \Cref{prop:common-radial-clt},
\[d_P\left(\bigl(\sqrt{\beta_n}(r-\bar{\mathcal R}_{n,\beta_n})\bigr)_*\mu_{n,\beta_n},\gamma^1\right)\to0.\]
On the other hand, the exact identity
\[\mathcal A_{n,\beta_n}^{-1}(r-\bar{\mathcal R}_{n,\beta_n})=\sqrt{\frac{\beta_n}{n+2\beta_n}}\,\sqrt{\beta_n}(r-\bar{\mathcal R}_{n,\beta_n})\]
holds, and the coefficient is at most $1/\sqrt2$. Since multiplication by this coefficient is $1$-Lipschitz,
\[d_P\left(\nu_n,\left(x\mapsto\sqrt{\frac{\beta_n}{n+2\beta_n}}x\right)_*\gamma^1\right)\to0.\]
The coupling and ambient-box estimates in \Cref{lem:common-prokhorov-coupling,lem:common-ambient-box} show that the box distance between the one-dimensional factor $(J_n,|\cdot|,\nu_n)$ and the Gaussian line on the right also tends to $0$.

The angular factor is $H_{\mathcal A_{n,\beta_n}}(\mathsf S_n)$. The one-coordinate absorption result in \Cref{lem:common-one-coordinate-absorption}, with the parameter sequence $\mathcal A_{n,\beta_n}\to0$ and the coefficient sequence $\sqrt{\beta_n/(n+2\beta_n)}\in[0,1]$, gives weak convergence of the pyramid of the $\ell_2$-product of the corresponding Gaussian line and the angular factor to $\PGauss$. The common-product estimate in \Cref{lem:common-box-common-product} bounds the box distance between this product and $Y_n$ by the box distance between their one-dimensional factors. The conclusion now follows from \Cref{cor:common-box-limit}. This completes the proof.
\end{proof}

% v2: lem:warped-product-comparison
\begin{lem}[Truncated warped-product comparison]
\label{lem:gauss-warped-product-comparison}
If $\mathcal A_{n,\beta_n}\to0$ as $n\to\infty$, then, for every $L>0$,
\[\Box\bigl((\mathcal A_{n,\beta_n}^{-1}\BetaBall)^{(L)},Y_n^{(L)}\bigr)\to0.\]
\end{lem}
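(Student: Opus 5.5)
We couple the two spaces through the natural map $x=(r,\theta)\mapsto(u,\theta)$ with $u\coloneqq\mathcal A_{n,\beta_n}^{-1}(r-\bar{\mathcal R}_{n,\beta_n})$. By \Cref{lem:common-radial-law}, $r$ and $\theta$ are independent, so this map pushes $\mu_{n,\beta_n}$ forward to $\nu_n\otimes\sigma_{n-1}$. The coupling is therefore diagonal, and it suffices to find a closed set of mass $1-o(1)$ on which
\[\left|\bigl(\mathcal A^{-1}\dH\bigr)\wedge L-\delta_n\wedge L\right|\to0\]
uniformly. Throughout, write $\mathcal A=\mathcal A_{n,\beta_n}$, $\bar R=\bar{\mathcal R}_{n,\beta_n}$. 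As in \Cref{lem:gauss-warped-product-limit}, $\beta_n\to+\infty$. Also $\cosh\bar R=1+n/\beta_n$ and $\sinh\bar R=\sqrt n\,\mathcal A$, so $\sinh^2\bar R/n=\mathcal A^2\to0$.

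For the radial window, \Cref{prop:common-radial-clt} gives $r-\bar R=O_P(\beta_n^{-1/2})$. Since $\mathcal A^{-1}\beta_n^{-1/2}\le\sqrt{1/2}$, the variable $u$ is tight, as already noted in \Cref{lem:gauss-warped-product-limit}. We therefore restrict to $|u|\le K$ for a large constant $K$, at a cost in mass of at most $\eta(K)\to0$. On this window $|r-\bar R|\le K\mathcal A$, which tends to $0$.

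The main step is to expand \eqref{eq:common-polar-distance} with $\rho=\bar R+\mathcal A u$, $\sigma=\bar R+\mathcal A v$, $t\coloneqq\sqrt n|\theta-\eta|$:
\[\cosh\dH=\cosh(\mathcal A(u-v))+\frac{\sinh\rho\sinh\sigma}{2n}\,t^2.\]
Because $\sinh\rho\sinh\sigma/\sinh^2\bar R=1+O(K\mathcal A\coth\bar R)$, the angular term equals $\tfrac12\mathcal A^2t^2(1+o(1))$, uniformly on the window.

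The truncation at $L$ lets us restrict to pairs with $\delta_n\le L+1$, hence $t\le L'$ for a constant $L'$. Pairs outside this range have both truncated distances equal to $L$ up to a small error, since both distances are large there; this is checked by comparing monotone bounds. On the bounded range, both $\mathcal A|u-v|$ and $\mathcal A t$ tend to $0$, so $\cosh\dH-1$ is small. Using $\operatorname{arcosh}(1+y)=\sqrt{2y}\,(1+O(y))$ we get
\[\mathcal A^{-1}\dH=\sqrt{(u-v)^2+t^2}\,(1+o(1))+o(1).\]
Similarly $H_{\mathcal A}(t)=t(1+O(\mathcal A^2t^2))$, so $\delta_n=\sqrt{(u-v)^2+t^2}\,(1+o(1))$. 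Both quantities are bounded by $2K+L'$ on this range, so the difference tends to $0$ uniformly there.

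The main obstacle is the uniformity when $\bar R$ does not tend to infinity, for example when $n/\beta_n\to0$. In that case $\coth\bar R$ is large and the ratio $\sinh\rho/\sinh\bar R$ need not be close to $1$. To handle this I would use $|\rho-\bar R|\le K\mathcal A$ together with $\sinh\bar R=\sqrt n\mathcal A$. This gives
\[\frac{|\rho-\bar R|\cosh\bar R}{\sinh\bar R}\le\frac{K\cosh\bar R}{\sqrt n}.\]
Since $\cosh\bar R\le1+n/\beta_n$, the right-hand side is at most $K\bigl(1/\sqrt n+\mathcal A\bigr)$, which tends to $0$. This bound is uniform in all cases, so the ratio is $1+o(1)$ after all.

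Finally, the relation is the closed set described above (window $|u|\le K$, with $|u|,|v|\le K$). Its distortion is $o(1)$ and its mass is at least $1-\eta(K)-o(1)$. By \Cref{def:common-box-concentration},
\[\limsup_{n\to\infty}\Box\bigl((\mathcal A^{-1}\BetaBall)^{(L)},Y_n^{(L)}\bigr)\le\eta(K),\]
and letting $K\to\infty$ completes the argument.
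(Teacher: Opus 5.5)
Your proposal is correct and follows essentially the same route as the paper. Both arguments use the diagonal coupling through $(r,\theta)\mapsto(u,\theta)$, tightness of $\nu_n$ from the radial central limit theorem, the key bound $\mathcal A\coth\bar R=\cosh\bar R/\sqrt n\le 1/\sqrt n+\mathcal A$ for the radial ratio $\sinh\rho/\sinh\bar R$, and a uniform comparison on bounded windows. The differences are minor: the paper writes the pulled-back distance exactly as $H_{\mathcal A}$ of a Euclidean norm and disposes of large angular separations by the reverse triangle inequality $\widetilde d_n\ge q-|u|-|v|$ through the shell $u=0$, whereas you use an $\operatorname{arcosh}$ expansion and monotonicity of both metrics in $t$.
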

\begin{proof}
Pull the scaled hyperbolic distance back to $J_n\times S^{n-1}$ by the radial-coordinate map and compare its truncation uniformly with $\delta_n\wedge L$ on bounded radial windows. Tightness of $(\nu_n)$ allows us to discard the mass outside such a window, and coupling the two spaces by their common product measure yields convergence in box distance. The full argument is given in \Cref{sec:app-warped-comparison}. This completes the proof.
\end{proof}

\subsubsection*{Assembly}

% v2: thm:gaussian-side-full
\begin{thm}[Limit in the Gaussian phase]
\label{thm:gauss-full}
If $\mathcal A_{n,\beta_n}\to0$ as $n\to\infty$, then
\[d_\Pi\bigl(\Pyr(\mathcal A_{n,\beta_n}^{-1}\BetaBall),\PGauss\bigr)\to0.\]
\end{thm}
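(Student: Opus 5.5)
We assemble the two preceding results: \Cref{lem:gauss-warped-product-limit} gives $\Pyr(Y_n)\to\PGauss$ weakly, and \Cref{lem:gauss-warped-product-comparison} gives $\Box\bigl((\mathcal A_{n,\beta_n}^{-1}\BetaBall)^{(L)},Y_n^{(L)}\bigr)\to0$ for every $L>0$. The only gap is that the comparison holds for truncated metrics. We close it with \Cref{prop:common-truncation}, which says that $(N,R)$-measurements see only the truncation at level $2R$.

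Fix $N\ge1$ and $R>0$, and put $L\coloneqq2R$ and $Z_n\coloneqq\mathcal A_{n,\beta_n}^{-1}\BetaBall$. By \Cref{prop:common-truncation},
\[\mathcal M(Z_n;N,R)=\mathcal M(Z_n^{(2R)};N,R),\qquad \mathcal M(Y_n;N,R)=\mathcal M(Y_n^{(2R)};N,R).\]
The first estimate in \Cref{cor:common-box-limit} then gives
\[\dHaus{d_P}\bigl(\mathcal M(Z_n;N,R),\mathcal M(Y_n;N,R)\bigr)\le\Box\bigl(Z_n^{(2R)},Y_n^{(2R)}\bigr)\to0.\]
Note that $\operatorname{supp}$ and the mm-space structure are unaffected by truncation, since $d\wedge L$ induces the same topology, so $X^{(L)}$ is again an mm-space and the corollary applies.

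Since $\Pyr(Y_n)\to\PGauss$ weakly, \Cref{def:common-measurement-convergence} gives
\[\mathcal M(Y_n;N,R)\to\mathcal M(\PGauss;N,R)\]
in Hausdorff distance, using $\mathcal M(\Pyr(Y_n);N,R)=\mathcal M(Y_n;N,R)$. The triangle inequality for $\dHaus{d_P}$ then yields
\[\mathcal M(Z_n;N,R)\to\mathcal M(\PGauss;N,R)\]
for every $N$ and $R$. Applying \Cref{def:common-measurement-convergence} once more gives $\Pyr(Z_n)\to\PGauss$ weakly, which is equivalent to $d_\Pi\to0$.

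There is no real obstacle at this stage, because the analytic work already sits in \Cref{lem:gauss-warped-product-comparison}. The one point needing care is the observation that truncated and untruncated spaces share their $(N,R)$-measurements once $L\ge2R$. We cannot simply quote the convergence statement of \Cref{cor:common-box-limit}, since the untruncated spaces need not be $\Box$-close; we therefore use its first, quantitative estimate level by level in $(N,R)$.
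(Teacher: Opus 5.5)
Your proposal is correct and follows the paper's proof essentially line by line. You use truncation invariance of the $(N,R)$-measurements at level $L=2R$ (\Cref{prop:common-truncation}), apply the quantitative estimate of \Cref{cor:common-box-limit} to the truncated spaces via \Cref{lem:gauss-warped-product-comparison}, and conclude with the triangle inequality, \Cref{lem:gauss-warped-product-limit}, and the measurement criterion for weak convergence; your side remark that $X^{(L)}$ is again an mm-space is a harmless detail the paper leaves implicit.
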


\begin{proof}
Fix $N\ge1$ and $R>0$. Using the truncation invariance in \Cref{prop:common-truncation}, the measurement bound in \Cref{cor:common-box-limit}, and \Cref{lem:gauss-warped-product-comparison} with $L=2R$, we obtain
\[
\begin{split}
&\dHaus{d_P}\bigl(\mathcal M(\mathcal A_{n,\beta_n}^{-1}\BetaBall;N,R),\mathcal M(Y_n;N,R)\bigr)\\
&=\dHaus{d_P}\bigl(\mathcal M((\mathcal A_{n,\beta_n}^{-1}\BetaBall)^{(2R)};N,R),\mathcal M(Y_n^{(2R)};N,R)\bigr)\\
&\le\Box\bigl((\mathcal A_{n,\beta_n}^{-1}\BetaBall)^{(2R)},Y_n^{(2R)}\bigr)\to0.
\end{split}
\]
Together, \Cref{lem:gauss-warped-product-limit} and \Cref{def:common-measurement-convergence} show that $\mathcal M(Y_n;N,R)$ converges to $\mathcal M(\PGauss;N,R)$ with respect to $\dHaus{d_P}$. The triangle inequality gives the same limit for $\mathcal M(\mathcal A_{n,\beta_n}^{-1}\BetaBall;N,R)$. Since $N$ and $R$ were arbitrary, the weak-convergence criterion in \Cref{def:common-measurement-convergence} proves the assertion. This completes the proof.
\end{proof}

% v2: thm:gaussian-ptp
\begin{thm}[Phase transition property in the Gaussian phase]
\label{thm:gauss-ptp}
If $\mathcal A_{n,\beta_n}\to0$ as $n\to\infty$, then $\BetaBall$ has the phase transition property with critical scale $\mathcal A_{n,\beta_n}^{-1}$.
\end{thm}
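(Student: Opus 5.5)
The plan follows the pattern already used for \Cref{thm:star-ptp}, \Cref{thm:leb-ptp}, and \Cref{thm:cross-ptp}: verify the hypotheses of \Cref{lem:common-ptp-criterion} with $X_n=\BetaBall$ and $s_n=\mathcal A_{n,\beta_n}^{-1}$.

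First, \Cref{thm:gauss-full} gives weak convergence $\Pyr(\mathcal A_{n,\beta_n}^{-1}\BetaBall)\to\PGauss$ as $n\to\infty$. So the space $Z_n=s_nX_n$ in \Cref{lem:common-ptp-criterion} is exactly $\mathcal A_{n,\beta_n}^{-1}\BetaBall$, and the critical pyramid is $\mathcal Q=\PGauss$.

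Second, \Cref{lem:common-admissible-critical-pyramids} shows that $\PGauss$ meets both conditions on the critical pyramid. The first is the observable diameter bound $\operatorname{ObsDiam}(\PGauss;-\kappa)\le2\sqrt{2\log(2/\kappa)}$ for $0<\kappa<1$, and it is zero for $\kappa\ge1$ by convention. The second is positive separation: for any admissible masses it provides separated compact intervals in $\Gamma^1\in\PGauss$ whose masses strictly exceed the $\kappa_i$. Hence there is a single $\varepsilon>0$ such that $\operatorname{Sep}(\Gamma^1;\kappa_0+\varepsilon,\ldots,\kappa_N+\varepsilon)>0$.

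Applying \Cref{lem:common-ptp-criterion} then yields both parts of the phase transition property at once. If $\alpha_n\mathcal A_{n,\beta_n}\to0$, the rescaled spaces form a L\'evy family. If $\alpha_n\mathcal A_{n,\beta_n}\to+\infty$, they infinitely dissipate. There is no real obstacle, since all the analytic work sits in \Cref{thm:gauss-full}. The only point to check is bookkeeping: the scaling in the criterion is $(\alpha_n/s_n)Z_n=\alpha_nX_n$, so $s_n=\mathcal A_{n,\beta_n}^{-1}$ is the correct critical scale order, not its reciprocal.
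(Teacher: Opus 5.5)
Your proposal is correct and matches the paper's proof essentially step for step. Like the paper, you take the weak limit $\PGauss$ of $\Pyr(\mathcal A_{n,\beta_n}^{-1}\BetaBall)$ from \Cref{thm:gauss-full}, verify both critical-pyramid hypotheses by \Cref{lem:common-admissible-critical-pyramids}, and conclude from \Cref{lem:common-ptp-criterion} with $s_n=\mathcal A_{n,\beta_n}^{-1}$; your scaling check $(\alpha_n/s_n)Z_n=\alpha_n\BetaBall$ is also right.
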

\begin{proof}
Weak convergence of $\Pyr(\mathcal A_{n,\beta_n}^{-1}\BetaBall)$ to $\PGauss$ follows from \Cref{thm:gauss-full}. According to \Cref{lem:common-admissible-critical-pyramids}, the limiting pyramid has finite observable diameter and positive separation. The conclusion follows from \Cref{lem:common-ptp-criterion} upon choosing $s_n=\mathcal A_{n,\beta_n}^{-1}$. This completes the proof.
\end{proof}

% v2: sec:sharpness (completion of the phase diagram)
\begin{prop}[Subsequence exhaustion by the four phases]
\label{prop:sharp-exhaustiveness}
For every positive sequence $(\beta_n)$ and every subsequence, there is a further subsequence along which exactly one of the following four mutually exclusive cases holds as $n\to\infty$.
\begin{enumerate}[label=\textup{(\roman*)}]
  \item $\mathcal A_{n,\beta_n}\to+\infty$ and $\beta_n\mathcal L_{n,\beta_n}\to\lambda\in[0,+\infty)$, and
  \[d_\Pi(\Pyr(\beta_n\BetaBall),\mathcal P_\lambda^\star)\to0.\]
  \item $\mathcal A_{n,\beta_n}\to+\infty$ and $\beta_n\mathcal L_{n,\beta_n}\to+\infty$, and
  \[d_\Pi(\Pyr(\mathcal L_{n,\beta_n}^{-1}\BetaBall),\Ple)\to0.\]
  \item For some $a\in(0,+\infty)$, $\mathcal A_{n,\beta_n}\to a$, and
  \[d_\Pi(\Pyr(\BetaBall),F_a(\PGauss))\to0.\]
  \item $\mathcal A_{n,\beta_n}\to0$, and
  \[d_\Pi(\Pyr(\mathcal A_{n,\beta_n}^{-1}\BetaBall),\PGauss)\to0.\]
\end{enumerate}
\end{prop}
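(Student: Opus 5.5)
The plan is a compactness argument in the extended half-line, followed by reading off the limit from the four phase theorems already proved. Given a positive sequence $(\beta_n)$ and a subsequence, we first pass to a further subsequence along which $\mathcal A_{n,\beta_n}$ converges in $[0,+\infty]$; this is possible because $[0,+\infty]$ is sequentially compact. This produces three cases: the limit is $0$, a finite positive value $a$, or $+\infty$.

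If the limit is $+\infty$, \Cref{lem:common-scale-algebra} gives $\mathcal L_{n,\beta_n}\to+\infty$, so $\beta_n\mathcal L_{n,\beta_n}$ is eventually positive. We extract once more so that $\beta_n\mathcal L_{n,\beta_n}$ converges in $[0,+\infty]$. This yields case (i) when the limit is finite and case (ii) when it is $+\infty$.

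The convergence statements then follow from the corresponding theorems applied along the subsequence:
\begin{enumerate}[label=\textup{(\roman*)}]
  \item \Cref{thm:phase-diagram-draft};
  \item \Cref{thm:leb-full};
  \item \Cref{thm:cross-full};
  \item \Cref{thm:gauss-full}.
\end{enumerate}
One point needs a brief check: each theorem is stated for a full sequence $(\beta_n)_n$, while here it is used along a subsequence $(n_k)$. Every proof only uses asymptotic properties of the pairs $(n,\beta_n)$ as $n\to\infty$, together with $n\to\infty$ itself. Both are preserved along $n_k$, so each argument applies verbatim with $n$ replaced by $n_k$. One could instead extend the subsequence to a full sequence by interpolation, but the verbatim route is simpler. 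Since $d_\Pi$ metrizes weak convergence (\Cref{def:common-measurement-convergence}), weak convergence and $d_\Pi$-convergence are the same thing here.

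Mutual exclusivity is immediate from the hypotheses. Cases (iii) and (iv) prescribe distinct limits $a\in(0,+\infty)$ and $0$ of $\mathcal A_{n,\beta_n}$. Both differ from the limit $+\infty$ required in (i) and (ii). Cases (i) and (ii) are then separated by whether $\beta_n\mathcal L_{n,\beta_n}$ has a finite limit or diverges. Limits of real sequences in $[0,+\infty]$ are unique, so at most one case can hold along a given subsequence.

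There is no real obstacle in this proof. The only care needed is the bookkeeping in the $+\infty$ branch, namely checking via \Cref{lem:common-scale-algebra} that $\mathcal L_{n,\beta_n}>0$ eventually. This makes $\mathcal L_{n,\beta_n}^{-1}\BetaBall$ well defined and ensures that $\beta_n\mathcal L_{n,\beta_n}$ lies in $[0,+\infty)$, so the second extraction is legitimate.
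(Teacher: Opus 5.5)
Your proposal is correct and follows essentially the same route as the paper: first extract a subsequence along which $\mathcal A_{n,\beta_n}$ converges in $[0,+\infty]$, then in the divergent case extract again for $\beta_n\mathcal L_{n,\beta_n}$, read off the limit from the four phase theorems, and get mutual exclusivity from uniqueness of limits. Your remark about applying the theorems along a subsequence $(n_k)$ is a sound piece of bookkeeping that the paper handles silently by relabeling.
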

\begin{proof}
Relabel the given subsequence by $n$. By compactness of the extended half-line $[0,+\infty]$, we may pass to a further subsequence along which $\mathcal A_{n,\beta_n}$ converges to $0$, to a finite positive value, or to $+\infty$. The first two alternatives give cases (iv) and (iii), respectively.

If the limit is $0$, \Cref{thm:gauss-full} supplies the pyramid convergence in case (iv). If it is $a\in(0,+\infty)$, \Cref{thm:cross-full} supplies case (iii).

Suppose that $\mathcal A_{n,\beta_n}\to+\infty$. Then $\mathcal L_{n,\beta_n}=2\log\mathcal A_{n,\beta_n}$ is eventually positive. Passing to another subsequence, we may assume that the positive sequence $\beta_n\mathcal L_{n,\beta_n}$ has a limit in $[0,+\infty]$. If the limit is finite, \Cref{thm:phase-diagram-draft} gives case (i); if it is $+\infty$, \Cref{thm:leb-full} gives case (ii). The four cases are mutually exclusive because they are determined by the limit of $\mathcal A_{n,\beta_n}$ and, when this sequence diverges, by the limit of $\beta_n\mathcal L_{n,\beta_n}$. This completes the proof.
\end{proof}

For a sequence satisfying the hypotheses of one of the four phases, both implications in \Cref{def:common-ptp} have converses at that phase's critical scale. At the endpoint rescaling ratios $0$ and $+\infty$, this follows from \Cref{thm:star-ptp,thm:leb-ptp,thm:cross-ptp,thm:gauss-ptp}. At a finite positive subsequential rescaling ratio, \Cref{lem:common-pyramid-rescaling-continuity,thm:phase-diagram-draft,thm:leb-full,thm:cross-full,thm:gauss-full} identify the rescaled critical pyramid. There, positive separation from \Cref{lem:star-rich-separation,lem:common-admissible-critical-pyramids}, transferred by \Cref{prop:common-pyramid-limit-formulas}, excludes the L\'evy property, and finite observable diameter from \Cref{lem:star-pyramid-finite-obsdiam,lem:common-admissible-critical-pyramids}, together with \Cref{lem:common-dissipation-limit}, excludes infinite dissipation.

\section{\texorpdfstring{Sharpness of the $\Ple$ boundary}{Sharpness of the diameter-at-most-one boundary}}
\label{sec:sharp-ple-boundary}

For necessity at the $\Ple$ boundary, a subsequence on which $\beta_n\mathcal L_{n,\beta_n}$ remains bounded returns to the finite-star-tree phase. The cases where its limit is zero or positive obstruct convergence to $\Ple$ by dissipation or by an unbounded one-branch generator, respectively.

% v2: thm:sharp-width
\begin{thm}[Sharpness of the criterion for convergence to $\Ple$]
\label{thm:sharp-width}
Assume that $\mathcal A_{n,\beta_n}\to+\infty$ as $n\to\infty$. Then
\[d_\Pi\bigl(\Pyr(\mathcal L_{n,\beta_n}^{-1}\BetaBall),\Ple\bigr)\to0\]
if and only if
\[\beta_n\mathcal L_{n,\beta_n}\to+\infty\]
as $n\to\infty$.
\end{thm}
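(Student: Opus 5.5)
The ``if'' direction is exactly \Cref{thm:leb-full}. For the ``only if'' direction I argue by contradiction. Suppose $d_\Pi(\Pyr(\mathcal L_{n,\beta_n}^{-1}\BetaBall),\Ple)\to0$ but $\beta_n\mathcal L_{n,\beta_n}\not\to+\infty$. Then some subsequence has $\beta_n\mathcal L_{n,\beta_n}$ bounded. Passing further, I assume $\beta_n\mathcal L_{n,\beta_n}\to\lambda\in[0,+\infty)$, still with $\mathcal A_{n,\beta_n}\to+\infty$. By \Cref{thm:phase-diagram-draft} applied along this subsequence, $\Pyr(\beta_n\BetaBall)\to\mathcal P_\lambda^\star$ weakly. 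Write
\[\mathcal L_{n,\beta_n}^{-1}\BetaBall=c_n\,\beta_n\BetaBall,\qquad c_n\coloneqq(\beta_n\mathcal L_{n,\beta_n})^{-1}.\]
The two cases $\lambda>0$ and $\lambda=0$ are handled separately.

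\emph{Case $\lambda>0$.} Here $c_n\to\lambda^{-1}\in(0,+\infty)$, so \Cref{lem:common-pyramid-rescaling-continuity} gives $\Pyr(\mathcal L_{n,\beta_n}^{-1}\BetaBall)\to\lambda^{-1}\mathcal P_\lambda^\star$. Uniqueness of limits in $(\Pi,d_\Pi)$ would then force $\lambda^{-1}\mathcal P_\lambda^\star=\Ple$. This fails because $T_{1,\lambda}(1)=([\lambda,+\infty),|\cdot|,\nu_\lambda)$ lies in $\mathcal P_\lambda^\star$ by \Cref{lem:star-map} and has infinite diameter, since $\nu_\lambda$ has full support on the half-line. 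Its rescaling by $\lambda^{-1}$ therefore lies in $\lambda^{-1}\mathcal P_\lambda^\star$ but not in $\Ple$.

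\emph{Case $\lambda=0$.} Here $c_n\to+\infty$, so rescaling continuity does not apply, and I use dissipation instead. Every tree $T_{m,0}(p_1,\dots,p_m)$ lies in every subsequential limit of $\Pyr(\beta_n\BetaBall)$. Combined with \Cref{lem:star-rich-separation} and \Cref{prop:common-pyramid-limit-formulas}, this gives, for every admissible $\kappa_0,\dots,\kappa_N$, some $\delta>0$ with
\[\liminf_n\operatorname{Sep}(\beta_n\BetaBall;\kappa_0,\dots,\kappa_N)\ge\delta.\]
The argument is the same as in the proof of \Cref{lem:common-ptp-criterion}. Multiplying the metric by $c_n\to+\infty$ shows that $\operatorname{Sep}(\mathcal L_{n,\beta_n}^{-1}\BetaBall;\kappa_0,\dots,\kappa_N)\to+\infty$, so the sequence infinitely dissipates along the subsequence. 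By \Cref{lem:common-dissipation-limit} with $\delta=2$, every subsequential limit then contains all mm-spaces of diameter at most $2$, for example a two-point space with distance $2$. That space is not in $\Ple$, which contradicts convergence to $\Ple$.

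The main obstacle is the case $\lambda=0$. One must check that the positive separation in \Cref{lem:star-rich-separation} really holds at $\lambda=0$: the tails $\{s\ge r_i\}$ with small $r_i>0$ are at distance $r_i+r_j>0$, so it does. One must also check that the $\liminf$ bound transfers through \Cref{prop:common-pyramid-limit-formulas} using the $\varepsilon$-slack that lemma provides. Both cases give a contradiction, which proves the ``only if'' direction.
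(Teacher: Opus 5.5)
Your proposal is correct and follows the paper's proof essentially step for step. You use the same reduction to a subsequence with $\beta_n\mathcal L_{n,\beta_n}\to\lambda\in[0,+\infty)$ and the same split: for $\lambda>0$, \Cref{lem:common-pyramid-rescaling-continuity} together with the unbounded rescaled one-branch generator; for $\lambda=0$, infinite dissipation followed by \Cref{lem:common-dissipation-limit} with $\delta=2$. The only cosmetic difference is that you re-derive the dissipation from \Cref{lem:star-rich-separation} and \Cref{prop:common-pyramid-limit-formulas}, whereas the paper simply cites \Cref{thm:star-ptp}, which packages exactly that argument.
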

\begin{proof}
Sufficiency is \Cref{thm:leb-full}. We prove necessity by contraposition. Suppose that $\beta_n\mathcal L_{n,\beta_n}$ does not tend to $+\infty$. By \Cref{lem:common-scale-algebra}, $\mathcal L_{n,\beta_n}\to+\infty$, so there is a subsequence along which the product is bounded. Passing to a further subsequence, assume that
\[\beta_n\mathcal L_{n,\beta_n}\to\lambda\in[0,+\infty)\]
as $n\to\infty$. By \Cref{lem:star-scale-identification}, along this subsequence,
\[\beta_n\to0,\qquad\beta_n\log n\to\lambda.\]

If $\lambda=0$, then
\[\frac{\mathcal L_{n,\beta_n}^{-1}}{\beta_n}=\frac1{\beta_n\mathcal L_{n,\beta_n}}\to+\infty.\]
Applying \Cref{thm:star-ptp} at the critical scale $\beta_n$, we see that $\mathcal L_{n,\beta_n}^{-1}\BetaBall$ infinitely dissipates. By compactness of the space of pyramids, every subsequence of the associated pyramids has a weakly convergent further subsequence. Applying \Cref{lem:common-dissipation-limit} with $\delta=2$, we find that every such limit contains the two-point mm-space whose distinct points are at distance $2$. This mm-space has diameter greater than $1$ and therefore does not belong to $\Ple$; consequently, the resulting limit is not $\Ple$.

Now suppose that $\lambda>0$. By \Cref{thm:phase-diagram-draft},
\[\Pyr(\beta_n\BetaBall)\to\mathcal P_\lambda^\star.\]
Moreover,
\[\mathcal L_{n,\beta_n}^{-1}\BetaBall=\frac1{\beta_n\mathcal L_{n,\beta_n}}(\beta_n\BetaBall),\qquad\frac1{\beta_n\mathcal L_{n,\beta_n}}\to\frac1\lambda.\]
Hence \Cref{lem:common-pyramid-rescaling-continuity} gives
\[\Pyr(\mathcal L_{n,\beta_n}^{-1}\BetaBall)\to\lambda^{-1}\mathcal P_\lambda^\star.\]
By \Cref{thm:phase-diagram-draft,lem:star-pyramid,lem:star-map}, this limit contains the rescaled one-branch generator
\[([\lambda,+\infty),\lambda^{-1}|\cdot|,\nu_\lambda)\]
whose diameter is infinite because its support is unbounded. Thus this mm-space does not belong to $\Ple$, and neither does the limiting pyramid.

In both cases, the original sequence has a subsequence that does not converge to $\Ple$. Therefore, convergence to $\Ple$ requires $\beta_n\mathcal L_{n,\beta_n}\to+\infty$. This completes the proof.
\end{proof}

\section{Appendix}
\label{sec:appendix}

\subsection{A hyperbolic-sphere application}
\label{sec:app-hyperbolic-sphere}

% v2: cor:hyperbolic-sphere-limit-lebesgue
\begin{cor}[A hyperbolic-sphere limit]
\label{cor:leb-hyperbolic-sphere}
For $n\ge2$, put
\[H_n\coloneqq \left(S^n,\frac{\operatorname{arsinh}(n|x-y|)}{\log\sqrt n},\sigma_n\right),\]
where $S^n\subset\R^{n+1}$ is the unit sphere, $|x-y|$ is the ambient Euclidean chord distance, and $\sigma_n$ is the normalized surface measure. Then
\[d_\Pi(\Pyr(H_n),\Ple)\to0\qquad\text{as }n\to\infty.\]
\end{cor}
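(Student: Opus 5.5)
We identify $H_n$ with a normalized deterministic shell and then apply \Cref{lem:leb-deterministic-shell}. Work on $S^{n}\subset\R^{n+1}$, so the ambient dimension is $n+1$. In the notation of \Cref{lem:common-shell-transform} with dimension $n+1$, the shell metric is
\[
d_{n+1,R}(x,y)=2\operatorname{arsinh}\bigl(\tfrac{\sinh R}{2}|x-y|\bigr).
\]
Choose $R_n$ by $\sinh R_n=2n$. Then $d_{n+1,R_n}(x,y)=2\operatorname{arsinh}(n|x-y|)$, and therefore
\[
H_n=\frac{1}{2\log\sqrt n}\,\Sigma_{n+1,R_n}=(\log n)^{-1}\Sigma_{n+1,R_n}
\]
exactly as mm-spaces. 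The measure $\sigma_n$ on $S^n$ is the measure $\sigma_{(n+1)-1}$ used in $\Sigma_{n+1,R}$, so the measures agree as well.

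Next, set $A_{n+1,R_n}=\sinh R_n/\sqrt{n+1}=2n/\sqrt{n+1}\to+\infty$. \Cref{lem:leb-deterministic-shell}, applied along the index $n+1$ with $\rho=R_n$, gives
\[
\Pyr\bigl((2\log A_{n+1,R_n})^{-1}\Sigma_{n+1,R_n}\bigr)\to\Ple.
\]
Formally, define $\rho_m\coloneqq R_{m-1}$ for $m\ge3$, so that the lemma's hypotheses hold exactly.

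It remains to match the normalizations. We have $2\log A_{n+1,R_n}=2\log 2+2\log n-\log(n+1)$, so
\[
c_n\coloneqq\frac{2\log A_{n+1,R_n}}{\log n}\to1.
\]
Hence $H_n=c_n\cdot\bigl((2\log A_{n+1,R_n})^{-1}\Sigma_{n+1,R_n}\bigr)$. By \Cref{lem:common-pyramid-rescaling-continuity} with $c=1$, we get $\Pyr(H_n)\to1\cdot\Ple=\Ple$ in $d_\Pi$.

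The only real point is the bookkeeping of dimension indices. The sphere $S^n$ corresponds to $\mathsf S_{n+1}$ and $\Sigma_{n+1,\cdot}$, and the reindexing $m=n+1$ must be made so that \Cref{lem:leb-deterministic-shell} applies literally. Everything else is exact algebra followed by the constant-rescaling continuity. An alternative route avoids rescaling continuity: recompute separations directly as in the proof of \Cref{lem:leb-deterministic-shell}, using \Cref{lem:common-spherical-block} and \Cref{lem:leb-transform-separation}, then invoke \Cref{prop:leb-diameter-one-criterion}. In that computation the separation of $H_n$ is $2\operatorname{arsinh}(n\cdot\Theta(n^{-1/2}))/\log n\to1$.
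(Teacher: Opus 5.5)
Your proposal is correct and follows the paper's proof essentially verbatim. It uses the identification $H_n=(\log n)^{-1}\Sigma_{n+1,\operatorname{arsinh}(2n)}$, applies \Cref{lem:leb-deterministic-shell} with shell parameter $2n/\sqrt{n+1}\to+\infty$, and matches the normalizations through \Cref{lem:common-pyramid-rescaling-continuity} with limiting coefficient $1$; your explicit reindexing $\rho_m\coloneqq R_{m-1}$ simply makes precise what the paper phrases as ``used in dimension $n+1$.''
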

\begin{proof}
The shell-transform formula in \Cref{lem:common-shell-transform}, used in dimension $n+1$ at radius $\operatorname{arsinh}(2n)$, gives the shell metric
\[2\operatorname{arsinh}\left(\frac{2n|x-y|}{2}\right)=2\operatorname{arsinh}(n|x-y|)\]
and, since $(\log\sqrt n)^{-1}=2/\log n$,
\[H_n=(\log n)^{-1}\Sigma_{n+1,\operatorname{arsinh}(2n)}.\]
The parameter of this shell is $2n/\sqrt{n+1}\to+\infty$. \Cref{lem:leb-deterministic-shell} then gives
\[d_\Pi\left(\Pyr\left(\left(2\log\frac{2n}{\sqrt{n+1}}\right)^{-1}\Sigma_{n+1,\operatorname{arsinh}(2n)}\right),\Ple\right)\to0.\]
The ratio of the two normalizations satisfies
\[\frac{2\log(2n/\sqrt{n+1})}{\log n}=1+\frac{2\log2-\log(1+1/n)}{\log n}\to1.\]
Rescaling continuity in \Cref{lem:common-pyramid-rescaling-continuity}, with limiting coefficient $1$, gives $\Pyr(H_n)\to\Ple$.
	This completes the proof.
\end{proof}

\subsection{Profile compactness}
\label{sec:app-star-profile}

\begin{proof}[Proof of \Cref{lem:star-profile-compactness}]
Same-direction hyperbolic distances equal radial differences, so, for $s,t\in I$,
\[\|g_{k,\theta}(s)-g_{k,\theta}(t)\|_\infty\le|s-t|.\]
Thus $g_{k,\theta}\in K$. The family $K$ is uniformly bounded, equicontinuous, and closed, hence compact by the Arzel\`a--Ascoli theorem. Uniform continuity of $\Psi_k$ on the compact radial window makes $\theta\mapsto g_{k,\theta}$ continuous, so $q_k$ is a probability measure on $K$. After passing to a subsequence, let $q_k\Rightarrow q$.

Let $g,h\in\operatorname{supp}q$ be distinct, and choose $\tau>0$ small enough that their $\tau$-balls in $K$ are disjoint. Both balls have positive $q$-measure. By the Portmanteau theorem, for all sufficiently large $k$, the sets
\[U_k\coloneqq \{\theta\mid g_{k,\theta}\in B_K(g,\tau)\},\qquad V_k\coloneqq \{\theta\mid g_{k,\theta}\in B_K(h,\tau)\}\]
have measures at least positive constants $\kappa_g$ and $\kappa_h$, respectively. By \Cref{lem:common-spherical-two-set}, we may choose $\theta_k\in U_k$ and $\eta_k\in V_k$ such that
\[|\theta_k-\eta_k|\le\frac2{\sqrt{(n_k-1)\min\{\kappa_g,\kappa_h\}}}+\frac1{\sqrt{n_k}}\le\frac C{\sqrt{n_k}}\]
where $C\coloneqq2\sqrt{2/\min\{\kappa_g,\kappa_h\}}+1$.

By \Cref{lem:star-cross-branch-asymptotic}(ii), there is a sequence $\varepsilon_k\downarrow0$ such that, for every $s,t\in I$,
\[\|g(s)-h(t)\|_\infty\le2\tau+\|g_{k,\theta_k}(s)-g_{k,\eta_k}(t)\|_\infty\le2\tau+s+t-\lambda+\varepsilon_k.\]
Letting $k\to+\infty$ and then $\tau\downarrow0$ gives the cross-branch constraint.

Put $\zeta_k\coloneqq(\beta_{n_k}r)_*\mu_{n_k,\beta_{n_k}}$. By \Cref{prop:star-radial}, $\zeta_k\Rightarrow\nu_\lambda$. Since $\nu_\lambda$ assigns no mass to the endpoints of $I$, we have $\zeta_k|_I\Rightarrow\nu_\lambda|_I$. Radial--angular independence from \Cref{lem:common-radial-law} gives
\[(\Psi_k)_*\bigl(\mu_{n_k,\beta_{n_k}}|_{\{\beta_{n_k}r\in I\}}\bigr)=E_*\bigl((\zeta_k|_I)\otimes q_k\bigr).\]
The evaluation map $E\colon I\times K\to B_R^N$ is continuous, so the right-hand side converges weakly to $\mu^I$. The left-hand side is bounded above by $(\Psi_k)_*\mu_{n_k,\beta_{n_k}}$. Testing against arbitrary nonnegative continuous functions and passing to the limit gives $\mu^I\le\mu$. Since $\mu^I$ has total mass $\nu_\lambda(I)$, the remaining mass is $1-\nu_\lambda(I)$. This completes the proof.
\end{proof}

\subsection{\texorpdfstring{Upper inclusion for $(N,R)$-measurements}{Upper inclusion for (N,R)-measurements}}
\label{sec:app-star-measurement}

\begin{proof}[Proof of \Cref{lem:star-measurement-upper}]
By the support argument in \Cref{prop:common-truncation}, there is a map $\Psi_k\colon Z_{n_k}\to B_R^N$. It is $1$-Lipschitz and satisfies $\mu_k=(\Psi_k)_*\mu_{n_k,\beta_{n_k}}$. Fix $0<\varepsilon<1$, and choose $\delta>0$ and $M>\lambda+\delta$ such that
\[I\coloneqq [\lambda+\delta,M],\qquad\nu_\lambda(I)>1-\varepsilon\]
Apply \Cref{lem:star-profile-compactness} and pass to a subsequence to obtain a limiting profile measure $q$ and
\[\mu^I\coloneqq E_*\bigl((\nu_\lambda|_I)\otimes q\bigr)\le\mu\]

Cover $\operatorname{supp}q$ by open $\varepsilon$-balls centered at distinct points $g_1,\ldots,g_m$. Assign each point to the first ball containing it and discard cells of zero $q$-measure. This gives a Borel map
\[\psi\colon\operatorname{supp}q\to\{g_1,\ldots,g_m\},\qquad\|g-\psi(g)\|_\infty<\varepsilon\quad(q\text{-a.e. }g)\]
and positive weights $p_i\coloneqq q(\psi^{-1}(g_i))$. Define $a(s,i)\coloneqq g_i(s)$ on $I\times\{1,\ldots,m\}$. The $1$-Lipschitz property of $g_i$ on each branch and the cross-branch constraint in \Cref{lem:star-profile-compactness} show that $a$ is $1$-Lipschitz from the star-tree metric to $\|\cdot\|_\infty$.

Write $a=(a_1,\ldots,a_N)$ and define, on the whole space $T_{m,\lambda}(p_1,\ldots,p_m)$,
\[\widetilde a_\ell(x)\coloneqq \inf_{y\in I\times\{1,\ldots,m\}}\{a_\ell(y)+d_\lambda(x,y)\}\qquad(1\le\ell\le N)\]
The triangle inequality shows that every $\widetilde a_\ell$ is $1$-Lipschitz and agrees with $a_\ell$ on the window. Clipping each coordinate to $[-R,R]$ gives a $1$-Lipschitz map
\[\Phi\colon T_{m,\lambda}(p_1,\ldots,p_m)\to B_R^N\]
that agrees with $a$ on the window. Denote its pushforward measure by $\nu_\varepsilon$.

On the window, send $(s,g)$ to $(g(s),\Phi(s,i))$ whenever $\psi(g)=g_i$. The first marginal is $\mu^I$, and the second is the part of $\nu_\varepsilon$ inside the window. The paired points are at distance $\|g(s)-g_i(s)\|_\infty<\varepsilon$. The remaining measures $\mu-\mu^I$ and the part of $\nu_\varepsilon$ outside the window both have mass $1-\nu_\lambda(I)<\varepsilon$ and may be coupled arbitrarily. This produces a coupling in $\mathcal T(\mu,\nu_\varepsilon)$ and gives $d_P(\mu,\nu_\varepsilon)\le\varepsilon$. Moreover, $\nu_\varepsilon\in\mathcal M(\mathcal P_\lambda^\star;N,R)$.

For $\varepsilon_j\downarrow0$, the measures $\nu_{\varepsilon_j}$ converge to $\mu$ in Prokhorov distance. By \Cref{lem:common-prokhorov-coupling,lem:common-ambient-box}, the mm-spaces determined by these measures converge in box distance to $(\operatorname{supp}\mu,\|\cdot\|_\infty,\mu)$. Since $\mathcal P_\lambda^\star$ is box closed, $\mu\in\mathcal M(\mathcal P_\lambda^\star;N,R)$. This completes the proof.
\end{proof}

\subsection{Radial central limit theorem}
\label{sec:app-radial-clt}

\begin{proof}[Proof of \Cref{prop:common-radial-clt}]
By \Cref{lem:common-log-gamma-clt}, independence, and the Gamma law of large numbers,
\[\left(\frac{\mathbf U_n-n/2}{\sqrt{n/2}},\frac{\mathbf V_{n,\beta_n}-\beta_n}{\sqrt{\beta_n}}\right)\overset{\mathrm d}{\longrightarrow}\gamma^1\otimes\gamma^1,\]
while $\mathbf U_n/(n/2)\overset{\mathrm P}{\longrightarrow}1$ and $\mathbf V_{n,\beta_n}/\beta_n\overset{\mathrm P}{\longrightarrow}1$. The half-angle formula gives
\[\mathbf R_{n,\beta_n}=2\operatorname{arsinh}\sqrt{\frac{\mathbf U_n}{\mathbf V_{n,\beta_n}}},\qquad \bar{\mathcal R}_{n,\beta_n}=2\operatorname{arsinh}\sqrt{\frac{n/2}{\beta_n}}.\]

Suppose first that $(n/2)/\beta_n\to0$. Since $\mathbf V_{n,\beta_n}/\beta_n\overset{\mathrm P}{\longrightarrow}1$ and $(\mathbf V_{n,\beta_n}-\beta_n)/\sqrt{\beta_n}$ is tight, the identity
\[\sqrt{\beta_n}\left(\sqrt{\frac{\beta_n}{\mathbf V_{n,\beta_n}}}-1\right)=-\frac{(\mathbf V_{n,\beta_n}-\beta_n)/\sqrt{\beta_n}}{\sqrt{\mathbf V_{n,\beta_n}/\beta_n}(1+\sqrt{\mathbf V_{n,\beta_n}/\beta_n})}\]
shows that its left-hand side is tight. The identity
\[2(\sqrt{\mathbf U_n}-\sqrt{n/2})=\frac{\mathbf U_n-n/2}{\sqrt{n/2}}\frac{2\sqrt{n/2}}{\sqrt{\mathbf U_n}+\sqrt{n/2}}\]
shows that this sequence is also tight. Moreover,
\[\frac{\mathbf U_n}{\beta_n}=\frac{\mathbf U_n}{n/2}\frac{n/2}{\beta_n}\overset{\mathrm P}{\longrightarrow}0,\]
and hence
\[\sqrt{\mathbf U_n}\left(\sqrt{\frac{\beta_n}{\mathbf V_{n,\beta_n}}}-1\right)=\sqrt{\frac{\mathbf U_n}{\beta_n}}\sqrt{\beta_n}\left(\sqrt{\frac{\beta_n}{\mathbf V_{n,\beta_n}}}-1\right)\overset{\mathrm P}{\longrightarrow}0.\]
Therefore,
\[\sqrt{\frac{\beta_n\mathbf U_n}{\mathbf V_{n,\beta_n}}}-\sqrt{n/2}-(\sqrt{\mathbf U_n}-\sqrt{n/2})\overset{\mathrm P}{\longrightarrow}0,\]
and $\sqrt{\beta_n\mathbf U_n/\mathbf V_{n,\beta_n}}-\sqrt{n/2}$ is tight.

The derivative of $s\mapsto2\sqrt{\beta_n}\operatorname{arsinh}(s/\sqrt{\beta_n})$ is $2/\sqrt{1+s^2/\beta_n}$. The squares of the endpoints $\sqrt{\beta_n\mathbf U_n/\mathbf V_{n,\beta_n}}$ and $\sqrt{n/2}$, divided by $\beta_n$, converge to zero because
\[\frac{\mathbf U_n}{\mathbf V_{n,\beta_n}}=\frac{\mathbf U_n}{n/2}\frac{n/2}{\beta_n}\frac{\beta_n}{\mathbf V_{n,\beta_n}}\overset{\mathrm P}{\longrightarrow}0,\qquad \frac{n/2}{\beta_n}\longrightarrow0.\]
The same holds for every intermediate point. The mean-value theorem and the tightness just proved give
\[\sqrt{\beta_n}(\mathbf R_{n,\beta_n}-\bar{\mathcal R}_{n,\beta_n})-2\left(\sqrt{\frac{\beta_n\mathbf U_n}{\mathbf V_{n,\beta_n}}}-\sqrt{n/2}\right)\overset{\mathrm P}{\longrightarrow}0.\]
Combining the last two estimates yields
\[\sqrt{\beta_n}(\mathbf R_{n,\beta_n}-\bar{\mathcal R}_{n,\beta_n})-2(\sqrt{\mathbf U_n}-\sqrt{n/2})\overset{\mathrm P}{\longrightarrow}0.\]
Finally, $2\sqrt{n/2}/(\sqrt{\mathbf U_n}+\sqrt{n/2})\overset{\mathrm P}{\longrightarrow}1$. The Gamma central limit theorem and Slutsky's theorem prove convergence to the standard Gaussian distribution.

Suppose next that $(n/2)/\beta_n\to\lambda\in(0,+\infty)$. Apply Taylor's theorem at $(1,1)$ to $(x,y)\mapsto2\operatorname{arsinh}\sqrt{(n/2)x/(\beta_ny)}$. Its two partial derivatives there are $\sqrt{(n/2)/(\beta_n+n/2)}$ and $-\sqrt{(n/2)/(\beta_n+n/2)}$. Since $(n/2)/\beta_n$ stays between two positive constants, the second partial derivatives are bounded by a common constant on a fixed neighborhood of $(1,1)$. On the event where $(\mathbf U_n/(n/2),\mathbf V_{n,\beta_n}/\beta_n)$ belongs to this neighborhood, the absolute Taylor remainder after multiplication by $\sqrt{\beta_n}$ is at most
\[C\sqrt{\beta_n}\left\{\frac1{n/2}\left|\frac{\mathbf U_n-n/2}{\sqrt{n/2}}\right|^2+\frac1{\beta_n}\left|\frac{\mathbf V_{n,\beta_n}-\beta_n}{\sqrt{\beta_n}}\right|^2\right\}\]
for a constant $C>0$ independent of $n$. The two squared random variables are tight, while $\sqrt{\beta_n}/(n/2)=\sqrt{\beta_n/(n/2)}/\sqrt{n/2}\to0$ and $1/\sqrt{\beta_n}\to0$. Thus the remainder converges to zero in probability. Since the neighborhood event has probability tending to one,
\[
\begin{split}
\sqrt{\beta_n}(\mathbf R_{n,\beta_n}-\bar{\mathcal R}_{n,\beta_n})&-\sqrt{\frac{\beta_n}{\beta_n+n/2}}\frac{\mathbf U_n-n/2}{\sqrt{n/2}}\\
&+\sqrt{\frac{n/2}{\beta_n+n/2}}\frac{\mathbf V_{n,\beta_n}-\beta_n}{\sqrt{\beta_n}}\overset{\mathrm P}{\longrightarrow}0.
\end{split}
\]
The two coefficients converge to $(1+\lambda)^{-1/2}$ and $\sqrt{\lambda/(1+\lambda)}$, respectively, and their squares sum to one. Joint Gaussian convergence and Slutsky's theorem again give the standard Gaussian limit.

Suppose finally that $(n/2)/\beta_n\to+\infty$. For $x>0$,
\[
\begin{split}
\left|\frac{d}{dx}\{\operatorname{arcosh}(1+x)-\log x\}\right|&=\frac1x-\frac1{\sqrt{x(x+2)}}\\
&=\frac{2}{x^2\sqrt{1+2/x}(1+\sqrt{1+2/x})}\le\frac1{x^2}.
\end{split}
\]
The identity
\[\sqrt{\beta_n}\left(\frac{\beta_n\mathbf U_n}{(n/2)\mathbf V_{n,\beta_n}}-1\right)=\frac{\sqrt{\beta_n/(n/2)}(\mathbf U_n-n/2)/\sqrt{n/2}-(\mathbf V_{n,\beta_n}-\beta_n)/\sqrt{\beta_n}}{\mathbf V_{n,\beta_n}/\beta_n}\]
shows that its left-hand side is tight. In particular,
\[\frac{\beta_n\mathbf U_n}{(n/2)\mathbf V_{n,\beta_n}}\overset{\mathrm P}{\longrightarrow}1.\]
On the event where this ratio belongs to $[1/2,2]$, every point between $2\mathbf U_n/\mathbf V_{n,\beta_n}$ and $n/\beta_n$ is at least $(n/2)/\beta_n$. The event has probability tending to one, and the mean-value theorem and the derivative bound give
\[
\begin{split}
&\sqrt{\beta_n}\left|\left\{\operatorname{arcosh}\left(1+\frac{2\mathbf U_n}{\mathbf V_{n,\beta_n}}\right)-\log\frac{2\mathbf U_n}{\mathbf V_{n,\beta_n}}\right\}-\left\{\operatorname{arcosh}\left(1+\frac n{\beta_n}\right)-\log\frac n{\beta_n}\right\}\right|\\
&\qquad\le\frac{2\beta_n}{n/2}\left|\sqrt{\beta_n}\left(\frac{\beta_n\mathbf U_n}{(n/2)\mathbf V_{n,\beta_n}}-1\right)\right|\overset{\mathrm P}{\longrightarrow}0.
\end{split}
\]
It follows that
\[\sqrt{\beta_n}(\mathbf R_{n,\beta_n}-\bar{\mathcal R}_{n,\beta_n})-\sqrt{\beta_n}\left(\log\frac{\mathbf U_n}{n/2}-\log\frac{\mathbf V_{n,\beta_n}}{\beta_n}\right)\overset{\mathrm P}{\longrightarrow}0.\]

The events
\[\left|\frac{\mathbf U_n-n/2}{\sqrt{n/2}}\right|\le\frac{\sqrt{n/2}}2,\qquad \left|\frac{\mathbf V_{n,\beta_n}-\beta_n}{\sqrt{\beta_n}}\right|\le\frac{\sqrt{\beta_n}}2\]
have probabilities tending to one. On these events, $|\log(1+x)-x|\le x^2$ for $|x|\le1/2$ gives
\[
\begin{split}
\left|\sqrt{n/2}\log\frac{\mathbf U_n}{n/2}-\frac{\mathbf U_n-n/2}{\sqrt{n/2}}\right|&\le\frac1{\sqrt{n/2}}\left|\frac{\mathbf U_n-n/2}{\sqrt{n/2}}\right|^2,\\
\left|\sqrt{\beta_n}\log\frac{\mathbf V_{n,\beta_n}}{\beta_n}-\frac{\mathbf V_{n,\beta_n}-\beta_n}{\sqrt{\beta_n}}\right|&\le\frac1{\sqrt{\beta_n}}\left|\frac{\mathbf V_{n,\beta_n}-\beta_n}{\sqrt{\beta_n}}\right|^2.
\end{split}
\]
Both right-hand sides converge to zero in probability. The first estimate also shows that $\sqrt{n/2}\log(\mathbf U_n/(n/2))$ is tight. Since $\beta_n/(n/2)\to0$, we obtain
\[\sqrt{\beta_n}\log\frac{\mathbf U_n}{n/2}\overset{\mathrm P}{\longrightarrow}0,\qquad \sqrt{\beta_n}\log\frac{\mathbf V_{n,\beta_n}}{\beta_n}-\frac{\mathbf V_{n,\beta_n}-\beta_n}{\sqrt{\beta_n}}\overset{\mathrm P}{\longrightarrow}0.\]
Thus the logarithmic main term differs from $-(\mathbf V_{n,\beta_n}-\beta_n)/\sqrt{\beta_n}$ by a quantity converging to zero in probability. The Gamma central limit theorem, the preceding comparison, and Slutsky's theorem give the standard Gaussian limit.

For an arbitrary subsequence, compactness of $[0,+\infty]$ gives a further subsequence along which $(n/2)/\beta_n$ converges to $0$, to a point of $(0,+\infty)$, or to $+\infty$. The three cases show that
\[\sqrt{\beta_n}(\mathbf R_{n,\beta_n}-\bar{\mathcal R}_{n,\beta_n})\overset{\mathrm d}{\longrightarrow}\gamma^1\]
along this further subsequence. Thus every subsequence has a further subsequence converging in distribution to $\gamma^1$. The subsequence criterion for weak convergence gives the same convergence along the original sequence. This completes the proof.
\end{proof}

\subsection{Comparison with the truncated warped product}
\label{sec:app-warped-comparison}

\begin{proof}[Proof of \Cref{lem:gauss-warped-product-comparison}]
The bound $\mathcal A_{n,\beta_n}^2\ge2/\beta_n$ gives $\beta_n\to+\infty$. We first verify that
\[\frac{\bar{\mathcal R}_{n,\beta_n}}{\mathcal A_{n,\beta_n}}\to+\infty.\]
If $n/\beta_n\ge1$, then $\bar{\mathcal R}_{n,\beta_n}\ge\operatorname{arcosh}2$, and the claim follows from $\mathcal A_{n,\beta_n}\to0$. If $n/\beta_n\le1$, then $\cosh s\le1+s^2$ for $0\le s\le1$ gives $\operatorname{arcosh}(1+n/\beta_n)\ge\sqrt{n/\beta_n}$. Since $\mathcal A_{n,\beta_n}^2=(n/\beta_n+2)/\beta_n\le3/\beta_n$,
\[\frac{\bar{\mathcal R}_{n,\beta_n}}{\mathcal A_{n,\beta_n}}\ge\sqrt{\frac n3}\to+\infty.\]

Define
\[\Phi_n\colon J_n\times S^{n-1}\to\B^n,\qquad\Phi_n(u,\theta)=(\bar{\mathcal R}_{n,\beta_n}+\mathcal A_{n,\beta_n}u,\theta)\]
using the usual identification at the origin. By \Cref{lem:common-radial-law}, $(\Phi_n)_*(\nu_n\otimes\sigma_{n-1})=\mu_{n,\beta_n}$. Let $\widetilde d_n$ be the pullback by this map of the distance on $\mathcal A_{n,\beta_n}^{-1}\BetaBall$.

Fix $M,K>0$ and put $T\coloneqq\max\{2M,K\}$. If $|w|\le M$ and $\mathcal A_{n,\beta_n}M\le1$, the hyperbolic addition formulas give
\[
\begin{split}
\sup_{|w|\le M}\left|\frac{\sinh(\bar{\mathcal R}_{n,\beta_n}+\mathcal A_{n,\beta_n}w)}{\sinh\bar{\mathcal R}_{n,\beta_n}}-1\right|
&\le\frac{(\mathcal A_{n,\beta_n}M)^2\cosh1}{2}\\
&\quad+M\cosh1\left(\frac1{\sqrt n}+\frac{\sqrt n}{\beta_n}\right).
\end{split}
\]
Indeed, the first term on the right bounds $|\cosh(\mathcal A_{n,\beta_n}w)-1|$, while the second uses
\[\mathcal A_{n,\beta_n}\coth\bar{\mathcal R}_{n,\beta_n}=\frac{\cosh\bar{\mathcal R}_{n,\beta_n}}{\sqrt n}=\frac1{\sqrt n}+\frac{\sqrt n}{\beta_n}.\]
Since $\sqrt n/\beta_n\le\mathcal A_{n,\beta_n}$, the preceding upper bound tends to $0$.

If $0\le s\le T$ and $\mathcal A_{n,\beta_n}T/2\le1$, Taylor's theorem gives
\[0\le\frac2{\mathcal A_{n,\beta_n}}\sinh\frac{\mathcal A_{n,\beta_n}s}{2}-s\le\frac{\mathcal A_{n,\beta_n}^2T^3\cosh1}{24},\qquad0\le s-H_{\mathcal A_{n,\beta_n}}(s)\le\frac{\mathcal A_{n,\beta_n}^2s^3}{24}.\]
Put $q_{n,\beta_n}(\theta,\eta)\coloneqq H_{\mathcal A_{n,\beta_n}}(\sqrt n|\theta-\eta|)$. Rewriting \eqref{eq:common-polar-distance} and $\sinh\bar{\mathcal R}_{n,\beta_n}=\mathcal A_{n,\beta_n}\sqrt n$ by the half-angle formula gives
\[
\begin{split}
\widetilde d_n((u,\theta),(v,\eta))&=H_{\mathcal A_{n,\beta_n}}\!\Biggl(\Biggl[\left(\frac2{\mathcal A_{n,\beta_n}}\sinh\frac{\mathcal A_{n,\beta_n}|u-v|}{2}\right)^2\\
&\quad{}+\frac{\sinh(\bar{\mathcal R}_{n,\beta_n}+\mathcal A_{n,\beta_n}u)\sinh(\bar{\mathcal R}_{n,\beta_n}+\mathcal A_{n,\beta_n}v)}{\sinh^2\bar{\mathcal R}_{n,\beta_n}}\\
&\qquad{}\times\left(\frac2{\mathcal A_{n,\beta_n}}\sinh\frac{\mathcal A_{n,\beta_n}q_{n,\beta_n}(\theta,\eta)}{2}\right)^2\Biggr]^{1/2}\Biggr).
\end{split}
\]
For all sufficiently large $n$, the radial ratio in the preceding display differs from $1$ by at most $1$. On the region $|u|,|v|\le M$ and $q_{n,\beta_n}(\theta,\eta)\le K$, the first component inside the square root is at most $T\cosh1$, the second at most $2T\cosh1$, and their Euclidean norm at most $\sqrt5T\cosh1$. Each inverse-transform error is at most $\mathcal A_{n,\beta_n}^2T^3\cosh1/24$, while the difference from $1$ of the square root of the product of the two radial ratios is at most three times the radial-ratio bound above. The reverse triangle inequality for the Euclidean norm gives
\[
\begin{split}
&\sup_{\substack{|u|,|v|\le M\\q_{n,\beta_n}(\theta,\eta)\le K}}|\widetilde d_n((u,\theta),(v,\eta))-\delta_n((u,\theta),(v,\eta))|\\
&\le\frac{\mathcal A_{n,\beta_n}^2(\sqrt5T\cosh1)^3}{24}+\frac{\mathcal A_{n,\beta_n}^2T^3\cosh1}{8}\\
&\quad+3T\left\{\frac{(\mathcal A_{n,\beta_n}M)^2\cosh1}{2}+M\cosh1\left(\frac1{\sqrt n}+\frac{\sqrt n}{\beta_n}\right)\right\}.
\end{split}
\]
The right-hand side tends to $0$, so the pulled-back distances converge uniformly to $\delta_n$ on bounded radial and angular windows.

Fix $L,M>0$ and put $K\coloneqq L+2M+1$. The exact formula above with $u=v=0$ gives $\widetilde d_n((0,\theta),(0,\eta))=q_{n,\beta_n}(\theta,\eta)$. Since radial distance equals $|u|$, the reverse triangle inequality gives
\[\widetilde d_n((u,\theta),(v,\eta))\ge q_{n,\beta_n}(\theta,\eta)-|u|-|v|.\]
If $|u|,|v|\le M$ and $q_{n,\beta_n}(\theta,\eta)>K$, then both $\widetilde d_n\wedge L$ and $\delta_n\wedge L$ equal $L$. When $q_{n,\beta_n}(\theta,\eta)\le K$, the preceding uniform estimate applies. Therefore,
\[\sup_{\substack{|u|,|v|\le M\\\theta,\eta\in S^{n-1}}}|(\widetilde d_n\wedge L)((u,\theta),(v,\eta))-(\delta_n\wedge L)((u,\theta),(v,\eta))|\to0.\]

By \Cref{prop:common-radial-clt}, $\sqrt{\beta_n}(r-\bar{\mathcal R}_{n,\beta_n})$ is tight. The identity
\[\mathcal A_{n,\beta_n}^{-1}(r-\bar{\mathcal R}_{n,\beta_n})=\sqrt{\frac{\beta_n}{n+2\beta_n}}\,\sqrt{\beta_n}(r-\bar{\mathcal R}_{n,\beta_n})\]
and the coefficient bound $1/\sqrt2$ show that $(\nu_n)$ is tight. Given $\varepsilon>0$, choose $M>0$ such that $\nu_n([-M,M])\ge1-\varepsilon$ for all sufficiently large $n$. The already established convergence $\bar{\mathcal R}_{n,\beta_n}/\mathcal A_{n,\beta_n}\to+\infty$ also gives $[-M,M]\subset J_n$ eventually. Under the diagonal coupling induced by the common product measure, the distortion of the truncated metrics on $[-M,M]\times S^{n-1}$ is at most $\varepsilon$ for all sufficiently large $n$, and this set has mass at least $1-\varepsilon$. By \Cref{def:common-box-concentration},
\[\Box\bigl((\mathcal A_{n,\beta_n}^{-1}\BetaBall)^{(L)},Y_n^{(L)}\bigr)\le\varepsilon.\]
Letting $\varepsilon\downarrow0$ proves the assertion. This completes the proof.
\end{proof}

\section*{Acknowledgments}

The author would like to thank Professor Takashi Shioya for many helpful suggestions and guidance. The author used Claude, GPT-5.5, and GPT-5.6-series Codex models as AI-assisted tools in preparing this manuscript. The author reviewed and revised the mathematical content and takes full responsibility for the final manuscript.

\end{document}